\documentclass[11pt]{amsart}
\usepackage{amsfonts,latexsym,amsthm,amssymb,graphicx}
\usepackage[all]{xy}
\usepackage[usenames]{color} 
\usepackage{xypic,amscd,epsfig}
\usepackage{hyperref}
\usepackage{pdfsync}
\usepackage[capitalize,nameinlink]{cleveref}

\DeclareFontFamily{OT1}{rsfs}{}
\DeclareFontShape{OT1}{rsfs}{n}{it}{<-> rsfs10}{}
\DeclareMathAlphabet{\mathscr}{OT1}{rsfs}{n}{it}

\setlength{\textwidth}{6 in}
\setlength{\textheight}{8.75 in}
\setlength{\topmargin}{-0.25in}
\setlength{\oddsidemargin}{0.25in}
\setlength{\evensidemargin}{0.25in}

\CompileMatrices

\newtheorem{theorem}{Theorem}[section]
\newtheorem{lemma}[theorem]{Lemma}
\newtheorem{corol}[theorem]{Corollary}
\newtheorem{prop}[theorem]{Proposition}

\newtheorem{conj}{Conjecture}

\newtheorem{definition}[theorem]{Definition}

{\theoremstyle{remark} \newtheorem{remark}[theorem]{Remark}
\newtheorem{example}[theorem]{Example}}

\newcommand{\nc}{\newcommand}
\nc{\rnc}{\renewcommand}
\nc{\bb}[1]{{\mathbb #1}}
\nc{\bbA}{\bb{A}}\nc{\bbB}{\bb{B}}\nc{\bbC}{\bb{C}}\nc{\bbD}{\bb{D}}
\nc{\bbE}{\bb{E}}\nc{\bbF}{\bb{F}}\nc{\bbG}{\bb{G}}\nc{\bbH}{\bb{H}}
\nc{\bbI}{\bb{I}}\nc{\bbJ}{\bb{J}}\nc{\bbK}{\bb{K}}\nc{\bbL}{\bb{L}}
\nc{\bbM}{\bb{M}}\nc{\bbN}{\bb{N}}\nc{\bbO}{\bb{O}}\nc{\bbP}{\bb{P}}
\nc{\bbQ}{\bb{Q}}\nc{\bbR}{\bb{R}}\nc{\bbS}{\bb{S}}\nc{\bbT}{\bb{T}}
\nc{\bbU}{\bb{U}}\nc{\bbV}{\bb{V}}\nc{\bbW}{\bb{W}}\nc{\bbX}{\bb{X}}
\nc{\bbY}{\bb{Y}}\nc{\bbZ}{\bb{Z}}
\nc{\mbf}[1]{{\mathbf #1}}
\nc{\bfA}{\mbf{A}}\nc{\bfB}{\mbf{B}}\nc{\bfC}{\mbf{C}}\nc{\bfD}{\mbf{D}}
\nc{\bfE}{\mbf{E}}\nc{\bfF}{\mbf{F}}\nc{\bfG}{\mbf{G}}\nc{\bfH}{\mbf{H}}
\nc{\bfI}{\mbf{I}}\nc{\bfJ}{\mbf{J}}\nc{\bfK}{\mbf{K}}\nc{\bfL}{\mbf{L}}
\nc{\bfM}{\mbf{M}}\nc{\bfN}{\mbf{N}}\nc{\bfO}{\mbf{O}}\nc{\bfP}{\mbf{P}}
\nc{\bfQ}{\mbf{Q}}\nc{\bfR}{\mbf{R}}\nc{\bfS}{\mbf{S}}\nc{\bfT}{\mbf{T}}
\nc{\bfU}{\mbf{U}}\nc{\bfV}{\mbf{V}}\nc{\bfW}{\mbf{W}}\nc{\bfX}{\mbf{X}}
\nc{\bfY}{\mbf{Y}}\nc{\bfZ}{\mbf{Z}}
\nc{\bfa}{\mbf{a}}\nc{\bfb}{\mbf{b}}\nc{\bfc}{\mbf{c}}\nc{\bfd}{\mbf{d}}
\nc{\bfe}{\mbf{e}}\nc{\bff}{\mbf{f}}\nc{\bfg}{\mbf{g}}\nc{\bfh}{\mbf{h}}
\nc{\bfi}{\mbf{i}}\nc{\bfj}{\mbf{j}}\nc{\bfk}{\mbf{k}}\nc{\bfl}{\mbf{l}}
\nc{\bfm}{\mbf{m}}\nc{\bfn}{\mbf{n}}\nc{\bfo}{\mbf{o}}\nc{\bfp}{\mbf{p}}
\nc{\bfq}{\mbf{q}}\nc{\bfr}{\mbf{r}}\nc{\bfs}{\mbf{s}}\nc{\bft}{\mbf{t}}
\nc{\bfu}{\mbf{u}}\nc{\bfv}{\mbf{v}}\nc{\bfw}{\mbf{w}}\nc{\bfx}{\mbf{x}}
\nc{\bfy}{\mbf{y}}\nc{\bfz}{\mbf{z}}
\nc{\mcal}[1]{{\mathcal #1}}
\nc{\calA}{\mcal{A}}\nc{\calB}{\mcal{B}}\nc{\calC}{\mcal{C}}\nc{\calD}{\mcal{D}}
\nc{\calE}{\mcal{E}} \nc{\calF}{\mcal{F}}\nc{\calG}{\mcal{G}}\nc{\calH}{\mcal{H}}
\nc{\calI}{\mcal{I}}\nc{\calJ}{\mcal{J}}\nc{\calK}{\mcal{K}}\nc{\calL}{\mcal{L}}
\nc{\calM}{\mcal{M}}\nc{\calN}{\mcal{N}}\nc{\calO}{\mcal{O}}\nc{\calP}{\mcal{P}}
\nc{\calQ}{\mcal{Q}}\nc{\calR}{\mcal{R}}\nc{\calS}{\mcal{S}}\nc{\calT}{\mcal{T}}
\nc{\calU}{\mcal{U}}\nc{\calV}{\mcal{V}}\nc{\calW}{\mcal{W}}\nc{\calX}{\mcal{X}}
\nc{\calY}{\mcal{Y}}\nc{\calZ}{\mcal{Z}}
\nc{\fA}{\frak{A}}\nc{\fB}{\frak{B}}\nc{\fC}{\frak{C}} \nc{\fD}{\frak{D}}
\nc{\fE}{\frak{E}}\nc{\fF}{\frak{F}}\nc{\fG}{\frak{G}}\nc{\fH}{\frak{H}}
\nc{\fI}{\frak{I}}\nc{\fJ}{\frak{J}}\nc{\fK}{\frak{K}}\nc{\fL}{\frak{L}}
\nc{\fM}{\frak{M}}\nc{\fN}{\frak{N}}\nc{\fO}{\frak{O}}\nc{\fP}{\frak{P}}
\nc{\fQ}{\frak{Q}}\nc{\fR}{\frak{R}}\nc{\fS}{\frak{S}}\nc{\fT}{\frak{T}}
\nc{\fU}{\frak{U}}\nc{\fV}{\frak{V}}\nc{\fW}{\frak{W}}\nc{\fX}{\frak{X}}
\nc{\fY}{\frak{Y}}\nc{\fZ}{\frak{Z}}
\nc{\fa}{\frak{a}}\nc{\fb}{\frak{b}}\nc{\fc}{\frak{c}} \nc{\fd}{\frak{d}}
\nc{\fe}{\frak{e}}\nc{\fFf}{\frak{f}}\nc{\fg}{\frak{g}}\nc{\fh}{\frak{h}}
\nc{\fri}{\frak{i}}\nc{\fj}{\frak{j}}\nc{\fk}{\frak{k}}\nc{\fl}{\frak{l}}
\nc{\fm}{\frak{m}}\nc{\fn}{\frak{n}}\nc{\fo}{\frak{o}}\nc{\fp}{\frak{p}}
\nc{\fq}{\frak{q}}\nc{\fr}{\frak{r}}\nc{\fs}{\frak{s}}\nc{\ft}{\frak{t}}
\nc{\fu}{\frak{u}}\nc{\fv}{\frak{v}}\nc{\fw}{\frak{w}}\nc{\fx}{\frak{x}}
\nc{\fy}{\frak{y}}\nc{\fz}{\frak{z}}

\newcommand{\C}{{\mathbb C}}

\newcommand{\Pbb}{{\mathbb{P}}}

\newcommand{\Z}{{\mathbb Z}}

\newcommand{\cF}{{\mathcal F}}

\newcommand{\cO}{{\mathcal O}}

\newcommand{\Fl}{\mathrm{Fl}}
\newcommand{\Gr}{\mathrm{Gr}}
\newcommand{\K}{\mathrm{K}}

\newcommand{\ssm}{{s_{\text{SM}}}}
\newcommand{\csm}{{c_{\text{SM}}}}

\newcommand{\csmh}{{c^\hbar_{\text{SM}}}}

\newcommand{\csmT}{{c^{{T}}_{\text{SM}}}}
\newcommand{\csmTv}{{c^{{T},\vee}_{\text{SM}}}}

\newcommand{\csmTh}{{c^{{T},\hbar}_{\text{SM}}}}

\newcommand{\one}{1\hskip-3.5pt1}

\newcommand{\parcoh}{\partial^\mathrm{coh}}
\newcommand{\operL}{\mathfrak{L}}

\DeclareMathOperator{\id}{id}

\DeclareMathOperator{\Frac}{Frac}

\DeclareMathOperator{\End}{End}

\DeclareMathOperator{\ch}{ch}


\begin{document}
\title[MC, Hirzebruch, and CSM classes of Schubert cells]{From motivic Chern classes of Schubert cells to their Hirzebruch and CSM classes}

\date{December 22, 2022}

\author{Paolo Aluffi}
\address{
Mathematics Department, 
Florida State University,
Tallahassee FL 32306
}
\email{aluffi@math.fsu.edu}

\author{Leonardo C.~Mihalcea}
\address{
Department of Mathematics, 
Virginia Tech University, 
Blacksburg, VA 24061
}
\email{lmihalce@vt.edu}

\author{J\"org Sch\"urmann}
\address{Mathematisches Institut, Universit\"at M\"unster, Germany}
\email{jschuerm@uni-muenster.de}

\author{Changjian Su}
\address{
Yau Mathematical Sciences Center,
Tsinghua University, 
100084, Beijing, China}
\email{changjiansu@mail.tsinghua.edu.cn}

\subjclass[2020]{Primary 14C17, 14M15; Secondary 14C40, 20C08, 14N15}
\keywords{motivic Chern class; Hirzebruch class; Schubert cells; Adams operation; Demazure-Lusztig operator; Grothendieck-Hirzebruch-Riemann-Roch; log concave sequence.}

\begin{abstract} The equivariant motivic Chern class of a Schubert cell in a `complete' 
flag manifold $X=G/B$ 
is an element in the equivariant K theory ring of $X$ to which one adjoins a formal parameter $y$.
In this paper we prove several `folklore results' about the motivic Chern classes, 
including finding specializations at $y=-1$ and $y=0$; the coefficient of the top power of $y$;
how to obtain Chern-Schwartz-MacPherson (CSM) classes as leading terms of motivic classes; divisibility 
properties of the Schubert expansion of motivic Chern classes. We collect several conjectures
about the positivity, unimodality, and log concavity of CSM and motivic Chern classes of Schubert cells, including a conjectural positivity of structure constants
of the multiplication of Poincar{\'e} duals of CSM classes.
In addition, we prove a `star duality'
for the motivic Chern classes. We utilize the motivic Chern transformation 
to define two equivariant variants of the Hirzebruch
transformation, which appear naturally in the Grothendieck-Hirzebruch-Riemann-Roch formalism.
We utilize the Demazure-Lusztig recursions from the motivic Chern class theory to find similar recursions giving the 
Hirzebruch classes of Schubert cells, their Poincar{\'e} duals, and their Segre
versions. We explain the functoriality properties needed to extend the results to `partial' flag manifolds $G/P$. 
\end{abstract}

\maketitle

\tableofcontents

\section{Introduction} Let $X$ be a quasi-projective complex algebraic variety and denote by $\K_0(var/X)$
the Grothendieck motivic group consisting of equivalence classes of morphisms $[f:Z \to X]$
modulo the usual additivity relations. Denote also by $\K(X)$ the Grothendieck ring of vector bundles
on $X$. The {\em motivic Chern transformation\/} defined by Brasselet, Sch{\"u}rmann and Yokura 
\cite{brasselet.schurmann.yokura:hirzebruch}
is the assignment for every such $X$ of a group homomorphism
\[ 
MC_y:  \K_0(var/X) \to \K(X)[y] \/, 
\]
uniquely determined by the fact that it commutes with proper push-forwards and that it satisfies 
the normalization condition 
\[ MC_y[\id_X: X \to X] = \lambda_y(T^*_X) = \sum y^i [\wedge^i T^*X]_T \in \K_T(X)[y] \]
if $X$ is nonsingular.
If $Z \hookrightarrow X$ is a locally closed subset, the {\em motivic Chern class} of $Z$ (regarded in $X$) 
is defined by 
\[ MC_y(Z):= MC_y[Z \hookrightarrow X] \in \K(X)[y] \/;\]
here $y$ is a formal indeterminate.
If $X$ admits a torus action, there is an equivariant version 
$MC_y:  \K_0^T(var/X) \to \K_T(X)[y]$
defined in \cite{feher2018motivic,AMSS:motivic}; we will work in this context.

In this paper we study the classes $MC_y(X(w)^\circ)$,
the (torus equivariant) 
motivic Chern classes of Schubert cells $X(w)^\circ$ in the flag manifolds $G/B$,
for $G$ a complex, semisimple, Lie group, and $B \subseteq G$ a Borel subgroup. By functoriality,
these determine the motivic Chern classes in the `partial' flag manifolds $G/P$,
with $P \supset B$ a standard parabolic subgroup.

The motivic classes $MC_y(X(w)^\circ)$ are closely related to the study of 
representation theory 
of the Hecke algebra of $G$, and through this connection they play a 
prominent r{\^o}le in several related topics:
($\K$-theoretic) stable envelopes and integrable systems \cite{RTV:Kstable,AMSS:motivic,feher2018motivic}, 
Whittaker functions from $p$-adic representation theory \cite{mihalcea2019whittaker},
characteristic classes of singular varieties \cite{feher2018characteristic}. In interesting situations they 
recover point counting 
over finite fields \cite{mihalcea2019whittaker} (see also \S \ref{ssec:pointc} below),
and are closely related to the study of the 
intersection (co)homology and the Riemann-Hilbert correspondence for arbitrary complex 
projective manifolds \cite{schurmann2009characteristic,AMSS:MCcot}. In Schubert Calculus, the motivic Chern classes,
and their cohomological counterparts, the {\em Chern-Schwartz-MacPherson (CSM)} classes,
provide deformations of the usual Schubert classes, which, provably or conjecturally,
satisfy remarkable positivity, unimodality, and log-concavity properties; see~\S\ref{ss:pos} below.

Among the main goals of this paper is to gather in a single place several `folklore results'
about properties of motivic classes $MC_y(X(w)^\circ)$. 
These include results about the specializations at $y=-1$ and $y=0$ of the parameter $y$; the coefficient
of $y^{\dim X(w)}$ in~$MC_y(X(w)^\circ)$; how to recover the CSM classes as the initial terms of the motivic 
Chern classes; divisibility properties of Schubert expansions.
As mentioned above, we also record several conjectural properties of  
CSM and motivic classes. In addition, we prove a new 
`star duality' for motivic Chern classes, generalizing to the motivic situation a known relation between
ideal sheaves and duals of structure sheaves of Schubert varieties; cf.~\S \ref{sec:stardual}. 

Our main new contribution is to utilize the properties of motivic Chern classes to
study the {\em Hirzebruch transformation} $Td_{y,*}^T$, 
and the {\em Hirzebruch classes} $Td_{y,*}^T(X(w)^\circ)$ of Schubert cells. 
Similar to the motivic Chern transformation, the Hirzebruch transformation 
${Td}^{T}_{y,*}: \K_0^{T}(var/X) \to \widehat{H}_*^{T}(X; \mathbb{Q}[y]) $
is a functorial transformation defined uniquely by a normalization property,
with values in a completed (Borel-Moore or Chow) homology group; see \S \ref{s:eqH}.
In the non-equivariant context this transformation was defined in \cite{brasselet.schurmann.yokura:hirzebruch}, 
and it arises naturally in 
the context of the Grothendieck-Hirzebruch-Riemann-Roch (GHRR) formalism. The `unnormalized' variant
of this transformation was studied by Weber \cite{weber1,weber2}. 

In this paper we extend the definition of the Hirzebruch transformation to the equivariant context, for arbitrary
quasi-projective complex algebraic varieties $X$ with a torus action. 
As hinted above, there are two variants of the Hirzebruch transformation.
The `unnormalized' variant is defined as the composition
\[ \widetilde{Td}^{T}_{y,*}:= td^{T}_*\circ MC_y: \K_0^{T}(var/X) \to \widehat{H}_*^{T}(X; \mathbb{Q}[y]) \]
of the (equivariant) Todd transformation $td_*^T$ constructed by Edidin and Graham \cite{edidin2000} 
with the motivic Chern transformation. The `normalized' version is the
composition
\[ {Td}^{T}_{y,*}:=\psi_*^{1+y} \circ  \widetilde{Td}^{T}_{y,*}:  \K_0^{T}(var/X) \to 
\widehat{H}_*^{T}(X; \mathbb{Q}[y]) \subseteq \widehat{H}_*^{T}(X; \mathbb{Q}[y,(1+y)^{-1}]) \/.\]
of a certain Adams operator with the unnormalized transformation. The Adams operator acts by 
multiplying by powers of $1+y$ (see \S \ref{s:eqH}). A technical subtlety is that {\em a priori} 
${Td}^{T}_{y,*}$ requires coefficients in $\mathbb{Q}[y,(1+y)^{-1}]$, but one proves that 
$\mathbb{Q}[y]$ suffices. In fact, an important property of 
${Td}^{T}_{y,*}$ is that the specialization at $y=-1$ is well defined. This specialization 
recovers the (equivariant) MacPherson's transformation 
\cite{macpherson:chern,ohmoto:eqcsm} $c_*:\mathcal{F}^T(X) \to H_*^T(X)$ 
from the group of (equivariant) constructible functions to homology; see \Cref{cor:special-Ty}.

Once the general theory is developed, we turn to the situation where $X$ is a flag manifold. Utilizing the Demazure-Lusztig (DL) 
operators which determine recursively the motivic Chern classes $MC_y(X(w)^\circ)$, we obtain operators calculating (recursively) the
Hirzebruch classes $\widetilde{Td}^{T}_{y,*}(X(w)^\circ)$ and ${Td}^{T}_{y,*}(X(w)^\circ)$, their Poincar{\'e} duals, and the 
Segre-Hirzebruch 
classes. Perhaps not surprisingly, the theory we find is essentially equivalent to that of motivic Chern classes. For instance, the Hirzebruch 
operators are images of the DL operators in $\K$-theory via a Todd transformation. In particular, the 
Hirzebruch operators satisfy the same relations as the DL operators in $\K$ theory, implying that 
they give an action of the Hecke algebra on the equivariant (co)homology of $G/B$. 

For the convenience of the reader we will now illustrate some of these results; we refer 
to \S \ref{s:eqH} for all the statements. Let $P_i$ be the minimal parabolic group associated to the $i$-th simple root, 
and denote by $p_i:G/B \to G/P_i$ the natural projection. The BGG operator $\parcoh_i$ is defined as $(p_i)^* (p_i)_*$. 
Define the unnormalized and the normalized variants of the Hirzebruch operators 
\[ \widetilde{\mathcal{T}}_i^H,{\mathcal{T}}_i^{H} :\widehat{H}^*_{T}(X,\mathbb{Q})[y]  \to 
\widehat{H}^*_{T}(X,\mathbb{Q})[y]\]
by 
\[  \widetilde{{\mathcal{T}}}_i^H := 
\widetilde{{Td}}^{T}_{y}(T_{p_i})\parcoh_i  - \id \/; \quad {\mathcal{T}}_i^H := 
{Td}^{T}_{y}(T_{p_i})\parcoh_i  - \id \/.\]
Here $\widetilde{{Td}}^{T}_{y}(T_{p_i})$ and ${Td}^{T}_{y}(T_{p_i})$ are the 
unnormalized, respectively the normalized, Hirzebruch class associated to the 
relative tangent bundle $T_{p_i}$. If $y=0$, each of these is equal to the Todd class of $T_{p_i}$, but if $y=-1$,
\[ \widetilde{{Td}}^{T}_{y}(T_{p_i})_{y=-1} = c_1^T(T_{p_i}) \textrm{ and }  {{Td}}^{T}_{y}(T_{p_i})_{y=-1} = 1+c_1^T(T_{p_i}) \/. \]
\begin{theorem} (cf.~ \Cref{prop:HDL-rec}) Let $W$ be the Weyl group of $G$, $w \in W$, and $s_i$ a simple reflection
such that $ws_i >w$ in the Bruhat ordering. Consider the Schubert 
cell $X(w)^\circ \subseteq G/B$, of dimension $\ell(w)$. Then the Hirzebruch classes are determined by the 
following recursions: 
\[  \widetilde{{\mathcal{T}}}_i^H(\widetilde{Td}^{T}_{y,*}(X(w)^\circ)) = \widetilde{Td}^{T}_{y,*}(X(ws_i)^\circ) \textrm{ and } \/
{{\mathcal{T}}}_i^H({Td}^{T}_{y,*}(X(w)^\circ)) = {Td}^{T}_{y,*}(X(ws_i)^\circ) \/.\]
\end{theorem}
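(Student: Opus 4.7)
The plan is to transport the known K-theoretic Demazure-Lusztig recursion for motivic Chern classes to cohomology via the Todd transformation, exploiting the definitions $\widetilde{Td}^T_{y,*} = td^T_* \circ MC_y$ and $Td^T_{y,*} = \psi^{1+y}_* \circ \widetilde{Td}^T_{y,*}$. Everything reduces to an operator identity intertwining the K-theoretic DL operator with $\widetilde{\mathcal{T}}_i^H$ through $td^T_*$, followed by a controlled application of the Adams operator.

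The starting point is the K-theoretic recursion $\mathcal{T}^K_i\bigl(MC_y(X(w)^\circ)\bigr) = MC_y(X(ws_i)^\circ)$ for $ws_i > w$, where
\[
\mathcal{T}^K_i := \lambda_y(T^*_{p_i}) \cdot p_i^*(p_i)_* - \id
\]
is the Demazure-Lusztig operator determining motivic Chern classes recursively (reviewed earlier in the paper). Applying $td^T_*$, the right side becomes $\widetilde{Td}^T_{y,*}(X(ws_i)^\circ)$ by definition, so it suffices to establish the operator identity $td^T_* \circ \mathcal{T}^K_i = \widetilde{\mathcal{T}}_i^H \circ td^T_*$. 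The $-\id$ summands cancel trivially. For the main summand, I would chain three ingredients: (i) equivariant Grothendieck-Hirzebruch-Riemann-Roch for the proper map $p_i$, giving $td^T_* \circ (p_i)_* = (p_i)_* \circ td^T_*$; (ii) the smooth-pullback compatibility $td^T_* \circ p_i^* = Td(T_{p_i}) \cdot p_i^* \circ td^T_*$, a consequence of the multiplicativity $Td(T_{G/B}) = Td(T_{p_i}) \cdot p_i^*Td(T_{G/P_i})$ combined with the Gysin pullback formula in equivariant Borel-Moore homology; and (iii) the multiplicativity $td^T_*(\xi \cdot \gamma) = \ch(\xi) \cdot td^T_*(\gamma)$ on the smooth ambient $G/B$. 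Composing (iii), (ii), (i) in order yields
\[
td^T_*\bigl(\lambda_y(T^*_{p_i}) \cdot p_i^*(p_i)_*\alpha\bigr) = \ch(\lambda_y(T^*_{p_i})) \cdot Td(T_{p_i}) \cdot \parcoh_i\bigl(td^T_*\alpha\bigr) = \widetilde{Td}^T_y(T_{p_i}) \cdot \parcoh_i\bigl(td^T_*\alpha\bigr),
\]
by the very definition $\widetilde{Td}^T_y(T_{p_i}) = \ch(\lambda_y(T^*_{p_i})) \cdot Td(T_{p_i})$. This proves the unnormalized recursion.

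For the normalized version, I would post-compose with $\psi^{1+y}_*$. The Adams operator scales Borel-Moore homology by powers of $1+y$ indexed by degree, hence is multiplicative up to controlled rescaling. Since $\parcoh_i$ introduces a uniform degree shift (coming from the one-dimensional fiber of $p_i$), the $(1+y)$-rescaling absorbs the difference between $\widetilde{Td}^T_y(T_{p_i})$ and $Td^T_y(T_{p_i})$ when these act on classes of the form $\parcoh_i(\beta)$. This is exactly the defining relationship between the unnormalized and normalized Hirzebruch classes of the line bundle $T_{p_i}$, yielding $\psi^{1+y}_* \circ \widetilde{\mathcal{T}}_i^H = \mathcal{T}_i^H \circ \psi^{1+y}_*$. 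Applying $\psi^{1+y}_*$ to the unnormalized identity then produces the normalized recursion.

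The hard part will be the bookkeeping in the last step: one must confirm that the Adams rescaling exactly converts $\widetilde{Td}^T_y(T_{p_i}) \cdot \parcoh_i(\beta)$ into $Td^T_y(T_{p_i}) \cdot \parcoh_i(\psi^{1+y}_*\beta)$ after accounting for the degree shift produced by $\parcoh_i$, without leaving spurious powers of $1+y$. A secondary technical point is to justify the smooth-pullback compatibility in ingredient (ii) within the completed equivariant Borel-Moore homology framework $\widehat{H}_*^T(X;\mathbb{Q}[y])$ used throughout the paper.
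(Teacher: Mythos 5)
Your proposal is correct and follows essentially the same route as the paper: apply $td^T_*$ to the motivic Chern recursion of \Cref{thm:MC+dual} and establish the operator intertwining $td^T_* \circ \mathcal{T}_i = \widetilde{\mathcal{T}}_i^H \circ td^T_*$ (this is the paper's \Cref{lemma:Hcomm}, with your ingredients (i)--(iii) bundled together via the Verdier-Riemann-Roch identity \eqref{eq:BGG-td} and the module property \eqref{Ch-td}), then post-compose with $\psi^{1+y}_*$ and use the degree-shift commutation $\psi^{1+y}_*\parcoh_i = (1+y)^{-1}\parcoh_i\psi^{1+y}_*$ together with $\psi_{1+y}^*(\widetilde{Td}^{T}_y(T_{p_i})) = (1+y)\,Td^{T}_y(T_{p_i})$ (the paper's \Cref{lemma:commHops}). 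The "bookkeeping" you flag in the last step works out cleanly for exactly the reason you sketch: the factor $1+y$ produced by the Adams operation on $\widetilde{Td}^T_y(T_{p_i})$ cancels the $(1+y)^{-1}$ from moving $\psi^{1+y}_*$ past $\parcoh_i$.
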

The specialization at $y=0$ of either of the two variants coincides with the Todd transformation
of the class of the ideal sheaves of the boundary of Schubert varieties:
\[ \widetilde{Td}^{T}_{y,*}(X(w)^\circ)_{y=0} = {Td}^{T}_{y,*}(X(w)^\circ)_{y=0}= td_*^T(\mathcal{I}_w^T) \/. \]
The specialization at $y=0$ of the Poincar{\'e} dual classes (see \Cref{thm:Hdual}) gives the Chern
character $\ch_T(\cO^{w,T})$ of the structure sheaves of the (opposite) Schubert varieties.
In particular, in \S \ref{s:specHclasses} we obtain recursions calculating each of the classes (see also the paragraph around
the equation \eqref{equ:BGGChern}), and we observe the orthogonality with respect 
to the Poincar{\'e} pairing: 
\[ \langle td_*^T(\mathcal{I}_u^T), \ch_T(\cO^{v,T}) \rangle = \delta_{u,v} \/. \]
(This also follows directly from the GHRR theorem and the duality $\langle \mathcal{I}_u^T, \cO^{v,T} \rangle = \delta_{u,v}$ in
 $\K$-theory.) The specialization at $y=-1$ recovers
 the CSM and the Segre-MacPherson classes of Schubert cells, and the recursions from \cite{aluffi.mihalcea:eqcsm,AMSS:shadows} 
 in terms of degenerate Hecke algebra operators.

 {\em Acknowledgments.} We would like to thank Mahir Can and J{\"o}rg Feldvoss
 for organizing the AMS Special Session on `Combinatorial and Geometric Representation Theory',
 and for their interest in this work.  
This material is partly based upon work supported by the National Science Foundation 
under Grant No.~DMS-1929284 while LM was in residence at the Institute for Computational and 
Experimental Research in Mathematics in Providence, RI, during the 
Combinatorial Algebraic Geometry program. LM was also supported in part by the NSF grant 
DMS-2152294 and a Simons Collaboration Grant. PA is supported in part by Simons 
Collaboration Grant~\#625561. J. Sch\"urmann
is funded by the Deutsche Forschungsgemeinschaft (DFG, German Research Foundation) Project-ID 427320536 -- SFB~1442, as well as under Germany's Excellence Strategy EXC 2044 390685587, Mathematics M\"unster: Dynamics -- Geometry -- Structure.

Throughout this project we used the Maple package
\texttt{Equivariant Schubert Calculator}, written by Anders Buch.\begin{footnote}
{The program is available at \texttt{https://sites.math.rutgers.edu/$\sim$asbuch/equivcalc/}}\end{footnote}

\section{Preliminaries}
\subsection{Equivariant (co)homology}\label{ss:ECSM}
Let $X$ be a quasi-projective complex algebraic variety. In this paper we will deal with
the Borel-Moore homology group $H_*(X)$ of $X$ and the cohomology ring $H^*(X)$,
with rational coefficients. As an alternative, one could use Chow (co)homology; there is
a homology degree-doubling cycle map from Chow to Borel-Moore, and our constructions 
are compatible with this map. This map is an isomorphism in some important
situations, such as the complex flag manifolds studied later in
this note. We refer to \cite[\S 19.1]{fulton:IT} and \cite[\S 2.6]{ginzburg:methods} 
for more details about Borel-Moore homology
and its relation to the Chow group.  In case we speak of (co)dimension
we always assume that our spaces are pure dimensional; in addition,
by (co)dimension
we will mean the {\em complex} (co)dimension. Any
subvariety $Y\subseteq X$ of (complex) dimension $k$ has a fundamental
class $[Y] \in H_{2k}(X)$.  Whenever $X$ is smooth, we can and will
identify the Borel-Moore homology and cohomology via Poincar{\'e}
duality.

Let $T$ be a torus and let $X$ be a variety with a $T$-action. Then
the equivariant cohomology~$H^*_T(X)$ is the ordinary cohomology of
the Borel mixing space $X_T:= (ET \times X)/T$, where $ET$ is the
universal $T$-bundle and $T$ acts by $t \cdot (e,x) = 
(e t^{-1}, t x)$. The ring $H^*_T(X)$ is an algebra over $H^*_T(pt)$, the polynomial ring 
$Sym_{\mathbb{Q}} \mathfrak{X}(T) \simeq \mathbb{Q}[t_1,\ldots , t_s]$ 
in the character group $\mathfrak{X}(T)$ (written additively)
and with $t_i \in H^2_T(pt)$; see e.g.,~\cite[\S 11.3.5]{kumar:KMbook}.
One may also define $T$-equivariant Borel-Moore homology and Chow
groups, related by an equivariant cycle map; see e.g.,
\cite{edidin.graham:eqchow}. Every $k$-dimensional subvariety
$Y\subseteq X$ that is stable under the $T$ action determines
an equivariant fundamental class $[Y]_T$ in
$H_{2k}^T(X)$. 

As in the non-equivariant case, whenever
  $X$ is smooth, we will identify $H_*^T(X)$ and~$H^*_T(X)$.  In
  particular, when $X=pt$, the identification sends $a \in H^*_T(pt)$
  to $a \cap [pt]_T$. If $X$ is smooth and proper, then there is an $H^*_T(pt)$-bilinear 
  Poincar{\'e} (or intersection) pairing $H^*_T(X) \otimes H^*_T(X) \to H^*_T(pt)$ defined
 by 
  \[ \langle a , b \rangle = \int_X a \cdot b \quad \/,\]
 where the integral stands for the push-forward to a point.
Equivariant vector bundles have equivariant Chern classes
$c^T_i(-)$, such that $c_j^T(E)\cap -$ is an operator
  $H^T_i(X)\to H^T_{i-2j}(X)$; see \cite[\S1.3]{MR2976939},
\cite[\S2.4]{edidin.graham:eqchow}.

We address the reader to \cite{MR2976939, knutson:noncomplex, AndersonFulton}
for background on equivariant cohomology and
homology.

\subsection{Equivariant K theory} For any $T$-variety $X$, the equivariant $\K$ theory ring $\K_T(X)$ 
is the Grothendieck ring generated by symbols $[E]$, where $E \to X$ is a $T$-equivariant 
vector bundle, modulo the relations $[E]=[E_1]+[E_2]$ for any short exact sequence 
$0 \to E_1 \to E \to E_2 \to 0$ of equivariant vector bundles. The ring $\K_T(X)$ is 
an algebra over $R(T)$, the 
representation ring of $T$. This may be identified with the Laurent polynomial ring 
$\Z[e^{\pm t_1}, \ldots , e^{\pm t_r}]$ where $e^{t_i}$ are characters corresponding to a basis of the
character lattice in the Lie algebra of $T$. 
An introduction to equivariant $\K$ theory may be found in \cite{chriss2009representation}. 
The equivariant $\K$ ring of $X$ admits a `vector bundle duality' involution $^{\vee}: \K_T(X) \to \K_T(X)$ 
mapping the class $[E]$ of a vector bundle to the class $[E^\vee]$ of its dual. This is not an involution of 
$\K_T(pt)$-algebras; it satisfies
 $(e^\lambda)^{\vee} = e^{-\lambda}$. Under mild hypotheses (e.g., $X$~projective)
 there is also a `Serre duality' involution 
 $\calD: \K_T(X) \to \K_T(X)$ inherited from 
(equivariant) Grothendieck-Serre duality and defined by
\[
\calD[F]:=[RHom(F,\omega^\bullet_{X})]
= [\omega_{X}^\bullet] \otimes [F]^\vee \in \K_T(X)
\] 
for $[F]\in \K_T(X)$,
where $\omega^\bullet_{X}\simeq \omega_{X}[\dim X]$ is the
(equivariant) dualizing complex of $X$.
Thus if $X$ is nonsingular, $[\omega^\bullet_{X}]=(-1)^{\dim X}[ \omega_{X}]$ with $\omega_{X}$ the equivariant canonical bundle of $X$.
Observe the multiplicativity
\[ \calD ([E] \otimes [F])
 = \calD ([E]) \otimes[F]^\vee \/.
\]
In later sections of this paper we will primarily be concerned with flag manifolds $X=G/B$, 
with $T$ acting on $X$ by left multiplication. 
In this case $X$ is a smooth projective variety
and the ring $\K_T(X)$ is naturally isomorphic to the Grothendieck group 
$ K_0(\mathfrak{coh}^T(\cO_X))$
of $T$-linearized coherent sheaves on $X$. This follows from the fact that every such 
coherent sheaf has a finite resolution by $T$-equivariant vector bundles. 
There is a $\K_T(pt)$-bilinear pairing
\[ \langle - , - \rangle: \K_T(X) \otimes \K_T(X) \to \K_T(pt) = R(T); \quad \langle [E], [F] \rangle := \int_X E \otimes F = \chi(X; E \otimes F) \/, \] 
where $\chi(X; E)$ is the (equivariant) Euler characteristic, i.e., the virtual representation
\[ \chi(X;E) = \int_X [E] = \sum_i (-1)^i H^i(X; E)  \/. \]

Note that
$$ \langle \calD[E], [F]^{\vee} \rangle =\int_X \calD( [E\otimes F])= \chi(X; E \otimes F)^{\vee}= (\langle [E], [F] \rangle)^{\vee} \:,$$
by equivariant Grothendieck-Serre duality \cite[Theorem 7.6]{hartshorne:AGbook}.

Let $y$ be an indeterminate. The {\em Hirzebruch $\lambda_y$-class\/} of an equivariant 
vector bundle~$E$ is the class 
\[ \lambda_y(E):= \sum_k [\wedge^k E] y^k \in \K_T(X)[y] \/.\] 
The $\lambda_y$-class is multiplicative, i.e., for any short exact sequence 
$0 \to E_1 \to E \to E_2 \to 0$ of equivariant vector bundles there is an equality $\lambda_y(E) = \lambda_y(E_1) \lambda_y(E_2)$ in $\K_T(X)[y]$. We refer to the books \cite{chriss2009representation} or \cite{hirzebruch:topological} for details. 

\subsection{The Chern character}
For a pure-dimensional $T$-variety $X$, Edidin and Graham \cite{edidin2000} 
defined an equivariant Chern character 
\[ \ch_{T} : \K_{T}(X) \to \widehat{H}_*^{T}(X) := \prod_{i \le \dim X} H_{2i}^{T}(X) \] 
such that: 
\begin{itemize} \item If $V \subseteq X$ is a 
$T$-invariant subvariety, then $\ch_{T} [\cO_V]  = [V]_{T} + $l.o.t. (lower order terms). 
(Non-equivariantly, see \cite[Theorem 18.3(5)]{fulton:IT}.) 
\item If $\mathcal{L}$ is an equivariant line bundle with first Chern class $c_1^{T}(\mathcal{L})$, then $\ch_{T} [\mathcal{L}] = e^{c_1^{T}(\mathcal{L})} \cap [X]_{T}$. 

\item $\ch_T$ commutes with pull-backs.

\item If $X$ is smooth, then after identifying $H_*^T(X) \simeq H^*_T(X)$ via Poincar{\'e} duality, $\ch_T$ is a ring homomorphism.

\end{itemize}
A fundamental result is the Grothendieck-Hirzebruch-Riemann-Roch (GHRR) theorem. In the equivariant case, this was proved in 
\cite{edidin2000}. For now we state the following particular form; in \S \ref{s:eqH} below we 
will need more general versions. Let $f:X \to Y$ be a smooth proper $T$-equivariant morphism of 
smooth $T$-varieties, and let $a \in \K_T(X)$. Then 
\begin{equation}\label{E:GHRR} \ch_T f_*(a) = f_* (\ch_T(a) \cdot Td(T_f)) \/. \end{equation}
where $Td(T_f)$ is the equivariant Todd class of the relative tangent bundle of $f$. Recall that 
if $E \to X$ is an equivariant vector bundle with Chern roots $x_1, \ldots, x_e$, then 
\[ Td(E) = \prod_{i=1}^e \frac{x_i}{1-e^{-x_i}} = \prod_{i=1}^e (1+ \frac{1}{2} x_i+ \ldots ) \/;\]
see e.g., \cite[Example~3.2.4]{fulton:IT}.
\section{Operators in equivariant cohomology and $\K$ theory of flag manifolds}

\subsection{Schubert data} Let $G$ be a complex semisimple, simply connected, Lie group, and fix a Borel subgroup $B$ with a maximal torus $T\subseteq B$. Let $B^-$ denote the opposite Borel subgroup. Let $W:=N_G(T)/T$ be the Weyl group, and $\ell:W \to \mathbb{N}$ the associated length function. Denote by $w_0$ the longest element in $W$; then $B^- = w_0 B w_0$. 
Let also $\Delta := \{ \alpha_1, \ldots , \alpha_r \} \subseteq R^+$ denote the set of simple roots included in the set of positive roots for $(G,B)$. Let $\rho$ denote the half sum of the positive root. The simple reflection for the root $\alpha_i \in \Delta$ is denoted by $s_i$ and the {\em minimal} parabolic subgroup is denoted by $P_i$, containing the Borel subgroup $B$. 

Let $X:=G/B$ be the flag variety. It has a stratification by Schubert cells $X(w)^\circ:= BwB/B$ and opposite Schubert cells $Y(w)^\circ:=B^- w B/B$. The closures $X(w):= \overline{X(w)^\circ}$ and $Y(w):=\overline{Y(w)^\circ}$ are the Schubert varieties. With these definitions, $\dim_{\C} X(w) = \mathrm{codim}_{\C} Y(w) = \ell(w)$. The Weyl group $W$ admits a partial ordering, called the Bruhat ordering, defined by $u \le v$ if and only if $X(u) \subseteq X(v)$.   

Let $\cO_w^T:=[\cO_{X(w)}]$ be the Grothedieck class determined by the structure sheaf of $X(w)$ (a coherent sheaf), and similarly $\cO^{w,T}:= [\cO_{Y(w)}]$. The equivariant $K$-theory ring has $K_T(pt)$-bases $\{ \cO_w^T \}_{w \in W}$ and $\{ \cO^{w,T} \}_{w \in W}$ for $w \in W$. Let $\partial X(w) := X(w) \setminus X(w)^\circ$ be the boundary of the Schubert variety $X(w)$, and similarly $\partial Y(w)$ the boundary of $Y(w)$. It is known that the dual bases of $\{ \cO_w^T \} $ and $\{\cO^{w,T} \}$ are given by the classes of the ideal sheaves $\mathcal{I}^{w,T}:= [\cO_{Y(w)}(- \partial Y(w))]$ respectively $
\mathcal{I}_w^T:= [\cO_{X(w)}(- \partial X(w))]$. I.e., 
\begin{equation}\label{E:dualst} \langle \cO_u^T , \mathcal{I}^{v,T} \rangle = \langle \cO^{u,T}, \mathcal{I}_v^T \rangle = \delta_{u,v} \/. \end{equation}
See e.g., \cite[Proposition 4.3.2]{brion:flagv} for the non-equivariant case - the same proof works equivariantly; 
see also \cite{graham.kumar:positivity,anderson.griffeth.miller:positivity}. 
It is also shown in \cite{brion:flagv} that 
\begin{equation}\label{equ:idealstru}
 \cO_w^T= \sum_{v \le w} \mathcal{I}_v^T  \textrm{\quad and }  \quad 
\mathcal{I}_w^T= \sum_{v \le w} (-1)^{\ell(w) - \ell(v)} \cO_v^T \/. 
\end{equation}
For any weight (character) $\lambda$ of $T$, 
we denote by $\mathcal{L}_\lambda$ the $G$-homogeneous line bundle
\[ \mathcal{L}_\lambda = G \times^B \C_{\lambda} \/. \]

Let $P$ be a standard parabolic subgroup of $G$, i.e., $B \subseteq P \subseteq G$. It
is determined by a subset $\Delta_P \subseteq \Delta$; for instance, $\Delta_B = \emptyset$. 
Denote by $W_P$ the subgroup of $W$ generated by the simple reflections $s_i$ such that 
$\alpha_i \in \Delta \setminus \Delta_P$. Let $W^P$ denote the subset of minimal length
representatives of $W/W_P$. By definition, $\ell(wW_P) = \ell(w)$ for $w \in W^P$. 
Similarly to $G/B$, the partial flag manifold $G/P$ has finitely $B$ and $B^-$ 
many orbits - the Schubert cells - indexed by the elements $w \in W^P$:
\[ X(wW_P)^{\circ} = Bw P/P \/; \quad Y(wW_P)^{\circ} = B^-w P/P \/;\]
as before $\dim X(wW_P)^\circ = \mathrm{codim}~Y(wW_P)^\circ = \ell(w W_P)$. 
The Schubert varieties $X(w), Y(w)$ in $G/P$ are the closures of the corresponding Schubert cells.

\subsection{BGG, Demazure, and Demazure-Lusztig operators} Fix a simple root $\alpha_i \in \Delta$ and denote by 
$P_i \subseteq G$ the corresponding minimal parabolic group. Consider the fiber diagram: 
\begin{equation}\label{E:fibrediag} 
\xymatrix@C=50pt{
FP:= G/B \times_{G/P_{i}}
G/B \ar[r]^-{pr_1}\ar[d]^{pr_2} & G/B \ar[d]^{p_{i}} \\ 
G/B \ar[r]^{p_{i}} & G/P_{i}
} 
\end{equation} 
The {\bf Bernstein-Gelfand-Gelfand} (BGG) operator \cite{BGG} is the 
$H^*_T(pt)$-linear morphism $\parcoh_i: H^i_T(X) \to H^{i+2}_T(X)$ defined by 
$\parcoh_i:= (p_i)^* (p_i)_*$. The same geometric definition gives the {\bf Demazure} operator
$\partial_i: \K_T(X) \to \K_T(X)$ in the (equivariant) $\K$-theory, linear over $\K_T(pt)$; see
\cite{demazure:desingularisations}. These operators satisfy
\begin{equation}\label{equ:BGGonstru}
\parcoh_i [X(w)]_T = \begin{cases} [X(ws_i)]_T & \textrm{ if } ws_i>w \/; \\ 0 & \textrm{ otherwise } \/. \end{cases} \quad 
 \partial_i(\cO_w^T) = \begin{cases} \cO_{ws_i}^T & \textrm{ if } ws_i>w \/; \\ \cO_w^T & \textrm{ otherwise } \/. \end{cases} 
\end{equation}
From this, one deduces that both operators satisfy the same commutation and braid relations as those 
for the elements of $W$. In cohomology, $(\parcoh_i)^2 = 0$, while in $\K$-theory
$\partial_i^2 = \partial_i$. 

The relative cotangent bundle of the projection $p_i$ is $T^*_{p_i}= \mathcal{L}_{\alpha_i}$. Define the $H^*_T(pt)$-algebra automorphism $s_i: H^*_T(G/B) \to H^*_T(pt)$ by 
\begin{equation}\label{E:sidef} s_i  = \id + c_1^T(T^*_{p_i})\parcoh_i = \id + c_1^T(\mathcal{L}_{\alpha_i}) \parcoh_i \/. \end{equation}
It was proved by Knutson \cite{knutson:noncomplex} that this is an automorphism
induced by the right Weyl group action on $G/B$; see \cite{aluffi.mihalcea:eqcsm} and also \cite{MNS}, 
where both left and right actions are studied. Using this automorphism, 
the {\bf cohomological Demazure-Lusztig (DL) operators}
are $H^*_T(pt)$-linear endomorphism of $H^*_T(G/B)$ defined by 
\begin{equation}\label{E:Ticohdef} \mathcal{T}_i^\mathrm{coh} = \parcoh_i - s_i \/; \quad  \mathcal{T}_i^{coh,\vee} = \parcoh_i + s_i  \/. \end{equation}
These operators satisfy the same braid and commutation relations as the BGG operators, and, in addition
$(\mathcal{T}_i^{coh})^2 = (\mathcal{T}_i^{coh,\vee})^2 = \id$; see \cite{aluffi.mihalcea:eqcsm}. 
In other words, these give a twisted representation
of the Weyl group $W$ on $H^*_T(G/B)$. This representation was studied earlier by Lascoux, Leclerc and Thibon \cite{LLT:twisted},
and by Ginzburg \cite{ginzburg:methods} in relation to the degenerate Hecke algebra. The operators are adjoint to each other, in the sense that for any $a,b \in H^*_T(G/B)$,  
\[ \langle \mathcal{T}_i^{coh}(a), b \rangle = \langle a, \mathcal{T}_i^{coh,\vee}(b) \rangle \/. \]
It is convenient to consider a homogenized version of this operator. Add a formal variable $\hbar$ of cohomological 
complex degree $1$. Then the homogenized operators are
 \[ \mathcal{T}_i^{coh,\hbar} = \hbar \parcoh_i - s_i \/; \quad \mathcal{T}_i^{coh,\vee,\hbar} = \hbar \parcoh_i  + s_i \/. \]
The variable $\hbar$ will arise geometrically from the $\C^*$-action by dilation on $T^*(G/B)$. The restriction of this action
to the zero-section $G/B \hookrightarrow T^*(G/B)$ is trivial, and $H^*_{T \times \C^*}(G/B) = H^*_T(G/B)[\hbar]$,
where $\hbar$ is interpreted as a generator of $H^2_{\C^*}(pt)$. 

We define next the $\K$-theoretic version of the DL operator.
Fix an indeterminate $y$; later, we will set $y = - e^{-\hbar}$. 
Define the {\bf $\K$-theoretic Demazure-Lusztig (DL) operators}
\begin{equation}\label{E:TiKdef} \mathcal{T}_i: = \lambda_y(T^*_{p_i}) \partial_i - \id; \quad \mathcal{T}_i^\vee: = \partial_i \lambda_y(T^*_{p_i}) - \id \/. \end{equation} 
The operators $\mathcal{T}_i$ and ${\mathcal{T}_i}^\vee$ are $\K_T(pt)[y]$-module endomorphisms of $\K_T(X)[y]$.

\begin{remark}\label{rmk:convos} The operator ${\mathcal{T}_i}^\vee$ was defined 
by Lusztig \cite[Eq.~(4.2)]{lusztig:eqK} in relation to affine Hecke algebras and 
equivariant $\K$ theory of flag varieties. The `dual' operator $\mathcal{T}_i$ 
arises naturally in the study of motivic Chern classes of Schubert cells 
\cite{AMSS:motivic}. In an algebraic form, 
$\mathcal{T}_i$ appeared recently in \cite{brubaker.bump.licata, lee.lenart.liu:whittaker,mihalcea2019whittaker}, 
in relation to Whittaker functions. The left versions of these operators are studied in \cite{MNS}.\end{remark}
As in the cohomological case, the Demazure and Demazure-Lusztig (DL) operators are adjoint to each other and $ \partial_i$ is self-adjoint: for any $a, b \in K_T(X)$, \[ \langle \mathcal{T}_i(a), b \rangle = \langle a , {\mathcal{T}_i}^\vee(b) \rangle 
\quad \text{and} \quad  \langle \partial_i(a), b \rangle = \langle a , \partial_i(b) \rangle \/. \] 
See \cite[Lemma 3.3]{AMSS:motivic} for a proof. 
\begin{prop}[\cite{lusztig:eqK}]\label{prop:hecke-relations} 
	The operators $\mathcal{T}_i$ and $\mathcal{T}_i^\vee$ satisfy the usual commutation and braid relations for the group $W$. For each simple root $\alpha_i \in \Delta$ the following quadratic formula holds:\[ (\mathcal{T}_i + \id) (\mathcal{T}_i + y) = ({\mathcal{T}_i}^\vee + \id) ({\mathcal{T}_i}^\vee + y) = 0 \/. \] 
\end{prop}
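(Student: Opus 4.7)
The plan is to prove the quadratic relation for $\mathcal{T}_i$ directly, obtain the dual version by adjunction, and handle the braid and commutation relations via the structure of the Demazure operator. For the quadratic relation, set $L := [\mathcal{L}_{\alpha_i}] = [T^*_{p_i}]$, so that $\mathcal{T}_i + \id = (1+yL)\partial_i$. Then $(\mathcal{T}_i + \id)(\mathcal{T}_i + y) = 0$ is equivalent to $(\mathcal{T}_i + \id)^2 = (1-y)(\mathcal{T}_i + \id)$. The key ingredient is the K-theoretic twisted Leibniz rule
\[
\partial_i(ab) \;=\; \partial_i(a)\,b + s_i(a)\bigl(\partial_i(b) - b\bigr),
\]
where $s_i$ denotes the right Weyl-group action on $\K_T(G/B)$; this follows by verification at fixed-point restrictions, using the pointwise formula $(\partial_i f)(w) = (f(w) - e^{-w\alpha_i}f(ws_i))/(1-e^{-w\alpha_i})$. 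Applying this rule to $\partial_i\bigl((1+yL)\partial_i(a)\bigr)$ with $b = \partial_i(a)$ satisfying $\partial_i(b) = b$ (thanks to $\partial_i^2 = \partial_i$), the twisted term vanishes and the inner expression collapses to $(1 + y\,\partial_i(L))\partial_i(a)$. Hence the quadratic reduces to the identity $\partial_i(L) = -1$. This in turn follows from a direct K-theoretic fiber computation: $\mathcal{L}_{\alpha_i}$ restricts to $\mathcal{O}_{\mathbb{P}^1}(-2)$ along each $p_i$-fiber and $\omega_{p_i} = \mathcal{L}_{\alpha_i}$, so relative Serre duality gives $(p_i)_*[\mathcal{L}_{\alpha_i}] = -[\mathcal{O}_{G/P_i}]$.

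The quadratic relation for $\mathcal{T}_i^\vee$ is then a formal consequence of the adjointness $\langle \mathcal{T}_i a, b\rangle = \langle a, \mathcal{T}_i^\vee b\rangle$ together with non-degeneracy of the pairing: scalars in $\K_T(pt)$ are self-adjoint, so any polynomial identity for $\mathcal{T}_i$ transposes verbatim to $\mathcal{T}_i^\vee$. For the commutation relation when $\alpha_i \perp \alpha_j$ (equivalently $s_is_j = s_js_i$): $L_j$ restricts trivially to each $p_i$-fiber and is therefore pulled back from $G/P_i$, so $\partial_i(L_j) = L_j = s_i(L_j)$. A second application of the Leibniz rule then gives $\partial_i \circ (1 + yL_j) = (1 + yL_j) \circ \partial_i$ as operators; combined with $\partial_i\partial_j = \partial_j\partial_i$, this yields the manifestly symmetric identity
\[
(\mathcal{T}_i + \id)(\mathcal{T}_j + \id) \;=\; (1 + yL_i)(1 + yL_j)\,\partial_i\partial_j,
\]
from which $\mathcal{T}_i\mathcal{T}_j = \mathcal{T}_j\mathcal{T}_i$ follows on expanding.

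The braid relation is the principal obstacle: when $\alpha_i, \alpha_j$ are non-orthogonal the mixed terms arising from $\partial_i \circ (1 + yL_j)$ no longer cancel, and a head-on application of the Leibniz rule through $m_{ij}$ consecutive factors becomes intricate. The most conceptual route is to identify $\mathcal{T}_i^\vee$ (and dually $\mathcal{T}_i$) with the standard generators of the affine Hecke algebra action on $\K_T(G/B)$ constructed by Lusztig in \cite{lusztig:eqK}; under this identification the braid relation is one of the defining Hecke algebra relations, and the quadratic relation proved above matches $(T_i - 1)(T_i + y) = 0$ in the standard Hecke normalization. Alternatively, one may reduce to the rank-two parabolic subgroup of $G$ generated by $s_i, s_j$ and verify the identity case-by-case in types $A_2$, $B_2$, $G_2$ by direct computation on the corresponding flag variety. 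We invoke the Hecke algebra identification to close the argument.
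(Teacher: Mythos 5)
Your argument is correct, and for two of the three parts it is genuinely more explicit than what the paper does: the paper simply attributes the whole proposition to Lusztig \cite{lusztig:eqK} and gives no proof, whereas you derive the quadratic relation for $\mathcal{T}_i$ from a twisted Leibniz rule $\partial_i(ab)=\partial_i(a)b+s_i(a)(\partial_i(b)-b)$ (readily verified on fixed-point restrictions, and consistent with the commutation identity in Lemma~\ref{lem:commutation}) together with the single key computation $\partial_i[\mathcal{L}_{\alpha_i}]=-1$, and you deduce the dual quadratic by adjunction and non-degeneracy of the pairing~\eqref{E:dualst}. The commutation case via descent of $\mathcal{L}_{\alpha_j}$ to $G/P_i$ when $\langle\alpha_j,\alpha_i^\vee\rangle=0$ is also clean and correct. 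For the braid relation you fall back to citing Lusztig, which is the same move the paper makes; note, however, that the phrasing ``under this identification the braid relation is one of the defining Hecke algebra relations'' risks sounding circular --- the substance of Lusztig's theorem is precisely that these geometric operators satisfy the braid relations, not that they are defined to. One small correction in an aside: your quadratic $(\mathcal{T}_i+\id)(\mathcal{T}_i+y)=0$ corresponds to the Hecke relation $(T_i+1)(T_i-q)=0$ under $q=-y$ (this is the convention used in \Cref{defin:nilhecke}), i.e.\ $(T_i+1)(T_i+y)=0$, not $(T_i-1)(T_i+y)=0$ as you wrote.
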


An immediate corollary of the quadratic formula is that for $y \neq 0$, the operators $\mathcal{T}_i$ and ${\mathcal{T}_i}^\vee$ are invertible. In fact,
\begin{equation}\label{E:invTi} \mathcal{T}_i^{-1} = - \frac{1}{y} \mathcal{T}_i - \frac{1+y}{y} \id 
\end{equation} 
as operators on $\K_T(X)[y,y^{-1}]$. The same formula holds when $\mathcal{T}_i$ is exchanged with $\mathcal{T}_i^\vee$. 

Consider next the localized equivariant K theory ring 
$$\K_T(G/B)_{\textit{loc}} := \K_T(G/B) \otimes_{\K_T(pt)} \Frac( \K_T(pt))$$ where $\Frac$ denotes the fraction field. The Weyl group elements $w\in W$ are in bijection with the torus fixed points $e_w \in G/B$. Let $\iota_w:=[\cO_{e_w}] \in K_T(G/B)_{loc}$ be the class of the structure sheaf of $e_w$. By the localization theorem, the classes $\iota_w$ form a basis for the localized equivariant K theory ring; we call this the {\em fixed point basis}. 

We need the following lemma, whose proof can be found e.g., in \cite[Lemma 3.7]{AMSS:motivic}.
\begin{lemma}\label{lem:actiononfixedpoint} The following formulas hold in $\K_T(G/B)_{loc}$:

(a) For any weight $\lambda$, $\mathcal{L}_\lambda \cdot \iota_w = e^{w \lambda} \iota_w$;

(b) For any simple root $\alpha_i$, \[ \partial_i (\iota_w) =\frac{1}{1-e^{w\alpha_i}}\iota_{w}+\frac{1}{1-e^{-w\alpha_i}}\iota_{ws_{i}} \/; \]

(c) The action of the operator $\calT_i$ on the fixed point basis is given by the following formula
\[\calT_i(\iota_{w})=-\frac{1+y}{1-e^{-w\alpha_i}}\iota_{w}+\frac{1+ye^{-w\alpha_i}}{1-e^{-w\alpha_i}}\iota_{ws_{i}}.\]

(d) The action of the adjoint operator $\calT_i^\vee$ is given by

\[\calT^\vee_i(\iota_{w})=-\frac{1+y}{1-e^{-w\alpha_i}}\iota_{w}+\frac{1+ye^{w\alpha_i}}{1-e^{-w\alpha_i}}\iota_{ws_{i}} \/.\]

(e) The action of the inverse operator $(\calT^\vee_i)^{-1}$is given by
\[(\calT^\vee_i)^{-1}(\iota_{w})=- \frac{1+y^{-1}}{1- e^{w\alpha_i}}\iota_{w}-\frac{y^{-1}+e^{w\alpha_i}}{1-e^{-w\alpha_i}}\iota_{ws_{\alpha_i}}\/.\]

\end{lemma}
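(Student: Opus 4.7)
The five parts build on one another, so I would handle them in the order (a), (b), (c), (d), (e); only (b) requires genuine geometric input and the rest are algebraic consequences.

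For (a), the fixed point $e_w = wB \in G/B$ is $T$-invariant, and the fiber of the homogeneous line bundle $\mathcal{L}_\lambda = G \times^B \C_\lambda$ over $e_w$ is the one-dimensional $T$-representation of weight $w\lambda$. Since $\iota_w = [\cO_{e_w}]$ is supported at $e_w$, the projection formula gives $\mathcal{L}_\lambda \cdot \iota_w = e^{w\lambda}\iota_w$.

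For (b), I would exploit the fact that $p_i \colon G/B \to G/P_i$ is a $\mathbb{P}^1$-bundle: the fiber over $wP_i$ contains exactly the two $T$-fixed points $e_w$ and $e_{ws_i}$, whose tangent weights in the fiber are $-w\alpha_i$ and $w\alpha_i$ respectively (this is where I would be most careful with signs, comparing with \eqref{E:fibrediag}). Applying equivariant $\K$-theoretic Atiyah--Bott localization to $(p_i)_*$, one gets
\[
(p_i)_*(\iota_w) \;=\; \frac{1}{1-e^{w\alpha_i}}\,\iota_{wP_i},
\]
and pulling back via $(p_i)^*(\iota_{wP_i}) = \iota_w + \iota_{ws_i}$ (since the fiber over $wP_i$ contains both fixed points and the pullback of a skyscraper on the base is the sum of skyscrapers on the fiber) would give the claimed formula, once I verify that the denominator $1-e^{-w\alpha_i}$ on the $\iota_{ws_i}$ term comes out of $(ws_i)\alpha_i = -w\alpha_i$.

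For (c) and (d), I use the identification $T^*_{p_i} = \mathcal{L}_{\alpha_i}$ and the multiplicativity of $\lambda_y$ on line bundles, which gives $\lambda_y(T^*_{p_i}) = 1 + y\mathcal{L}_{\alpha_i}$. Then, for (d), I combine (a) and (b) via the $\K_T(pt)$-linearity of $\partial_i$:
\[
\mathcal{T}_i^\vee(\iota_w) \;=\; (1+ye^{w\alpha_i})\,\partial_i(\iota_w) - \iota_w,
\]
and routine simplification of the $\iota_w$ coefficient,
\[
\frac{1+ye^{w\alpha_i}}{1-e^{w\alpha_i}} - 1 \;=\; \frac{e^{w\alpha_i}(1+y)}{1-e^{w\alpha_i}} \;=\; -\frac{1+y}{1-e^{-w\alpha_i}},
\]
gives the stated formula. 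Part (c) is analogous, applying the multiplication by $1+y\mathcal{L}_{\alpha_i}$ \emph{after} $\partial_i$, so the factor becomes $1+ye^{w\alpha_i}$ on the $\iota_w$ term and $1+ye^{-w\alpha_i}$ on the $\iota_{ws_i}$ term; the $\iota_w$ coefficient then simplifies exactly as above.

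Finally, for (e) I would not localize again but use the quadratic relation $(\mathcal{T}_i^\vee+\id)(\mathcal{T}_i^\vee+y) = 0$ of \Cref{prop:hecke-relations}, which yields
\[
(\mathcal{T}_i^\vee)^{-1} \;=\; -\tfrac{1}{y}\mathcal{T}_i^\vee - \tfrac{1+y}{y}\id,
\]
and substitute the formula from (d). The main obstacle is bookkeeping: keeping the sign conventions for tangent weights and the difference between $\partial_i \circ (\text{mult})$ and $(\text{mult})\circ \partial_i$ straight when passing between (c) and (d).
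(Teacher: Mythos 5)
Your handling of (a), (c), (d), and (e) is correct: (a) is the standard weight computation, $T^*_{p_i}=\mathcal{L}_{\alpha_i}$ plus the $\K_T(pt)$-linearity of $\partial_i$ (using $(1+ye^{\pm w\alpha_i})\in\K_T(pt)[y]$) gives (c)--(d) once (b) is known, and using the quadratic relation from \Cref{prop:hecke-relations} to write $(\mathcal{T}_i^\vee)^{-1}=-\tfrac1y\mathcal{T}_i^\vee-\tfrac{1+y}{y}\id$ and then substituting (d) is exactly right for (e); your simplifications check out. Note the paper does not prove the lemma itself but cites \cite[Lemma~3.7]{AMSS:motivic}, so I am judging the argument on its own.

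The genuine problem is in (b). Your two intermediate assertions, $(p_i)_*(\iota_w)=\tfrac{1}{1-e^{w\alpha_i}}\iota_{wP_i}$ and $(p_i)^*(\iota_{wP_i})=\iota_w+\iota_{ws_i}$, are both false, and their composition would produce $\tfrac{1}{1-e^{w\alpha_i}}(\iota_w+\iota_{ws_i})$, i.e.\ the \emph{same} denominator on both fixed-point terms, which contradicts the asymmetric formula you are trying to prove (you flag this worry yourself, correctly). The pushforward of a skyscraper along a proper map is again a skyscraper with no Euler-class factor: $(p_i)_*(\iota_w)=\iota_{wP_i}$, because $p_i$ restricted to $\{e_w\}$ is an isomorphism onto $\{wP_i\}$ and higher direct images vanish. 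All the nontriviality is in the pullback: $(p_i)^*(\iota_{wP_i})=[\cO_{p_i^{-1}(wP_i)}]$ is the class of the structure sheaf of the entire $\mathbb{P}^1$-fiber, and \emph{it} is what must be expanded in the fixed-point basis by localization. Since $[\cO_{\mathbb{P}^1_w}]|_{e_u}=1$ for $u\in\{w,ws_i\}$ while $\iota_u|_{e_u}=\lambda_{-1}(T^*_{e_u}X)$, the ratio contributes $1/\lambda_{-1}(T^*_{e_u}\mathbb{P}^1_w)$, i.e.\ $\tfrac{1}{1-e^{w\alpha_i}}$ at $e_w$ and $\tfrac{1}{1-e^{-w\alpha_i}}$ at $e_{ws_i}$ (cotangent weights $w\alpha_i$ and $ws_i\alpha_i=-w\alpha_i$). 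So the $\mathbb{P}^1$-bundle plus localization strategy is the right one, but both Euler factors live in the pullback step, with the two distinct denominators; once this is fixed the rest of your proof goes through.
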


We also record the action of several specializations of the Demazure-Lusztig operators, see \cite[Lemma 3.8]{AMSS:motivic}.
\begin{lemma}\label{lemma:yspec} 
(a) The specializations \[ (\calT_i)_{y=0} = \partial_i - \id \/; \quad (\mathcal{T}_i^\vee)_{y=0} = \partial_i-\id \/; \] Further, for any $w \in W$, the following hold:
\[ (\partial_i - \id) (\mathcal{I}_w^T )= \begin{cases} \mathcal{I}_{ws_i}^T & \textrm{ if } ws_i > w; \\ - \mathcal{I}_{w}^T & \textrm{ if } ws_i < w \/. \end{cases} \quad \partial_i (\mathcal{O}_w^T) = \begin{cases} \mathcal{O}_{ws_i}^T & \textrm{ if } ws_i > w; \\  \mathcal{O}_{w}^T & \textrm{ if } ws_i < w \/. \end{cases} \]

(b) Let $w \in W$. Then the specializations at $y=-1$ satisfy \[  (\mathcal{T}_i)_{y=-1} (\iota_w) = \iota_{ws_i} \/. \]
In other words, this specialization is compatible with the right Weyl group multiplication.\end{lemma}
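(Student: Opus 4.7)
For part (a), the two specialization identities $(\mathcal{T}_i)_{y=0} = (\mathcal{T}_i^\vee)_{y=0} = \partial_i - \id$ are immediate from the definition \eqref{E:TiKdef}, since $\lambda_y(E)|_{y=0} = [\mathcal{O}_X]$ for every vector bundle $E$. The stated formula for $\partial_i(\mathcal{O}_w^T)$ is literally \eqref{equ:BGGonstru}, so that assertion is just a restatement.

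The only substantive content of (a) is the action of $\partial_i - \id$ on the ideal-sheaf class $\mathcal{I}_w^T$. My plan is to exploit the duality \eqref{E:dualst}, under which $\{\mathcal{O}^{u,T}\}$ and $\{\mathcal{I}_u^T\}$ are dual bases, together with the self-adjointness of $\partial_i$ recorded in the text. Writing $(\partial_i - \id)(\mathcal{I}_w^T) = \sum_u c_u\, \mathcal{I}_u^T$ and pairing against $\mathcal{O}^{u,T}$, adjointness yields
\[
   c_u \;=\; \langle \mathcal{O}^{u,T},\,(\partial_i - \id)(\mathcal{I}_w^T)\rangle \;=\; \langle (\partial_i - \id)(\mathcal{O}^{u,T}),\, \mathcal{I}_w^T\rangle.
\]
This reduces matters to the opposite-cell analogue of \eqref{equ:BGGonstru}, namely
\[
   \partial_i(\mathcal{O}^{u,T}) = \begin{cases} \mathcal{O}^{us_i,T} & \text{if } us_i < u, \\ \mathcal{O}^{u,T} & \text{if } us_i > u. \end{cases}
\]
This I would prove exactly as \eqref{equ:BGGonstru} is proved, working from $\partial_i = p_i^*(p_i)_*$: a dimension count together with the fact that $Y(u)\cdot P_i = Y(u) \cup Y(us_i)$ (so $Y(u)$ is $P_i$-saturated precisely when $us_i > u$) shows that $p_i|_{Y(u)}$ is a $\mathbb{P}^1$-bundle onto its image in the $us_i>u$ case and birational onto a codimension-zero image in the $us_i<u$ case. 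Granting this, $(\partial_i-\id)(\mathcal{O}^{u,T})$ vanishes when $us_i > u$ and equals $\mathcal{O}^{us_i,T} - \mathcal{O}^{u,T}$ when $us_i < u$, so by \eqref{E:dualst} $c_u = \delta_{us_i,w} - \delta_{u,w}$ in that case. A case split on $ws_i > w$ versus $ws_i < w$ picks out the unique surviving index and produces the asserted $\mathcal{I}_{ws_i}^T$ or $-\mathcal{I}_w^T$ respectively.

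Part (b) is a direct substitution into \Cref{lem:actiononfixedpoint}(c): at $y=-1$, the coefficient of $\iota_w$ reduces to $-(1+y)/(1-e^{-w\alpha_i}) \mapsto 0$ and that of $\iota_{ws_i}$ becomes $(1-e^{-w\alpha_i})/(1-e^{-w\alpha_i}) = 1$, giving $(\mathcal{T}_i)_{y=-1}(\iota_w) = \iota_{ws_i}$; this matches the geometric right-multiplication action of $s_i$ on the $T$-fixed point set. The only real obstacle in the whole lemma is the opposite-cell formula for $\partial_i(\mathcal{O}^{u,T})$ invoked in the previous paragraph; every other step is a one-line specialization or a pairing computation.
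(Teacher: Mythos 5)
The paper does not actually supply a proof of this lemma; it simply cites \cite[Lemma~3.8]{AMSS:motivic}, so there is no in-text argument to compare your attempt against. Your proof is nevertheless correct and essentially self-contained given the facts already recorded in the paper.

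The two operator specializations and the $\partial_i(\cO_w^T)$ formula are, as you say, immediate from \eqref{E:TiKdef} and \eqref{equ:BGGonstru}, and part (b) is a direct plug-in of $y=-1$ into \Cref{lem:actiononfixedpoint}(c). The only real content is the ideal-sheaf formula, and your route -- dualize via \eqref{E:dualst}, invoke the self-adjointness of $\partial_i$ (stated after \Cref{rmk:convos} with a citation), and reduce to the Demazure formula for the \emph{opposite} structure sheaves $\cO^{u,T}$ -- is clean and correct; your case analysis $c_u = \delta_{us_i,w} - \delta_{u,w}$ when $us_i<u$ and $c_u=0$ otherwise does produce $\mathcal{I}_{ws_i}^T$ when $ws_i>w$ and $-\mathcal{I}_w^T$ when $ws_i<w$, as a quick check of which $u$ can contribute confirms. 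Two remarks on the supporting step. First, the opposite-cell Demazure formula $\partial_i\cO^{u,T}=\cO^{us_i,T}$ for $us_i<u$ (and $=\cO^{u,T}$ for $us_i>u$) can be obtained with less geometry than you sketch: the left $w_0$-action on $G/B$ sends $X(w)$ to $Y(w_0w)$ and commutes with $\partial_i=p_i^*(p_i)_*$ (which is built from right projections), so the formula follows directly from \eqref{equ:BGGonstru} by the substitution $w\mapsto w_0 u$ and the observation that $(w_0u)s_i > w_0u$ iff $us_i<u$. Second, your phrase ``birational onto a codimension-zero image'' in the $us_i<u$ case is imprecise -- the image $p_i(Y(u))\subseteq G/P_i$ has positive codimension; what you mean is that $p_i|_{Y(u)}$ is generically one-to-one onto its image, which (together with rational singularities of Schubert varieties) is what gives $(p_i)_*\cO_{Y(u)}=\cO_{p_i(Y(u))}$. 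Neither point is a gap; both are cosmetic. An alternative, more computational route is to expand $\mathcal{I}_w^T=\sum_{v\le w}(-1)^{\ell(w)-\ell(v)}\cO_v^T$ via \eqref{equ:idealstru}, apply $\partial_i-\id$ termwise using \eqref{equ:BGGonstru}, and track the telescoping; your adjointness argument avoids that bookkeeping and is the tidier choice.
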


\subsection{Leading terms of DL operators}
Next we utilize the grading 
induced by the equivariant Chern character in order to identify the 
`initial terms' of the Demazure-Lusztig operators
as certain operators on equivariant (co)homology related to the degenerate Hecke algebra. These
operators appeared as convolution operators in \cite{ginzburg:methods}, and determine
the Chern-Schwartz-MacPherson classes of Schubert cells \cite{aluffi.mihalcea:eqcsm}. 

As usual, $X=G/B$ but we consider the extended torus ${{A}}:= T \times \C^*$ where $\C^*$ acts trivially. 
(This is the restriction of the action of ${A}$ on $T^*(X)$, where $\C^*$ acts by dilation. The
CSM and motivic Chern classes considered later in this paper are naturally $\C^*$-equivariant; this justifies the use of 
the extended torus.) Assume that $y=-e^{-\hbar}$, i.e., more precisely,
\[ \ch_{\mathbb{C}^*}(y)=- e^{- \hbar}=  - 1+ \hbar + O(\hbar^2) \in \widehat{H}^*_{\C^*}(pt)\/. \]
We analyze the relation between the cohomological and $\K$-theoretic DL operators. 
\begin{prop}\label{prop:initial} Let $w \in W$ and consider the Grothendieck class $\cO_w^{{A}} \in \K_A(X)$ 
for the Schubert variety $X(w)$. 
Then \[ \ch_{A}( \mathcal{T}_i (\cO_w^{{A}}))  = \mathcal{T}_i^{coh,\hbar} [X(w)]_{{{A}}} + l.o.t. \] 
and 
\[ \ch_{A}( \mathcal{T}^\vee_i (\cO_w^{{A}}))  = \mathcal{T}_i^{coh,\vee,\hbar} [X(w)]_{{{A}}} + l.o.t., \]
where l.o.t. are terms in $\prod_{i < \ell(w)} H_{2i}^{{A}}(X)$. \end{prop}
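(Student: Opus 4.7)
The plan is to apply the Chern character $\ch_A$ to each operator, use that $\ch_A$ is a ring homomorphism on the smooth variety $X=G/B$ (via Poincar\'e duality), and extract the top Borel-Moore degree $2\ell(w)$ contribution. The key input is
\[
\ch_A\bigl(\lambda_y(T^*_{p_i})\bigr) = \ch_A(1 + y \mathcal{L}_{\alpha_i}) = 1 + y\,e^{\xi} = 1 - e^{\xi - \hbar}, \qquad \xi := c_1^A(\mathcal{L}_{\alpha_i}),
\]
using $T^*_{p_i}=\mathcal{L}_{\alpha_i}$ and the substitution $y=-e^{-\hbar}$. Taylor expansion gives $1-e^{\xi-\hbar}=(\hbar-\xi)+(\text{terms of cohomological degree}\ge 4)$; the crucial feature is the vanishing of the constant term, which forces this class to lower Borel-Moore degree by at least two when used as a cap-product multiplier. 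Combined with $\ch_A(\cO_w^A)=[X(w)]_A+\text{l.o.t.}$, this is the input for both identities.

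For $\calT_i = \lambda_y(T^*_{p_i})\,\partial_i - \id$, the operator $\partial_i$ lands on Schubert structure sheaves by \eqref{equ:BGGonstru}, so the ring-homomorphism property of $\ch_A$ yields
\[
\ch_A(\calT_i(\cO_w^A)) = (1-e^{\xi-\hbar})\,\ch_A(\partial_i(\cO_w^A)) - \ch_A(\cO_w^A).
\]
If $ws_i>w$, then $\partial_i(\cO_w^A)=\cO_{ws_i}^A$ and the BM degree $2\ell(w)$ component equals $(\hbar-\xi)[X(ws_i)]_A - [X(w)]_A$, which matches $\calT_i^{coh,\hbar}[X(w)]_A = \hbar\,\parcoh_i[X(w)]_A - s_i[X(w)]_A$ upon expanding $s_i=\id+\xi\parcoh_i$. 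If $ws_i<w$, then $\partial_i(\cO_w^A)=\cO_w^A$, and the first product lies entirely in BM degree $\le 2\ell(w)-2$; the leading contribution collapses to $-[X(w)]_A$, which coincides with $\calT_i^{coh,\hbar}[X(w)]_A$ since $\parcoh_i[X(w)]_A=0$ and $s_i[X(w)]_A=[X(w)]_A$ in this case.

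For $\calT_i^\vee = \partial_i\,\lambda_y(T^*_{p_i}) - \id$, the argument of $\partial_i$ is no longer a Schubert class, so I will invoke the equivariant GHRR formula \eqref{E:GHRR} for the smooth proper map $p_i$. Combined with $\parcoh_i = p_i^* (p_i)_*$ this rewrites
\[
\ch_A\bigl(\calT_i^\vee(\cO_w^A)\bigr) = \parcoh_i\bigl((1-e^{\xi-\hbar})\,\ch_A(\cO_w^A)\,Td(T_{p_i})\bigr) - \ch_A(\cO_w^A).
\]
The bracketed class has leading Borel-Moore term $(\hbar-\xi)[X(w)]_A$ in degree $2\ell(w)-2$. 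Applying $\parcoh_i$ (which raises BM degree by at most two) together with the Leibniz rule $\parcoh_i(\xi\alpha)=\parcoh_i(\xi)\,\alpha + s_i(\xi)\,\parcoh_i(\alpha)$ and the identities $s_i(\xi)=-\xi$ and $\parcoh_i(\xi)=-2$ (both consequences of $s_i=\id+\xi\parcoh_i$ applied to the weight $\alpha_i$) produces the BM degree $2\ell(w)$ contribution $(\hbar+\xi)\parcoh_i[X(w)]_A + 2[X(w)]_A$. Subtracting the leading $[X(w)]_A$ from $\ch_A(\cO_w^A)$ gives $(\hbar+\xi)\parcoh_i[X(w)]_A + [X(w)]_A$, which equals $\calT_i^{coh,\vee,\hbar}[X(w)]_A = \hbar\,\parcoh_i[X(w)]_A + s_i[X(w)]_A$ after again using $s_i=\id+\xi\parcoh_i$.

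The main technical challenge is the $\calT_i^\vee$ case: one must correctly apply the Leibniz rule for $\parcoh_i$ and track the shift $\parcoh_i(\xi)=-2$ that appears when moving $\parcoh_i$ past $\xi$, while simultaneously verifying that all terms of BM degree strictly below $2\ell(w)-2$ inside the bracket contribute only to l.o.t.~after $\parcoh_i$ is applied (using only that $\parcoh_i$ raises BM degree by exactly two when nonzero). Once this bookkeeping is settled, both identities emerge symmetrically from the leading-term analysis.
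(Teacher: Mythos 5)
Your proof is correct and follows essentially the same route as the paper's: for $\mathcal{T}_i$, expand $\ch_A(\lambda_y(T^*_{p_i}))=1-e^{\xi-\hbar}$ and use that $\partial_i$ sends structure sheaves to structure sheaves; for $\mathcal{T}_i^\vee$, apply GHRR in the form $\ch_A\partial_i(a)=\parcoh_i(\ch_A(a)\,Td(T_{p_i}))$ and extract the leading term. The only cosmetic difference is that you move $\parcoh_i$ past $\xi=c_1^A(\mathcal{L}_{\alpha_i})$ via the twisted Leibniz rule with $\parcoh_i(\xi)=-2$ and $s_i(\xi)=-\xi$, whereas the paper cites the equivalent commutation identity $c_1(\mathcal{L}_\lambda)\parcoh_i=\parcoh_i c_1(\mathcal{L}_{s_i\lambda})-\langle\lambda,\alpha_i^\vee\rangle$; these are the same fact in two guises, so the arguments coincide.
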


\begin{proof} Since $X=G/B$ is non-singular, the Chern character is a ring homomorphism, thus for any invariant subvariety $Z \subseteq X$ and any equivariant line bundle $\mathcal{L}$, 
\begin{equation}\label{eq:lb}  \ch_{{A}}([\cO_Z]_A \cdot \mathcal{L}) = [Z]_{{A}} + c_1^{{A}}(\mathcal{L}) \cdot [Z]_{{A}} + l.o.t. \end{equation} 
We take $Z= X(w)$, and we have two cases: either $w < ws_i$ or $w > ws_i$. If $w < ws_i$ then 
$\partial_i(\cO_w^{{A}}) = \cO_{ws_i}^{{A}}$. Using this, we obtain 
\[ \begin{split} \ch_{{A}}( \mathcal{T}_i (\cO_w^{{A}})) = & \ch_{{A}}( \cO_{ws_i}^{{A}} +y \mathcal{L}_{\alpha_i} \cdot \cO_{ws_i}^{{A}} - \cO_w^{{A}})   \\ 
= & \ch_{{A}}(\cO_{ws_i}^{{A}}) - e^{-\hbar} e^{c_1^{{A}}(\mathcal{L}_{\alpha_i})}
 \ch_{{A}}(\cO_{w s_i}^{{A}}) - [X(w)]_{{A}} + l.o.t. \\ = & \ch_A(\cO_{ws_i}^{{A}}) - (1 - \hbar)(1 + c_1^A(\mathcal{L}_{\alpha_i})) \ch_A(\cO_{ws_i}^{{A}}) - [X(w)]_{{A}} + l.o.t. \\ = & \hbar [X(ws_i)]_A -  c_1^A(\mathcal{L}_{\alpha_i}) [X(ws_i)]_A - [X(w)]_{{A}} + l.o.t. \\ = & \hbar \parcoh_i [X(w)]_A - (\id +c_1^{{A}}(\mathcal{L}_{{\alpha_i}}) \parcoh_i)[X(w)]_{{A}}+ l.o.t. \end{split} \]
where $l.o.t. \in \prod_{i < \ell(w)} H_{i}^{{A}}(X)$. By \cite[(3)]{aluffi.mihalcea:eqcsm} (which
uses a different sign convention) the last expression equals 
$$(\hbar \parcoh_i - s_i) [X(w)]_{{A}} + l.o.t. =\mathcal{T}_i^{coh,\hbar} [X(w)]_{{A}} + l.o.t.$$ and we are done in this case. If $w > ws_i$ then $\partial_i(\cO_w) = \cO_w$, $\partial_i[X(w)]_{{A}} = 0$ and $s_i[X(w)]_{{A}} = [X(w)]_{{A}}$ by \cite[(4)]{aluffi.mihalcea:eqcsm}. Then a similar, but simpler calculation, proves the first part of the proposition. 

For the second statement we start by observing that for $a \in \K_T(X)$, by the Grothendieck-Hirzebruch-Riemann-Roch (GHRR) 
\begin{equation}\label{E:BGG-ch} \ch_A \partial_i (a) = \ch_A p_i^* (p_i)_* (a) = p_i^* \ch_A((p_i)_*(a)) = \parcoh_i (\ch_A(a) Td(T_{p_i})) \/.\end{equation}
Then the second statement can be proved as follows. By the GHRR theorem,
\[ \begin{split} \ch_{{A}}( \mathcal{T}^\vee_i (\cO_w^{{A}})) = & \ch_{{A}}\partial_i( \cO_{w}^{A} +y \mathcal{L}_{\alpha_i} \cdot \cO_{w}^{{A}}) - \ch_A(\cO_w^{{A}})   \\ 
=&\parcoh_i\bigg(\ch_A (\cO_{w}^{A}+y \mathcal{L}_{\alpha_i} \cdot \cO_{w}^{{A}})Td(T_{p_i})\bigg)-\ch_A(\cO_w^{{A}})\\
= & \parcoh_i \bigg( \ch_A (\cO_w^A)Td(T_{p_i})(1- e^{-\hbar + c_1^A(\mathcal{L}_{\alpha_i})})\bigg) -\ch_A(\cO_w^{{A}}) 
\end{split} \]
Observe that $Td(T_{p_i})(1- e^{-\hbar + c_1^A(\mathcal{L}_{\alpha_i})}) = \hbar + c_1^A(\mathcal{L}_{-\alpha_i}) + $
(terms of degree $\ge 2$) in {\em cohomology}. Then the last expression equals
\[ \begin{split} \parcoh_i & \bigg((\hbar +c_1(\mathcal{L}_{-\alpha_i})[X(w)]_A\bigg)-[X(w)]_A+l.o.t\\
 = & \hbar \parcoh_i [X(w)]_{{A}} + (\id +c_1^{{A}}(\mathcal{L}_{{\alpha_i}}) \parcoh_i)[X(w)]_{{A}}+ l.o.t.\\
 =& (\hbar \parcoh_i + s_i) [X(w)]_{{A}} + l.o.t. \\
 =&\mathcal{T}_i^{coh,\vee,\hbar} [X(w)]_{{A}} + l.o.t. \end{split} \]
%
Here the third equation follows from the definition of $s_i$ from \eqref{E:sidef}, and the second from the general fact that for every weight $\lambda$, $c_1(\mathcal{L}_\lambda)\parcoh_i=\parcoh_ic_1(\mathcal{L}_{s_i\lambda})-\langle\lambda,\alpha_i^\vee\rangle$. 
This can be proved by e.g., adapting parts (a) and (b) of \Cref{lem:actiononfixedpoint} to the cohomological context. 
\end{proof}

\section{Equivariant motivic Chern  classes} 
\subsection{Preliminaries about motivic Chern classes}
We recall the definition of the motivic Chern classes, following 
\cite{brasselet.schurmann.yokura:hirzebruch}. For now let $X$ 
be a quasi-projective, complex algebraic variety, with an action of $T$. 
First we recall the definition of the (relative) motivic Grothendieck group 
${\K}_0^T(var/X)$ of varieties over $X$, mostly following Looijenga's notes 
\cite{looijenga:motivic}; see also Bittner \cite{bittner:universal}. For simplicity, 
we only consider the $T$-equivariant quasi-projective context (replacing the `goodness' assumption 
in~\cite{bittner:universal}), which is enough for all applications in this paper. 
The group ${\K}_0^T(var/X)$ is the quotient of the 
free abelian group generated by symbols $[f: Z \to X]$ where $Z$ is a quasi-projective 
$T$-variety and $f: Z \to X$ is a $T$-equivariant morphism modulo the additivity relations 
$$[f: Z \to X] = [f: U \to X] + [f:Z \setminus U \to X]$$ for $U \subseteq Z$ an open invariant subvariety. 
For any equivariant morphism $g:X \to Y$ of quasi-projective $T$-varieties there are 
well defined push-forwards $g_!: \K_0^T(var/X) \to \K_0^T(var/Y)$ (given by composition) 
and pull-backs $g^*:\K_0^T(var/Y) \to \K_0^T(var/X)$ (given by fiber product); see \cite[\S 6]{bittner:universal}.
There are also external products
$$\times: \K_0^T(var/X)\times \K_0^T(var/X') \to \K_0^T(var/X\times X'); 
\quad [f]\times [f']\mapsto [f\times f'],$$
which are  $\K_0^T(var/pt)$-bilinear and
commute with push-forward and pull-back.
If $X=pt$, then ${\K}_0^T(var/pt)$ is a ring with this external product, and the groups ${\K}_0^T(var/X)$ also acquire by the external product a module structure over $\K_0^T(var/pt)$ such that push-forward $g_!$ and pull-back $g^*$ are $\K_0^T(var/pt)$-linear.

\begin{remark} For any variety $X$, similar functors can be defined on the ring of constructible functions 
$\mathcal{F}(X)$, and the Grothendieck group $\K_0(var/X)$ may be regarded as a motivic version of 
$\mathcal{F}(X)$. In fact, there is a map $e: \K_0(var/X) \to \mathcal{F}(X)$ sending $[f:Y \to X] \mapsto f_!(\one_Y)$, where $f_!(\one_Y)$ is defined using compactly supported Euler characteristic of the fibers. The map $e$ is a group homomorphism, and if $X=pt$ then $e$ is a ring homomorphism. The constructions extend equivariantly, with $\mathcal{F}^T(X)\subseteq \mathcal{F}(X)$ the subgroup of $T$-invariant constructible functions.
 \end{remark}
The following theorem was proved in the non-equivariant case by Brasselet, Sch{\"u}rmann and Yokura \cite[Theorem 2.1]{brasselet.schurmann.yokura:hirzebruch}. Minor changes in the argument are needed in the equivariant case - see  also \cite{feher2018motivic,AMSS:motivic}. In the upcoming paper \cite{AMSS:MCcot} we will reprove the theorem below and relate equivariant motivic Chern classes to certain classes in the equivariant K-theory of the cotangent bundle as defined by Tanisaki \cite{tanisaki:hodge} with the help of equivariant mixed Hodge modules.

\begin{theorem}\label{thm:existence}\cite[Theorem 4.2]{AMSS:motivic} Let $X$ be a quasi-projective, non-singular, complex algebraic variety with an action of the torus $T$. There exists a unique natural transformation $MC_y: \K_0^T(var/X) \to \K_T(X)[y]$ satisfying the following properties:
\begin{enumerate} \item[(1)] It is functorial with respect to push-forwards via $T$-equivariant proper morphisms of non-singular, quasi-projective varieties. 

\item[(2)] It satisfies the normalization condition \[ MC_y[\id_X: X \to X] = \lambda_y(T^*X) = \sum y^i [\wedge^i T^*X]_T \in \K_T(X)[y] \/. \]
\end{enumerate}
The transformation $MC_y$ satisfies the following properties:
\begin{enumerate}

\item[(3)] It is determined by its image on classes $[f:Z \to X]=f_![\id_Z]$ where $Z$ is a non-singular,  irreducible,
quasi-projective algebraic variety and $f$ is a $T$-equivariant proper morphism.

\item[(4)] It satisfies a Verdier-Riemann-Roch (VRR) formula: for any smooth, $T$-equivariant morphism $\pi: X \to Y$ of quasi-projective and non-singular algebraic varieties, and any $[f: Z \to Y] \in \K_0^T(var/Y)$, the following holds:
\[ MC_y[\pi^* f:Z \times_Y X \to X] = \lambda_y(T^*_\pi) \cap \pi^* MC_y[f:Z \to Y]\/. \]

\end{enumerate} \end{theorem}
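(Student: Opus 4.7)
The plan is to mimic the Brasselet--Schürmann--Yokura argument in the equivariant setting, with the equivariant Bittner presentation as the central tool. The skeleton has four parts: (i) an equivariant Bittner-type presentation of $\K_0^T(var/X)$ by smooth projective generators and blow-up relations; (ii) uniqueness from equivariant resolution/compactification combined with the normalization and proper covariance; (iii) existence by prescribing the transformation on the generators and verifying the blow-up relation using the $\lambda_y$-class formula for a blow-up along a smooth center; (iv) the derived properties (3) and (4).

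For uniqueness, suppose two natural transformations $MC_y,\,MC_y'$ satisfy (1) and (2). By Sumihiro's theorem any $T$-equivariant quasi-projective variety admits an equivariant compactification, and by equivariant resolution of singularities (Bierstone--Milman, made $T$-equivariant) one may replace any $[f:Z\to X]$ with a class supported on smooth projective $Z$'s up to additivity, since the additivity relation together with resolution lets one write $[Z\to X] = [\Til Z \to X] - \sum [D_\alpha \to X] + \cdots$ where $\Til Z$ is smooth projective and the $D_\alpha$ are of strictly smaller dimension; induction on dimension reduces everything to $[f:Z\to X]$ with $Z$ smooth projective and $f$ proper $T$-equivariant. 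On such a generator, properties (1) and (2) force
\[
MC_y[f:Z\to X] \;=\; f_*\bigl(MC_y[\id_Z]\bigr) \;=\; f_*\lambda_y(T^*Z),
\]
so $MC_y = MC_y'$.

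For existence, the equivariant Bittner presentation asserts that $\K_0^T(var/X)$ is generated by classes $[f:Z\to X]$ with $Z$ smooth projective and $f$ proper equivariant, subject to the blow-up relation
\[
[f:Z\to X]-[f|_E:E\to X] \;=\; [f\circ\pi:\Til Z\to X]-[f|_Y \circ \pi|_Y: Y\to X],
\]
where $\pi:\Til Z\to Z$ is the equivariant blow-up of $Z$ along a smooth $T$-invariant center $Y\subseteq Z$ with exceptional divisor $E$. Define
\[
MC_y[f:Z\to X] \;:=\; f_*\lambda_y(T^*Z) \;\in\; \K_T(X)[y]
\]
on generators. Well-definedness reduces, via the projection formula applied to $f$, to the identity
\[
\lambda_y(T^*Z)-j_*\lambda_y(T^*Y) \;=\; \pi_*\bigl(\lambda_y(T^*\Til Z)\bigr)-\pi_*\bigl((j_E)_*\lambda_y(T^*Y)\bigr)
\]
in $\K_T(Z)[y]$, where $j:Y\hookrightarrow Z$ and $j_E:E\hookrightarrow \Til Z$. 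This is the $\lambda_y$-class blow-up formula, which in the non-equivariant case is established in BSY via the short exact sequences
\[
0\to \pi^* T^*Z \to T^*\Til Z \to \mathcal{Q}\to 0, \qquad 0\to j^*T^*Z\to T^*Y\oplus N^*_{Y/Z}\to 0,
\]
together with multiplicativity of $\lambda_y$ and a direct computation on the projective bundle $E\to Y$; all inputs are $T$-equivariant in our setting.

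The main obstacle is precisely this equivariant blow-up calculation; every other step is either formal or directly imported from the non-equivariant case. Once existence is established on generators, extension to all of $\K_0^T(var/X)$ is automatic. Property (3) is built into the construction. For property (4), given $\pi:X\to Y$ smooth equivariant and $[f:Z\to Y]$ presented by a smooth projective generator, one has $T^*(Z\times_Y X)=\pi_Z^*\,T^*Z \oplus f_X^*\,T^*_\pi$ where $\pi_Z,f_X$ are the two base-changed projections; multiplicativity of $\lambda_y$ and flat base change for proper pushforward then yield
\[
MC_y[\pi^* f] \;=\; (f_X)_*\lambda_y(T^*(Z\times_Y X)) \;=\; \lambda_y(T^*_\pi)\cdot \pi^*(f_*\lambda_y(T^*Z)) \;=\; \lambda_y(T^*_\pi)\cdot \pi^*MC_y[f],
\]
proving VRR.
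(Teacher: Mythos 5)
Your strategy — the equivariant Bittner presentation plus the $\lambda_y$-blow-up formula — is exactly the BSY argument that the paper references; the paper itself offers no proof, citing \cite[Theorem~4.2]{AMSS:motivic} and remarking only that "minor changes in the argument are needed in the equivariant case." So you are following the intended route. Two points in the reduction need repair, however.

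First, the reduction to classes $[f:Z\to X]$ with $Z$ \emph{smooth projective} is false when $X$ is merely quasi-projective. If $X$ is, say, affine and $Z$ is projective, then $f(Z)$ is finite, so such classes cannot possibly generate $[\id_X]$. The correct class of generators — and the one appearing in property (3) of the theorem you are proving — is $Z$ smooth quasi-projective with $f$ \emph{proper}. To get there, you should not compactify $Z$ in isolation (Sumihiro gives a projective $\bar Z$ but forgets $f$); instead you need a $T$-equivariant relative compactification of the morphism. Concretely: replace $Z$ by the closure of the graph $\Gamma_f\subseteq Z\times X$ inside $\bar Z\times X$, which is proper over $X$ via the second projection, then apply equivariant resolution to this closure and induct on dimension on the boundary pieces. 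This is the step where Sumihiro's theorem is actually needed (to embed $Z$ equivariantly in a projective $T$-space).

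Second, the blow-up verification, which you correctly identify as the crux, is stated with several errors that suggest it has not actually been checked. The relation you write has garbled restrictions ($f|_E$, $\pi|_Y$ do not typecheck: $E\subseteq\Til Z$, $Y\subseteq Z$); it should read
\[
[f\circ\pi:\Til Z\to X]-[f\circ\pi\circ j_E:E\to X]\;=\;[f:Z\to X]-[f\circ j:Y\to X].
\]
Your second ``short exact sequence'' $0\to j^*T^*Z\to T^*Y\oplus N^*_{Y/Z}\to 0$ is not a three-term sequence; the intended one is the conormal sequence $0\to N^*_{Y/Z}\to j^*T^*Z\to T^*Y\to 0$ (which does give $[j^*T^*Z]=[T^*Y]+[N^*_{Y/Z}]$ in $\K_T$, which is all you use). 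And in the displayed well-definedness identity, $\pi_*((j_E)_*\lambda_y(T^*Y))$ should be $\pi_*((j_E)_*\lambda_y(T^*E))$. None of these is fatal — they are exactly the inputs BSY use, and they do equivariantize — but as written the computation is not carried out, and this is the only part of the argument that is not formal.

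The verification of (4) via the cotangent sequence for $\pi_Z$ and equivariant flat base change is correct as you describe it.
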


If one forgets the $T$-action, then the equivariant motivic Chern class above recovers the non-equivariant motivic Chern class from
\cite{brasselet.schurmann.yokura:hirzebruch} (either by its construction, or by the properties (1)-(3) from Theorem~\ref{thm:existence}
and the corresponding results from \cite{brasselet.schurmann.yokura:hirzebruch}). 

\begin{remark} \Cref{thm:existence} and its proof work more generally for a possibly singular, quasi-projective $T$-equivariant base variety
 $X$, if one works with the Grothendieck group of $T$-equivariant coherent $\cO_X$-sheaves in the target
(cf.~\cite[Remark 2.2]{feher2018motivic}), i.e.,
$$ MC_y: \K_0^T(var/X) \to \K_0(\mathfrak{coh}^T(\cO_X))[y] \:.$$
Moreover, $MC_y$ commutes with exterior products:
\begin{equation}\label{MC-product}
MC_y[f\times f': Z\times Z'\to X\times X']=MC_y[f: Z\to X] \boxtimes MC_y[f': Z'\to X'] \:.
\end{equation}
This follows as in the non-equivariant context~\cite[Corollary~2.1]{brasselet.schurmann.yokura:hirzebruch} from part (3) of Theorem~\ref{thm:existence} 
and the multiplicativity of the equivariant $\lambda_y$-class for smooth and quasi-projective $T$-varieties $X,X'$:
$$\lambda_y(T^*(X\times X')) = \lambda_y(T^*X)\boxtimes \lambda_y(T^*X') \in \K_T(X\times X')[y]\:.$$
\end{remark}

\begin{remark} \label{rem:rigid}
The equivariant $\chi_y$-genus of a $T$-variety $Z$ is by definition
$$\chi_y(Z):=MC_y([Z\to pt]) \in \K_T(pt)[y]\:.$$
By \emph{rigidity} of the $\chi_y$-genus (see~\cite[\S 2.5]{feher2018motivic} and 
\cite[Theorem~ 7.2]{weber1}), it contains no information about the action of $T$; it is equal to 
the non-equivariant $\chi_y$-genus under the embedding 
$\mathbb{Z}[y]\to \K^T(pt)[y]$.
\end{remark}

In what follows, the variety $X$ will usually be understood from the context. 
If $Y \subseteq X$ is a $T$-invariant subvariety, not necessarily closed, 
denote by \[ MC_y(Y) := MC_y[Y \hookrightarrow X] \/. \] If $i: Y \subseteq X$ 
is closed nonsingular subvariety and $Y' \subseteq Y$, then by functoriality 
$$MC_y[Y' \hookrightarrow X] = i_* MC_y [Y' \hookrightarrow Y]$$
 (K-theoretic push-forward). For instance if $Y'=Y$  then 
 $$MC_y[i:Y \to X]= i_*(\lambda_y(T^*Y) \otimes [\cO_Y])$$ as an element in 
 $\K_T(X)$. We will often abuse notation and suppress the push-forward notation. 
 Similarly for the other transformations discussed in later sections.

\subsection{Motivic Chern classes of Schubert cells}
Assume now that $X=G/B$. The following theorem, proved in \cite[Corollary 5.2 and Theorem 6.2]{AMSS:motivic}, 
allows us to calculate recursively the motivic Chern 
classes of Schubert cells. 
\begin{theorem}\label{thm:MC+dual} Let $w \in W$. Then the motivic Chern class $MC_y(X(w)^\circ)$ is given by \[ MC_y(X(w)^\circ) =  \mathcal{T}_{w^{-1}}(\cO_{\id}^T) \/.\] 
\end{theorem}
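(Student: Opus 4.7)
The plan is to proceed by induction on $\ell(w)$, with the main work being a one-step recursion of the form
\[
\mathcal{T}_i \cdot MC_y(X(u)^\circ) = MC_y(X(us_i)^\circ)
\qquad\text{whenever } us_i > u.
\]
The base case $w=\id$ is immediate: $X(\id)^\circ = \{eB\}$ is a $T$-fixed point, and functoriality of $MC_y$ under the closed embedding $\{eB\}\hookrightarrow G/B$ combined with the normalization axiom at a point yields $MC_y(X(\id)^\circ) = \cO_{\id}^T = \mathcal{T}_{\id}(\cO_{\id}^T)$. For the inductive step, write $w=us_i$ with $us_i>u$; then $w^{-1}=s_i u^{-1}$ is a length-additive factorization, so by the braid relations of \Cref{prop:hecke-relations} we have $\mathcal{T}_{w^{-1}} = \mathcal{T}_i\,\mathcal{T}_{u^{-1}}$. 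Hence the inductive hypothesis $\mathcal{T}_{u^{-1}}(\cO_{\id}^T)=MC_y(X(u)^\circ)$ reduces everything to the recursion above.

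To prove the recursion, I would exploit the smooth proper $\mathbb{P}^1$-bundle $p_i:G/B\to G/P_i$. Two geometric facts drive the argument: first, because $us_i>u$ means $u\in W^{P_i}$, the restriction $p_i|_{X(u)^\circ}\colon X(u)^\circ\to X(uW_{P_i})^\circ$ is an isomorphism; second, the preimage $p_i^{-1}(X(uW_{P_i})^\circ)$ is a $\mathbb{P}^1$-bundle over $X(uW_{P_i})^\circ$ that decomposes as the disjoint union of locally closed Schubert cells
\[
p_i^{-1}(X(uW_{P_i})^\circ)= X(u)^\circ \sqcup X(us_i)^\circ.
\]
Both are standard consequences of the Bruhat decomposition and the identification $B u P_i = B u B\cup B u s_i B$ for $u\in W^{P_i}$.

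With these in hand I combine three formal properties of $MC_y$. By functoriality under the proper pushforward $p_i$ together with the isomorphism $X(u)^\circ\simeq X(uW_{P_i})^\circ$,
\[
(p_i)_*\, MC_y(X(u)^\circ) = MC_y(X(uW_{P_i})^\circ).
\]
By the Verdier–Riemann–Roch formula of \Cref{thm:existence}(4) applied to the smooth morphism $p_i$ and the motivic class $[X(uW_{P_i})^\circ\hookrightarrow G/P_i]$,
\[
\lambda_y(T^*_{p_i})\cdot p_i^*\,MC_y(X(uW_{P_i})^\circ) = MC_y\bigl[p_i^{-1}(X(uW_{P_i})^\circ)\hookrightarrow G/B\bigr].
\]
Finally, additivity in $\K_0^T(var/G/B)$ gives
\[
MC_y\bigl[p_i^{-1}(X(uW_{P_i})^\circ)\hookrightarrow G/B\bigr] = MC_y(X(u)^\circ) + MC_y(X(us_i)^\circ).
\]
Putting these three equalities together and using $\partial_i = p_i^*(p_i)_*$ yields
\[
\lambda_y(T^*_{p_i})\,\partial_i\, MC_y(X(u)^\circ) = MC_y(X(u)^\circ) + MC_y(X(us_i)^\circ),
\]
and subtracting $MC_y(X(u)^\circ)$ on both sides produces exactly $\mathcal{T}_i\,MC_y(X(u)^\circ)=MC_y(X(us_i)^\circ)$, as desired.

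The one genuinely delicate point is verifying that the three functorial properties of $MC_y$ can be applied in this combination; in particular, VRR requires smoothness of $p_i$ (true), proper pushforward requires properness (true), and one must check that the isomorphism $p_i|_{X(u)^\circ}\simeq X(uW_{P_i})^\circ$ really does identify the classes in $\K_0^T(var/G/P_i)$. Everything else — the geometry of $p_i^{-1}$ of a Schubert cell and the braid-relation well-definedness of $\mathcal{T}_{w^{-1}}$ — is routine. I expect no unexpected obstacles; this is a standard BGG-style geometric recursion adapted to the motivic setting.
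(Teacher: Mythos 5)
Your proof is correct, and it is the standard Demazure--Lusztig recursion argument for this theorem (the paper itself defers the proof to \cite{AMSS:motivic}, where essentially this same three-ingredient argument — proper pushforward along $p_i$ via the Bruhat-cell isomorphism, the VRR formula for the smooth $\mathbb{P}^1$-fibration $p_i$, and additivity applied to the cell decomposition $p_i^{-1}(X(uW_{P_i})^\circ)=X(u)^\circ\sqcup X(us_i)^\circ$ — is carried out). The only point worth making explicit is that the identity $BuP_i/B = BuB/B \sqcup Bus_iB/B$ uses $BuBs_iB = Bus_iB$, which holds precisely because $\ell(us_i)=\ell(u)+1$; you flagged this correctly by restricting to $u\in W^{P_i}$.
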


Following \cite[Remark~6.4]{AMSS:motivic}, we introduce an operator which will yield the (Poincar{\'e}) duals of motivic Chern classes.
For each simple root $\alpha_i \in \Delta$, define
\begin{equation}\label{equ:defofL} \operL_i = \partial_i + y (\partial_i \mathcal{L}_{\alpha_i} + \id) = - y (\mathcal{T}_i^\vee)^{-1}
= \mathcal{T}_i^\vee + (1+y)\id\/.
\end{equation} 
Since $\mathcal{T}_i^\vee$ satisfy the usual braid and commutativity relations, so do these operators. 
Hence, $\operL_w$ is well defined for any $w\in W$. Define the following elements 
in the equivariant $\K$ theory ring:
\[ \widetilde{MC}_y(Y(w)^\circ)):=\operL_{w^{-1}w_0}(\cO^{w_0, T});\quad  \widetilde{MC}_y(X(w)^\circ)=\operL_{w^{-1}}(\cO_{\id}^T)\/. \]
By definition $\widetilde{MC}_y(Y(w)^\circ))$ and $\widetilde{MC}_y(X(w)^\circ)$ are elements in $\K_T(X)[y]$.\begin{footnote}
{If $w_0^L$ denotes the left Weyl group action by $w_0$, as in \cite{MNS}, then 
$w_0^L . \widetilde{MC}_y(X(w)^\circ) = \widetilde{MC}_y(Y(w_0w)^\circ)$. 
This generalizes the more familiar formula from Schubert calculus: $w_0^L.[X(w)]_T = [Y(w_0w)]_T$.}\end{footnote}
\begin{theorem}\cite[Theorem~6.2]{AMSS:motivic}\label{thm:MCtilde}  The classes $\widetilde{MC}_y(Y(w)^\circ))$ are orthogonal to the motivic Chern 
Chern classes: for any $u, v \in W$, 
\begin{equation}\label{E:duality} \langle MC_y(X(u)^\circ), \widetilde{MC}_y(Y(v)^\circ) \rangle = \delta_{u,v} \prod_{\alpha > 0} (1+ y e^{-\alpha}) \/. \end{equation}
\end{theorem}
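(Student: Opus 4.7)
The plan is to prove the identity by induction on $k := \ell(v^{-1}w_0) = \ell(w_0)-\ell(v)$, using the adjointness of the Demazure-Lusztig operators together with a recursion for $\widetilde{MC}_y(Y(v)^\circ)$. Throughout, set $\eta := \prod_{\alpha>0}(1+ye^{-\alpha})$.

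For the base case $k=0$ (so $v=w_0$), $\widetilde{MC}_y(Y(w_0)^\circ) = \operL_{\id}(\mathcal{O}^{w_0,T}) = \mathcal{O}^{w_0,T}$. Because $Y(w_0) = B^-w_0B/B = \{e_{w_0}\}$ is a single torus fixed point, this class coincides with $\iota_{w_0}$, and the pairing reduces to a localization:
\[ \langle MC_y(X(u)^\circ), \iota_{w_0}\rangle = MC_y(X(u)^\circ)\big|_{e_{w_0}}. \]
This vanishes unless $e_{w_0} \in X(u)$, which forces $u = w_0$; and for $u = w_0$ the cell $X(w_0)^\circ$ is open in $X$ near $e_{w_0}$, so $MC_y(X(w_0)^\circ)|_{e_{w_0}} = \lambda_y(T^*_{e_{w_0}}X) = \prod_{\alpha>0}(1+ye^{-\alpha}) = \eta$, using that the weights of $T^*_{e_{w_0}}X$ are $-R^+$.

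For the inductive step ($k>0$), pick $s_i$ with $vs_i > v$, and set $v':=vs_i$. A reduced expression for $v^{-1}w_0$ starting with $s_i$, obtained from $v^{-1}w_0 = s_i \cdot (v')^{-1}w_0$ with $\ell(v^{-1}w_0) = \ell((v')^{-1}w_0)+1$, yields the recursion $\widetilde{MC}_y(Y(v)^\circ) = \operL_i\,\widetilde{MC}_y(Y(v')^\circ)$. Since $\mathcal{T}_i$ is adjoint to $\mathcal{T}_i^\vee$, the adjoint of $\operL_i = \mathcal{T}_i^\vee + (1+y)\id$ is $A_i := \mathcal{T}_i + (1+y)\id = -y\,\mathcal{T}_i^{-1}$, where the second equality follows from the quadratic relation of \Cref{prop:hecke-relations}. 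Hence
\[
\langle MC_y(X(u)^\circ),\widetilde{MC}_y(Y(v)^\circ)\rangle = \langle A_i\,MC_y(X(u)^\circ),\widetilde{MC}_y(Y(v')^\circ)\rangle,
\]
and \Cref{thm:MC+dual} combined with the quadratic relation gives
\[
A_i\,MC_y(X(u)^\circ) = \begin{cases} MC_y(X(us_i)^\circ) + (1+y)\,MC_y(X(u)^\circ) & \text{if } us_i>u,\\ -y\,MC_y(X(us_i)^\circ) & \text{if } us_i<u. \end{cases}
\]
Applying the inductive hypothesis to $v'$ then expresses the pairing as a signed sum of Kronecker deltas multiplied by $\eta$.

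A short Bruhat-order analysis using $vs_i>v$ and the case split $us_i\gtrless u$ shows that the only surviving contribution comes from the subcase $us_i>u$ with $u=v$: there $us_i = vs_i = v'$ and the pairing evaluates to $\eta\cdot\delta_{v',v'} = \eta$. Every other configuration is excluded by a length contradiction — e.g.\ in the branch $us_i>u$ the possibility $u=v'$ would force $us_i = v < u$, contradicting $us_i>u$, while in the branch $us_i<u$ the possibility $us_i=v'$ would force $u=v$ and hence $us_i = v' > v = u$, again a contradiction. The main potential pitfall is precisely this bookkeeping: cancellation of the $(1+y)$- and $(-y)$-weighted ``spurious'' terms is not automatic and depends on the direction $vs_i>v$ of the recursion, which is what excludes exactly those $u$ that would otherwise contribute off-diagonal terms.
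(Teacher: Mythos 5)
The paper does not give a proof of this statement; it is imported verbatim from \cite[Theorem~6.2]{AMSS:motivic}. Your argument is correct and is essentially the adjointness-plus-induction proof one would expect (and which matches the spirit of the cited paper): you identify the base case $v=w_0$ via localization at the fixed point $e_{w_0}$, use the recursion $\widetilde{MC}_y(Y(v)^\circ)=\operL_i\widetilde{MC}_y(Y(v')^\circ)$ with $v'=vs_i>v$, transfer $\operL_i$ across the pairing to its adjoint $A_i=\mathcal{T}_i+(1+y)\id$, compute $A_i\,MC_y(X(u)^\circ)$ from \Cref{thm:MC+dual} together with the quadratic relation of \Cref{prop:hecke-relations}, and check the Bruhat bookkeeping. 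I verified each of these steps, including the base-case identity $MC_y(X(w_0)^\circ)|_{e_{w_0}}=\lambda_y(T^*_{e_{w_0}}X)=\prod_{\alpha>0}(1+ye^{-\alpha})$ (the weights of $T^*_{e_{w_0}}X$ being $w_0R^+=-R^+$), and the two length-contradiction arguments in the inductive step. One small terminological quibble: you describe the off-diagonal terms as requiring ``cancellation,'' but in fact there is no cancellation between the $(1+y)$- and $(-y)$-weighted terms — each individually vanishes, by the Bruhat contradictions you give, precisely because the recursion direction is chosen on the $v$-side ($vs_i>v$). With that said, the proof is complete and correct.
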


Note that $\prod_{\alpha > 0} (1+ y e^{-\alpha})=\lambda_y(T^*_{w_0}X)$.

\begin{remark}\label{rem:dualandnormalized} Another family of classes dual to motivic Chern
classes is given by a certain Serre dual variant of Segre motivic classes. 
Combining \cite[Theorem~8.11]{AMSS:motivic} and \cite[Theorem~7.1]{MNS}) (see also 
\cite{feher2018motivic}), one obtains the remarkable equality:
\begin{equation}
\frac{\widetilde{MC}_y(Y(v)^\circ)}{\prod_{\alpha > 0} (1+ y e^{-\alpha})}= (-y)^{\dim G/B-\ell(v)}\frac{\calD (MC_y(Y(v)^\circ))}{\lambda_y(T^*X)}
\in K_T(X)[y]_S[y^{-1}] \/.
\end{equation}
Here $\calD$ denotes the (Grothendieck-Serre) duality, extended to the parameter $y$ via $y^n\mapsto y^{-n}$, and
$\K_T(X)[y]_S$ is appropriately localized so that 
$\lambda_y(T^*X)$ is invertible 
(see~\cite[Remark 8.9]{AMSS:motivic}). We note that one may define these 
`Serre-Segre' motivic classes for any partial flag manifold $G/P$, and they are always dual to motivic Chern 
classes of Schubert cells; see \cite[Theorem~7.2]{MNS}.
Geometrically, the duality above is expected to arise from a transversality formula, generalizing to 
$\K$ theory the results from \cite{schurmann:transversality}. In cohomology, this was explained in \cite[\S 7]{AMSS:shadows}.
\end{remark}
Consider the expansions of the equivariant motivic Chern classes, 
\begin{equation}\label{E:schub} MC_y (X(w)^\circ) = \sum_{ u \le w} c_{u,w}(y;e^t) \cO_u^{T}\/.
\end{equation} 
The equivariant K Chevalley formula, used to multiply a Schubert class by the line bundle~$\mathcal{L}_{\alpha_i}$ \cite{lenart2007affine, pittie1999pieri},
and \Cref{thm:MC+dual}, give a recursive procedure to calculate the the motivic Chern classes of Schubert cells. The recursion also implies that 
coefficients $c_{u,w}(y;e^t)$ are polynomials $c_{u,w}(y,e^t) \in \Z[e^{\pm \alpha_1}, \ldots , e^{ \pm \alpha_{r}}][y]$ in the characters associated to the simple roots. We provide next a few calculations for the motivic Chern classes of the flag manifolds~$\Pbb^1$ and $\Fl(3)$. 

\begin{example}[Equivariant motivic Chern classes for $\Pbb^1$]\label{ex:P1} 
The equivariant motivic Chern classes for $\Pbb^1$ are : 
\[ MC_y(X(\id)) = \cO_{\id}^{T}; \quad MC_y(X(s)^\circ) = (1+ e^{-\alpha_1} y)\cO_{\Pbb^1}^{T} - (1 + (1+ e^{-\alpha_1})y) \cO_{\id}^{T} \/. \]

\end{example}


\begin{example}[The Schubert cell $X(s_1 s_2)^\circ$]\label{ex:FL3} The equivariant motivic Chern classes for larger flag manifolds are much more complicated. For instance, the equivariant motivic Chern class of the Schubert cell $X(s_1 s_2)^\circ \subseteq \mathrm{Fl}(3)$ is \[ \begin{split} MC_y(X(s_1s_2)^\circ) = & (1+ e^{-\alpha_1}y)(1+ e^{-(\alpha_1 + \alpha_2)}y) \cO_{s_1 s_2}^{T} - \\ & (1+ e^{-\alpha_1}y)(1+(1+ e^{-(\alpha_1+ \alpha_2)})y) \cO_{s_1}^{T}  - \\ & (1 + (1 + e^{-\alpha_1})(1+e^{-\alpha_2})y + e^{-\alpha_2}(1+ e^{-\alpha_1}+ e^{-2\alpha_1})y^2)\cO_{s_2}^{T} + \\ & (1 + (2+ e^{-\alpha_1} + e^{-\alpha_2} + e^{-(\alpha_1+\alpha_2)})y) \cO_{\id}^{T}+ \\ & (1+ e^{-\alpha_1} + e^{-\alpha_2} + e^{-(\alpha_1+\alpha_2)} + e^{-(2 \alpha_1 + \alpha_2)})y^2\cO_{\id}^{T} \/. \end{split} \]
\end{example}
The expressions above encode a remarkable amount of information. 
For instance, a simple verification in \Cref{ex:FL3} shows that the expression for the {\em sum} of 
the coefficients simplifies dramatically and equals $y^2$, reflecting the geometric fact 
that we deal with a cell of dimension $2$. We will prove this and more 
in~\Cref{prop:sum-st} and \Cref{prop:specialize} below. 
For now, we 
also provide some examples of non-equivariant motivic Chern classes. These are obtained 
from the equivariant ones by making the substitution $e^\lambda \mapsto 1$ 
for each weight $\lambda$. 
  
\begin{example}\label{ex:motcells} The following are the non-equivariant motivic Chern classes of Schubert cells in $\Fl(3)$: \[ \begin{split} MC_y(X(\id)) = & \cO_{\id}; \\ MC_y(X(s_1)^\circ) = & (1+y) \cO_{s_1} - (1+2y) \cO_{\id}\/; \\ MC_y(X(s_2)^\circ) = & (1+y) \cO_{s_2} - (1+2y) \cO_{\id} \/; \\ MC_y(X(s_1 s_2)^\circ) = & (1+y)^2 \cO_{s_1 s_2} -  (1+y) (1+2y) \cO_{s_1} - (1+y)(1+3y) \cO_{s_2} + (5y^2+ 5y+1) \cO_{\id} \/;  \\ MC_y(X(s_2 s_1)^\circ) = & (1+y)^2 \cO_{s_2 s_1} -  (1+y) (1+2y) \cO_{s_2} - (1+y)(1+3y) \cO_{s_1} + (5y^2 + 5y+1) \cO_{\id} \/; \\ MC_y(X(w_0)^\circ) = & (1+y)^3 \cO_{w_0} -  (1+y)^2 (1+2y) (\cO_{s_1s_2} + \cO_{s_2 s_1}) + \\ & (1+y)(5 y^2 + 4 y +1)( \cO_{s_1 } + \cO_{s_2})    - (8y^3+11y^2+ 5y+1) \cO_{\id} \/. \end{split} \] 
The classes $\widetilde{MC}_y(Y(w)^\circ)$ for the Schubert cells in $\Fl(3)$ are: 
\[ \begin{split} \widetilde{MC}_y(Y(w_0)) = & \cO^{w_0}; \\ \widetilde{MC}_y(Y(s_1 s_2)^\circ) = & (1+y) \cO_{s_1s_2} + y \cO^{w_0}\/; \\ \widetilde{MC}_y(Y(s_2 s_1)^\circ) = & (1+y) \cO_{s_2 s_1} + y \cO^{w_0} \/; \\ \widetilde{MC}_y(Y(s_1)^\circ) = & (1+y)^2 \cO^{s_1} +  y(1+y) \cO^{s_1 s_2} + 2y (1+y)\cO_{s_2 s_1} + y^2 \cO_{w_0} \/;  \\ \widetilde{MC}_y(Y(s_2)^\circ) = & (1+y)^2 \cO^{s_2} +  2y(1+y) \cO^{s_1 s_2} + y (1+y)\cO^{s_2 s_1} + y^2 \cO^{w_0} \/; \\ \widetilde{MC}_y(Y(\id)^\circ) = & (1+y)^3 \cO^{\id} + y (1+y)^2 (\cO^{s_1} + \cO^{s_2}) + \\ & 2 y^2 (1+y)( \cO^{s_1 s_2} + \cO^{s_2 s_1})  + y^3 \cO^{w_0} \/. \end{split} \] 

An algebra check together with fact that $\langle \cO_u, \cO^v \rangle = 1$ if and only if $u \ge v$,  shows that \[ \langle MC_y(X(u)^\circ), \widetilde{MC}_y(Y(v)^\circ) \rangle = (1+y)^3 \delta_{u,v} \] as expected from \Cref{thm:MCtilde}. 

These examples suggest that the motivic Chern classes and their duals satisfy a certain positivity property. This is discussed in section \ref{ss:pos} below. At this time we note that the positivity of the dual classes does {\em not} extend beyond small cases. For instance the coefficient of $\cO^{s_3 s_1 s_2}$ in the expansion of $\widetilde{MC}_y(Y(\id)^\circ) \in K(\Fl(4))$ equals $y^2 (4y-1)(1+y)^3$.
\end{example}
The `top' Schubert coefficient is calculated in the following result. For the convenience of the 
reader, we provide a proof.
\begin{lemma}\label{lemma:cww} The coefficient $c_{w,w}(y;e^t)$ is given by 
\[ c_{w,w}(y,e^t) = \prod_{\alpha > 0, w\alpha< 0} (1+ e^{w \alpha} y) = \lambda_y(T^*_{e_w} X(w)) \/. \]
\end{lemma}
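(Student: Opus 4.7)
The plan is to extract $c_{w,w}(y;e^t)$ by restricting the expansion to the torus-fixed point $e_w$ and exploiting the smoothness of $X(w)$ at $e_w$.

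First, I would note that in $MC_y(X(w)^\circ) = \sum_{u \le w} c_{u,w}(y;e^t)\,\cO_u^T$, the class $\cO_u^T = [\cO_{X(u)}]$ is supported on $X(u)$; consequently its localization at $e_w$ vanishes unless $e_w \in X(u)$, which happens precisely when $w \le u$. Combined with $u \le w$, only the $u = w$ summand survives, so
\[
MC_y(X(w)^\circ)\big|_{e_w} \;=\; c_{w,w}(y;e^t)\cdot \cO_w^T\big|_{e_w} \quad \text{in } \K_T(pt)[y].
\]
Both sides can now be computed directly.

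For $MC_y(X(w)^\circ)|_{e_w}$, I would pass to the $T$-invariant open set $U := X\setminus \partial X(w)$. Inside $U$, the cell $X(w)^\circ = X(w)\cap U$ is smooth, closed, and contains $e_w$. The VRR property \Cref{thm:existence}(4) applied to the open immersion $U\hookrightarrow X$ (whose relative cotangent bundle is trivial) gives $MC_y(X(w)^\circ)|_U = MC_y[X(w)^\circ \hookrightarrow U]$. The latter class is computed by the functoriality \Cref{thm:existence}(1) for the closed immersion $i: X(w)^\circ \hookrightarrow U$ together with the normalization condition, yielding $i_*\lambda_y(T^*X(w)^\circ)$. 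The standard Koszul formula for restricting the push-forward of the structure sheaf of a smooth $T$-invariant subvariety to a fixed point then gives
\[
MC_y(X(w)^\circ)\big|_{e_w} \;=\; \lambda_y(T^*_{e_w}X(w))\cdot \lambda_{-1}(N^*_{X(w)/X,\, e_w}).
\]
The same Koszul argument applied to $X(w)$ alone (smooth at $e_w$) yields $\cO_w^T|_{e_w} = \lambda_{-1}(N^*_{X(w)/X,\, e_w})$.

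Comparing the two computations and cancelling the common factor $\lambda_{-1}(N^*_{X(w)/X,\, e_w})$ — nonzero in the fraction field of $\K_T(pt)$ because its weights are characters of the form $w\alpha$ with $\alpha>0$ and $w\alpha>0$, hence nontrivial roots — I would conclude $c_{w,w}(y;e^t) = \lambda_y(T^*_{e_w}X(w))$. To obtain the stated product, I would identify $T_{e_w}X(w)^\circ$ with $\mathfrak{b}/(\mathfrak{b}\cap w\mathfrak{b}w^{-1})$ via the infinitesimal action of $B$ at $e_w$; as a $T$-module this has weights $\{\alpha>0 : w^{-1}\alpha<0\} = \{-w\beta : \beta>0,\ w\beta<0\}$. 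Dualizing and taking the $\lambda_y$-class produces exactly $\prod_{\alpha>0,\,w\alpha<0}(1+e^{w\alpha}y)$. The main (mild) obstacle is combining the three functoriality steps — open pullback, closed push-forward, and fixed-point restriction — to obtain the Koszul formula for $MC_y(X(w)^\circ)|_{e_w}$; once this is in place, the remaining bookkeeping with root data is routine.
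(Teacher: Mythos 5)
Your proof is correct, and it takes a genuinely different route from the paper's. You and the paper share the same first step — localize at $e_w$, so that only the $u=w$ term survives because $\cO_u^T$ is supported on $X(u)\not\ni e_w$ when $w\not\le u$ — but the two computations of $MC_y(X(w)^\circ)|_{e_w}$ diverge. The paper computes it algebraically: it invokes \Cref{thm:MC+dual} (the recursion $MC_y(X(w)^\circ)=\mathcal{T}_{w^{-1}}(\cO_{\id}^T)$) together with the fixed-point formula for $\mathcal{T}_i$ from \Cref{lem:actiononfixedpoint}(c), obtaining $MC_y(X(w)^\circ)|_w=\prod_{\alpha>0,w\alpha<0}\frac{1+ye^{w\alpha}}{1-e^{w\alpha}}\,\iota_w|_w$, then divides by $(\cO_w^T)|_w=\iota_w|_w/\prod_{\alpha>0,w\alpha<0}(1-e^{w\alpha})$. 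You instead compute it geometrically, using only the defining functorial properties of $MC_y$: pass to the open set $U=X\setminus\partial X(w)$ where $X(w)^\circ$ is smooth and closed, apply VRR for the open immersion to get $MC_y(X(w)^\circ)|_U=MC_y[X(w)^\circ\hookrightarrow U]=i_*\lambda_y(T^*X(w)^\circ)$ by normalization and proper functoriality, then use the Koszul self-intersection formula $i^*i_*[\mathcal{F}]=[\mathcal{F}]\cdot\lambda_{-1}(N^\vee)$ to restrict to $e_w$, and cancel the normal-bundle factor (nonzero since its weights are the positive roots $w\alpha$ with $\alpha>0$, $w\alpha>0$). Your root-system bookkeeping identifying the weights of $T_{e_w}X(w)\cong\mathfrak b/(\mathfrak b\cap w\mathfrak b w^{-1})$ is also correct. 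The paper's proof is shorter once the DL-operator machinery is available and serves as a sanity check on the recursion; yours is more self-contained and directly reflects the geometric content (smoothness of $X(w)$ at $e_w$), at the modest cost of invoking the self-intersection formula in equivariant $K$-theory, which you should cite when writing this up.
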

\begin{proof} The localization $MC_y(X(w)^\circ){|_w}$ equals $c_{w,w}(y,e^t) (\cO_w^{T})|_w$.
By Lemma~\ref{lem:actiononfixedpoint}(c) and \Cref{thm:MC+dual}, we get 
\[ MC_y(X(w)^\circ){|_w}=\prod_{\alpha>0,w\alpha<0}\frac{1+ye^{w\alpha}}{1-e^{w\alpha}}\iota_w|_w.\] 
However, $(\cO_w^{T})|_w=\frac{\iota_w|_w}{\lambda_{-1}(T^*_w X(w))}=\frac{\iota_w|_w}{\prod_{\alpha>0,w\alpha<0}(1-e^{w\alpha})}$. 
The claim follows from this.\end{proof}

We end this section with an analogue of \Cref{thm:MC+dual} for the Segre motivic classes 
\[ SMC_y(X(w)^\circ) :=  \frac{MC_y(X(w)^\circ)}{\lambda_y(T^*X) } \/. \]
These classes live in a localization of $\K_T(X)_S$ where the element $
\prod_{\alpha > 0} (1+ye^\alpha)(1+ye^{-\alpha}) \in \K_T(pt)$
is invertible; see \cite[Remark 8.9]{AMSS:motivic}. 
We recall \cite[Theorem~4.2]{mihalcea2019whittaker}, which will be used below when 
discussing the Hirzebruch version of the Segre classes.
\begin{theorem}\label{thm.msegre}
For any $w\in W$ one has in $ K_T(X)[y]_S$:
\begin{equation}
\frac{MC_y(X(w)^\circ)}{\lambda_y(T^*X)}= \frac{\mathcal{T}_{w^{-1}}^\vee(\cO_{\id}^T)}{\prod_{\alpha > 0} (1+ y e^{\alpha})}
= \mathcal{T}_{w^{-1}}^\vee\left( \frac{\cO_{\id}^T}{\prod_{\alpha > 0} (1+ y e^{\alpha})}\right)
\end{equation}
and
\begin{equation}\label{eq:SegreMC}
\frac{MC_y(Y(w)^\circ)}{\lambda_y(T^*X)}= \frac{\mathcal{T}_{(w_0w)^{-1}}^\vee(\cO^{w_0, T})}{\prod_{\alpha > 0} (1+ y e^{-\alpha})}
= \mathcal{T}_{(w_0w)^{-1}}^\vee\left( \frac{\cO^{w_0, T}}{\prod_{\alpha > 0} (1+ y e^{-\alpha})}\right) \:.
\end{equation}
\end{theorem}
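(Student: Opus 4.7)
The plan is to reduce Theorem~\ref{thm.msegre} to the operator-level conjugation identity
\[
\frac{\mathcal{T}_i(b)}{\lambda_y(T^*X)} = \mathcal{T}_i^\vee\!\left(\frac{b}{\lambda_y(T^*X)}\right),
\]
valid for every $b \in \K_T(X)[y]_S$, and then iterate it along a reduced word for $w^{-1}$. First I would record the elementary identity
\[
\mathcal{T}_i\bigl(\lambda_y(T^*_{p_i})\,b\bigr) = \lambda_y(T^*_{p_i})\,\mathcal{T}_i^\vee(b),
\]
which is immediate from the definitions $\mathcal{T}_i = \lambda_y(T^*_{p_i})\partial_i - \id$ and $\mathcal{T}_i^\vee = \partial_i\,\lambda_y(T^*_{p_i}) - \id$ in \eqref{E:TiKdef} upon factoring out $\lambda_y(T^*_{p_i})$. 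Second, from the short exact sequence $0 \to p_i^*T^*(G/P_i) \to T^*X \to T^*_{p_i} \to 0$ together with multiplicativity of $\lambda_y$ one has
\[
\lambda_y(T^*X) = p_i^*\lambda_y(T^*(G/P_i))\cdot\lambda_y(T^*_{p_i}),
\]
while the projection formula $\partial_i(p_i^*a \cdot c) = p_i^*a \cdot \partial_i(c)$ shows that both $\mathcal{T}_i$ and $\mathcal{T}_i^\vee$ commute with multiplication by any $p_i^*$-pullback. Writing $b = \lambda_y(T^*_{p_i})\,c$ in the localized ring and combining these three ingredients produces the conjugation identity.

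A straightforward induction on the length of $u$, using a reduced expression $u = s_{i_1}\cdots s_{i_\ell}$ and the factorizations $\mathcal{T}_u = \mathcal{T}_{i_1}\cdots\mathcal{T}_{i_\ell}$, $\mathcal{T}_u^\vee = \mathcal{T}_{i_1}^\vee\cdots\mathcal{T}_{i_\ell}^\vee$ guaranteed by Proposition~\ref{prop:hecke-relations}, then upgrades the identity to every $u \in W$. Specializing to $u = w^{-1}$ and $b = \cO_{\id}^T$ and invoking Theorem~\ref{thm:MC+dual} on the left yields
\[
\frac{MC_y(X(w)^\circ)}{\lambda_y(T^*X)} = \mathcal{T}_{w^{-1}}^\vee\!\left(\frac{\cO_{\id}^T}{\lambda_y(T^*X)}\right).
\]
Since $\cO_{\id}^T$ is a skyscraper at the $T$-fixed point $e_{\id}$, multiplication by any $a \in \K_T(X)$ reduces to $a|_{e_{\id}}\cdot\cO_{\id}^T$; as $T_{e_{\id}}(G/B) \cong \mathfrak{g}/\mathfrak{b}$ carries weights $\{-\alpha:\alpha>0\}$, the cotangent carries $\{\alpha:\alpha>0\}$ and $\lambda_y(T^*X)|_{e_{\id}} = \prod_{\alpha>0}(1+ye^\alpha)$. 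Combined with the $\K_T(pt)[y]$-linearity of $\mathcal{T}_{w^{-1}}^\vee$, this produces both equalities in the first displayed formula. The proof of \eqref{eq:SegreMC} runs along the same lines after replacing $e_{\id}$ by $e_{w_0}$, the base class $\cO_{\id}^T$ by $\cO^{w_0,T}$, and using the analogous reduced-word recursion for $MC_y(Y(w)^\circ)$ starting from the point class $\cO^{w_0,T}$; the cotangent weights at $e_{w_0}$ are $\{-\alpha:\alpha>0\}$, which produces the factor $\prod_{\alpha>0}(1+ye^{-\alpha})$.

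The main technical point I would need to handle is ensuring the intermediate conjugation argument legitimately takes place in a single localization, since it divides separately by $\lambda_y(T^*_{p_i})$ and by $p_i^*\lambda_y(T^*(G/P_i))$ rather than by $\lambda_y(T^*X)$ alone. By the equivariant localization theorem, invertibility in $\K_T(X)[y]_S$ can be checked at each $T$-fixed point $e_v$, where both factors are divisors of $\lambda_y(T^*X)|_{e_v}$; since the latter is a unit in $\K_T(X)[y]_S$ by construction of the localization (cf.\ \cite[Remark~8.9]{AMSS:motivic}), each factor is itself a unit and every step of the argument remains inside this ring.
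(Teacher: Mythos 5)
The paper itself does not supply a proof of this theorem; it only recalls it from \cite[Theorem~4.2]{mihalcea2019whittaker}. So there is no in-paper argument to compare against, and the real question is whether your self-contained proof holds up. Your strategy — proving the single-operator conjugation $\mathcal{T}_i(b)/\lambda_y(T^*X)=\mathcal{T}_i^\vee(b/\lambda_y(T^*X))$ via the factorization $\lambda_y(T^*X)=p_i^*\lambda_y(T^*(G/P_i))\cdot\lambda_y(T^*_{p_i})$, iterating along a reduced word, and then evaluating at the skyscraper $\cO_{\id}^T$ — is correct, and it is the natural way to prove the statement given \Cref{thm:MC+dual} and \Cref{prop:hecke-relations}.

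Two points deserve tightening. First, the final localization paragraph is more roundabout than necessary and, as stated, is not quite airtight: injectivity of the restriction map $\K_T(X)[y]_S\to\bigoplus_v\K_T(pt)[y]_S$ does not by itself let you conclude that an element whose restrictions are units is a unit. You do not need fixed points here at all. In a commutative ring, any divisor of a unit is a unit: since $\lambda_y(T^*X)=\lambda_y(T^*_{p_i})\cdot p_i^*\lambda_y(T^*(G/P_i))$ is a unit in $\K_T(X)[y]_S$ by \cite[Remark~8.9]{AMSS:motivic}, each of the two factors is automatically a unit, and the inverse of $p_i^*\lambda_y(T^*(G/P_i))$ is itself a pullback $p_i^*(\lambda_y(T^*(G/P_i))^{-1})$ from the correspondingly localized $\K_T(G/P_i)$ (the localizing set lives in $\K_T(pt)$), so the projection formula $\partial_i(p_i^*a\cdot c)=p_i^*a\cdot\partial_i(c)$ applies both to multiplication and to division by this factor.

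Second, the opposite-cell case is asserted as ``analogous'' but the needed input, $MC_y(Y(w)^\circ)=\mathcal{T}_{(w_0w)^{-1}}(\cO^{w_0,T})$, is not stated in the paper. It does hold, and follows from the footnote's left Weyl group action: $w_0^L$ sends $X(w)^\circ$ to $Y(w_0w)^\circ$ and sends $\cO_{\id}^T=\iota_{\id}$ to $\cO^{w_0,T}=\iota_{w_0}$, and the right operators $\mathcal{T}_i$ commute with $w_0^L$, so applying $w_0^L$ to \Cref{thm:MC+dual} gives $MC_y(Y(w_0w)^\circ)=\mathcal{T}_{w^{-1}}(\cO^{w_0,T})$. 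You should make this step explicit (or cite \cite{MNS}) rather than leave it implicit, since the rest of the argument for \eqref{eq:SegreMC} hinges on it.
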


Note that $\prod_{\alpha > 0} (1+ y e^{\alpha})=\lambda_y(T^*_{e_{\id}}X)$ and $\prod_{\alpha > 0} (1+ y e^{-\alpha})=\lambda_y(T^*_{e_{w_0}}X)$.

\subsection{Integrals of motivic Chern classes and point counting}\label{ssec:pointc}
By functoriality of motivic Chern classes the integral of motivic Chern class of a Schubert cell equals 
\[  \int_{G/B} MC_y(X(w)^\circ)=MC_y[X(w)^\circ \to pt] = MC_y[\mathbb{A}^{\ell(w)} \to pt] = MC_y[\mathbb{A}^1 \to pt]^{\ell (w)} \/. \]
The third equality uses the fact the the map $MC_y: \K_0^{T}(var/pt) \to \K_0^{T}(pt)$ is a {\em ring} homomorphism, with the product given by exterior product of varieties; see e.g., \cite{brasselet.schurmann.yokura:hirzebruch} or \cite[Theorem 4.2]{AMSS:motivic}. One can calculate 
$MC_y[\mathbb{A}^1 \to pt]$ directly from \Cref{ex:P1}:
\[ MC_y[\mathbb{A}^1 \to pt] = \int_{\mathbb{P}^1} MC_y(\mathbb{A}^1) = -y \/. \]
Combining all these, we deduce:
\begin{prop}\label{prop:sum-st} Recall the Schubert expansion \eqref{E:schub}. Then the following hold:

(a) $\int_{G/B} MC(X(w)^\circ) = \sum c_{w,u} (y,e^t) = (-y)^{\ell(w)}$. 

(b) The $\chi_y$-genus of $G/B$ equals
\[ \chi_y(G/B) = \int_{G/B} \lambda_y(T^*(G/B)) = \sum_{w \in W} (-y)^{\ell(w)} \/. \]
\end{prop}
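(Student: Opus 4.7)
The plan is to prove (a) by establishing the two stated equalities separately, and then to deduce (b) from (a) by additivity of $MC_y$ applied to the Schubert stratification.

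For the first equality in (a), I would push the Schubert expansion \eqref{E:schub} forward to a point. Since $G/B$ is proper and the Schubert varieties $X(u)$ have rational singularities with $\chi(G/B, \cO_{X(u)}) = 1$ equivariantly (a standard fact, e.g., via a BGG-type resolution or the Kempf vanishing theorem for the homogeneous line bundle $\cO_{X(u)}$), one has $\int_{G/B} \cO_u^T = 1 \in \K_T(pt)$. Therefore
\[
\int_{G/B} MC_y(X(w)^\circ) = \sum_{u \le w} c_{u,w}(y, e^t) \int_{G/B} \cO_u^T = \sum_{u \le w} c_{u,w}(y, e^t).
\]
In particular, this equivariant sum will turn out to be independent of $e^t$.

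For the second equality in (a), I would use the functoriality of $MC_y$ under the proper pushforward along $G/B \to pt$, which gives $\int_{G/B} MC_y(X(w)^\circ) = MC_y[X(w)^\circ \to pt]$. Since $X(w)^\circ \cong \mathbb{A}^{\ell(w)}$, and since the external product property \eqref{MC-product} makes $MC_y : \K_0^T(var/pt) \to \K_T(pt)[y]$ a ring homomorphism (with multiplication given by products of varieties), we obtain
\[
MC_y[\mathbb{A}^{\ell(w)} \to pt] = MC_y[\mathbb{A}^1 \to pt]^{\ell(w)}.
\]
It remains to compute $MC_y[\mathbb{A}^1 \to pt] = -y$. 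This follows directly from \Cref{ex:P1}: integrating the expansion of $MC_y(X(s)^\circ) = (1+e^{-\alpha_1}y)\cO_{\mathbb{P}^1}^T - (1+(1+e^{-\alpha_1})y)\cO_{\id}^T$ against the structure map $\mathbb{P}^1 \to pt$ and using $\int_{\mathbb{P}^1} \cO_{\mathbb{P}^1}^T = \int_{\mathbb{P}^1} \cO_{\id}^T = 1$ yields $-y$. Alternatively, by the rigidity of the equivariant $\chi_y$-genus (\Cref{rem:rigid}) together with additivity $\chi_y(\mathbb{A}^1) = \chi_y(\mathbb{P}^1) - \chi_y(pt) = (1-y) - 1 = -y$.

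For (b), I would use that the stratification $G/B = \bigsqcup_{w \in W} X(w)^\circ$ yields the identity $[\id_{G/B}] = \sum_{w \in W} [X(w)^\circ \hookrightarrow G/B]$ in $\K_0^T(var/G/B)$ by the additivity relations. Applying the normalization of $MC_y$ from \Cref{thm:existence}(2) then gives
\[
\lambda_y(T^*(G/B)) = \sum_{w \in W} MC_y(X(w)^\circ).
\]
Integrating over $G/B$ and applying part (a) term by term yields the stated formula for $\chi_y(G/B)$. The only mild obstacle is verifying $\int_{G/B} \cO_u^T = 1$ equivariantly, but this is classical and can be cited directly.
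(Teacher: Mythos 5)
Your proposal is correct and follows essentially the same route as the paper: both use functoriality of $MC_y$ to reduce $\int_{G/B} MC_y(X(w)^\circ)$ to $MC_y[\mathbb{A}^{\ell(w)}\to pt]$, the ring-homomorphism property over a point to reduce to $MC_y[\mathbb{A}^1\to pt]=-y$, and the fact $\int_{G/B}\cO_u^T=1$ (rational singularities) for the first equality, while (b) comes from the Schubert stratification and additivity. Your alternative derivation of $MC_y[\mathbb{A}^1 \to pt]=-y$ via rigidity and $\chi_y(\mathbb{P}^1)=1-y$ is a harmless, equally valid variant.
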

\begin{proof} Part (a) follows from the considerations above and because $\int_{G/B} \cO_w^T =1$, since
$H^i(X(w),\cO_{G/B}) = 0$ for $i>0$, as Schubert varieties are rational with rational singularities. 
Part (b) follows from (a), using the fact that
$ \lambda_y(T^*(G/B)) \otimes \cO_{G/B}= MC_y(G/B) = \sum_{w \in W} MC_y(X(w)^\circ)$.
\end{proof}
If one specializes $y=-q$, this proposition shows that the $\chi_y$ genus of a Schubert variety
$X(w)$ is equal to the number of points of $X(w)$ over $\mathbb{F}_q$, the field with $q$ elements. 
This type of arguments extend more generally to any $G/P$, or to $T$ varieties with finitely many fixed points; 
see e.g.,~ \cite{mihalcea2019whittaker}. 

Utilizing again the specialization $y=-q$ and taking $G/B=\Fl(n)$, one recovers in a natural way $q$-analogues 
of classical formulae. In this case, $W=S_n$ (the symmetric group) and 
\[ 
\chi_{-q}(\Fl(n)) = \sum_{w \in S_n} q^{\ell (w)}\/.
\]
It is known that this sum equals the $q$-analogue of the factorial,
\[
\sum_{w \in S_n} q^{\ell (w)} = [n]_q ! = (1+q)(1+q+q^2)\cdot \ldots \cdot (1+q + \ldots + q^{n-1}) \/. 
\]
In fact, it is fun to work out a geometric interpretation of this formula. 
~The natural projection $p_n: \Fl(n) \to \Gr(n-1, n)$ sending a flag 
$(F_1 \subseteq \ldots \subseteq F_n= \mathbb{C}^n)$ to $F_{n-1} \subseteq \mathbb{C}^n$ is a 
$G$-equivariant Zariski locally trivial fibration, with fiber isomorphic to $\Fl(n-1)$. 
By additivity and multiplicativity of motivic Chern classes over a point, it follows that 
\[ \chi_{-q}(\Fl(n)) = \chi_{-q}(\Fl(n-1)) \cdot \chi_{-q}(\Gr(n-1,n)) = \chi_{-q}(\Fl(n-1)) \cdot (1 +q + q^2 + q^3 + \ldots + q^{n-1}) \/. \] 
The last equality follows from the fact that the (dual) projective space $\Gr(n-1,n)$ is the disjoint union of Schubert cells, each of which with contribution $q^{\dim (\textrm{cell})}$ to the $\chi_{-q}$ genus. Then the equality follows by induction on $n$. A more detailed analysis of this geometric argument is done in the section \ref{ss:parabolic} below.

\subsection{Divisibility properties} Consider now the expansions of the {\em nonequivariant} motivic Chern classes, 
\begin{equation}\label{E:noneqschub} MC_y (X(w)^\circ) = \sum_{ u \le w} c_{u,w}(y) \cO_u \quad \in \K(X)[y] \/.
\end{equation} 
The coefficients $c_{u,w}(y)$ from \eqref{E:noneqschub} are polynomials in $\Z[y]$. \Cref{ex:motcells} suggests a divisibility property 
of these coefficients.  
\begin{prop}\label{prop:divisibility} The coefficient $c_{u,w}(y)$ is divisible by $(1+y)^{\ell(u)}$.
\end{prop}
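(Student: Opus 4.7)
The approach is to strengthen the statement to the assertion that $MC_y(X(w)^\circ)$ lies in the $\mathbb{Z}[y]$-submodule
\[
M \;:=\; \bigoplus_{u \in W}(1+y)^{\ell(u)}\,\mathbb{Z}[y]\cdot\cO_u \;\subseteq\; \K(G/B)[y],
\]
and then to induct on $\ell(w)$. By the non-equivariant specialization of \Cref{thm:MC+dual}, $MC_y(X(w)^\circ)=\mathcal{T}_{w^{-1}}(\cO_{\id})$; since $\cO_{\id}\in M$, it is enough to show that each Demazure-Lusztig operator $\mathcal{T}_i=\lambda_y(T^*_{p_i})\partial_i-\id$ preserves $M$, because $MC_y(X(w)^\circ)$ is then obtained by successive application of such operators along a reduced word for $w^{-1}$.

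By $\mathbb{Z}[y]$-linearity, preservation of $M$ reduces to the following claim: for every $u\in W$, writing the Schubert expansion $\mathcal{T}_i(\cO_u)=\sum_v a_{v,u,i}(y)\,\cO_v$, we have $(1+y)^{\max(0,\,\ell(v)-\ell(u))}\mid a_{v,u,i}(y)$. The case $\ell(v)\le\ell(u)$ is vacuous. If $us_i<u$ then $\partial_i\cO_u=\cO_u$, so $\mathcal{T}_i(\cO_u)=y[\mathcal{L}_{\alpha_i}]\cO_u$; since this class is supported on $X(u)$, its Schubert expansion involves only $\cO_\gamma$ with $\gamma\le u$, and no term of length $>\ell(u)$ appears. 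If $us_i>u$ then
\[
\mathcal{T}_i(\cO_u)\;=\;\cO_{us_i}\,+\,y\,[\mathcal{L}_{\alpha_i}]\cO_{us_i}\,-\,\cO_u,
\]
and the only Schubert term with $\ell(v)>\ell(u)$ is $\cO_{us_i}$ (of length $\ell(u)+1$), with coefficient $1+y\cdot c$, where $c$ is the coefficient of $\cO_{us_i}$ in $[\mathcal{L}_{\alpha_i}]\cO_{us_i}$.

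The crucial input is that $c=1$ for any non-equivariant line bundle $\mathcal{L}$ on $G/B$. I would deduce this from compatibility of the Schubert basis with the topological filtration on $\K(G/B)$: the class $[\mathcal{L}]-1$ has rank zero, hence lies in $F^1\K(G/B)$, so $([\mathcal{L}]-1)\cO_{us_i}$ lies in $F^{\dim X-\ell(us_i)+1}$, which is spanned by $\cO_\gamma$ with $\ell(\gamma)\le\ell(us_i)-1$; therefore the $\cO_{us_i}$-coefficient of $[\mathcal{L}_{\alpha_i}]\cO_{us_i}$ equals $1$. This makes the coefficient of $\cO_{us_i}$ in $\mathcal{T}_i(\cO_u)$ equal to $1+y$, divisible by $(1+y)^{\ell(us_i)-\ell(u)}=(1+y)$, closing the induction. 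The main point requiring care is precisely this `leading Chevalley coefficient equals one' identity; one can alternatively obtain it from the explicit non-equivariant Pittie-Ram / Lenart-Postnikov Chevalley formula, or via the Chern character isomorphism $\K(G/B)\otimes\mathbb{Q}\cong H^*(G/B,\mathbb{Q})$ sending $[\mathcal{L}]$ to $e^{c_1(\mathcal{L})}=1+c_1(\mathcal{L})+\cdots$.
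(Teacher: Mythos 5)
Your proof is correct and takes a genuinely different route from the paper's. The paper fixes $w$ and inducts \emph{downward} on $\ell(u)$ using the Chern character: by \Cref{thm:MC+dual} and \Cref{prop:initial}, the top homological component of $\ch_{\C^*}(MC_{-e^{-\hbar}}(X(w)^\circ))$ lies in $H_0^{\C^*}(X)$ (it is the homogenized CSM class), so nothing survives in positive homological dimension; if the $(1+y)$-adic order of some $c_{u,w}$ were only $\ell<\ell(u)$, one would extract a nonzero coefficient of $\hbar^\ell[X(u)]$ in dimension $\ell(u)-\ell>0$, a contradiction. You instead stay entirely in $\K$-theory, strengthening the claim to stability of the lattice $M=\bigoplus_u(1+y)^{\ell(u)}\Z[y]\,\cO_u$ under each DL operator $\mathcal{T}_i$; the sole geometric input is the $\K$-theoretic Chevalley fact that the leading Schubert coefficient of $[\mathcal{L}]\,\cO_v$ equals $1$. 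Your argument is more elementary (no Chern character or CSM machinery) and yields a stronger structural statement, a $\mathcal{T}_i$-stable lattice, which applies to any class in $M$ and not only to $\cO_{\id}$; the paper's route is tailored to its broader goal of identifying CSM classes as leading terms, and its single input, $\ch(\cO_v)=[X(v)]+$ lower-order terms, is precisely the GHRR shadow of the Chevalley unit you invoke. One refinement: rather than appealing to multiplicativity of the coniveau filtration on $\K(G/B)$, it is cleaner to observe that $[\mathcal{L}]\cO_v-\cO_v$ is pushed forward from the rank-zero class $[\mathcal{L}|_{X(v)}]-[\cO_{X(v)}]$ in the Grothendieck group of coherent sheaves on $X(v)$, which the cellular decomposition of $X(v)$ expresses as a $\Z$-combination of $\cO_\gamma$ for $\gamma<v$, with no moving lemma on $G/B$ required.
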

\begin{proof} We prove the statement by induction on $\ell(u)$. If $u=w$, \Cref{lemma:cww} gives 
$c_{w,w}(y)=(1+y)^{\ell(w)}$. Now assume $u<w$ and that the statement is true for any $v\leq w$ with $\ell(v)>\ell(u)$.
Suppose that $(1+y)^\ell\mid c_{u,w}(y)$ and $(1+y)^{\ell+1}\nmid c_{u,w}(y)$ for some $\ell<\ell(u)$.
We use the hypotheses of \Cref{prop:initial} where we only keep the action of $\C^*$. In particular $y = - e^{-\hbar}$,
therefore the initial term of $1+y$ is $\hbar$. Consider the expansion:
\begin{align}\label{equ:ch}
\ch_{\bbC^*}&(MC_{-e^{-\hbar}}(X(w)^\circ))=\sum_{z\leq w, \ell(z)<\ell(u)} c_{z,w}(-e^{-\hbar})\ch_{\bbC^*}(\calO_z)\\
&+\sum_{z\leq w, \ell(z)>\ell(u)} c_{z,w}(-e^{-\hbar})\ch_{\bbC^*}(\calO_z)+\sum_{z\leq w, \ell(z)=\ell(u)} c_{z,w}(-e^{-\hbar})\ch_{\bbC^*}(\calO_z).\nonumber
\end{align}
\Cref{thm:MC+dual} and \Cref{prop:initial} imply that the part with the highest homological degree in $\ch_{\bbC^*}(MC_{-e^{-\hbar}}(X(w)^\circ))$ lies in 
$H_0^{\bbC^*}(X)$. (Later, we will show this is the homogenized Chern-Schwartz-MacPherson class $\csmh(X(w)^\circ)$, in 
particular it is nonzero.) Since $\ell < \ell(u)$, it follows that the coefficient of $\hbar^{\ell}[X(u)] \in H_{2\ell(u) - 2\ell }^{\C^*}(X)$ in this class is
equal to $0$. We analyze this coefficient on the right hand side. The first summand has no contribution because $\ell(z)<\ell(u)$. 
By induction, every term in the second summand is divisible by $\hbar^{\ell(u)+1}$, thus again it does not contribute to the 
coefficient of $\hbar^{\ell}[X(u)]$. In the last summand, only the term with $z=u$ can contribute. Its contribution
equals the coefficient of $\hbar^\ell$ in $c_{u,w}(-e^{-\hbar})$. This coefficient is non-zero, by the hypothesis on $\ell$, and it 
gives a non-zero coefficient of $\hbar^\ell [X(u)]$, which is is impossible.\end{proof}

We end with the following corollary.

\begin{corol} Consider the non-equivariant motivic Chern class
\[ MC_y(X(w)^\circ) = \sum c_{u,w}(y) \cO_u \/. \]
Then $c_{\id,w}(-1) =1$.
\end{corol}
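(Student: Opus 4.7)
The plan is to combine the two immediately preceding results, namely \Cref{prop:sum-st}(a) and \Cref{prop:divisibility}, and evaluate at $y=-1$.

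First I would take the Schubert expansion $MC_y(X(w)^\circ) = \sum_{u \le w} c_{u,w}(y) \cO_u$ and integrate it over $G/B$. Since $\int_{G/B} \cO_u = 1$ for every $u$ (because Schubert varieties have rational singularities, so $H^i(X(u),\cO_{G/B})=0$ for $i>0$), \Cref{prop:sum-st}(a) yields the scalar identity
\[
\sum_{u \le w} c_{u,w}(y) \;=\; (-y)^{\ell(w)}.
\]
Evaluating at $y=-1$ gives $\sum_{u \le w} c_{u,w}(-1) = 1$.

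Next I would apply \Cref{prop:divisibility}, which asserts that $c_{u,w}(y)$ is divisible by $(1+y)^{\ell(u)}$. Hence $c_{u,w}(-1)=0$ whenever $\ell(u) \ge 1$, so the only surviving term in the sum above is $c_{\id,w}(-1)$. This forces $c_{\id,w}(-1)=1$, as desired.

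There is essentially no obstacle here: both inputs are already established, so the proof reduces to observing that the sum is concentrated at $u=\id$ after the specialization. The only minor point to remark on is the non-equivariant normalization $\int_{G/B}\cO_u=1$, which follows from the rationality of Schubert varieties and is already used in the proof of \Cref{prop:sum-st}(a).
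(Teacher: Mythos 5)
Your proof is correct and matches the paper's argument exactly: both use \Cref{prop:divisibility} to kill all terms with $\ell(u)\ge 1$ at $y=-1$, and then \Cref{prop:sum-st}(a) to conclude that the sole surviving coefficient $c_{\id,w}(-1)$ equals $(-(-1))^{\ell(w)}=1$.
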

\begin{proof} By the divisibility property, $c_{u,w}(-1) = 0$ for $\ell(u) >0$. Then
by \Cref{prop:sum-st},
\[ 1=\int_{G/B} MC_{-1}(X(w)^\circ) = c_{\id, w}(-1) \/, \]
which finishes the proof.\end{proof}
We invite the reader to verify this corollary for the motivic Chern classes in $\Fl(3)$ 
from \Cref{ex:motcells} above.
\subsection{The parabolic case}\label{ss:parabolic} Consider the (generalized) partial
flag manifold $G/P$, and let $\pi:G/B \to G/P$ be the natural projection. This is 
a $G$-equivariant locally trivial fibration in Zariski topology, with fiber
$F:=\pi^{-1}(1.P) = P/B$. This fiber is the flag manifold $L/(B \cap L)$, where $L$ is the Levi subgroup
of $P$. The Schubert varieties in $F$ are indexed by 
the elements in $W_P$. Furthermore, the image $\pi(X(w)^\circ)$ equals $X(wW_P)^\circ$, and 
the restriction of $\pi$ to $X(w)^\circ$ is a trivial fibration, showing that 
$X(w)^\circ \simeq X(wW_P)^\circ \times (w.F \cap X(w)^\circ)$. It follows that in the 
Grothedieck group $\K_0(var/(G/P))$,
\begin{equation}\label{E:G0prod} [X(w)^\circ \to G/P] = [w.F \cap X(w)^\circ \to wP]\boxtimes [X(wW_P)^\circ \to G/P]  \/. \end{equation}
The intersection $w.F \cap X(w)^\circ$ is the Schubert cell in $w.F$ indexed by $w_2\in W_P$,
where $w= w_1 w_2$ is the parabolic factorization of $w$ with respect to $P$; cf.~\cite[Theorem~2.8]{BCMP:QKpos}. 
This argument allows us calculate the push-forwards of motivic Chern classes. 
\begin{prop}\label{prop:pf} The following hold:

(a) $\pi_* MC_y(X(w)^\circ) = (-y)^{\ell(w) - \ell(w W_P)} MC_y(X(wW_P)^\circ)$ in $\K_T(G/P)$.

(b) More generally, let $P \subseteq Q$ be two standard parabolic subgroups, and $\pi':G/P \to G/Q$ the natural projection. 
Then
\[ \pi'_* MC_y(X(wW_P)^\circ)= (-y)^{\ell(wW_P) - \ell(wW_Q)} MC_y(X(wW_Q)^\circ) \/. \]
\end{prop}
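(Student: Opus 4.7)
The plan is to reduce both parts to the functoriality of $MC_y$ under proper pushforwards (property~(1) of \Cref{thm:existence}), combined with the multiplicativity \eqref{MC-product} of $MC_y$ under exterior products and the geometric decomposition \eqref{E:G0prod}.

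For part (a), functoriality yields $\pi_* MC_y(X(w)^\circ) = MC_y[X(w)^\circ \to G/P]$, where the map on the right is $\pi$ restricted to $X(w)^\circ$. Applying \eqref{E:G0prod} in $\K_0^T(var/(G/P))$ and then \eqref{MC-product} gives
\[
\pi_* MC_y(X(w)^\circ) \;=\; MC_y\bigl[w.F \cap X(w)^\circ \to pt\bigr] \cdot MC_y[X(wW_P)^\circ \to G/P],
\]
where the first factor lies in $\K_T(pt)[y]$ and acts by scalar multiplication, since its target is a single point. By rigidity (\Cref{rem:rigid}), this scalar equals the non-equivariant $\chi_y$-genus of the cell $w.F \cap X(w)^\circ$, which by the text preceding \eqref{E:G0prod} is an affine space of dimension $\ell(w)-\ell(wW_P)$. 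Since $MC_y \colon \K_0^T(var/pt) \to \K_T(pt)[y]$ is a ring homomorphism with $MC_y[\mathbb{A}^1 \to pt] = -y$ (see \S\ref{ssec:pointc}), this scalar equals $(-y)^{\ell(w)-\ell(wW_P)}$, proving (a).

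For part (b), write $\pi_P := \pi$ and let $\pi_Q \colon G/B \to G/Q$ be the analogous projection, so that $\pi_Q = \pi' \circ \pi_P$. Choose $w \in W^P$ to be the minimal length representative of the coset $wW_P$, so that $\ell(w) = \ell(wW_P)$. Applying part~(a) to $\pi_P$ gives $(\pi_P)_* MC_y(X(w)^\circ) = MC_y(X(wW_P)^\circ)$, while applying part~(a) to $\pi_Q$ gives
\[
(\pi_Q)_* MC_y(X(w)^\circ) \;=\; (-y)^{\ell(w)-\ell(wW_Q)} MC_y(X(wW_Q)^\circ).
\]
Combining these via $(\pi_Q)_* = \pi'_* \circ (\pi_P)_*$ and substituting $\ell(w) = \ell(wW_P)$ yields the claim in (b); no cancellation of powers of $(-y)$ is required.

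The main technical point is confirming that the decomposition \eqref{E:G0prod} and the multiplicativity \eqref{MC-product} apply in the $T$-equivariant setting; once the $T$-equivariant product isomorphism $X(w)^\circ \simeq X(wW_P)^\circ \times (w.F \cap X(w)^\circ)$ asserted preceding \eqref{E:G0prod} is accepted, both parts reduce to elementary length bookkeeping.
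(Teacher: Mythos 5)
Your proof is correct and follows essentially the same route as the paper: part (a) combines the decomposition \eqref{E:G0prod}, functoriality, and multiplicativity of $MC_y$, with the scalar factor identified as the $\chi_y$-genus of the affine cell $w.F\cap X(w)^\circ$ (you justify this via rigidity; the paper cites \Cref{prop:sum-st}(a), which encapsulates the same computation), and part (b) follows by applying (a) to the factored projection $\pi_Q=\pi'\circ\pi_P$, exactly as the paper indicates.
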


\begin{proof} Equation \eqref{E:G0prod} and functoriality of motivic Chern classes imply that
\[\pi _* MC_y(X(w)^\circ) = MC_y[w.F \cap X(w)^\circ \to wP] \cdot MC_y(X(wW_P)^\circ)  \/, \]
Then the claim follows from \Cref{prop:sum-st}(a). Part (b) may be obtained by applying (a)
to the composition of projections $G/B \to G/Q \to G/P$.
\end{proof}

We end this section by pointing out that the argument from \Cref{prop:sum-st} extends {\em verbatim} to the parabolic case. 
One obtains:
\begin{prop}\label{prop:sum-st-parab} Let $w \in W^P$ and consider the Schubert expansion 
$MC_y(X(wW_P)^\circ) = \sum_{u \in W^P, u \le w} c_{u,w}(y,e^t) MC_y(X(uW_P)^\circ)$. Then the following hold:

(a) $\int_{G/P} MC(X(wW_P)^\circ) = \sum c_{w,u} (y,e^t) = (-y)^{\ell(w)}$. 

(b) The $\chi_y$-genus of $G/P$ equals $\chi_y(G/P) = \sum_{w \in W^P} (-y)^{\ell(w)}$.
\end{prop}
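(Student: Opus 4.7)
The proposition asserts that the proof of \Cref{prop:sum-st} extends verbatim to the parabolic case, so my plan is to identify the three ingredients of that proof and check that each still works when $G/B$ is replaced by $G/P$.

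\textbf{Part (a).} First I would use functoriality of $MC_y$ with respect to the proper pushforward $G/P \to pt$ to write
\[
\int_{G/P} MC_y(X(wW_P)^\circ) = MC_y[X(wW_P)^\circ \to pt].
\]
For $w \in W^P$, the Bruhat decomposition of $G/P$ identifies $X(wW_P)^\circ$ with $\mathbb{A}^{\ell(w)}$, so the multiplicativity of $MC_y$ over a point (the ring structure on $\K_0^T(var/pt)$ induced by exterior product, cf.~\eqref{MC-product}) gives
\[
MC_y[\mathbb{A}^{\ell(w)} \to pt] = MC_y[\mathbb{A}^1 \to pt]^{\ell(w)} = (-y)^{\ell(w)},
\]
where the last equality uses the computation $MC_y[\mathbb{A}^1 \to pt] = -y$ from \Cref{ex:P1}. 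For the equality with the sum of Schubert coefficients, reading the expansion as one in the K-theoretic Schubert basis $\{\cO_{uW_P}^T\}_{u \in W^P}$, I would invoke the fact that Schubert varieties in $G/P$ are rational with rational singularities, so $\int_{G/P} \cO_{uW_P}^T = \chi(X(uW_P);\cO) = 1$; this is exactly the fact used in the $G/B$ case.

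\textbf{Part (b).} By additivity of motivic Chern classes along the stratification $G/P = \bigsqcup_{w \in W^P} X(wW_P)^\circ$, together with the normalization $MC_y[\id_{G/P}] = \lambda_y(T^*(G/P))$, one has
\[
\lambda_y(T^*(G/P)) \otimes \cO_{G/P} = \sum_{w \in W^P} MC_y(X(wW_P)^\circ).
\]
Integrating both sides over $G/P$ and applying part (a) to each summand gives $\chi_y(G/P) = \sum_{w \in W^P} (-y)^{\ell(w)}$.

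The only input that does not already appear in \Cref{prop:sum-st} is the cohomology-vanishing statement $H^i(X(uW_P), \cO) = 0$ for $i > 0$ on Schubert varieties in $G/P$; this is the main (very mild) obstacle, but it is classical and follows e.g.~from the Bott--Samelson desingularization (which factors through a Bott--Samelson resolution of the corresponding Schubert variety in $G/B$, whose pushforward to $G/P$ is still a rational resolution). Once this is cited, the rest is a direct transcription of the $G/B$ argument.
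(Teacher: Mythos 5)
Your proof is correct and matches the approach the paper takes (the paper itself only says the $G/B$ argument "extends verbatim" without spelling out the details, and you have spelled out exactly the right details). You also correctly identify the one mild new input — $H^i(X(uW_P),\cO)=0$ for $i>0$ on parabolic Schubert varieties, which gives $\int_{G/P}\cO_{uW_P}^T=1$ — and supply a valid justification; the reference to rational resolutions via Bott–Samelson varieties is exactly what is needed, and one can equally cite \cite{brion.kumar:frobenius} as the paper does elsewhere for the same fact.
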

To illustrate, let $G/P= \Gr(k,n)$ be the Grassmann manifold of subspaces of dimension $k$ in $\C^n$. The set $W^P$ corresponds 
to partitions $\lambda=(\lambda_1, \ldots, \lambda_k)$ included in the $k \times (n-k)$ rectangle such that 
$\dim X(\lambda) = |\lambda| = \lambda_1 + \ldots + \lambda_k$. For $q=-y$, the $\chi_{-q}$ genus is
\[ \chi_{-q}(\Gr(k,n))= \sum_{\lambda} q^{|\lambda|} = {n \choose k}_q  \/, \]
the $q$-analogue of the binomial coefficient. 

\section{The parameter $y$ in motivic Chern classes} 
In this section we discuss some key combinatorial properties of the Schubert expansion of the motivic Chern classes, including specializations of the parameter $y$, and their geometric interpretation.
\begin{theorem}\label{prop:specialize} Let $X=G/P$ and $w\in W^P$. Then the following hold:

(a) The specialization $y=-1$ gives $MC_{-1}(X(wW_P)^\circ)= \iota_{wW_P}$ (the equivariant class of the unique torus fixed point in $X(wW_P)^\circ$).

(b) The specialization $y=0$ gives $MC_0(X(wW_P)^\circ)=\calI_{wW_P}^T$ (the class of the ideal sheaf $\cO_{X(wW_P)}(- \partial X(wW_P))$).

(c) The degree of $MC_y(X(wW_P)^\circ)$ with respect to $y$ is
equal to $\ell (w)$,
and the coefficient of~$y^{\ell (w)}$ in $MC_y(X(wW_P)^\circ)$ is the class of $\omega_{X(wW_P)}$, the dualizing sheaf on $X(wW_P)$. 
\end{theorem}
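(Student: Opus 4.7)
The plan is to prove all three parts first on the complete flag manifold $X = G/B$ (where, for $w \in W^P$ a minimal coset representative, the Schubert variety $X(w) \subset G/B$ is defined), and then transfer the results to $G/P$ via the push-forward formula of \Cref{prop:pf}(a). The key parabolic reduction is that $\ell(w) = \ell(wW_P)$ for $w \in W^P$, so the factor $(-y)^{\ell(w)-\ell(wW_P)}$ in \Cref{prop:pf}(a) equals $1$; hence $\pi_* MC_y(X(w)^\circ) = MC_y(X(wW_P)^\circ)$, where $\pi: G/B \to G/P$ is the projection.

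Parts (a) and (b) will both be deduced by specializing the formula $MC_y(X(w)^\circ) = \mathcal{T}_{w^{-1}}(\mathcal{O}_{\id}^T)$ of \Cref{thm:MC+dual}. For (a), \Cref{lemma:yspec}(b) gives $(\mathcal{T}_i)_{y=-1}(\iota_u) = \iota_{us_i}$; since $\mathcal{O}_{\id}^T = \iota_{\id}$, iterating along a reduced word for $w^{-1}$ produces $MC_{-1}(X(w)^\circ) = \iota_w$, and $\pi_* \iota_w = \iota_{wW_P}$ yields the parabolic version. For (b), \Cref{lemma:yspec}(a) gives $(\mathcal{T}_i)_{y=0} = \partial_i - \id$ together with $(\partial_i - \id)(\mathcal{I}_u^T) = \mathcal{I}_{us_i}^T$ when $us_i > u$. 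Combined with the base case $\mathcal{O}_{\id}^T = \mathcal{I}_{\id}^T$, an induction on $\ell(w)$ using the reduced-word recursion $\mathcal{T}_{(ws_i)^{-1}} = \mathcal{T}_i \circ \mathcal{T}_{w^{-1}}$ (for $ws_i > w$) yields $MC_0(X(w)^\circ) = \mathcal{I}_w^T$. Passing to $G/P$ reduces to checking $\pi_* \mathcal{I}_w^T = \mathcal{I}_{wW_P}^T$ for $w \in W^P$; this follows from $\pi|_{X(w)} : X(w) \to X(wW_P)$ being proper and birational with $\pi^{-1}(X(wW_P)^\circ) = X(w)^\circ$, the rational singularities of Schubert varieties, and the projection formula.

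For (c), the bound $\deg_y MC_y(X(w)^\circ) \le \ell(w)$ is immediate from the recursion: each $\mathcal{T}_i = (1 + y\mathcal{L}_{\alpha_i})\partial_i - \id$ raises $y$-degree by at most one, and $\mathcal{O}_{\id}^T$ has $y$-degree zero. To identify the leading coefficient as $[\omega_{X(w)}]$, I would take a Bott-Samelson resolution $\mu: \tilde X(w) \to X(w) \subset G/B$, in which $\tilde X(w)$ is smooth projective, $\mu$ is proper and birational, and $\mu^{-1}(X(w)^\circ) = \tilde X(w) \setminus D$ for a $T$-invariant simple normal crossings (SNC) divisor $D$. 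Functoriality of $MC_y$ under proper maps gives $MC_y(X(w)^\circ) = \mu_* MC_y[\tilde X(w) \setminus D \hookrightarrow \tilde X(w)]$, and the Brasselet-Sch\"urmann-Yokura formula for motivic Chern classes of SNC complements expresses this as $\mu_* \sum_i y^i [\Omega^i_{\tilde X(w)}(\log D)(-D)]$, whose top coefficient is $\mu_*[\omega_{\tilde X(w)}(D)(-D)] = \mu_*[\omega_{\tilde X(w)}]$. Grauert-Riemenschneider vanishing, valid because Schubert varieties have rational (hence Cohen-Macaulay) singularities, then gives $\mu_*[\omega_{\tilde X(w)}] = [\omega_{X(w)}]$; non-vanishing of this class confirms that the $y$-degree equals $\ell(w)$. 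The parabolic version follows from \Cref{prop:pf}(a) together with the analogous identity $\pi_*[\omega_{X(w)}] = [\omega_{X(wW_P)}]$, by Grauert-Riemenschneider applied to the birational proper map $\pi|_{X(w)} : X(w) \to X(wW_P)$ for $w \in W^P$.

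The main obstacle is part (c): one must invoke the Brasselet-Sch\"urmann-Yokura computation of motivic Chern classes of SNC complements and then commute push-forwards past the dualizing sheaf, relying essentially on the rational/Cohen-Macaulay nature of Schubert varieties. The same ingredients (applied to the twist $\mathcal{O}(-D)$) underpin the identification $\pi_* \mathcal{I}_w^T = \mathcal{I}_{wW_P}^T$ needed in (b).
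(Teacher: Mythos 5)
Your proposal is correct and follows essentially the same route as the paper: parts (a) and (b) via the DL recursion $MC_y(X(w)^\circ)=\mathcal{T}_{w^{-1}}(\cO_{\id}^T)$ and the specializations of $\mathcal{T}_i$ from \Cref{lemma:yspec}; part (c) via a Bott-Samelson resolution with SNC boundary and $\theta_*\omega_Z=\omega_{X(w)}$; parabolic reduction via \Cref{prop:pf}. The only cosmetic difference is in (c), where you invoke the closed Brasselet--Sch\"urmann--Yokura formula $MC_y(Z\setminus D)=\sum_i y^i[\Omega^i_Z(\log D)(-D)]$ whereas the paper extracts the top $y$-coefficient by the inclusion-exclusion argument that $MC_y(D)$ has $y$-degree $<\ell(w)$; these are equivalent, and your appeal to Grauert--Riemenschneider makes explicit what the paper's phrase ``since $X(w)$ has rational singularities'' compresses. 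One small caveat: your justification of $\pi_*\mathcal{I}_w^T=\mathcal{I}_{wW_P}^T$ by ``projection formula'' is not quite a one-liner, since $\mathcal{I}_w^T$ is not a pullback from $G/P$; the paper sidesteps this by citing Brion.
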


\begin{proof} We first prove all the statements for $P=B$. Parts (a) and (b) follow from \Cref{thm:MC+dual} and the specializations
at $y=-1$ and $y=0$ of the operator $\mathcal{T}_i$, from \Cref{lemma:yspec}.
The fact that the $y$-degree in (c) is less than $\ell(w)$ follows again from  \Cref{thm:MC+dual}.
Let $\theta: Z \to X(w)$ denote the Bott-Samelson 
resolution of the Schubert variety $X(w)$ utilized, e.g., in \cite{aluffi.mihalcea:eqcsm}. 
By functoriality,
\[ MC_y[X(w)^\circ \to G/B] = \theta_* MC_y [X(w)^\circ \to Z].\] 
The restriction of $\theta$ to $\theta^{-1}(X(w)^\circ)$ is an isomorphism, and it is known that 
the boundary of $\theta^{-1}(X(w)^\circ)$ in $Z$ is a simple normal crossing divisor.
Since $Z$ is smooth, by inclusion-exclusion it follows that the term with
highest power of $y$ in $MC_y(Z\setminus \theta^{-1}(X(w)^\circ))$ is the same
as the one in $MC_y(Z) = \lambda_y(T^*(Z))$, namely $y^{\ell(w)} \wedge^{\dim Z} T^*(Z)= 
y^{\ell(w)} \omega_Z$, a multiple of the canonical bundle of $Z$. 
This finishes the proof of (c), since $\theta_* (\omega_Z) = \omega_{X(w)}$ as 
$X(w)$ has rational singularities; see e.g., \cite{brion.kumar:frobenius}.

We now turn to the general $G/P$ situation. Since $w \in W^P$, the projection
$\pi:G/B \to G/P$ restricts to a birational map $\pi:X(w) \to X(w W_P)$ which is an isomorphism
over the Schubert cell $X(wW_P)^\circ$. By \Cref{prop:pf}, 
$\pi_* MC_y(X(w)^\circ) = MC_y(X(wW_P)^\circ)$. Then each claim follows from the corresponding
statement for $G/B$, taking into consideration that $\pi_*(\iota_w) = \iota_{wW_P}$,
$\pi_*\calI_w^T = \calI_{wW_P}^T$ (cf.~\cite[Proof of Lemma 4]{brion:Kpos}), and 
$\pi_* \omega_{X(w)} = \omega_{X(wW_P)}$ (since  $X(wW_P)$ has rational singularities;
see \cite{brion.kumar:frobenius}). 
\end{proof}
\begin{remark} In the non-equivariant case, the result in (b) can also be proved using that 
for the Schubert {\em variety} $MC_0(X(w)) = \cO_w^{T}$, since $X(w)$ has rational singularities, 
hence DuBois \cite[Example 3.2]{brasselet.schurmann.yokura:hirzebruch}. The equivariant generalization uses either an equivariant version of the DuBois complex, which is not available at this time in the literature, or just a corresponding virtual equality in $ K_0(\mathfrak{coh}^{T}(\cO_X))$.\end{remark}

The duality in \Cref{thm:MCtilde} allows us to calculate the specializations of $\widetilde{MC}_y(Y(w)^\circ)$.
\begin{corol}\label{cor:sp-MC} Let $w\in W$. Then the following hold:

(a) The specialization $y=-1$ gives 
\[\widetilde{MC}_{-1}(Y(w)^\circ)={\frac{\lambda_{-1} (T^*_{w_0}(G/B))}{\lambda_{ -1}(T^*_{w}(G/B))}\iota_w =
\frac{\prod_{\alpha > 0} (1- e^{-\alpha})}{\prod_{\alpha > 0} (1- e^{w \alpha})} \iota_w }
 \/. \]

(b) The specialization $y=0$ gives $\widetilde{MC}_0(Y(w)^\circ)=\cO^{w,T}$.
\end{corol}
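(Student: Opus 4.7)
Both parts will follow from specializing the orthogonality of \Cref{thm:MCtilde} against classes whose values at $y=-1$ and $y=0$ are already computed in \Cref{prop:specialize}. The overall strategy is that the classes $\widetilde{MC}_y(Y(v)^\circ)$ are pinned down by their pairings with the motivic Chern classes of opposite cells; once we specialize $y$, the pairing becomes transparent.

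For part (a), the plan is to substitute $y=-1$ into
\[
\langle MC_y(X(u)^\circ), \widetilde{MC}_y(Y(v)^\circ)\rangle = \delta_{u,v}\prod_{\alpha>0}(1+ye^{-\alpha})
\]
and use $MC_{-1}(X(u)^\circ) = \iota_u$ from \Cref{prop:specialize}(a) to obtain
\[
\langle \iota_u, \widetilde{MC}_{-1}(Y(v)^\circ)\rangle = \delta_{u,v}\prod_{\alpha>0}(1-e^{-\alpha}).
\]
Passing to the localized ring $\K_T(G/B)_{\textit{loc}}$, any class $b$ satisfies $\langle \iota_u, b\rangle = b|_u$ (since $\iota_u$ is supported at $e_u$ and $\iota_u|_u$ cancels the denominator in the localization formula for the pairing). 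Hence $\widetilde{MC}_{-1}(Y(v)^\circ)$ is characterized by its fixed-point restrictions $\widetilde{MC}_{-1}(Y(v)^\circ)|_u = \delta_{u,v}\prod_{\alpha>0}(1-e^{-\alpha})$; in particular it is supported only at $e_v$. Writing the class as $c\,\iota_v$ in the fixed-point basis and using $\iota_v|_v = \lambda_{-1}(T^*_v(G/B)) = \prod_{\alpha>0}(1-e^{v\alpha})$ recovers the asserted coefficient.

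For part (b) I would run the same procedure at $y=0$, where the right-hand side of the orthogonality specializes to $\delta_{u,v}$ and \Cref{prop:specialize}(b) gives $MC_0(X(u)^\circ) = \mathcal{I}_u^T$. The orthogonality becomes
\[
\langle \mathcal{I}_u^T, \widetilde{MC}_0(Y(v)^\circ)\rangle = \delta_{u,v},
\]
and the K-theoretic duality \eqref{E:dualst}, combined with the fact that $\{\cO^{v,T}\}_{v\in W}$ is a $\K_T(pt)$-basis of $\K_T(G/B)$, immediately forces $\widetilde{MC}_0(Y(v)^\circ) = \cO^{v,T}$. No obstacle is expected in (b); the only point requiring mild care is in (a), namely that the characterization by fixed-point restrictions uniquely determines the class. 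This is standard: the localization map $\K_T(G/B) \to \prod_{w\in W} \K_T(pt)_{\textit{loc}}$ is injective after base change to $\Frac(\K_T(pt))$, since $G/B$ has finitely many $T$-fixed points.
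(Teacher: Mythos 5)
Your proposal is correct and follows essentially the same route as the paper: both specialize the orthogonality in \Cref{thm:MCtilde} at $y=-1$ (resp.\ $y=0$), substitute the known values of $MC_y(X(u)^\circ)$ from \Cref{prop:specialize}, and in (a) reduce to fixed-point localizations and injectivity of the localization map, while in (b) appeal to the duality \eqref{E:dualst} between ideal and structure sheaves. The only cosmetic difference is that you phrase the intermediate step via $\langle \iota_u, b\rangle = b|_u$ before passing to restrictions, whereas the paper states the restrictions $\kappa|_u$ directly; the content is the same.
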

\begin{proof} \Cref{thm:MCtilde} and \Cref{prop:specialize} imply that $\kappa:=\widetilde{MC}_{-1}(Y(w)^\circ)$
is a class in $\K_T(G/B)[y]$ with localizations
\[ \kappa|_u = \delta_{u,w} {\lambda_{-1} (T^*_{w_0} (G/B))} \/. \]
Then (a) follows from the injectivity of the localization map (see e.g., \cite{chriss2009representation})
and the fact that $\langle \iota_w,\iota_u \rangle = \delta_{u,w} {\lambda_{-1} (T^*_{w}(G/B))}$.
Part (b) is a consequence of the duality between the ideal sheaves and structure sheaves from
\Cref{E:dualst}, combined with \Cref{prop:specialize}(b).
\end{proof}
\section{Equivariant Hirzebruch  classes}\label{s:eqH} 
\subsection{The equivariant Hirzebruch transformation}\label{ssec:eqH}
In the non-equivariant case, going from motivic Chern classes to Chern-Schwartz-MacPherson (CSM) classes uses a 
certain renormalization of the Todd transformation, together with Grothendieck-Riemann-Roch; the procedure 
is explained in \cite[\S0 and \S 3]{brasselet.schurmann.yokura:hirzebruch}. 
In this paper we use  the equivariant Riemann-Roch theorem proved by Edidin and Graham \cite[Theorem~3.1]{edidin2000} to explain the equivariant counterparts
studied first by Weber~\cite{weber1, weber2} for the unnormalized Hirzebruch classes.

Since many of the results explained here are not available (in this generality) in the literature, 
we find it useful to recall the precise hypotheses we utilize. We hope this section may be used 
as a reference in the future.

By $X$ we denote a 
complex algebraic variety. For a commutative ring $R$, the completions
$\widehat{A}_*^{T}(X,R),\widehat{H}_*^{T}(X,R)$ denote the product of the equivariant 
Chow groups \cite{edidin.graham:eqchow}, respectively the equivariant Borel-Moore
homology groups (where the degree is doubled), with coefficients in $R$:
$$\widehat{A}_*^{T}(X,R):= \prod_{ i \le \dim X} A_i^{T}(X)\otimes R\quad \text{or}\quad 
 \widehat{H}_*^{T}(X,R):= \prod_{ i \le \dim X} H_{2i}^{T}(X)\otimes R \quad \/.$$
The Chow and Borel-Moore homology are related by a cycle map 
$cl:A_i^T(X) \to H_{2i}^T(X)$, and one may 
work directly in the Chow context, or in the image under this 
map; cf.~ \cite[\S2.8]{edidin.graham:eqchow}. In what follows we have chosen 
to work in the Borel-Moore context.
~The coefficients ring $R$
will be mostly $\mathbb{Z},\mathbb{Q}, \mathbb{Q}[y]$ or $\mathbb{Q}[y,(1+y)^{-1}]$.
~In case no coefficients are mentioned, we are using $R=\mathbb{Q}$
(as before).

For a $T$-variety $X$ let 
$$td_*^{T}: K_0(\mathfrak{coh}^{T}(\cO_X))\to \widehat{H}_*^{T}(X)$$ 
be the equivariant Todd class transformation to the completion
$\widehat{H}_*^{T}(X)$, constructed in \cite[\S3.2]{edidin2000}. 
Then $td_*^{T}$ is covariant for proper $T$-equivariant morphisms.
Also note that 
$$td_*^{T}(\cO_X))=Td^T(TX)\cap [X]_T$$
for $X$ smooth by \cite[Theorem~3.1(d)(i)]{edidin2000} and~\cite[Remark~6.10 and Lemma~A.1]{EG:nonabelian},
since $T$ is abelian so that the adjoint action of $T$ on its Lie algebra  is trivial
(see also~\cite[pp.~2218-2219]{MS:toric} and compare with ~\cite[Proof of Theorem~3.3]{AMSS:shadows} for the counterpart of Ohmoto's equivariant Chern class transformation).
Recall the equivariant Chern character 
$$\ch_{T}: \K_{T}(X)\to \widehat{H}^*_{T}(X) \/.$$
This is a contravariant ring homomorphism for $T$-equivariant morphisms. Then the Todd transformation 
satisfies the module property
\begin{equation} \label{Ch-td}
td_*^{T}([E]_{T}\otimes [\mathcal{F}]_{T})= \ch_{T}([E]_{T})\cap td_*^{T}([\mathcal{F}]_{T})
\end{equation}
for $[E]_{T}\in \K_{T}(X)$ and $[\mathcal{F}]_{T}\in  \K_0(\mathfrak{coh}^{T}(\cO_X))$
(\cite[Theorem~3.1]{edidin2000}). Recall that
for a $T$-equivariant vector bundle 
$E$, the cohomological Todd class $Td^T(E):=Td^{T}([E]_{T})$ is multiplicative in short exact sequences, and for an equivariant line bundle $\mathcal{L}$ 
it is defined by
$$Td^{T}(\mathcal{L}):=\frac{c_1^T(\mathcal{L})}{1-e^{-c_1^T(\mathcal{L})}} = 1+ \frac{1}{2} c_1^T(\mathcal{L})+ \ldots$$

Define the {\bf unnormalized} (respectively {\bf normalized}) 
{\bf cohomological Hirzebruch class} 
$\widetilde{Td}^{T}_y$ (resp. $Td^{T}_y$) by
$$\widetilde{Td}^{T}_y(\mathcal{L}):=\ch_{T}(\lambda_y(\mathcal{L}^{\vee}))  Td^{T}(\mathcal{L}) =   \frac{c_1^T(\mathcal{L})(1+ye^{-c_1^T(\mathcal{L})})}{1-e^{-c_1^T(\mathcal{L})}}$$
and 
$$Td^{T}_y(\mathcal{L}):=\frac{\widetilde{Td}^{T}_y((1+y)[\mathcal{L}]_{T})}{1+y}= \frac{c_1^T(\mathcal{L})(1+ye^{-c_1^T(\mathcal{L})(1+y)})}{1-e^{-c_1^T(\mathcal{L})(1+y)}}$$
Then extend these definitions to any equivariant vector bundle by requiring that they are multiplicative
on short exact sequences.
Note the specializations:
\begin{equation}\label{E:Tdy=0} \widetilde{Td}^{T}_{y=0}(\mathcal{L})=Td^{T}_{y=0}(\mathcal{L}) =Td^{T}(\mathcal{L}) \/,\end{equation}
and
\begin{equation}\label{E:Tdy=-1} \widetilde{Td}^{T}_{y=-1}(\mathcal{L})=c_1^{T}(\mathcal{L})\quad ,
\quad Td^{T}_{y=-1}(\mathcal{L})=c^T(\mathcal{L}) = 1+c_1^{T}(\mathcal{L}) \/. \end{equation}
The power series ${Td}^{T}_y(\calL)= 1 +\ldots$ has constant coefficient $1$,
whereas $\widetilde{Td}^{T}_y(\calL)=1+y +\ldots$ has constant coefficient 
$1+y$, explaining the name `unnormalized'. 

Combining \Cref{thm:existence} with the equivariant Riemann-Roch theorem proved by Edidin and Graham 
\cite[Theorem~3.1]{edidin2000}
one obtains the following results about the unnormalized equivariant Hirzebruch class transformation.

\begin{theorem}\label{thm:Hirzebruch} Let $X$ be a quasi-projective, 
non-singular, complex algebraic variety with an action of the torus $T$. 
The unnormalized Hirzebruch transformation
$$\widetilde{Td}^{T}_{y,*}:=td^{T}_*\circ MC_y: \K_0^{T}(var/X) \to \widehat{H}_*^{T}(X)[y] 
\subseteq \widehat{H}_*^{T}(X; \mathbb{Q}[y,(1+y)^{-1}])$$
is the unique natural transformation satisfying the following properties:
\begin{enumerate} \item[(a)] It is functorial with respect to $T$-equivariant proper morphisms of non-singular, quasi-projective varieties. 

\item[(b)] It satisfies the normalization condition \[ \widetilde{Td}^{T}_{y,*}([\id_X])=\widetilde{Td}^{T}_{y}(TX)\cap [X]_{T}\/. \]

\item[(c)] It is determined by its image on classes $[f:Z \to X]=f_![\id_Z]$ where $Z$ is a non-singular,  irreducible,
quasi-projective algebraic variety and $f$ is a $T$-equivariant proper morphism.

\item[(d)] It satisfies a Verdier-Riemann-Roch (VRR) formula: for any smooth, $T$-equivariant morphism $\theta: X \to Y$ of quasi-projective and non-singular algebraic varieties, and any $[f: Z \to Y] \in \K_0^{T}(var/Y)$,
\[ \widetilde{Td}^{T}_{y}(T^*_\theta) \cap \theta^*\widetilde{Td}^{T}_{y,*}[f:Z \to Y] = \widetilde{Td}^{T}_{y,*}[\theta^* f:Z \times_Y X \to X] \/. \]

\end{enumerate} \end{theorem}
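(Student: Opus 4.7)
The plan is to deduce Theorem~\ref{thm:Hirzebruch} from Theorem~\ref{thm:existence} (characterizing $MC_y$) together with the established properties of the equivariant Todd transformation $td^{T}_*$ of Edidin--Graham~\cite{edidin2000}, by observing that each of (a)--(d) is a chained version of the corresponding property for one of the two factors. The composition $\widetilde{Td}^{T}_{y,*}=td^{T}_*\circ MC_y$ is well defined since $MC_y$ takes values in $\K_{T}(X)[y]$, which maps into $\K_0(\mathfrak{coh}^{T}(\cO_X))[y]$, the domain of $td^{T}_*$.

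First I would verify (a): $MC_y$ is covariant for proper $T$-equivariant morphisms by Theorem~\ref{thm:existence}(1), and so is $td^{T}_*$ by~\cite[Theorem~3.1]{edidin2000}, hence so is their composition. For the normalization (b), the module property~\eqref{Ch-td} together with $MC_y[\id_X]=\lambda_y(T^*X)$ yields
$$\widetilde{Td}^{T}_{y,*}[\id_X]=td^{T}_*\bigl(\lambda_y(T^*X)\otimes[\cO_X]\bigr)=\ch_{T}(\lambda_y(T^*X))\cap td^{T}_*([\cO_X]);$$
since $td^{T}_*([\cO_X])=Td^{T}(TX)\cap [X]_{T}$ for smooth $X$, the right-hand side is exactly $\widetilde{Td}^{T}_y(TX)\cap [X]_{T}$ by the definition of the unnormalized cohomological Hirzebruch class. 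Property (c) is inherited verbatim from Theorem~\ref{thm:existence}(3). For the Verdier--Riemann--Roch statement (d), I would chain the motivic VRR of Theorem~\ref{thm:existence}(4) with the module property of $td^{T}_*$ and the smooth-pullback form of VRR for the Todd transformation; multiplying $\ch_{T}(\lambda_y(T^*_\theta))$ by $Td^{T}(T_\theta)$ and using multiplicativity of $\lambda_y$ and $Td^{T}$ packages the result as the Hirzebruch class of the relative tangent bundle.

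Uniqueness is then formal from (a), (b), (c): any such transformation is determined by (c) on classes $f_![\id_Z]$ with $Z$ smooth and $f$ proper, on which (a) reduces the value to $f_*\widetilde{Td}^{T}_{y,*}[\id_Z]$, and (b) pins the latter down. The image sits in $\widehat{H}_*^{T}(X)[y]$ with no need to invert $1+y$, because the unnormalized $\widetilde{Td}^{T}_y$ is visibly polynomial in $y$. The main technical point I expect to dwell on is the smooth-pullback form of VRR for $td^{T}_*$ in the completed equivariant Borel--Moore homology $\widehat{H}_*^{T}(-)$: while~\cite{edidin2000} cleanly records covariance and the module property, extracting the flat/smooth base-change compatibility in the completed group requires a careful truncation argument in each fixed equivariant homological degree, and this is the step I would write out in full detail.
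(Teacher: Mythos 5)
Your overall strategy coincides with the paper's: the paper gives no detailed argument, simply stating that the result follows by combining Theorem~\ref{thm:existence} for $MC_y$ with the Edidin--Graham theory for $td_*^T$, and your proposal spells out exactly that composition for each of (a)--(d) plus uniqueness.

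One point deserves attention. Your computation in (d) correctly chains the motivic VRR $MC_y[\theta^*f]=\lambda_y(T^*_\theta)\otimes\theta^*MC_y[f]$ with the module property~\eqref{Ch-td} and the lci/smooth pullback VRR for $td_*^T$, and what comes out is
\[
\widetilde{Td}^{T}_{y,*}[\theta^*f]=\ch_T(\lambda_y(T^*_\theta))\,Td^T(T_\theta)\cap \theta^*\widetilde{Td}^{T}_{y,*}[f]=\widetilde{Td}^{T}_{y}(T_\theta)\cap\theta^*\widetilde{Td}^{T}_{y,*}[f],
\]
that is, the Hirzebruch class of the relative \emph{tangent} bundle $T_\theta$. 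The statement as printed in the paper has $\widetilde{Td}^{T}_{y}(T^*_\theta)$, which is inconsistent with the definition $\widetilde{Td}^{T}_{y}(E)=\ch_T(\lambda_y(E^\vee))\,Td^T(E)$ and with the normalization (b) using $TX$ rather than $T^*X$. Your argument therefore establishes (d) with $T_\theta$ in place of $T^*_\theta$, and you should say so explicitly: otherwise a reader will think your derivation fails to reach the claimed formula, when in fact it identifies a typo. Also, the ``careful truncation argument'' you anticipate for the smooth-pullback VRR of $td_*^T$ in the completed group is not a real obstruction: $\theta^*$ is a degreewise map (shifting by the relative dimension), so the VRR of \cite[Theorem~3.1(d)]{edidin2000} passes to $\widehat{H}_*^T(-)$ without extra work, which is exactly how the paper uses it elsewhere (e.g.\ \eqref{eq:BGG-td}).
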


If one forgets the $T$-action, then the unnormalized equivariant Hirzebruch transformation above 
recovers the corresponding non-equivariant transformation
$\widetilde{T}_{y,*}$ from
\cite{brasselet.schurmann.yokura:hirzebruch} (either by its construction, or by the properties (a)-(c) from \Cref{thm:Hirzebruch}
and the corresponding results from \cite{brasselet.schurmann.yokura:hirzebruch}.) 

\begin{remark}\label{rem:Hirzebruch} \Cref{thm:Hirzebruch} and its proof work 
more generally for a possibly singular, quasi-projective $T$-equivariant base variety
 $X$. Moreover, $\widetilde{Td}^{T}_{y,*}$ commutes with exterior products:
\begin{equation}\label{Hirze-product}
\widetilde{Td}^{T}_{y,*}[f\times f': Z\times Z'\to X\times X']=\widetilde{Td}^{T}_{y,*}[f: Z\to X] 
\boxtimes \widetilde{Td}^{T}_{y,*}[f': Z'\to X'] \:.
\end{equation}
This follows as in the non-equivariant context~\cite[Corollary 3.1]{brasselet.schurmann.yokura:hirzebruch} 
from part (3) of \Cref{thm:Hirzebruch} 
and the multiplicativity of the corresponding  equivariant cohomological class 
for smooth and quasi-projective $T$-varieties $X,X'$:
$$ \widetilde{Td}^{T}_{y}(T^*(X\times X')) =  \widetilde{Td}^{T}_{y}(T^*X)\boxtimes \widetilde{Td}^{T}_{y}(T^*X') \in \widehat{H}^*_{T}(X\times X')[y]\:.$$
\end{remark}
One can also define a {\em normalized} equivariant Hirzebruch class transformation. With this aim, we 
introduce the following functorial {\bf (co)homological Adams operations}:
$$\psi_{1+y}^*: \widehat{H}^*_{T}(X,\mathbb{Q}[y])\to \widehat{H}^*_{T}(X,\mathbb{Q}[y]) \quad \text{and} \quad
\psi^{1+y}_*: \widehat{H}_*^{T}(X,\mathbb{Q}[y])\to \widehat{H}_*^{T}(X,\mathbb{Q}[y,(1+y)^{-1}]) $$ 
given by multiplication with $(1+y)^i$ on $H^{2i}_{T}(X,\mathbb{Q}[y])$ respectively $(1+y)^{-j}$ on $H_{2j}^{T}(X,\mathbb{Q}[y])$,
and satisfying the module and ring  properties
\begin{equation}\label{eq:psi-module}
\psi^{1+y}_*(-\cap -)=\psi_{1+y}^*(-)\cap \psi^{1+y}_*(-) \quad \text{and} \quad \psi_{1+y}^*(-\cup -)=\psi_{1+y}^*(-)\cup \psi_{1+y}^*(-) \:.
\end{equation}
(In the Chow context, the (co)homological grading will not be doubled.)
\begin{remark}\label{rem:duality}
These module and ring  properties also hold for the functorial (co)homological duality transformations
$$\psi_{-1}^*: \widehat{H}^*_{T}(X,\mathbb{Q})\to \widehat{H}^*_{T}(X,\mathbb{Q}) \quad \text{and} \quad
  \psi^{-1}_*: \widehat{H}_*^{T}(X,\mathbb{Q})\to \widehat{H}_*^{T}(X,\mathbb{Q}) $$ 
given by multiplication with $(-1)^i$ on $H^{2i}_{T}(X,\mathbb{Q})$ resp. $(-1)^{j}$ on $H_{2j}^{T}(X,\mathbb{Q})$.
If $X$ is smooth, these are consistent with the corresponding duality involutions in K-theory:
$$\ch_{T}\circ (-)^{\vee}= \psi_{-1}^*\circ \ch_{T}(-): \K_{T}(X)\to \widehat{H}^*_{T}(X,\mathbb{Q})\:,$$
since $c^{T}_1(\mathcal{L}^{\vee})=-c^{T}_1(\mathcal{L})$ for a $T$-equivariant line bundle $\mathcal{L}$.
Similarly, for the Grothendieck-Serre duality $\mathcal{D}$, 
$$td^{T}_*\circ \calD(-)= \psi^{-1}_*\circ td^{T}_*(-): \K_{T}(X)\to \widehat{H}_*^{T}(X,\mathbb{Q})\:,$$
since
 $$td^{T}_*( \omega_{X})=\ch_{T}(\omega_{X})Td^{T}(TX)\cap [X]_{T}
=Td^{T}(T^*X)\cap [X]_{T}=
(-1)^{\dim X}\psi^{-1}_*(Td^{T}(TX)\cap [X]_{T}) \:.$$
\end{remark}
By definition, the (co)homological Adams operations satisfy
 $$\psi_{1+y}^*(c_1^{T}(\mathcal{L}))=(1+y)c_1^{T}(\mathcal{L});\quad 
{\psi_{1+y}^*(\widetilde{Td}^{T}_{y}(\mathcal{L}))=(1+y)Td^{T}_{y}(\mathcal{L}) }$$
 and 
$\psi^{1+y}_*([X]_{T})=(1+y)^{-d}[X]_{T}$ for $X$ of pure dimension $d$. It follows that
\begin{equation}\label{eq:Psi-smooth}
\psi^{1+y}_*(\widetilde{Td}^{T}_{y}(TX)\cap [X]_{T}) =\psi_{1+y}^*(\widetilde{Td}^{T}_{y}(TX))\cap \psi^{1+y}_*([X]_{T})= Td^{T}_{y}(TX)\cap [X]_{T}
\end{equation}
for $X$ smooth and pure dimensional. In particular 
$$
\psi^{1+y}_*({\widetilde{Td}_{y}^{T}(TX)}\cap [X]_{T})\in \widehat{H}_*^{T}(X,\mathbb{Q}[y])\subseteq 
\widehat{H}_*^{T}(X,\mathbb{Q}[y,(1+y)^{-1}]) \/.
$$
This motivates the following:
\begin{definition} The {\bf normalized 
equivariant Hirzebruch transformation} $Td^{T}_{y,*}$ is defined by: 
\begin{equation}\label{eq:nHir}
Td^{T}_{y,*}:= \psi^{1+y}_*\circ \widetilde{Td}^{T}_{y,*}: \K_0^{T}(var/X) \to 
\widehat{H}_*^{T}(X,\mathbb{Q}[y]) \subseteq \widehat{H}_*^{T}(X,\mathbb{Q}[y,(1+y)^{-1}])\:.
\end{equation}
\end{definition}
{After replacing the unnormalized classes by the normalized ones, this transformation satisfies the properties from \Cref{thm:Hirzebruch} and \Cref{rem:Hirzebruch} above.~Furthermore, the normalized transformation
has values in $\widehat{H}_*^{T}(X,\mathbb{Q}[y])$,}
by \Cref{thm:Hirzebruch}(c) and 
\Cref{eq:Psi-smooth} above. 

{\begin{remark} \label{rem:rigid2}
The equivariant $\chi_y$-genus of a $T$-variety $Z$ may be calculated by
$$\chi_y(Z)=\widetilde{Td}_{y}^{T}([Z\to pt]) \in \widehat{H}^*_{T}(pt)[y] \quad \text{and} \quad
\chi_y(Z)=Td_{y}^{T}([Z\to pt]) \in \widehat{H}^*_{T}(pt)[y]\:.$$
By \emph{rigidity} of the $\chi_y$-genus (see~\cite[Theorem~7.2]{weber1}), these 
quantities
contain no information about the action of $T$, i.e.
both are equal to the non-equivariant $\chi_y$-genus under the embedding 
$\mathbb{Z}[y]\to \mathbb{Q}[y]\to  \widehat{H}^*_{T}(pt)[y]$.
\end{remark}
}

The definition of the Hirzebruch classes implies that  
$$Td^{T}_{y=-1}(TX)\cap [X]_{T}= c^{T}(TX)\cap [X]_{T} \quad \text{and} \quad Td^{T}_{y=0}(TX)\cap [X]_{T}= Td^{T}(TX)\cap [X]_{T} $$
for $X$ smooth and pure dimensional. As in the non-equivariant context of
~\cite{brasselet.schurmann.yokura:hirzebruch} (for the non-equivariant normalized Hirzebruch class $T_{y,*}$)
this implies the following:
\begin{corol}\label{cor:special-Ty}
The equivariant Hirzebruch transformation $Td^{T}_{y,*}$ fits into the following commutative diagram of natural transformations:
$$\begin{CD}
\mathcal{F}^{T}(X) @< e << \K_0^{T}(var/X) @> MC_{y=0} >> \K_0(\mathfrak{coh}^{T}(\cO_X))\\
@V c_*^{T}\otimes \mathbb{Q} VV @V Td^{T}_{y,*} VV @VV td^{T}_* V\\
\widehat{H}_*^{T}(X) @< y=-1 << \widehat{H}_*^{T}(X,\mathbb{Q}[y])@> y=0 >>
\widehat{H}_*^{T}(X) \:.
\end{CD}$$
\end{corol}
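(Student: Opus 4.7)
The right-hand square is essentially tautological and will serve as my starting point. By construction $Td^{T}_{y,*} = \psi^{1+y}_* \circ \widetilde{Td}^{T}_{y,*} = \psi^{1+y}_* \circ td^{T}_* \circ MC_y$, and at $y=0$ the Adams operator $\psi^{1+y}_*$ acts by multiplication by $1^{-j}=1$ on $H_{2j}^{T}(X,\mathbb{Q})$; hence it is the identity, and $Td^{T}_{y=0,*} = td^{T}_* \circ MC_{y=0}$, which is precisely what the right square asserts.

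For the left square, the plan is to verify that both $Td^{T}_{y=-1,*}$ and $(c_*^{T}\otimes \mathbb{Q}) \circ e$ are natural transformations from $\K_0^{T}(var/X)$ to $\widehat{H}_*^{T}(X,\mathbb{Q})$ that are covariant under equivariant proper morphisms, and then to deduce the desired identity from the determinacy property (c) of \Cref{thm:Hirzebruch}: any element of $\K_0^{T}(var/X)$ is a $\mathbb{Z}$-linear combination of classes $f_![\id_Z]$ with $Z$ smooth quasi-projective and $f$ equivariant and proper, so it suffices to check agreement on classes of the form $[\id_Z]$. On such a generator, the normalization for $Td^{T}_{y,*}$ (which follows from \Cref{eq:Psi-smooth}) gives
\[
Td^{T}_{y,*}([\id_Z]) = Td^{T}_{y}(TZ) \cap [Z]_{T},
\]
which at $y=-1$ specializes by \Cref{E:Tdy=-1} and multiplicativity to $c^{T}(TZ) \cap [Z]_{T}$. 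On the other side, $e([\id_Z]) = \one_Z$, and by the normalization property of Ohmoto's equivariant MacPherson transformation one has $c_*^{T}(\one_Z) = c^{T}(TZ) \cap [Z]_{T}$ for smooth $Z$. The two sides agree.

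The main technical point requiring care is that $Td^{T}_{y=-1,*}$ is itself a well-defined natural transformation with values in $\widehat{H}_*^{T}(X,\mathbb{Q})$, so that the lower-left horizontal arrow $(y=-1)$ in the diagram is unambiguous. This is precisely the content of the statement preceding \Cref{cor:special-Ty} that $Td^{T}_{y,*}$ takes values in $\widehat{H}_*^{T}(X,\mathbb{Q}[y])$ rather than only in $\widehat{H}_*^{T}(X,\mathbb{Q}[y,(1+y)^{-1}])$; once this integrality is known, the ring homomorphism $\mathbb{Q}[y]\to \mathbb{Q}$ evaluating at $y=-1$ commutes with all the covariant pushforwards involved, so $Td^{T}_{y=-1,*}$ inherits functoriality from $Td^{T}_{y,*}$. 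Naturality of $(c_*^{T}\otimes \mathbb{Q})\circ e$ follows from Ohmoto's theorem together with the fact that $e$ commutes with proper pushforward (since $f_!=f_*$ for proper $f$). Taken together with the computation on smooth generators above, this completes the argument.
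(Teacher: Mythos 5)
Your proof is correct and follows essentially the same strategy the paper uses: the right square is checked directly from the definition $Td^{T}_{y,*}=\psi_*^{1+y}\circ td_*^T\circ MC_y$ (with $\psi_*^{1}=\id$), and the left square is established by comparing the two sides on generators $[\id_Z]$ with $Z$ smooth via the normalization $Td^T_{y=-1}(TZ)\cap[Z]_T = c^T(TZ)\cap[Z]_T$ and Ohmoto's normalization for $c_*^T$, exactly as the paper does by citing the normalization identity preceding the corollary and appealing to the BSY determinacy argument. Your explicit note that the well-definedness of $Td^{T}_{y=-1,*}$ rests on $Td^T_{y,*}$ taking values in $\widehat{H}^T_*(X,\mathbb{Q}[y])$ is a useful addition that the paper leaves implicit.
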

Here $c_*^{T}$ is the \textit{equivariant Chern class} transformation defined by Ohmoto~\cite{ohmoto:eqcsm};
see more about it in section~\ref{sec:CSM}. We note that $c_*$ has values in the {\em integral} homology, and also 
the completion is not needed, i.e., 
$c_*^{T}: \mathcal{F}^{T}(X) \to H_*^{T}(X,\mathbb{Z}) \subseteq \widehat{H}_*^{T}(X,\mathbb{Z})$.
The rational coefficients are needed due to the use of the Chern character.
Finally $e([\id_X])=\one_X$ even for $X$ singular, so that the equivariant normalized Hirzebruch class  $Td^{T}_{y,*}(X):=Td^{T}_{y,*}([\id_X])$ specializes for $y=-1$ also for a singular $X$
to the (rationalized) equivariant Chern-Schwartz-MacPherson (CSM) class $c_*^{T}(X):=c_*^{T}(\one_X)$ of $X$, and similarly for a locally closed $T$-invariant subvariety $Z\subseteq X$:
\begin{equation}\label{eq:T=c}
Td^{T}_{y=-1,*}[Z\hookrightarrow  X]= c^{T}_*(\one_Z)\otimes \mathbb{Q}\:.
\end{equation}
Note that if $H_*^{T}(X,\mathbb{Z})$ is torsion free, no information about $c^{T}_*(\one_Z)$ 
is lost by switching to rational coefficients; this is the case for the flag manifolds $X=G/P$.

{If $Z \subseteq X$ we will refer to the classes $\widetilde{Td}_{y,*}[Z \hookrightarrow X]$ and ${Td}_{y,*}[Z \hookrightarrow X]$ in $\widehat{H}_*^T(X, \mathbb{Q}[y])$ as the {\bf unnormalized}, respectively the {\bf normalized Hirzebruch class} of $Z$ in $X$.}

\subsection{Hirzebruch classes for flag manifolds} We turn now to the study 
of the Hirzebruch transformation in the case 
when $X=G/B$. There is the following commutative diagram:
\begin{equation}\label{diagr-pairing}
\begin{CD}
\K_T(X) \times \K_0(\mathfrak{coh}^{T}(\cO_X)) @> \otimes >>\K_0(\mathfrak{coh}^{T}(\cO_X)) @> \int_X >> \K_{T}(pt)\\
@V \ch_T \otimes td^{T}_* VV @V td^{T}_* VV @VV \ch_{T}=td^{T}_* V\\
\widehat{H}^*_{T}(X) \times \widehat{H}_*^{T}(X) @> \cap >> \widehat{H}_*^{T}(X)  @> \int_X >> \widehat{H}^*_{T}(pt) \:.
 \end{CD}
\end{equation}
Since $X$ is smooth, from \eqref{Ch-td} we have that $td^{T}_*(-)=\ch_{T}(-)Td^{T}(TX)$. The functoriality of $td^{T}_*$ gives the equivariant Grothendieck-Hirzebruch-Riemann-Roch theorem
for an equivariant morphism of smooth $T$-varieties; cf.~\cite[Theorem~3.1]{edidin2000}. In particular, the GHRR theorem and \eqref{E:dualst} imply that
\begin{equation}\label{equ:todd-duals}
\langle td_*^{T}(\cO_{u}^{T}), \ch_{T}( \mathcal{I}^{v,T}) \rangle=\delta_{u,v}= \langle td_*^{T}( \mathcal{I}_u^T), \ch_{T}(\cO^{v,T}) \rangle \:.
\end{equation}
Utilizing that $\{ [X(w)]_T \}_{w \in W}$ is a $H^*_T(pt)$-basis of $H^*_T(X)$, it is not difficult to show that 
the {Todd} classes $td^{T}_*(X(w)):=td_*^{T}(\cO_{w}^{T})$ of the Schubert varieties $X(w)$, respectively 
the Todd classes of the corresponding ideal sheaves $td_*^{T}( \mathcal{I}_w)$, give two bases of 
$\widehat{H}_*^{T}(X)$ as a $\widehat{H}^*_{T}(pt)$-module. The key point is that 
the corresponding coefficient matrix is triangular with respect to the Bruhat ordering, with units on the diagonal.
This follows by 
the functoriality of $td^{T}_*$ for a closed inclusion $X(w)\hookrightarrow X$. 
The corresponding dual bases are given by the Chern characters $ \ch_{T}( \mathcal{I}^{v,T})$ 
respectively $\ch_{T}(\cO^{v,T})$ of the opposite Schubert varieties
for $v\in W$.
If one specializes all the equivariant parameters to $0$ one recovers the natural map  
$\widehat{H}_*^{T}(X)\to H_*(X)$ forgetting the $T$-action, and mapping $td_*^{T}$ to $td_*$. 
In this case ${td_*(\calO_w)}=[X(w)]+ l.o.t.$ and $td_*( \mathcal{I}_w)=[X(w)]+ l.o.t.$ (lower order terms).

Recall from \Cref{E:BGG-ch} that the Demazure and BGG operators are related by
\begin{equation}\label{E:BGG-ch2}
\ch_{T} (\partial_i (-)) = \parcoh_i (Td^{T}(T_{p_i})\ch_{T}(-)): \K_{T}(X) \to 
\widehat{H}^*_{T}(X)\:.
\end{equation}
Similarly, the equivariant Verdier-Riemann-Roch theorem (VRR) of 
\cite[Theorem~3.1(d)]{edidin2000} implies
\begin{equation}\label{eq:BGG-td}
td^{T}_* (\partial_i (-)) = Td^{T}(T_{p_i})\parcoh_i (td^{T}_*(-)): \K_0(\mathfrak{coh}^{T}(\cO_X)) \to \widehat{H}_*^{T}(X)\:.
\end{equation}
Then identities \eqref{E:BGG-ch2} and \eqref{eq:BGG-td}, combined with \eqref{equ:BGGonstru}, translate into
\begin{equation}\label{equ:BGGChern}
\parcoh_i ( Td^{T}(T_{p_i}) \ch_{T}(\cO_{w}^{T})) = \begin{cases} \ch_{T}(\cO_{ws_i}^{T}) & \textrm{ if } ws_i>w \/; \\
\ch_{T}(\cO_{w}^{T}) & \textrm{ otherwise. }  \end{cases} 
\end{equation}
and
\begin{equation}\label{equ:BGGTodd}
 Td^{T}(T_{p_i})\parcoh_i (td^{T}_*(\cO_{w}^{T})) = \begin{cases} td^{T}_*(\cO_{ws_i}^{T}) & \textrm{ if } ws_i>w \/; \\
 td^{T}_*(\cO_{w}^{T}) & \textrm{ otherwise }  \end{cases} 
\end{equation}
Similarly, Lemma~\ref{lemma:yspec}(a) translates into
\begin{equation}\label{equ:BGGChern-I}
(\parcoh_i Td^{T}(T_{p_i})-\id) ( \ch_{T} (\mathcal{I}_w^T)) = \begin{cases} \ch_{T}( \mathcal{I}_{ws_i}^T) & \textrm{ if } ws_i>w \/; \\
-\ch_{T}( \mathcal{I}_w^T) & \textrm{ otherwise. }  \end{cases} 
\end{equation}
and
\begin{equation}\label{equ:BGGTodd-I}
 (Td^{T}(T_{p_i})\parcoh_i -\id)(td^{T}_*( \mathcal{I}_w^T)) = \begin{cases} td^{T}_*( \mathcal{I}_{ws_i}^T) & \textrm{ if } ws_i>w \/; \\
- td^{T}_*( \mathcal{I}_w^T) & \textrm{ otherwise }  \end{cases} 
\end{equation}
{The specializations from \Cref{cor:special-Ty} and the last two equations motivate the definition of the 
{\bf unnormalized (ordinary and dual) Hirzebruch operators}
$\widetilde{\mathcal{T}}_i^H,\widetilde{\mathcal{T}}_i^{H,\vee} :\widehat{H}^*_{T}(X,\mathbb{Q})[y]  \to 
\widehat{H}^*_{T}(X,\mathbb{Q})[y]$ by 
\begin{equation}\label{eq:unnDL-H} 
\widetilde{\mathcal{T}}_i^H := 
\widetilde{Td}^{T}_{y}(T_{p_i})\parcoh_i  - \id \/; \quad 
\widetilde{\mathcal{T}}_i^{H,\vee} = \parcoh_i \circ(\widetilde{Td}^{T}_{y}(T_{p_i})\cup (-) ) - \id \/.
\end{equation}
Similarly we define
the {\bf normalized Hirzebruch operators} 
${\mathcal{T}}_i^H,{\mathcal{T}}_i^{H,\vee} :\widehat{H}^*_{T}(X,\mathbb{Q})[y]  \to 
\widehat{H}^*_{T}(X,\mathbb{Q})[y]$ by   
\begin{equation}\label{eq:normDL-H} 
{\mathcal{T}}_i^H := 
{Td}^{T}_{y}(T_{p_i})\parcoh_i  - \id \/; \quad 
{\mathcal{T}}_i^{H,\vee} = \parcoh_i \circ({Td}^{T}_{y}(T_{p_i})\cup (-) ) - \id \/.
\end{equation}
All these operators are $\widehat{H}^*_T(pt)$-linear. 
\begin{lemma}\label{lemma:Hcomm} The Hirzebruch operators satisfy the following commutativity relations:

(a) As operators $\K_{T}(X)[y] \to \widehat{H}^*_{T}(X,\mathbb{Q})[y]$,
\begin{equation}\label{E:comm-tildeTH} td^{T}_*  \mathcal{T}_i = \widetilde{\mathcal{T}}_i^H td^{T}_* \quad \textrm{ and } \quad  \ch_{T} \mathcal{T}_i^\vee = \widetilde{\mathcal{T}}_i^{H,\vee} \ch_{T} \/.\end{equation}

(b) As operators $\K_{T}(X)[y] \to \widehat{H}^*_{T}(X,\mathbb{Q}[y,(1+y)^{-1}])$,
\begin{equation}\label{E:comm-TH} \psi_*^{1+y}td^{T}_* \mathcal{T}_i = \mathcal{T}_i^H \psi_*^{1+y}td^{T}_* \/ \quad \textrm{ and } \quad 
\psi^*_{1+y}\ch_{T} \mathcal{T}_i^\vee = \mathcal{T}_i^{H,\vee} \psi^*_{1+y} \ch_{T} \/.\end{equation}
\end{lemma}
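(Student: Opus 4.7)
The plan for part (a) is to translate each K-theoretic identity into the homology or cohomology side by expanding the operators and invoking two inputs: the module property \eqref{Ch-td} of $td^T_*$ and the ring-homomorphism property of $\ch_T$ on the smooth variety $X=G/B$, together with the Riemann--Roch-type identities \eqref{E:BGG-ch2} and \eqref{eq:BGG-td}. Concretely, for the first identity I expand
\[
td^T_*\mathcal{T}_i(a) = td^T_*\bigl(\lambda_y(T^*_{p_i})\otimes \partial_i(a)\bigr) - td^T_*(a),
\]
pull out $\ch_T(\lambda_y(T^*_{p_i}))$ via the module property, rewrite $td^T_*\partial_i$ as $Td^T(T_{p_i})\parcoh_i\, td^T_*$, and observe that the combined cohomological factor $\ch_T(\lambda_y(T^*_{p_i}))\cdot Td^T(T_{p_i})$ is by definition $\widetilde{Td}^T_y(T_{p_i})$. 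The second identity is dual: I start from
\[
\ch_T\mathcal{T}_i^\vee(a)=\ch_T\partial_i\bigl(\lambda_y(T^*_{p_i})\otimes a\bigr)-\ch_T(a),
\]
push $\ch_T$ through $\partial_i$ using \eqref{E:BGG-ch2}, and split the resulting $\ch_T$ of the tensor product multiplicatively to reassemble $\widetilde{\mathcal{T}}_i^{H,\vee}\ch_T(a)$.

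For part (b), given (a), it suffices to prove the conjugation identities $\psi^{1+y}_*\widetilde{\mathcal{T}}_i^H = \mathcal{T}_i^H\psi^{1+y}_*$ and $\psi^*_{1+y}\widetilde{\mathcal{T}}_i^{H,\vee}=\mathcal{T}_i^{H,\vee}\psi^*_{1+y}$. Three ingredients drive this: (i) $\psi^*_{1+y}$ is a cohomological ring homomorphism, and the substitution $x\mapsto (1+y)x$ in the relevant power series yields $\psi^*_{1+y}(\widetilde{Td}^T_y(T_{p_i}))=(1+y)\,Td^T_y(T_{p_i})$, a direct computation from the explicit formulas for the two classes; (ii) the module property \eqref{eq:psi-module} of the Adams operations; and (iii) the fact that $\parcoh_i$ raises Borel--Moore homology by one complex unit (equivalently lowers cohomological degree by one), which gives $\psi^{1+y}_*\parcoh_i=(1+y)^{-1}\parcoh_i\psi^{1+y}_*$ and the analogous cohomological identity. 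The stray $(1+y)$ produced by (i) is then exactly cancelled by the $(1+y)^{-1}$ from (iii), converting $\widetilde{Td}^T_y(T_{p_i})$ into $Td^T_y(T_{p_i})$ as required.

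The main obstacle is the degree-bookkeeping in (b): verifying that the Adams normalization, multiplying by $(1+y)^i$ on $H^{2i}_T$ and by $(1+y)^{-j}$ on $H_{2j}^T$, interacts with $\parcoh_i$ precisely in the way claimed is the whole point, since this is exactly what encodes the passage from the unnormalized class to the normalized class. For the dual operator one must be careful to commute the Adams operation past the cup product \emph{before} commuting it past $\parcoh_i$; once the order of operations is set correctly, the two identities of (b) follow by routine rearrangement from the corresponding identities of (a).
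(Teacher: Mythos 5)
Your proposal is essentially identical to the paper's argument. Part (a) is the same telescoping computation, using the module property of $td^T_*$ and the ring-homomorphism property of $\ch_T$ together with the VRR/GHRR relations \eqref{eq:BGG-td} and \eqref{E:BGG-ch2}, reassembling $\widetilde{Td}^T_y(T_{p_i}) = \ch_T(\lambda_y(T^*_{p_i}))\,Td^T(T_{p_i})$; and part (b) is reduced, exactly as in the paper's \Cref{lemma:commHops}, to the conjugation identities $\psi^{1+y}_*\widetilde{\mathcal{T}}_i^H = \mathcal{T}_i^H\psi^{1+y}_*$ and $\psi^*_{1+y}\widetilde{\mathcal{T}}_i^{H,\vee}=\mathcal{T}_i^{H,\vee}\psi^*_{1+y}$, proved from the module/ring properties of the Adams operations, the identity $\psi^*_{1+y}\widetilde{Td}^T_y = (1+y)\,Td^T_y$, and the degree-shift rule for $\parcoh_i$. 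One very minor remark: in the dual case the operator is $\parcoh_i\circ(\widetilde{Td}^T_y(T_{p_i})\cup(-))$, so when pushing $\psi^*_{1+y}$ through from the left one meets $\parcoh_i$ before the cup product (the paper commutes in that order); your closing sentence states the opposite order, but since all these operations commute this is a matter of exposition and not a gap.
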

\begin{proof} The first commutativity relation in part (a) follows from the following sequence of equalities:
\[ \begin{split} \widetilde{\mathcal{T}}_i^H td^{T}_* & = \widetilde{Td}_y^T (T_{p_i})\parcoh_i td_* - td_*  \\
& = \ch_T(\lambda_y(T^*_{p_i})) Td(T_{p_i}) \parcoh_i td_* - td_* \\
& = \ch_T(\lambda_y(T^*_{p_i}))td_* \partial_i -td_* \\
& = td_* (\lambda_y(T^*_{p_i}) \partial_i - \id) \\
& = td_* \mathcal{T}_i \/. \end{split}\]
Here the third equality follows from \eqref{eq:BGG-td}, and the fourth from the module property \eqref{Ch-td}
of the Todd transformation. The second commutativity relation in (a) follows from 
\[ \begin{split} \widetilde{\mathcal{T}}_i^{H,\vee} \ch_{T} & = \parcoh_i (\widetilde{Td}_y(T_{p_i}) \ch_T) - \ch_T \\
& =  \parcoh_i ({Td(T_{p_i})} \ch_T(\lambda_y(T^*_{p_i}) \cup (-) )) - \ch_T \\
& = \ch_T \partial_i (\lambda_y(T^*_{p_i}) \cup (-)) - \ch_T \\
& = \ch_T (\partial_i \lambda_y(T^*_{p_i}) - \id) \\
& = \ch_T \mathcal{T}_i^\vee \/.
\end{split}\]
In this case, the second equality follows since $\ch_T$ is a ring homomorphism, the third from \eqref{E:BGG-ch2}, and the rest by the definitions. 
Part (b) follows from (a) and the identities in \Cref{lemma:commHops} below.
\end{proof} 

\begin{lemma}\label{lemma:commHops} As operators $\widehat{H}^*_{T}(X)[y] \to \widehat{H}^*_{T}(X,\mathbb{Q}[y,(1+y)^{-1}])$,
\begin{equation}\label{E:comm-TH1}
 \psi_*^{1+y}\widetilde{\mathcal{T}}_i^H=  \mathcal{T}_i^H \psi_*^{1+y} \/
 \quad \textrm{ and } \quad 
\psi^*_{1+y}\widetilde{\mathcal{T}}_i^{H,\vee} = \mathcal{T}_i^{H,\vee} \psi^*_{1+y}  \/.
\end{equation}
\end{lemma}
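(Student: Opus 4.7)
The plan is to reduce both identities to a degree-by-degree calculation using three ingredients: (i) the explicit comparison between the unnormalized and normalized Todd classes on a line bundle, (ii) the module and ring properties of the Adams operations recorded in \eqref{eq:psi-module}, and (iii) the (co)homological degree shift produced by the BGG operator $\parcoh_i$.

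First, since the $-\id$ summand in each Hirzebruch operator commutes trivially with any $\psi$, each identity reduces to showing that the appropriate $\psi$ intertwines $\widetilde{Td}^{T}_{y}(T_{p_i})\parcoh_i$ with $Td^{T}_{y}(T_{p_i})\parcoh_i$ in the ordinary case, and the corresponding composition with cup product on the left and $\parcoh_i$ applied last in the dual case.

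Second, I would establish the auxiliary identity $\psi_{1+y}^*(\widetilde{Td}^{T}_{y}(T_{p_i})) = (1+y)\,Td^{T}_{y}(T_{p_i})$, which holds because $T_{p_i}$ is a line bundle. Indeed, if $f_k(y)\,c^k$ is the degree-$2k$ component of $\widetilde{Td}^{T}_{y}(T_{p_i})$ with $c = c_1^{T}(T_{p_i})$, then the corresponding component of $Td^{T}_{y}(T_{p_i})$ is $(1+y)^{k-1} f_k(y)\,c^k$ by the defining substitution $c \mapsto (1+y)c$ followed by division by $(1+y)$; applying $\psi_{1+y}^*$ rescales the degree-$2k$ piece by $(1+y)^k$, giving the claim.

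With this in hand, the first identity in \eqref{E:comm-TH1} follows by a direct weight computation: for $\alpha \in H^{T}_{2j}(X)$, the class $\parcoh_i(\alpha)$ sits in $H^{T}_{2j+2}(X)$, and capping with the degree-$2k$ piece of $\widetilde{Td}^{T}_{y}(T_{p_i})$ lands in homological degree $2j+2-2k$, so $\psi_*^{1+y}$ rescales by $(1+y)^{k-j-1}$; on the other side, $\psi_*^{1+y}(\alpha)$ is rescaled by $(1+y)^{-j}$ and the degree-$2k$ piece of $Td^{T}_{y}(T_{p_i})$ contributes an additional $(1+y)^{k-1}$ by the auxiliary identity, producing the same overall weight. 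The dual statement is obtained by the parallel calculation in cohomology, with cup product in place of cap and the ring property of $\psi_{1+y}^*$ in place of the module property of $\psi_*^{1+y}$. The only genuine obstacle is the bookkeeping of these several $(1+y)$-weights; once the auxiliary identity is in place, all factors cancel automatically.
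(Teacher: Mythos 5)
Your proof is correct and uses exactly the same ingredients as the paper's: the line-bundle identity $\psi_{1+y}^*(\widetilde{Td}^{T}_{y}(\mathcal{L}))=(1+y)\,Td^{T}_{y}(\mathcal{L})$, the module/ring properties of the Adams operations from \eqref{eq:psi-module}, and the fact that $\parcoh_i$ shifts (co)homological complex degree by one. The paper packages these facts into a short chain of equalities using the module property abstractly, while you make the same cancellation explicit degree-by-degree; these are presentationally different but mathematically identical.
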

\begin{proof}
The first commutativity relation follows similarly to the proof of 
\Cref{lemma:Hcomm}(a)
above from the following sequence of equalities:
\[ \begin{split} \psi_*^{1+y}\widetilde{\mathcal{T}}_i^H & = \psi_*^{1+y}( \widetilde{Td}_y^T (T_{p_i})\parcoh_i - \id)  \\
& = \psi^*_{1+y}( \widetilde{Td}_y^T(T^*_{p_i})) \psi_*^{1+y} \parcoh_i  - \psi_*^{1+y} \\
& = \frac{ \psi^*_{1+y}( \widetilde{Td}_y^T(T^*_{p_i}))}{1+y} \parcoh_i\psi_*^{1+y} -\psi_*^{1+y} \\
& = Td_y^T(T^*_{p_i}))\parcoh_i \psi_*^{1+y} -\psi_*^{1+y} \\
& = \mathcal{T}_i^H \psi_*^{1+y} \/. \end{split}\]
Here the second equality follows from  the module property \eqref{eq:psi-module}
of the Adams transformation, and the third uses the property
$$\psi_*^{1+y}\circ \parcoh_i = (1+y)^{-1}\parcoh_i\circ \psi_*^{1+y}\:,$$
since  $\parcoh_i$ shifts the  complex homological  degree by one.
 The second commutativity relation follows similarly from: 
\[ \begin{split} \psi^*_{1+y}\widetilde{\mathcal{T}}_i^{H,\vee} & =\psi^*_{1+y}( \parcoh_i \widetilde{Td}_y(T_{p_i})  - \id) \\
& =  \parcoh_i\left(\frac{\psi^*_{1+y}}{1+y} ( \widetilde{Td}_y(T_{p_i}) \cup (-) )\right) - \psi^*_{1+y}\\
& = \parcoh_i\left(\frac{\psi^*_{1+y}  \widetilde{Td}_y(T_{p_i})}{1+y}\psi^*_{1+y}(-) \right ) - \psi^*_{1+y}\\
& = \parcoh_i( Td_y^T(T^*_{p_i}) \psi^*_{1+y}(-)) -\psi^*_{1+y} \\
& =  \mathcal{T}_i^{H,\vee} \psi^*_{1+y}  \/.
\end{split}\]
In this case, the second equality uses the property
$$\psi_{1+y}^*\circ \parcoh_i = \parcoh_i\circ ((1+y)^{-1}\psi^*_{1+y})\:,$$
since  $\parcoh_i$ shifts the  complex cohomological  degree by minus one.
The third equality follows since $\psi^*_{1+y}$ is a ring homomorphism.
\end{proof}

\begin{lemma}\label{lemma:adjoint} (a) The ordinary (normalized/unnormalized) Hirzebruch 
operators are adjoint to the dual operators, i.e.~for any $a,b \in \widehat{H}^*_{T}(X,\mathbb{Q})[y]$,
\[ \langle {\mathcal{T}}_i^H (a), b \rangle = \langle a, {\mathcal{T}}_i^{H,\vee}(b) \rangle \/; \quad 
\langle \widetilde{\mathcal{T}}_i^H (a), b \rangle = \langle a, \widetilde{\mathcal{T}}_i^{H,\vee}(b) \rangle \/, \]
where the pairing is extended by $\mathbb{Q}[y]$-linearity.\begin{footnote} {Later in the context of Segre classes, we will tacitly
extend this pairing by linearity from coefficients in $\mathbb{Q}[y]$ to $\mathbb{Q}[y,(1+y)^{-1}]$.}\end{footnote}

(b) Each family of the Hirzebruch operators (ordinary / dual / (un)normalized) satisfies the same relations as the
K-theoretic Demazure-Lusztig operators; cf.~\Cref{prop:hecke-relations}.
\end{lemma}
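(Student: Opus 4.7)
The plan is to deduce both parts from known facts about the BGG operator $\parcoh_i$ and the K-theoretic operators $\mathcal{T}_i,\mathcal{T}_i^\vee$. The underlying principle for part (a) is the self-adjointness of $\parcoh_i$ with respect to the Poincar{\'e} pairing on $\widehat{H}^*_T(X,\mathbb{Q})$; this is a direct consequence of two applications of the projection formula to the $\Pbb^1$-bundle $p_i:G/B\to G/P_i$, since $\parcoh_i=p_i^*(p_i)_*$. Once this is in hand, the adjointness $\langle \widetilde{\mathcal{T}}_i^H(a), b \rangle = \langle a, \widetilde{\mathcal{T}}_i^{H,\vee}(b)\rangle$ falls out as a one-line computation:
\[
\langle \widetilde{Td}^T_y(T_{p_i})\parcoh_i(a)-a,\, b\rangle = \langle \parcoh_i(a),\, \widetilde{Td}^T_y(T_{p_i})\cup b\rangle - \langle a,b\rangle = \langle a,\, \parcoh_i(\widetilde{Td}^T_y(T_{p_i})\cup b) - b\rangle,
\]
using commutativity of the cup product; the identical computation with $\widetilde{Td}^T_y$ replaced by $Td^T_y$ settles the normalized version.

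For part (b), I would transfer the Hecke relations of \Cref{prop:hecke-relations} from K-theory to cohomology via the intertwining $\widetilde{\mathcal{T}}_i^H\circ td_*^T = td_*^T\circ \mathcal{T}_i$ from \Cref{lemma:Hcomm}. Given a polynomial identity $P(\mathcal{T}_{i_1},\ldots,\mathcal{T}_{i_r})=0$ on $\K_T(X)[y]$ (a braid, commutation, or quadratic relation), repeated application of the intertwining yields
\[
P(\widetilde{\mathcal{T}}_{i_1}^H,\ldots,\widetilde{\mathcal{T}}_{i_r}^H)\circ td_*^T = td_*^T\circ P(\mathcal{T}_{i_1},\ldots,\mathcal{T}_{i_r})=0.
\]
By the remark preceding the lemma, $\{td_*^T(\mathcal{O}_w^T)\}_{w\in W}$ is an $\widehat{H}^*_T(pt,\mathbb{Q})$-basis of $\widehat{H}_*^T(X,\mathbb{Q})$, and the Hirzebruch operators are $\widehat{H}^*_T(pt)[y]$-linear, so the identity extends to all of $\widehat{H}^*_T(X,\mathbb{Q})[y]$. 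The analogous argument with $\ch_T$, using the second intertwining in \eqref{E:comm-tildeTH} and the Bruhat-triangular basis $\{\ch_T(\mathcal{O}^{w,T})\}$, handles $\widetilde{\mathcal{T}}_i^{H,\vee}$. Finally, the normalized operators are conjugates of the unnormalized ones by the Adams operations after inverting $(1+y)$ (\Cref{lemma:commHops}), so the same polynomial identities hold after conjugation; since these identities are polynomial in $y$ and the normalized operators are themselves defined over $\mathbb{Q}[y]$, the identities persist without inverting $(1+y)$.

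The main obstacle is conceptual rather than technical: the cleanest proof of part (a) proceeds from the \emph{geometric} self-adjointness of the BGG operator rather than from the K-theoretic adjointness of $\mathcal{T}_i,\mathcal{T}_i^\vee$, which would also work but requires the auxiliary spanning argument through diagram \eqref{diagr-pairing}. For part (b), by contrast, one genuinely needs the spanning property of the Schubert Todd classes (respectively Chern characters of opposite Schubert structure sheaves) in order to upgrade identities on the image of $td_*^T$ to identities on all of $\widehat{H}_*^T(X,\mathbb{Q})[y]$; this is the step to which one must pay most attention, but it is already in place by the triangularity observation recorded immediately before the lemma.
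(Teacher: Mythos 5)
Your proof is correct and follows essentially the same route as the paper's: part (a) via self-adjointness of $\parcoh_i$ together with self-adjointness of cup-multiplication, and part (b) by transporting the Hecke relations through the intertwining identities of \Cref{lemma:Hcomm} and invoking the $\widehat{H}^*_T(pt)$-basis property of $\{td_*^T(\cO_w^T)\}$ (respectively $\{\ch_T(\cO^{w,T})\}$). The only cosmetic difference is that the paper first reduces the dual operators to the ordinary ones via (a), while you apply the $\ch_T$-intertwining directly — an alternative the paper explicitly notes.
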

\begin{proof} 
Part (a) can be proved by using the self-adjointness of $\parcoh_i$ and the operators of multiplication by
${Td}^{T}_{y}(T_{p_i})$, as follows. 
By definition,
\begin{align*}
\langle {\mathcal{T}}_i^H (a), b \rangle =&\langle ({Td}^{T}_{y}(T_{p_i})\parcoh_i  - \id)  (a), b \rangle\\
=&\langle a,(\parcoh_i{Td}^{T}_{y}(T_{p_i})-\id) b \rangle\\
=& \langle a, {\mathcal{T}}_i^{H,\vee}(b) \rangle.
\end{align*}
A similar proof works for the unnormalized operators.
 
 We turn to the relations in (b). First, we deduce from (a) that it suffices to prove the statements for the
 ordinary operators. Then we utilize \Cref{lemma:Hcomm}(a) again to show that for $a' \in \K_T(G/B)$, 
 \[ \widetilde{\mathcal{T}}_{i_1}^H \widetilde{\mathcal{T}}_{i_2}^H \ldots \widetilde{\mathcal{T}}_{i_k}^H (td_*(a')) = td_* {\mathcal{T}}_{i_1} {\mathcal{T}}_{i_2} \ldots {\mathcal{T}}_{i_k} (a') \/. \]
Since $td^T_*$ is surjective (after appropriate extending the coefficients via $\ch_T: \K^T(pt)\to \widehat{H}^*_{T}(pt)$), this implies the claim for the unnormalized Hirzebruch operators. The same proof works for the normalized operators, using classes of the form $\psi_*^{1+y}td_*(a')$,  \Cref{lemma:Hcomm}(b) and \Cref{lemma:commHops}.
We also note that in order to prove the statement for the dual operators, instead of adjointness, one may alternatively
work with classes of type $\ch_T(a')$  and $\psi^*_{1+y}\ch_T(a')$. 
\end{proof}

\begin{remark}
The results of \Cref{lemma:adjoint}(b) also follow from the following argument. 
Regard $\widehat{H}^*_{T}(pt)[y]$ as a $\K_T(pt)[y]$-algebra via the (injective) equivariant 
Chern character map.
Then the transformations $td_*^T$ and $\ch_T$ induce injective homomorphisms of $\K_T(pt)[y]$-algebras
\begin{equation}
\overline{td_*^T} , \overline{\ch_T} : End_{\K^T(pt)[y]}(\K^T(X)[y]) \to End_{\widehat{H}^*_{T}(pt)[y] }(\widehat{H}^*_{T}(X)[y] ) 
\/.
\end{equation}
The injectivity part follows by using suitable bases, such as images of Schubert classes
$\cO_w$ under the Todd or Chern character maps. Then \Cref{lemma:Hcomm} 
may be interpreted as
giving the identities
\[ \overline{td_*^T}(\mathcal{T}_i) = \widetilde{\mathcal{T}}_i^H \quad \text{and} \quad
\overline{\ch_T}(\mathcal{T}_i^{\vee}) = \widetilde{\mathcal{T}}_i^{H,\vee} \:. \]
Since $\overline{td_*^T} , \overline{\ch_T}$ are algebra homomorphisms, they will preserve
relations satisfied by $\mathcal{T}_i,\mathcal{T}_i^{\vee}$, proving the claim for the unnormalized operators.

One may argue similarly in the case of normalized operators. 
{Start by changing the coefficient ring using the algebra isomorphism
$$\begin{CD}
 \widehat{H}^*_{T}(X)[y]\otimes_{\widehat{H}^*_{T}(pt)[y]}
\widehat{H}^*_{T}(pt,\mathbb{Q}[y,(1+y)^{-1}])
@> \sim >> \widehat{H}^*_{T}(X,\mathbb{Q}[y,(1+y)^{-1}])
\end{CD} \/.$$
Then the Adams operations 
$$\psi^*_{1+y}: \widehat{H}^*_{T}(pt)[y] \to  
\widehat{H}^*_{T}(pt)[y] \textrm { and } \psi_*^{1+y}: \widehat{H}_*^{T}(pt)[y] \to  
\widehat{H}_*^{T}(pt,\mathbb{Q}[y,(1+y)^{-1}])$$ 
induce algebra isomorphisms
\[ \overline{\psi^*_{1+y}} : End_{\widehat{H}^*_{T}(pt,\mathbb{Q}[y,(1+y)^{-1}]}(\widehat{H}^*_{T}(X,\mathbb{Q}[y,(1+y)^{-1}] ) \to
End_{\widehat{H}^*_{T}(pt,\mathbb{Q}[y,(1+y)^{-1}]}(\widehat{H}^*_{T}(X,\mathbb{Q}[y,(1+y)^{-1}] ) \]
}
and 
\[ \overline{\psi_*^{1+y}}: End_{\widehat{H}^*_{T}(pt,\mathbb{Q}[y,(1+y)^{-1}]) }(\widehat{H}^*_{T}(X,\mathbb{Q}[y,(1+y)^{-1}]) ) \to
End_{\widehat{H}^*_{T}(pt,\mathbb{Q}[y,(1+y)^{-1}]) }(\widehat{H}^*_{T}(X,\mathbb{Q}[y,(1+y)^{-1}]) ) \]
which by \Cref{lemma:commHops} satisfy
\[ \overline{\psi_*^{1+y}}(\widetilde{\mathcal{T}}_i^H) = \mathcal{T}_i^H \quad \text{and} \quad
\overline{\psi^*_{1+y}}( \widetilde{\mathcal{T}}_i^{H,\vee} ) = \mathcal{T}_i^{H,\vee} \/.\]
Therefore the relations satisfied by the unnormalized operators are the same as those
for the normalized ones.
\end{remark}

\begin{theorem}\label{prop:HDL-rec} 
Let $w \in W$ and $s_i$ a simple reflection such that $ws_i >w$. 

(a) The unnormalized Hirzebruch operators satisfy:
\[ \widetilde{\mathcal{T}}_i^H (\widetilde{Td}_{y,*}^T (X(w)^\circ))= \widetilde{Td}_{y,*}^T (X(ws_i)^\circ)\/;\]

(b) The normalized Hirzebruch operators satisfy:
\[ {\mathcal{T}}_i^H ({Td}_{y,*}^T (X(w)^\circ))= {Td}_{y,*}^T (X(ws_i)^\circ)\/.\]

(c) In particular, for $w \in W$, the Hirzebruch classes of Schubert cells are given by:
\[ \widetilde{Td}_{y,*}^T (X(w)^\circ) = \widetilde{\mathcal{T}}_{w^{-1}}^H [e_{\id}]_T \textrm { and } 
{Td}_{y,*}^T (X(w)^\circ) = {\mathcal{T}}_{w^{-1}}^H [e_{\id}]_T \/. \]
 
\end{theorem}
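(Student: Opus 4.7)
The plan is to reduce the recursion for Hirzebruch classes to the analogous recursion for motivic Chern classes, which is already encoded by Theorem~\ref{thm:MC+dual}. The key link is the intertwining identities of \Cref{lemma:Hcomm}: on $\K_T(X)[y]$ one has
\[ td^{T}_* \circ \mathcal{T}_i = \widetilde{\mathcal{T}}_i^H \circ td^{T}_* \quad \text{and} \quad \psi_*^{1+y}\circ td^{T}_* \circ \mathcal{T}_i = {\mathcal{T}}_i^H\circ \psi_*^{1+y}\circ td^{T}_*\/. \]
Combined with the definitions $\widetilde{Td}_{y,*}^T = td^T_* \circ MC_y$ and $Td_{y,*}^T = \psi_*^{1+y}\circ \widetilde{Td}_{y,*}^T$, these identities say that the Hirzebruch operators are the images of the K-theoretic Demazure-Lusztig operators under the (normalized or unnormalized) Todd transformation. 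So recursions for motivic Chern classes will propagate directly.

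For part (a), I would compute
\[ \widetilde{\mathcal{T}}_i^H\bigl(\widetilde{Td}_{y,*}^T(X(w)^\circ)\bigr) = \widetilde{\mathcal{T}}_i^H\bigl(td^T_* MC_y(X(w)^\circ)\bigr) = \widetilde{\mathcal{T}}_i^H\bigl(td^T_* \mathcal{T}_{w^{-1}}(\cO_{\id}^T)\bigr)\/, \]
using \Cref{thm:MC+dual}. Applying the first intertwiner from \Cref{lemma:Hcomm} this becomes $td^T_*\bigl(\mathcal{T}_i \mathcal{T}_{w^{-1}}(\cO_{\id}^T)\bigr)$. The hypothesis $ws_i > w$ gives $\ell(s_i w^{-1}) = \ell(w^{-1}) + 1$, so by the braid relations of \Cref{prop:hecke-relations} one has $\mathcal{T}_i\mathcal{T}_{w^{-1}} = \mathcal{T}_{s_i w^{-1}} = \mathcal{T}_{(ws_i)^{-1}}$, yielding $td^T_* MC_y(X(ws_i)^\circ) = \widetilde{Td}_{y,*}^T(X(ws_i)^\circ)$ as desired. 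Part (b) follows by exactly the same argument using the second identity of \Cref{lemma:Hcomm}: apply $\psi_*^{1+y}$ to the equality just proved, move it past $\widetilde{\mathcal{T}}_i^H$ via \Cref{lemma:commHops} to produce $\mathcal{T}_i^H$, and identify the result with $Td_{y,*}^T(X(ws_i)^\circ)$.

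For part (c) I would induct on $\ell(w)$. The base case is $w = \id$, where $X(\id)^\circ = \{e_{\id}\}$ is a reduced point, so $MC_y(X(\id)^\circ) = \cO_{\id}^T$ and hence $\widetilde{Td}_{y,*}^T(X(\id)^\circ) = td^T_*(\cO_{e_\id}) = [e_\id]_T$; since this lives in homological degree zero, $\psi_*^{1+y}$ acts as the identity and $Td_{y,*}^T(X(\id)^\circ) = [e_\id]_T$ as well. For the inductive step, choose any reduced expression $w = s_{i_1}\cdots s_{i_k}$ so that $w^{-1} = s_{i_k}\cdots s_{i_1}$, and apply parts (a) and (b) successively to obtain
\[ \widetilde{Td}_{y,*}^T(X(w)^\circ) = \widetilde{\mathcal{T}}_{i_k}^H\cdots \widetilde{\mathcal{T}}_{i_1}^H [e_\id]_T = \widetilde{\mathcal{T}}_{w^{-1}}^H [e_\id]_T\/, \]
and similarly for the normalized class. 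The expression $\widetilde{\mathcal{T}}_{w^{-1}}^H$ is well-defined independently of the chosen reduced word because the unnormalized Hirzebruch operators satisfy the braid and commutation relations of $W$, as recorded in \Cref{lemma:adjoint}(b).

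There is not much of a genuine obstacle: essentially all the real work has been absorbed into \Cref{lemma:Hcomm} and \Cref{thm:MC+dual}. The only subtle point to verify carefully is the length bookkeeping in the step $\mathcal{T}_i\mathcal{T}_{w^{-1}} = \mathcal{T}_{(ws_i)^{-1}}$, which uses $\ell(w) = \ell(w^{-1})$ and the hypothesis $\ell(ws_i) = \ell(w)+1$ to guarantee that concatenating $s_i$ with a reduced expression for $w^{-1}$ remains reduced; without this, the quadratic relation $(\mathcal{T}_i + \id)(\mathcal{T}_i + y) = 0$ from \Cref{prop:hecke-relations} would intervene and the identity would fail.
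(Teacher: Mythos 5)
Your proposal is correct and follows essentially the same approach as the paper's proof: reduce the recursion for the Hirzebruch classes to the recursion for motivic Chern classes via \Cref{thm:MC+dual}, using the intertwining identities of \Cref{lemma:Hcomm} (and \Cref{lemma:commHops}) to pass through the Todd/Adams transformations. The paper's version is more compressed, but the structure — including the base-case observation $\widetilde{Td}_{y,*}^T(e_{\id}) = Td_{y,*}^T(e_{\id}) = [e_{\id}]_T$ by functoriality and the length bookkeeping making $\mathcal{T}_i\mathcal{T}_{w^{-1}} = \mathcal{T}_{(ws_i)^{-1}}$ valid — is the same.
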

\begin{proof} Part (a) follows from \Cref{thm:MC+dual} together with the commutation relations \eqref{E:comm-tildeTH}.
The same argument applies to (b), using the definition of the normalized Hirzebruch transformation from \eqref{eq:nHir},
\Cref{thm:MC+dual} again, and the commutation relations \eqref{E:comm-TH}. Part (c) is a consequence of (a)
and (b), taking into account that, by functoriality, $\widetilde{Td}_{y,*}^T (e_{\id})={Td_{y,*}^T} (e_{\id})=[e_{\id}]_T$. 
\end{proof} 

In analogy with the definition of the operators $\operL_i$ giving 
the dual classes of the motivic Chern classes, 
define the operators $ \operL^H_i$ and $\widetilde{ \operL}^H_i$ by
$$ \operL^H_i  := - y (\mathcal{T}_i^{H,\vee})^{-1}= \mathcal{T}_i^{H,\vee} + (1+y)\id
\quad \text{and} \quad 
\widetilde{\operL}^H_i  := - y (\widetilde{\mathcal{T}}_i^{H,\vee})^{-1}= \widetilde{\mathcal{T}}_i^{H,\vee} + (1+y)\id \:.$$

\begin{theorem}\label{thm:Hdual} For any $u, v \in W$, 
 \begin{equation}\label{equ:Hirze-dual1} 
 \langle \widetilde{Td}^{T}_{y*}(X(u)^\circ), \widetilde{\operL}^H_{v^{-1}w_0} ([e_{w_0}]_{T}) \rangle_H =
\delta_{u,v} \widetilde{Td}_y^{T}(T_{{w_0}}X) \end{equation}
and
 \begin{equation}\label{equ:Hirze-dual2} 
 \langle Td^{T}_{y*}(X(u)^\circ), \operL^H_{v^{-1}w_0} ([e_{w_0}]_{T}) \rangle_H =
\delta_{u,v} Td_y^{T}(T_{{w_0}}X) \:. \end{equation}
\end{theorem}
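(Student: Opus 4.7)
The plan is to derive both identities from the K-theoretic orthogonality of \Cref{thm:MCtilde},
\[ \langle MC_y(X(u)^\circ), \widetilde{MC}_y(Y(v)^\circ)\rangle_K = \delta_{u,v}\prod_{\alpha>0}(1+ye^{-\alpha}),\]
by pushing it through the equivariant Grothendieck-Hirzebruch-Riemann-Roch (GHRR) theorem and the Chern character, in close analogy with the dual-pair relation $\langle td^T_*(\cO^T_u),\ch_T(\mathcal{I}^{v,T})\rangle_H=\delta_{u,v}$ recorded in \eqref{equ:todd-duals}.

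For \eqref{equ:Hirze-dual1}, the starting point is the GHRR identity $\ch_T\langle a,b\rangle_K = \langle td^T_*(a),\ch_T(b)\rangle_H$, valid on the smooth variety $X=G/B$ after Poincar\'e duality. Substituting $a=MC_y(X(u)^\circ)$ and $b=\widetilde{MC}_y(Y(v)^\circ)$ will reduce the left side via \Cref{thm:MCtilde} to $\delta_{u,v}\ch_T\prod_{\alpha>0}(1+ye^{-\alpha})=\delta_{u,v}\widetilde{Td}^T_y(T_{w_0}X)$ (using $\prod_{\alpha>0}(1+ye^{-\alpha})=\lambda_y(T^*_{w_0}X)$), while the first factor on the right is by definition $\widetilde{Td}^T_{y,*}(X(u)^\circ)$. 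What will remain is to identify $\ch_T(\widetilde{MC}_y(Y(v)^\circ))$ with $\widetilde{\operL}^H_{v^{-1}w_0}([e_{w_0}]_T)$: since $Y(w_0)^\circ=\{e_{w_0}\}$ is a single point, $\cO^{w_0,T}=\cO_{e_{w_0}}$ and $\ch_T(\cO_{e_{w_0}})=[e_{w_0}]_T$; \Cref{lemma:Hcomm}(a) gives $\ch_T\circ \mathcal{T}^\vee_i = \widetilde{\mathcal{T}}^{H,\vee}_i\circ \ch_T$, whence $\ch_T\circ \operL_i = \widetilde{\operL}^H_i\circ \ch_T$; and the braid and commutation relations of \Cref{lemma:adjoint}(b) then extend this intertwining to any $w\in W$, yielding the identification at $w=v^{-1}w_0$.

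The normalized identity \eqref{equ:Hirze-dual2} will follow by pushing Adams operations through the unnormalized one. By definition $Td^T_{y,*}=\psi^{1+y}_*\circ \widetilde{Td}^T_{y,*}$, and iterating \Cref{lemma:commHops} yields $\psi^*_{1+y}\widetilde{\operL}^H_w = \operL^H_w\psi^*_{1+y}$. Since $[e_{w_0}]_T\in H^{2\dim X}_T(X)$, $\psi^*_{1+y}([e_{w_0}]_T)=(1+y)^{\dim X}[e_{w_0}]_T$, so $\operL^H_{v^{-1}w_0}([e_{w_0}]_T)=(1+y)^{-\dim X}\psi^*_{1+y}\widetilde{\operL}^H_{v^{-1}w_0}([e_{w_0}]_T)$. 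The module property $\psi^{1+y}_*(\alpha\cap\beta)=\psi^*_{1+y}(\alpha)\cap\psi^{1+y}_*(\beta)$ together with compatibility of $\psi^{1+y}_*$ with proper pushforward to a point will allow pulling a global $\psi^{1+y}_*$ outside the pairing, reducing it to \eqref{equ:Hirze-dual1}. The multiplicative identity $\psi^*_{1+y}\widetilde{Td}^T_y(E)=(1+y)^{\rk E}Td^T_y(E)$ (immediate for line bundles from the definitions, then extended multiplicatively) applied to $E=T_{w_0}X$ will cancel the stray $(1+y)^{\dim X}$ and convert $\widetilde{Td}^T_y(T_{w_0}X)$ into $Td^T_y(T_{w_0}X)$. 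The main obstacle will be exactly this last bookkeeping: one must check that the Adams shift on $[e_{w_0}]_T$, the intertwining on the operator side, and the module/pushforward properties combine with precisely the right powers of $(1+y)$ to land on $Td^T_y(T_{w_0}X)$ with nothing left over.
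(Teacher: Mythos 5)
Your overall strategy is precisely the paper's: apply $td^T_*$ to the K-theoretic orthogonality of \Cref{thm:MCtilde}, use the module property (GHRR on $G/B$) to split the pairing into $td^T_*$ of the first argument times $\ch_T$ of the second, intertwine $\ch_T$ with $\operL_{v^{-1}w_0}$ via \Cref{lemma:Hcomm}, and then push through Adams operations for the normalized version. However, two of your intermediate identifications are wrong in a way that happens to cancel, so as written the argument does not actually justify the steps.

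First, you claim $\ch_T(\cO_{e_{w_0}})=[e_{w_0}]_T$. This is false: for a point $e_{w_0}$ in the smooth variety $X$, GHRR for the inclusion gives $td^T_*(\cO_{e_{w_0}})=[e_{w_0}]_T$, so $\ch_T(\cO_{e_{w_0}})=[e_{w_0}]_T/Td^T(TX)$ (which restricts to $[e_{w_0}]_T/Td^T(T_{w_0}X)$). Second, you claim $\ch_T\bigl(\prod_{\alpha>0}(1+ye^{-\alpha})\bigr)=\widetilde{Td}^T_y(T_{w_0}X)$. This is also false: by definition $\widetilde{Td}^T_y(E)=\ch_T(\lambda_y(E^\vee))\,Td^T(E)$, so $\ch_T(\lambda_y(T^*_{w_0}X))=\widetilde{Td}^T_y(T_{w_0}X)/Td^T(T_{w_0}X)$. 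The two missing factors of $Td^T(T_{w_0}X)$ cancel across the identity — which is exactly the bookkeeping the paper does explicitly — so the final formula is still correct, but your proposal as written asserts two equalities that do not hold. To repair, replace both steps with the correct versions and pull the factor $1/Td^T(T_{w_0}X)$ out of $\widetilde{\operL}^H_{v^{-1}w_0}(\cdot)$ by $\widehat{H}^*_T(pt)[y]$-linearity before cancelling.

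For \eqref{equ:Hirze-dual2}, your plan is the paper's (apply Adams operations, use \Cref{lemma:commHops}, track the $(1+y)^{\dim X}$ from $[e_{w_0}]_T$ against $\psi^*_{1+y}\widetilde{Td}^T_y(T_{w_0}X)=(1+y)^{\dim X}Td^T_y(T_{w_0}X)$). One caution: the Adams operations do not straightforwardly commute with proper pushforward; what the paper actually uses is the module property $\psi^{1+y}_*(\alpha\cap\beta)=\psi^*_{1+y}(\alpha)\cap\psi^{1+y}_*(\beta)$ applied \emph{inside} the pairing, not a pushforward compatibility of $\psi^{1+y}_*$ by itself — you should make that move explicit rather than appeal to a compatibility with $\int_X$ that is not among the paper's stated properties.
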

\begin{proof} 
We
apply the equivariant Todd class transformation to both sides in the expression from \Cref{thm:MCtilde} to obtain
\[ \begin{split} \delta_{u,v} td_*^T \left(\prod_{\alpha > 0} (1+ y e^{-\alpha}) \right) 
=& td^T_*\left(\langle MC_y(X(u)^\circ), \widetilde{MC}_y(Y(v)^\circ) \rangle_K \right) \\
= & td^T_*\left( \int_X^K MC_y(X(u)^\circ)\cdot \widetilde{MC}_y(Y(v)^\circ) \right)\\
=&\int_X^{H^*} td^T_*(MC_y(X(u)^\circ))\cdot \ch_T(\widetilde{MC}_y(Y(v)^\circ))\\
=&\int_X^{H^*} \widetilde{Td}^{T}_{y,*}(X(u)^\circ) \cdot \ch_T(\operL_{v^{-1}w_0}(\calO^{w_0,T}))\\
=&\int_X^{H^*} \widetilde{Td}^{T}_{y,*}(X(u)^\circ)\cdot \widetilde{\operL}^H_{v^{-1}w_0}(\ch_T(\calO^{w_0,T}))\\
=&\langle \widetilde{Td}^{T}_{y,*}(X(u)^\circ), \widetilde{\operL}^H_{v^{-1}w_0} (\ch_T(\calO^{w_0,T})) \rangle_{H} \/.
\end{split} \]
Here we use $K$ and $H^*$ to indicate where the operation is taken, and the fifth equality 
follows from
\Cref{lemma:Hcomm}. Given this, the claim in \eqref{equ:Hirze-dual1} follows because
\[\ch_T(\calO^{w_0,T})=\frac{[e_{w_0}]_T}{Td^T(TX)}=\frac{[e_{w_0}]_T}{Td^T(T_{{w_0}}X])},\]
\[\widetilde{Td}_y^{T}(T_{{w_0}}X)=\ch_T\left(\prod_{\alpha > 0} (1+ y e^{-\alpha})\right) Td^T(T_{{w_0}}X),\]
and the fact that $\tilde{\operL}^H_i$ is $\widehat{H}^*_{T}(pt)[y]$-linear.

The equality from \eqref{equ:Hirze-dual2} follows from \eqref{equ:Hirze-dual1} 
by application of the Adams operation $\psi_*^{1+y}$:
\begin{align*}
\delta_{u,v}\psi^*_{1+y}\left( \widetilde{Td}_y^{T}(T_{{w_0}}X)\right) =& \psi_*^{1+y}\left( 
 \langle \widetilde{Td}^{T}_{y*}(X(u)^\circ), \widetilde{\operL}^H_{v^{-1}w_0} ([e_{w_0}]_{T}) \rangle_H \right) \\
=&  \langle \psi_*^{1+y}\left(\widetilde{Td}^{T}_{y*}(X(u)^\circ)\right), \psi^*_{1+y}\left(\widetilde{\operL}^H_{v^{-1}w_0} ([e_{w_0}]_{T}) \right) \rangle_H \\
=&  \langle Td^{T}_{y*}(X(u)^\circ), \operL^H_{v^{-1}w_0} \psi^*_{1+y} ([e_{w_0}]_{T}) \rangle_H\:.
\end{align*}
Then the claim follows because
\[ \psi^*_{1+y}\left( \widetilde{Td}_y^{T}(T_{e_{w_0}}X)\right) =(1+y)^{\dim X}Td_y^{T}(T_{e_{w_0}}X) \textrm{
 and }\psi^*_{1+y} ([e_{w_0}]_{T}) = (1+y)^{\dim X}[e_{w_0}]_{T} \/, \]
with $[e_{w_0}]_{T}$ viewed as an equivariant cohomology class of complex  degree $\dim X$ (by equivariant Poincar\'e duality).
\end{proof}

We finish this section with the counterpart of \Cref{thm.msegre},
using now the (un)normalized Segre version of the Hirzebruch classes:
\[ \frac{\widetilde{Td}^{T}_{y*}(X(w)^\circ)}{\widetilde{Td}_y^{T}(TX)} \/ \quad \textrm{ and } \quad 
\frac{Td^{T}_{y*}(X(w)^\circ)}{Td_y^{T}(TX)} \/. \]
Observe that for any smooth $X$, the class $\widetilde{Td}_y^{T}(X)$ is invertible in 
the completed ring
$\widehat{H}^*_{T}(X,\mathbb{Q}[y,(1+y)^{-1}])$, since its leading term is the invertible element $1+y$. 
Similarly $Td_y^{T}(X)$ is invertible in 
$ \widehat{H}^*_{T}(X,\mathbb{Q}[y])$, as its leading term is $1$.
\begin{theorem}\label{thm:Hirze-segre}
For any $w\in W$ one has in $ \widehat{H}^*_{T}(X,\mathbb{Q}[y,(1+y)^{-1}])$ resp. 
$ \widehat{H}^*_{T}(X,\mathbb{Q}[y])$:
$$
\frac{\widetilde{Td}^{T}_{y*}(X(w)^\circ)}{\widetilde{Td}_y^{T}(TX)}
= \widetilde{\mathcal{T}}_{w^{-1}}^{\vee,H}\left(\frac{ [e_{\id}]_{T}}{\widetilde{Td}_y^{T}(T_{e_{\id}}X)}\right)
\quad \text{and} \quad
\frac{Td^{T}_{y*}(X(w)^\circ)}{Td_y^{T}(TX)}
= \mathcal{T}_{w^{-1}}^{\vee,H}\left(\frac{ [e_{\id}]_{T}}{Td_y^{T}(T_{e_{\id}}X)}\right)\:,
$$
as well as 
$$
\frac{\widetilde{Td}^{T}_{y*}(Y(w)^\circ)}{\widetilde{Td}_y^{T}(TX)}
= \widetilde{\mathcal{T}}_{(w_0w)^{-1}}^{\vee,H}\left(\frac{ [e_{w_0}]_{T}}{\widetilde{Td}_y^{T}(T_{e_{w_0}}X)}\right)
\quad \text{and} \quad
\frac{Td^{T}_{y*}(Y(w)^\circ)}{Td_y^{T}(TX)}
= \mathcal{T}_{(w_0w)^{-1}}^{\vee,H}\left(\frac{ [e_{w_0}]_{T}}{Td_y^{T}(T_{e_{w_0}}X)}\right)\:.
$$
\end{theorem}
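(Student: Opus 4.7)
The plan is to deduce Theorem~\ref{thm:Hirze-segre} from the K-theoretic Segre identity of Theorem~\ref{thm.msegre} by passing through the equivariant Chern character, and then using the commutation between Demazure--Lusztig and Hirzebruch operators established in Lemma~\ref{lemma:Hcomm}.

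For the unnormalized $X(w)^\circ$ formula, I would first rewrite the LHS in terms of $\ch_T$. Since $X=G/B$ is smooth, the module property~\eqref{Ch-td} combined with $td^T_*(\cO_X)=Td^T(TX)\cap[X]_T$ gives
\[
\widetilde{Td}^{T}_{y*}(X(w)^\circ) = td^{T}_*(MC_y(X(w)^\circ)) = \ch_T(MC_y(X(w)^\circ))\cdot Td^T(TX)\cap [X]_T,
\]
while by definition $\widetilde{Td}_y^{T}(TX)=\ch_T(\lambda_y(T^*X))\cdot Td^T(TX)$. Therefore the LHS equals $\ch_T\bigl(MC_y(X(w)^\circ)/\lambda_y(T^*X)\bigr)\cap[X]_T$. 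Applying $\ch_T$ to the identity in Theorem~\ref{thm.msegre} and using Lemma~\ref{lemma:Hcomm}(a) iteratively along a reduced word for $w^{-1}$ (valid since the braid relations are preserved by Lemma~\ref{lemma:adjoint}(b)), one rewrites $\ch_T\circ\mathcal{T}^{\vee}_{w^{-1}}$ as $\widetilde{\mathcal{T}}^{H,\vee}_{w^{-1}}\circ\ch_T$.

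To conclude I would compute directly, using $td^{T}_*(\cO_{\id}^T)=[e_{\id}]_T$ and the definition of $\widetilde{Td}_y^{T}$, that
\[
\ch_T\!\Bigl(\frac{\cO_{\id}^T}{\prod_{\alpha>0}(1+ye^\alpha)}\Bigr)\cap[X]_T = \frac{[e_{\id}]_T}{\widetilde{Td}_y^{T}(T_{e_{\id}}X)};
\]
indeed $\ch_T(\cO_{\id}^T)\cap[X]_T=[e_{\id}]_T/Td^T(T_{e_{\id}}X)$, and since $\ch_T(\lambda_y(T^*_{e_{\id}}X)) = \widetilde{Td}_y^{T}(T_{e_{\id}}X)/Td^T(T_{e_{\id}}X)$, the $Td^T(T_{e_{\id}}X)$ factors cancel. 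Since $\widetilde{\mathcal{T}}^{H,\vee}_{w^{-1}}$ is identified with its homological counterpart via capping with $[X]_T$, it commutes past this cap, establishing the first formula. The normalized identity then follows by applying $\psi_*^{1+y}$ to both sides of the unnormalized one: by~\eqref{eq:psi-module}, Lemma~\ref{lemma:commHops}, and the identity $\psi^*_{1+y}(\widetilde{Td}_y^{T}(\mathcal{L}))=(1+y)Td_y^{T}(\mathcal{L})$, the factors $(1+y)^{\dim X}$ that appear on each side cancel. The two $Y(w)^\circ$ identities are obtained in the same manner starting from equation~\eqref{eq:SegreMC}, with $td^{T}_*(\cO^{w_0,T})=[e_{w_0}]_T$ replacing the computation at $e_{\id}$.

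The main technical subtlety is accounting for the coefficient rings. Since the leading term of $\widetilde{Td}_y^{T}(TX)$ is $(1+y)^{\dim X}$, the unnormalized identity must be read in $\widehat{H}^*_T(X,\mathbb{Q}[y,(1+y)^{-1}])$; after applying $\psi_*^{1+y}$ the leading term becomes $1$, and the normalized identity lives in $\widehat{H}^*_T(X,\mathbb{Q}[y])$, consistent with the statement and with the parallel K-theoretic localization recorded in~\cite[Remark~8.9]{AMSS:motivic}. Once the correct coefficient rings are fixed, the argument reduces to routine bookkeeping through the Todd/Adams machinery and the commutation relations already established in Sections~\ref{ssec:eqH} and~\ref{s:eqH}.
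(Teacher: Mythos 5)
Your proposal is correct and follows essentially the same route as the paper's proof: both apply the equivariant Chern character to the K-theoretic Segre identity of Theorem~\ref{thm.msegre} (resp.\ \eqref{eq:SegreMC}), use the intertwining relation $\ch_T\circ\mathcal{T}_i^\vee=\widetilde{\mathcal{T}}_i^{H,\vee}\circ\ch_T$ from Lemma~\ref{lemma:Hcomm}(a), rewrite the Segre denominators via $\widetilde{Td}_y^T(TX)=\ch_T(\lambda_y T^*X)\,Td^T(TX)$ and the fixed-point computation of $\ch_T(\cO^T_{\id})$ (resp.\ $\ch_T(\cO^{w_0,T})$), and then obtain the normalized statement by applying the Adams operation $\psi^*_{1+y}$, with the common factor $(1+y)^{\dim X}$ canceling. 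The only cosmetic difference is that the paper spells out the $Y(w)^\circ$ case while you spell out the $X(w)^\circ$ case; the paper explicitly notes the two are interchangeable.
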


\begin{proof}
We only explain the proof for the opposite Schubert cells $Y(w)^\circ$, since  the result for the Schubert cells $X(w)^\circ$ are shown in exactly the same way. (Alternatively, one may apply the automorphism $w_0^L$ obtained 
by left multiplication by the longest element $w_0 \in W$; see \cite[\S 5.2]{AMSS:shadows} or \cite[\S 3.1]{MNS}.)
We start with the unnormalized classes. 
The application of $\ch_T$ to~\eqref{eq:SegreMC} together with \Cref{lemma:Hcomm}(a)
imply:
\begin{align*}
\frac{\ch_T(MC_y(Y(w)^\circ))}{\ch_T(\lambda_yT^*X)}=& 
\widetilde{ \mathcal{T}}_{(w_0w)^{-1}}^{H,\vee}\left( \frac{\ch_T(\cO^{w_0, T})}{\ch_T\left(\prod_{\alpha > 0} (1+ y e^{-\alpha})\right)}\right) \\
=&\widetilde{ \mathcal{T}}_{(w_0w)^{-1}}^{H,\vee}\left( \frac{[e_{w_0}]_{T}}{\widetilde{Td}_y^{T}(T_{e_{w_0}}X)}\right) \:,
\end{align*}
with the last equality as in the proof of \Cref{thm:Hdual}.
Then the result follows from
$$\widetilde{Td}^{T}_{y*}(Y(w)^\circ)=  td^{T}_*(MC_y(Y(w)^\circ))=\ch_T(MC_y(Y(w)^\circ)) Td^{T}(TX)$$
and
$$\widetilde{Td}_y^{T}(TX)=\ch_T(\lambda_yT^*X)Td^{T}(TX)\:.$$
To deduce the result for the normalized classes, we further apply the Adams transformation $\psi^*_{1+y}$, with $td^{T}_*(MC_y(Y(w)^\circ))$ and $[ e_{e_0}]_T$ viewed as an equivariant cohomology class as before (by equivariant Poincar\'e duality, with $[e_{w_0}]_{T}$  of complex  degree $\dim X$):
\[ \begin{split} 
\psi^*_{1+y} \bigg(\frac{\widetilde{Td}^{T}_{y,*}(Y(w)^\circ)}{\widetilde{Td}_y^{T}(TX)} \bigg)
= &\left( \psi^*_{1+y}
\widetilde{ \mathcal{T}}_{(w_0w)^{-1}}^{H,\vee}\right) \left( \frac{[e_{w_0}]_{T}}{\widetilde{Td}_y^{T}(T_{e_{w_0}}X)}\right)
\\=&
\left( \mathcal{T}_{(w_0w)^{-1}}^{H,\vee}\psi^*_{1+y}\right)\left( \frac{[e_{w_0}]_{T}}{\widetilde{Td}_y^{T}(T_{e_{w_0}}X)}\right)
\\=& 
\mathcal{T}_{(w_0w)^{-1}}^{H,\vee}\left( \frac{(1+y)^{\dim X}[e_{w_0}]_{T}}{(1+y)^{\dim X}Td_y^{T}(T_{e_{w_0}}X)}\right) \/.
\end{split}\]
%
Then the result follows from
$$\psi^*_{1+y}\left(\widetilde{Td}_y^{T}(TX)\right)= (1+y)^{\dim X} Td_y^{T}(TX)$$
and
$$\psi^*_{1+y}\left(\widetilde{Td}^{T}_{y*}(Y(w)^\circ)\right)=(1+y)^{\dim X}Td^{T}_{y*}(Y(w)^\circ)\:,$$
since by the module property \eqref{eq:psi-module},
$$\psi^*_{1+y}(-)\cap [X]_T=\psi^*_{1+y}(-)\cap\psi^{1+y}_*((1+y)^{\dim X}[X]_T)=(1+y)^{\dim X}\psi^{1+y}_*( - \cap [X]_T)\:.$$ \end{proof}
}

\subsection{Specializations of (dual) Hirzebruch operators and Hirzerbruch classes of Schubert cells}\label{s:specHclasses}
We take the opportunity to record the specializations at $y=-1$ and $y=0$ for the 
(un)normalized Hirzebruch operators $\widetilde{\mathcal{T}}_i^H$ and $\mathcal{T}_i^H$ 
 and  their (shifted)  dual operators $\widetilde{\mathcal{T}}_i^{H,\vee}, \mathcal{T}_i^{H,\vee}$ and
$\widetilde{\operL}_i^H, \operL_i^H$. These  follow from the definitions of the objects involved, 
utilizing the corresponding specializations of the  Hirzebruch classes from \eqref{E:Tdy=0} and \eqref{E:Tdy=-1},
and are stated in the next proposition. 

\begin{prop} The following hold:

(a) The specializations at $y=0$ of the (un)normalized Hirzebruch operators are given by
\[ (\widetilde{\mathcal{T}}_i^H)_{y=0} = ({\mathcal{T}}_i^H)_{y=0} = Td^T (T_{p_i}) \parcoh_i -\id \/,\]
and for their duals by
\[ (\widetilde{\mathcal{T}}_i^{H,\vee})_{y=0} = ({\mathcal{T}}_i^{H,\vee})_{y=0} = \parcoh_i Td^T (T_{p_i})  -\id \]
so that
\[ (\widetilde{\operL}_i^H)_{y=0} = ({\operL}_i^H)_{y=0} = \parcoh_i Td^T (T_{p_i})  \:. \]

(b) The specializations at $y=-1$ of the (un)normalized Hirzebruch operators is given by:

\[ (\widetilde{\mathcal{T}}_i^H)_{y=-1} = -s_i \/; \quad  ({\mathcal{T}}_i^H)_{y=-1} = \mathcal{T}_i^\mathrm{coh} \/, \]
 and for their (shifted) duals by
\[ (\widetilde{\mathcal{T}}_i^{H,\vee})_{y=-1} = (\widetilde{\operL}_i^H)_{y=-1} = -s_i^{\vee}=s_i \/; \quad 
({\mathcal{T}}_i^{H,\vee})_{y=-1} = ({\operL}_i^H)_{y=-1} = \mathcal{T}_i^{coh,\vee} \:,\]
where the operators $s_i$ and $\mathcal{T}_i^\mathrm{coh}, \mathcal{T}_i^{\mathrm{coh},\vee}$ are defined in \eqref{E:sidef} respectively \eqref{E:Ticohdef}.
\end{prop}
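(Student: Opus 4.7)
The plan is to prove the proposition by direct substitution into the defining formulas \eqref{eq:unnDL-H} and \eqref{eq:normDL-H}, invoking the two boundary specializations of the cohomological Hirzebruch classes recorded in \eqref{E:Tdy=0} and \eqref{E:Tdy=-1}, the identification $T^*_{p_i}=\mathcal{L}_{\alpha_i}$ (so that $c_1^T(T_{p_i})=-c_1^T(\mathcal{L}_{\alpha_i})$), and the definition \eqref{E:sidef} of $s_i$.

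First I would dispose of the $y=0$ case. By \eqref{E:Tdy=0} both $\widetilde{Td}^T_y(T_{p_i})$ and $Td^T_y(T_{p_i})$ specialize to the classical equivariant Todd class $Td^T(T_{p_i})$, so the defining expressions for the unnormalized and normalized ordinary (resp.\ dual) Hirzebruch operators coincide there and yield the stated formulas. For $\widetilde{\operL}_i^H$ and $\operL_i^H$ at $y=0$ the extra summand $(1+y)\id$ evaluates to $\id$, which exactly cancels the $-\id$ in $\mathcal{T}_i^{H,\vee}$ and leaves $\parcoh_i\, Td^T(T_{p_i})$.

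For the $y=-1$ case I would first treat the direct operators $\widetilde{\mathcal{T}}_i^H$ and $\mathcal{T}_i^H$. Using \eqref{E:Tdy=-1} together with $c_1^T(T_{p_i})=-c_1^T(\mathcal{L}_{\alpha_i})$ and \eqref{E:sidef}, one reads off $(\widetilde{\mathcal{T}}_i^H)_{y=-1}=-s_i$ and $(\mathcal{T}_i^H)_{y=-1}=\parcoh_i - s_i=\mathcal{T}_i^{\mathrm{coh}}$ at once. For the dual operators I would then appeal to \Cref{lemma:adjoint}(a), which shows that forming the Poincar\'e adjoint commutes with specializing $y$. This identifies $(\mathcal{T}_i^{H,\vee})_{y=-1}$ with the adjoint of $\mathcal{T}_i^{\mathrm{coh}}$, namely $\mathcal{T}_i^{\mathrm{coh},\vee}$, by the adjointness recorded immediately after \eqref{E:Ticohdef}; and it identifies $(\widetilde{\mathcal{T}}_i^{H,\vee})_{y=-1}$ with $(-s_i)^\vee$. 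The identity $s_i^\vee=-s_i$ needed for the latter falls out of comparing $\mathcal{T}_i^{\mathrm{coh},\vee}=\parcoh_i+s_i$ with the adjoint of $\mathcal{T}_i^{\mathrm{coh}}=\parcoh_i-s_i$, using self-adjointness of $\parcoh_i$. The formulas for $\widetilde{\operL}_i^H$ and $\operL_i^H$ at $y=-1$ then follow because $(1+y)\id$ vanishes there.

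The only real subtlety is the sign identity $s_i^\vee=-s_i$; once one accepts it, the rest is bookkeeping. A calculation-based alternative that avoids adjointness entirely is to apply the commutation relation $c_1(\mathcal{L}_\lambda)\parcoh_i=\parcoh_i c_1(\mathcal{L}_{s_i\lambda})-\langle\lambda,\alpha_i^\vee\rangle$ from the end of Section~3 with $\lambda=\alpha_i$: this rewrites $\parcoh_i\, c_1^T(T_{p_i})=\parcoh_i c_1(\mathcal{L}_{-\alpha_i})$ as $c_1^T(\mathcal{L}_{\alpha_i})\parcoh_i+2$, so that $\parcoh_i\, c_1^T(T_{p_i})-\id = s_i$ and hence $(\widetilde{\mathcal{T}}_i^{H,\vee})_{y=-1}=s_i$ directly; the normalized version is obtained by adding $\parcoh_i$, giving $\parcoh_i+s_i=\mathcal{T}_i^{\mathrm{coh},\vee}$.
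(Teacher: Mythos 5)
Your proof is correct, and it matches the paper's (very terse) argument: the paper simply asserts that the proposition follows from the definitions together with the specializations \eqref{E:Tdy=0} and \eqref{E:Tdy=-1}, and your direct-substitution verification (with $T_{p_i}=\mathcal{L}_{-\alpha_i}$ and the definition \eqref{E:sidef} of $s_i$) supplies exactly the omitted bookkeeping. The second, computation-based route for the dual operators via the commutation relation $c_1(\mathcal{L}_\lambda)\parcoh_i=\parcoh_i c_1(\mathcal{L}_{s_i\lambda})-\langle\lambda,\alpha_i^\vee\rangle$ is arguably cleaner than invoking adjointness (which requires tacitly using that the Poincar\'e pairing remains perfect after specializing $y$, so that adjoints are unique), but both are valid and equivalent in substance.
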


Using these specializations of the (shifted dual) Hirzebruch operators, we can 
specialize in a similar way the corresponding results 
from the Theorems~\ref{prop:HDL-rec},~\ref{thm:Hdual} and~\ref{thm:Hirze-segre} to $y=0$ and $y=-1$. First we consider the case $y=0$. We obtain: 
\begin{equation}\label{eq:classes-cell1}
 Td^{T}_{y=0,*}(X(w)^\circ)=\widetilde{Td}^{T}_{y=0,*}(X(w)^\circ) = td_*(MC_0(X(w)^\circ)  )=td_*^{T}( \mathcal{I}^T_w) 
\end{equation}
by \Cref{prop:specialize}(b). Then \Cref{prop:HDL-rec} specializes for $y=0$ to the recursion
$$ (Td^{T}(T_{p_i})\parcoh_i -\id)(td^{T}_*( \mathcal{I}^T_w)) =  td^{T}_*( \mathcal{I}^T_{ws_i}) 
 \textrm{ for } ws_i>w $$
from \eqref{equ:BGGTodd-I} for $w\in W$ and $s_i$ a simple reflection. Similarly \Cref{thm:Hdual} specializes for $y=0$ to 
 \begin{equation}\label{equ:Hirze-dual2y=0} 
 \langle Td^{T}_*(\mathcal{I}^T_u), (\operL^H_{v^{-1}w_0})_{y=0} ([e_{w_0}]_{T}) \rangle_H =
\delta_{u,v} Td^T(T_{e_{w_0}}X)  \end{equation}
for $u,v \in W$. Since $\ch_T(\calO^{w_0,T})=\frac{[e_{w_0}]_T}{Td^T(T_{e_{w_0}}X)}$, this translates into 
$$ \langle Td^{T}_*(\mathcal{I}^T_u), (\operL^H_{v^{-1}w_0})_{y=0} (\ch_T(\calO^{w_0,T})) \rangle_H =\delta_{u,v}\:,$$
from which we deduce by \eqref{equ:todd-duals} (for $v\in W$) that:
\begin{equation}\label{equ:todd-duals=L}
  \ch_{T}(\cO^{v,T})  = (\operL^H_{v^{-1}w_0})_{y=0} (\ch_T(\calO^{w_0,T})) \:,
\end{equation}
with $ ({\operL}_i^H)_{y=0} = \parcoh_i Td^T (T_{p_i})$.
This recovers \Cref{equ:BGGChern}.

Finally, since $td^T_*(-)=\ch_T(-)Td^T(TX)$ for $y=0$, \Cref{thm:Hirze-segre} specializes to 
\begin{equation}\ch_{T} (\mathcal{I}^T_w)) = \left(\mathcal{T}_{w^{-1}}^{\vee,H}\right)_{y=0}(\ch_{T} (\mathcal{I}^T_{\id}))
\quad \text{and} \quad
 \ch_{T} (\mathcal{I}^{w,T})) = \left(\mathcal{T}_{(w_0w)^{-1}}^{\vee,H}\right)_{y=0}(\ch_{T} (\mathcal{I}^{w_0,T}))
\end{equation}
for $w\in W$, with  $({\mathcal{T}}_i^{H,\vee})_{y=0} = \parcoh_i Td (T_{p_i})  -\id$, consistent
with \Cref{equ:BGGChern-I}.

Next we record the specializations of Theorems \ref{prop:HDL-rec},~\ref{thm:Hdual} and~\ref{thm:Hirze-segre} for $y=-1$. For simplicity we only consider the more interesting case of normalized  classes and operators.
Note that
\begin{equation}\label{eq:classes-cell2}
 Td^{T}_{y=-1,*}(X(w)^\circ)=c_*^T(\one_{X(w)^\circ})=:\csmT(X(w)^\circ) \in H^T_*(G/B,\mathbb{Z})
\end{equation}
by  \eqref{eq:T=c}, since $H_*^{T}(G/B,\mathbb{Z})$ is torsion free.~Recall that $\csmT(X(w)^\circ)$ is
\textit{Chern-Schwartz-MacPherson} (CSM) class of the Schubert cell $X(w)^\circ$; this class is discussed 
in more detail in the next section. \Cref{prop:HDL-rec}(b) for  the normalized Hirzebruch classes 
\emph{implies} for $y=-1$ the following important recursion
 of \cite[Theorem~6.4]{aluffi.mihalcea:eqcsm}
and \cite[Theorem~6.1]{AMSS:shadows} 
(formulated in \Cref{equ:csm} in the next section, in terms of homogenized classes):
\begin{equation}\label{eq:rec-CSM}
\mathcal{T}_i^\mathrm{coh}(\csmT(X(w)^\circ))=   (c^{T}(T_{p_i})\parcoh_i -\id)(\csmT(X(w)^\circ))=\csmT(X(ws_i)^\circ)
\end{equation}
for $w\in W$ and $s_i$ a simple reflection, with $ws_i>w$.

Similarly, ~\Cref{thm:Hdual} for the normalized Hirzebruch classes \emph{implies} for $y=-1$
the corresponding {\bf Hecke orthogonality} of 
\cite[Theorem~7.2]{AMSS:shadows} (with their equivariant parameter $\hbar\in H^2_{\bbC^*}(pt,\mathbb{Z})$ specialized here to $\hbar=1$):
\begin{equation}
 \langle \csmT(X(u)^\circ), (\operL^H_{v^{-1}w_0})_{y=-1} ([e_{w_0}]_{T}) \rangle =
\delta_{u,v} c^{T}(T_{e_{w_0}}X) = \delta_{u,v} \prod_{\alpha> 0} (1+\alpha)
\end{equation}
for $u,v\in W$. Here 
$$(\operL^H_{v^{-1}w_0})_{y=-1} ([e_{w_0}]_{T}) =  \mathcal{T}_{v^{-1}w_0}^{coh,\vee}  ([e_{w_0}]_{T}) =: {\csmTv}(Y(v)^\circ)$$
is the \textit{dual Chern-Schwartz-MacPherson} class from \cite[Eq.~(14)]{AMSS:shadows}.
Note that, in terms of the duality operators from \Cref{rem:duality},
$${\csmTv}(Y(v)^\circ)=(-1)^{\dim X-\ell(v)}\psi_*^{-1}(\csmT(Y(v)^\circ)\:,$$
since the homogenized operators satisfy
 \begin{equation}\label{E:Tihomsign} \mathcal{T}_i^{coh,\vee,\hbar} = \hbar \parcoh_i  + s_i = -(-\hbar \parcoh_i  - s_i)
= -\mathcal{T}_i^{coh,\hbar}|_{h\mapsto -h}  \/. \end{equation}

Finally \Cref{thm:Hirze-segre} for  the normalized Hirzebruch classes {implies} for $y=-1$:
\begin{equation}\label{E:ssmrec}
\frac{\csmT(X(w)^\circ)}{c^{T}(TX)}
= \mathcal{T}_{w^{-1}}^{\vee,coh}\left(\frac{ [e_{\id}]_{T}}{\prod_{\alpha >0} (1-\alpha)}\right) 
= \frac{{\csmTv}(X(w)^\circ)}{\prod_{\alpha >0} (1-\alpha)}
\end{equation}
and
\begin{equation}
\frac{\csmT(Y(w)^\circ)}{c^{T}(TX)}
= \mathcal{T}_{(w_0w)^{-1}}^{\vee,coh}\left(\frac{ [e_{w_0}]_{T}}{\prod_{\alpha >0} (1+\alpha)}\right) 
= \frac{{\csmTv}(Y(w)^\circ)}{\prod_{\alpha >0} (1+\alpha)} \:.
\end{equation}
This recovers ~\cite[Theorem~7.5]{AMSS:shadows}, 
which is one of the main results of that paper (again with the
equivariant parameter $\hbar\in H^2_{\bbC^*}(pt,\mathbb{Z})$ specialized to $\hbar=1$).

\subsection{Parabolic Hirzebruch classes}\label{subsec-Parab-Hierze} We now
consider the (generalized) partial
flag manifold $G/P$, and we let $\pi:G/B \to G/P$ be the natural projection. 
The Schubert varieties $X(wW_P)^\circ$ in $G/P$ are indexed by 
the elements in $w\in W^P$, with the image $\pi(X(w)^\circ) = X(wW_P)^\circ$ for $w\in W$. 
Applying $td^T_*$ and $\psi_*^{1+y}td_*^T$ to the equalities from~\Cref{prop:pf} implies 
by functoriality the following counterpart for the Hirzebruch classes.
 
\begin{prop}\label{prop:pf2} The following hold for $w\in W$:

(a) $\pi_* \widetilde{Td}^{T}_{y*}(X(w)^\circ) = (-y)^{\ell(w) - \ell(w W_P)} \widetilde{Td}^{T}_{y*}(X(wW_P)^\circ)$ in $\widehat{H}_*^T(G/P)[y]$
and 
 $$\pi_*Td^{T}_{y*}(X(w)^\circ) = (-y)^{\ell(w) - \ell(w W_P)} Td^{T}_{y*}(X(wW_P)^\circ) \in \widehat{H}_*^T(G/P)[y] \:.$$
(b) More generally, let $P \subseteq Q$ be two standard parabolic subgroups, and $\pi':G/P \to G/Q$ the natural projection. 
Then
\[ \pi'_* \widetilde{Td}^{T}_{y*}(X(wW_P)^\circ)= (-y)^{\ell(wW_P) - \ell(wW_Q)} \pi_* \widetilde{Td}^{T}_{y*}(X(wW_Q)^\circ) 
\in \widehat{H}_*^T(G/Q)[y]  \]
and
\[ \pi'_*Td^{T}_{y*}(X(wW_P)^\circ)= (-y)^{\ell(wW_P) - \ell(wW_Q)} \pi_* Td^{T}_{y*}(X(wW_Q)^\circ) \in \widehat{H}_*^T(G/Q)[y] \/. \]
\end{prop}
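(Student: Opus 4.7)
The statement is a direct consequence of Proposition~\ref{prop:pf} combined with the definitions of the two Hirzebruch transformations. My plan is the following.

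First I would unwind the definition $\widetilde{Td}^{T}_{y,*} := td^{T}_* \circ MC_y$, together with the covariance of the equivariant Todd transformation $td^{T}_*$ for proper $T$-equivariant morphisms (\cite[Theorem~3.1]{edidin2000}). Since $G/B$, $G/P$, and $G/Q$ are complete, the projections $\pi:G/B \to G/P$ and $\pi':G/P\to G/Q$ are proper and $T$-equivariant, so
\[
\pi_*\, \widetilde{Td}^{T}_{y,*}(X(w)^\circ)
= \pi_*\, td^{T}_*\bigl(MC_y[X(w)^\circ\hookrightarrow G/B]\bigr)
= td^{T}_*\bigl(\pi_* MC_y[X(w)^\circ\hookrightarrow G/B]\bigr).
\]
Applying \Cref{prop:pf}(a) to the right-hand side and using linearity gives
\[
td^{T}_*\bigl((-y)^{\ell(w)-\ell(wW_P)}\, MC_y[X(wW_P)^\circ\hookrightarrow G/P]\bigr)
= (-y)^{\ell(w)-\ell(wW_P)}\, \widetilde{Td}^{T}_{y,*}(X(wW_P)^\circ),
\]
proving the unnormalized case. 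Part~(b) follows in exactly the same way from \Cref{prop:pf}(b).

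For the normalized variant I would apply the Adams operation $\psi_*^{1+y}$ to the unnormalized identity, using the definition ${Td}^{T}_{y,*}=\psi_*^{1+y}\circ\widetilde{Td}^{T}_{y,*}$. The key observation is that $\psi_*^{1+y}$ commutes with proper pushforward: since a proper equivariant morphism preserves the Borel--Moore homological degree, and $\psi_*^{1+y}$ acts by the scalar $(1+y)^{-j}$ on $H_{2j}^{T}(-,\mathbb{Q}[y])$, the operator $\psi_*^{1+y}$ intertwines $\pi_*$ and itself on each degree piece. Hence
\[
\pi_*\, {Td}^{T}_{y,*}(X(w)^\circ)
= \pi_*\, \psi_*^{1+y}\,\widetilde{Td}^{T}_{y,*}(X(w)^\circ)
= \psi_*^{1+y}\, \pi_*\, \widetilde{Td}^{T}_{y,*}(X(w)^\circ),
\]
and invoking the unnormalized identity just proved yields the claim. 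Part (b) of the normalized case is handled identically.

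There is essentially no obstacle: the argument is a formal consequence of the functoriality of $MC_y$, $td^{T}_*$, and the Adams operation. The only point requiring brief verification is the commutation of $\psi_*^{1+y}$ with $\pi_*$, which reduces to the fact that proper pushforward is degree-preserving in (Borel--Moore) homology, so that multiplication by the degree-dependent scalars $(1+y)^{-j}$ passes through $\pi_*$.
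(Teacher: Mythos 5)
Your proof is correct and follows essentially the same route as the paper, which simply applies $td^{T}_*$ (resp.\ $\psi_*^{1+y}\circ td^{T}_*$) to both sides of the equalities in \Cref{prop:pf} and invokes functoriality. The only part you spell out in more detail than the paper is the commutation of $\psi_*^{1+y}$ with proper pushforward via the degree-preserving property of $\pi_*$ in Borel--Moore homology, which is a correct and worthwhile verification.
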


Specializing to $y=0$ (with the convention $0^0=1$), we get by \Cref{prop:specialize}(b):
\[ \pi_*Td^{T}_{*}(\mathcal{I}^T_w) = \begin{cases} Td^{T}_{*}(\mathcal{I}^T_{wW_P}) & \textrm{ if } \ell(w) = \ell(w W_P) \/; \\
0 & \textrm{ otherwise} \/. \end{cases} \]
Similarly, specializing the normalized classes to $y=-1$, we get for $w\in W$:
\begin{equation}\label{E:pushcsm}
\pi_*\csmT(X(w)^\circ) = \csmT(X(wW_P)^\circ) \textrm{ and } \pi'_*\csmT(X(wW_P)^\circ) =  \csmT(X(wW_Q)^\circ) \/.
\end{equation}
These equalities hold in $H_*^{T}(G/P,\mathbb{Z})$ and $H_*^T(G/Q,\mathbb{Z})$, since these are torsion
free.

\section{The Chern-Schwartz-MacPherson classes as leading terms} We have seen in the \Cref{cor:special-Ty} that the Chern-Schwartz-MacPherson (CSM) classes may be recovered from the Hirzebrch 
transformation by specializing at $y=-1$. In this section we take a different route, and 
recover the CSM classes directly, by identifying them as the leading terms of the motivic 
Chern classes.

\subsection{Chern-Schwartz-MacPherson classes}\label{sec:CSM} 
According to a conjecture attributed to Deligne and Grothendieck, there is a unique natural 
transformation $c_*: \cF(-) \to H_*(-,\mathbb{Z})$ from the functor of constructible functions 
on a complex algebraic variety $X$ to homology (i.e., even degree Borel-Moore homology, 
or Chow groups), where all morphisms are proper, such that if $X$ is smooth then 
$c_*(\one_X)=c(TX)\cap [X]$.  This conjecture was proved by 
MacPherson \cite{macpherson:chern}; the class $c_*(\one_X)$ for possibly singular 
$X$ was shown to coincide with a class defined earlier by 
M.-H.~Schwartz \cite{schwartz:1, schwartz:2, BS81}. For any constructible subset 
$W\subseteq X$, the class 
$c_{SM}(W):=c_*(\one_W)\in H_*(X,\mathbb{Z})$ is called the \textit{Chern-Schwartz-MacPherson} (CSM) class of $W$ in $X$. If $X$ is a $T$-variety, an equivariant version of the group of constructible
functions $\mathcal{F}^T(X)$ and a Chern class transformation 
$c_*^T: \mathcal{F}^T(X) \to H_*(X;\mathbb{Z})$ were defined by Ohmoto \cite{ohmoto:eqcsm}.

\subsection{The homogenized CSM class via the motivic Chern class }
We now consider $X=G/B$. If
\[\csmT(X(w)^\circ)=\sum_i \csmT(X(w)^\circ)_i\in H^{{T}}_*(G/B,\mathbb{Z}),\] 
where $\csmT(X(w)^\circ)_i\in H_{2i}^{{T}}(G/B,\mathbb{Z})$, the homogenized CSM class is 
defined to be
$$\csmTh(X(w)^\circ):=\sum_i\hbar^i\csmT(X(w)^\circ)_i \in H_0^{{T} \times \C^*}(G/B,\mathbb{Z}) \/.$$ 
Here $\bbC^*$ acts trivially on $G/B$ and $\hbar\in H^2_{\bbC^*}(pt,\mathbb{Z})$ is a generator. 
Consider the Schubert expansion of the homogenized CSM class:
\[ \csmTh(X(w)^\circ) = \sum_{u \le w} c'_{u,w}(\hbar,t) [X(u)]_{T} \in H_0^{T \times \C^*}(X) \/,\] 
where $c'_{u,w}(\hbar,t) \in
 H^*_{{T} \times \C^*}(pt,\mathbb{Z}) = \Z[\hbar; \alpha_1, \ldots \alpha_r]$ is a homogeneous 
 polynomial of degree $\ell(u)$. It was proved in \cite{aluffi.mihalcea:eqcsm} (see also \cite{AMSS:shadows}, 
 or \eqref{eq:rec-CSM} above) that  
\begin{equation}\label{equ:csm}
\mathcal{T}_i^{coh,\hbar} (\csmTh(X(w)^\circ) = \csmTh(X(w s_i)^\circ). 
\end{equation}
Combined with \Cref{prop:initial} this implies that the CSM class of the Schubert cell is the `initial term' of the 
motivic Chern class $MC_y(X(w)^\circ)$, where $y=-e^{-\hbar}$. We make this precise next. 

\begin{theorem}\label{prop:csm=initial} Let $w \in W$ and consider the Schubert expansions 
\begin{equation}\label{E:MC-to-schub} MC_y (X(w)^\circ) = \sum_{ u \le w} c_{u,w}(y,e^t) \cO_u^T \in \K_T(G/B)[y] \end{equation}
and
\begin{equation}\label{E:csm-to-schub} \csmTh(X(w)^\circ) = \sum_{ u \le w} c'_{u,w}(\hbar, t) [X(u)]_T \in H_0^{T \times \C^*}(G/B,\mathbb{Z}) \/, \end{equation}
where $ c'_{u,w}(\hbar, t) \in H^{2 \ell(u)}_{T \times \C*}(pt)$ and $\ch_{\C^*}(y)=-e^{-\hbar}$. 
Then the following hold:

(a) The image $\ch_{T \times \C^*} (c_{u,w}(y,e^t))$ of the coefficient $c_{u,w}(y,e^t)$ under 
the Chern character belongs to 
$\prod_{i \ge \ell(u)} H^{2i}_{T \times \C^*}(pt)$.

(b) The coefficient $c'_{u,w}(\hbar, t)$ equals the term of degree $\ell(u)$ in $c_{u,w}(-e^{-\hbar},e^t)$, i.e., 
\[ c'_{u,w}(\hbar, t) =  (\ch_{T \times \C^*} (c_{u,w}(y,e^t)))_{\ell(u)} \/.\] 
Equivalently,
\[ \csmTh(X(w)^\circ) = \textrm {degree $0$ component of } \ch_T(MC_y (X(w)^\circ)) \/. \]
\end{theorem}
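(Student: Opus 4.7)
The plan is to prove parts (a) and (b) simultaneously by induction on $\ell(w)$. The base case $w = \id$ is immediate: $MC_y(X(\id)^\circ) = \cO_{\id}^T$ and $\ch_T(\cO_{\id}^T)$ has leading cohomological component $[X(\id)]_T$ (viewed in $H^{2n}_T$ via Poincar\'e duality, where $n = \dim X$), matching $\csmTh(X(\id)^\circ) = [X(\id)]_T$.

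The technical engine will be the following strengthening of \Cref{prop:initial} to arbitrary K-theory classes. Writing $y = -e^{-\hbar}$ and viewing the Chern character as $\ch_{T \times \C^*}$, the claim is: for any $a \in \K_T(X)[y]$ such that $\ch(a) \in \prod_{k \ge n} H^{2k}_{T \times \C^*}(X)$, the same support condition holds for $\ch(\mathcal{T}_i(a))$, and the $H^{2n}$ component of $\ch(\mathcal{T}_i(a))$ equals $\mathcal{T}_i^{coh,\hbar}$ applied to the $H^{2n}$ component of $\ch(a)$. The proof is a direct computation combining the definition of $\mathcal{T}_i$ with the GHRR formula \eqref{E:BGG-ch2}:
\[ \ch(\mathcal{T}_i(a)) = \ch(\lambda_y(T^*_{p_i})) \cdot \parcoh_i(Td(T_{p_i}) \cdot \ch(a)) - \ch(a)\:. \]
Setting $\xi := c_1^T(\mathcal{L}_{\alpha_i})$, one expands $\ch(\lambda_y(T^*_{p_i})) = 1 - e^{\xi - \hbar} = (\hbar - \xi) + O(\mathrm{deg}\,4)$ and $Td(T_{p_i}) = 1 + O(\xi)$. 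The degree support hypothesis on $\ch(a)$ forces only the lowest-order contributions to survive at $H^{2n}$, yielding $(\hbar - \xi)\parcoh_i(\alpha) - \alpha$, where $\alpha$ is the $H^{2n}$ component of $\ch(a)$; this equals $\mathcal{T}_i^{coh,\hbar}(\alpha)$ via the identity $\mathcal{T}_i^{coh,\hbar} = (\hbar - \xi)\parcoh_i - \id$ derived from \eqref{E:sidef}.

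Iterating the key lemma along $MC_y(X(w)^\circ) = \mathcal{T}_{w^{-1}}(\cO_{\id}^T)$ (\Cref{thm:MC+dual}), in parallel with the CSM recursion \eqref{equ:csm}, shows that $\ch(MC_y(X(w)^\circ))$ is supported in $\prod_{k \ge n} H^{2k}$ with $H^{2n}$ component equal to $\csmTh(X(w)^\circ)$; this establishes the ``equivalently'' reformulation of (b). Part (a) then follows from the dual basis identity $c_{u,w} = \langle MC_y(X(w)^\circ), \mathcal{I}^{u,T}\rangle$ of \eqref{E:dualst}: equivariant GHRR for the pushforward to a point gives
\[ \ch(c_{u,w}) = \int_X \ch(MC_y(X(w)^\circ)) \cdot \ch(\mathcal{I}^{u,T}) \cdot Td(TX)\:, \]
and since $\ch(\mathcal{I}^{u,T})$ is supported in cohomological degree $\ge 2\ell(u)$ (the codimension of $Y(u)$) while $\ch(MC_y(X(w)^\circ))$ sits in degree $\ge 2n$, the integral lies in degree $\ge 2\ell(u)$. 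The Schubert-coefficient form of (b) finally follows by matching the $H^{2n}$ component of $\sum_u \ch(c_{u,w}) \cdot \ch(\cO_u^T)$ against $\csmTh(X(w)^\circ) = \sum_u c'_{u,w} [X(u)]_T$: given (a) together with $\ch(\cO_u^T) = [X(u)]_T + (\text{strictly higher degree})$, the only contribution to $H^{2n}$ from the $u$-th Schubert summand is $(\ch(c_{u,w}))_{\ell(u)} \cdot [X(u)]_T$, yielding $c'_{u,w} = (\ch(c_{u,w}))_{\ell(u)}$.

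The main obstacle will be the clean verification of the key lemma, in particular the bookkeeping needed to confirm that all higher-order contributions from $Td(T_{p_i})$ and from the expansion of $\ch(\lambda_y(T^*_{p_i}))$ vanish at $H^{2n}$ once the degree support condition on $\ch(a)$ is imposed. Once this is under control, the identification of the surviving leading term with $\mathcal{T}_i^{coh,\hbar}(\alpha)$ proceeds by the same algebraic manipulations as in the proof of \Cref{prop:initial}.
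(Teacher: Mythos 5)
Your proposal is correct and follows the same route as the paper's (terse) proof: induction on $\ell(w)$ via the Demazure–Lusztig recursion of \Cref{thm:MC+dual}, using a leading-term analysis of the Chern character (essentially \Cref{prop:initial}) together with the CSM recursion \eqref{equ:csm}. Your ``key lemma'' is a clean repackaging of the computation in the proof of \Cref{prop:initial} so that it applies to any class with the appropriate degree support rather than only to $\cO_w^A$, which is a reasonable way to make the induction explicit; the degree bookkeeping for parts (a) and (b) via the dual-basis pairing and the leading terms $\ch_T(\cO_u^T)=[X(u)]_T+\text{h.o.t.}$ is sound.
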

\begin{proof} Both parts follow by induction on $\ell(w)$, using the recursion calculating
$MC_y(X(w)^\circ)$ from \Cref{thm:MC+dual} combined with \Cref{prop:initial}; in part (b)
we utilize the recursion for $\csmTh(X(w)^\circ)$ from
\eqref{equ:csm}.\end{proof}
\begin{example}\label{ex:mcp1-to-csm} Consider the equivariant motivic Chern class in $\K_T(\mathbb{P}^1)[y]$:
\[ MC_y(X(s)^\circ) = (1+ e^{-\alpha_1} y)\cO_{s_1}^{T} - (1 + (1+ e^{-\alpha_1})y) \cO_{\id}^{T}  \/. \]
The specialization $y=-e^{-\hbar}$ in the coefficient $c_{s_1,s_1}(y,e^t)$ gives:
\[ c_{s_1,s_1}(-e^{-\hbar},e^t) = 1 - e^{\alpha_1 - \hbar} = \hbar - \alpha_1 + h.o.t. \]
The term of degree $1$ is $c'_{s_1,s_1}= \hbar -\alpha_1$.
Similarly, the specialization of $c_{\id,s_1}(y,e^t)$ gives 
\[ - 1 + 2 e^{-\hbar} - \alpha_1 e^{-\hbar} +h.o.t. = -1 + h.o.t. \] By \Cref{prop:csm=initial},
\[ \csmTh(X(w)^\circ) = (\hbar - \alpha_1) [X(s_1)]_T + [X(\id)]_T \/. \]
\end{example}
Consider now the non-equivariant case, i.e., in the expansions from \Cref{prop:csm=initial}
we set $\alpha= 0$, so $e^\alpha \mapsto 1$. In this case we denote the coefficients
by $c_{u,w}(y)$ and $c'_{u,w}(\hbar)$. Note that $c_{u,w}(y) \in \Z[y]$ and 
$c'_{u,w}(\hbar) \in \Z[\hbar]$. Furthermore, by homogeneity, 
\[ c'_{u,w}(\hbar) = \overline{c}_{u,w} \hbar^{\ell(u)} \/,\] 
where $\overline{c}_{u,w} \in \Z$ is an integer.
Next we give a more direct relation between these 
coefficients. 

Recall from \Cref{prop:divisibility} that the polynomial $c_{u,w}(y)$ is divisible by $(1+y)^{\ell(u)}$.
 
\begin{corol}\label{thm:csm} The coefficient
$\overline{c}_{u,w}$ equals the specialization at $y=-1$ of $\frac{c_{u,w}(y)}{(1+y)^{\ell(u)}}$:
\[ \overline{c}_{u,w}= \bigg(\frac{c_{u,w}(y)}{(1+y)^{\ell(u)}}\bigg)_{y \mapsto -1} \/. \]
\end{corol}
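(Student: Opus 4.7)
The plan is to combine the divisibility statement of \Cref{prop:divisibility} with the identification of $\overline{c}_{u,w}$ as the lowest degree term of the Chern character of $c_{u,w}(y)$ provided by (the non-equivariant specialization of) \Cref{prop:csm=initial}. The key ingredient is the substitution $y = -e^{-\hbar}$, which gives
\[
1 + y \;=\; 1 - e^{-\hbar} \;=\; \hbar - \tfrac{\hbar^2}{2} + \cdots,
\]
so each factor of $1+y$ contributes one copy of $\hbar$ to the leading term when expanded as a power series in $\hbar$.

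By \Cref{prop:divisibility}, I may write $c_{u,w}(y) = (1+y)^{\ell(u)} q_{u,w}(y)$ with $q_{u,w}(y) \in \mathbb{Z}[y]$. Substituting $y = -e^{-\hbar}$ yields
\[
c_{u,w}(-e^{-\hbar}) \;=\; (1 - e^{-\hbar})^{\ell(u)} \, q_{u,w}(-e^{-\hbar})
\;=\; \hbar^{\ell(u)} \, q_{u,w}(-1) \;+\; O(\hbar^{\ell(u)+1}),
\]
since $(1-e^{-\hbar})^{\ell(u)} = \hbar^{\ell(u)} + O(\hbar^{\ell(u)+1})$ and $q_{u,w}(-e^{-\hbar}) = q_{u,w}(-1) + O(\hbar)$.

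Now I invoke the non-equivariant case of \Cref{prop:csm=initial}(b): the homogenized CSM coefficient $c'_{u,w}(\hbar) = \overline{c}_{u,w}\,\hbar^{\ell(u)}$ equals the degree $\ell(u)$ component of $\ch_{\mathbb{C}^*}(c_{u,w}(y)) = c_{u,w}(-e^{-\hbar})$. Comparing with the expansion just obtained gives
\[
\overline{c}_{u,w} \, \hbar^{\ell(u)} \;=\; \hbar^{\ell(u)} \, q_{u,w}(-1),
\]
i.e.\ $\overline{c}_{u,w} = q_{u,w}(-1) = \bigl(c_{u,w}(y)/(1+y)^{\ell(u)}\bigr)_{y = -1}$, as desired.

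There is essentially no obstacle here; the corollary is a straightforward synthesis of \Cref{prop:divisibility} and \Cref{prop:csm=initial}. The only point requiring mild care is verifying that no lower-degree terms in $\hbar$ are introduced by the Chern character when $c_{u,w}(y)$ is viewed as an element of $\K_{\mathbb{C}^*}(pt)[y]$, but this is automatic from the factor $(1-e^{-\hbar})^{\ell(u)}$ vanishing to order $\ell(u)$ at $\hbar = 0$.
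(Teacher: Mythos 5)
Your proof is correct and follows essentially the same route as the paper: write $c_{u,w}(y)=(1+y)^{\ell(u)}q_{u,w}(y)$ by \Cref{prop:divisibility}, substitute $y=-e^{-\hbar}$ so that $(1+y)^{\ell(u)}=\hbar^{\ell(u)}+O(\hbar^{\ell(u)+1})$, and extract the degree-$\ell(u)$ coefficient via \Cref{prop:csm=initial}(b). The paper phrases the last step as identifying $\overline{c}_{u,w}$ with the constant term of $q_{u,w}(-e^{-\hbar})$, which is the same observation you make.
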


\begin{proof} Let $Q_{u,w}(y) := \sum a_i y^i$ in $\Z[y]$
be the quotient $\frac{c_{u,w}(y)}{(1+y)^{\ell(u)}}$. By \Cref{prop:csm=initial}, the coefficient
$\overline{c}_{u,w}$ equals the term of degree $0$ in the specialization 
$Q_{u,w}(-e^{-\hbar})$. Since $-e^{-\hbar} = -1 + $ higher order terms,   
$\overline{c}_{u,w} = Q_{u,w}(-1)$. This finishes the proof.
\end{proof}
\begin{example} Consider the non-equivariant version of \Cref{ex:mcp1-to-csm} :
\[ MC_y(X(s_1)^\circ)  = (1+y) \cO_{s_1} - (1+2y) \cO_{\id} \/. \]
According to \Cref{thm:csm}, we need to divide each coefficient 
$c_{u,s_1}(y) $ by $(1+y)^{\ell(u)}$ and then specialize at $y=-1$. We obtain:
\[ \csmh(X(s_1)^\circ) = \hbar [X(s_1)] + [X(\id)] \/. \]
The non-homogenized class is obtained by setting $\hbar$ to $1$.
\end{example}

\begin{example} Consider the motivic Chern class $MC_y(X(s_1 s_2)^\circ) \in \K(\Fl(3))[y]$: 
\[ MC_y(X(s_1 s_2)^\circ) = (1+y)^2 \cO_{s_1 s_2} -  (1+y) (1+2y) \cO_{s_1} - (1+y)(1+3y) \cO_{s_2} + (5y^2+ 5y+1) \cO_{\id} \/ \/. \]
As before, we need to divide each coefficient $c_{u,s_1 s_2}(y) $ by $(1+y)^{\ell(u)}$ and then specialize at $y=-1$. We obtain:
\[ \csm(X(s_1 s_2)^\circ) = [X(s_1 s_2)] +[X(s_1)] + 2 [X(s_2)] + [X(\id)] \/. \]
\end{example}

\section{Positivity, unimodality, and log concavity conjectures}\label{ss:pos} 
In this section, we record several conjectures involving Schubert expansions of the motivic 
Chern classes, and of the CSM classes, and about the structure constants of the CSM
classes. Some of these conjectures have been observed by other authors, and our goal 
to collect them in a single place.

We start with the CSM classes, since this is the case when we have the most partial results.

\subsection{Positivity of Schubert expansions of CSM classes} Consider the 
(non-equivariant) CSM class of a Schubert
cells in a generalized flag manifolds $G/P$:
\[ \csm(X(wW_P)^\circ) = \sum_{vW_P \le wW_P} c_{v,w} [X(vW_P)] \/, \]
with $c_{v,w} \in \Z$. For $G/P=\Gr(k;n)$, it was conjectured in \cite{aluffi.mihalcea:csm} that the 
coefficients $c_{v;w}$~are nonnegative; this was proved in some special cases in {\em loc.cit.}~and 
in \cite{mihalcea:binomial, jones:csm,stryker:thesis}, and in full generality (for 
Grassmannians) by Huh~\cite{huh:csm}, also see below. 
The recursive algorithm from \cite{aluffi.mihalcea:eqcsm} yielded calculations of CSM 
classes of Schubert 
cells in any $G/P$, and provided supporting evidence that the CSM classes of Schubert cells
in all flag manifolds are effective. This conjecture was recently proved in~\cite{AMSS:shadows}.

Equivariantly, the numerical evidence supports the following conjecture.
\begin{conj}[Equivariant Positivity]\label{conj:eqcsmpos} Let $X(wW_P)^\circ \subseteq G/P$ be any Schubert cell and 
consider the Schubert expansion of the equivariant CSM class:  
\[ \csmT(X(wW_P)^\circ) = \sum_{v \le w} c_{v,w}(\alpha) [X(vW_P)]_T \quad \in H_*^T(G/P) \/. \]
Then $c_{v;w}(\alpha)$ is a polynomial in positive roots $\alpha$ with non-negative coefficients.
\end{conj}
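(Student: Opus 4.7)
The plan is to first reduce to the case $P = B$. For $w \in W^P$, the projection $\pi: G/B \to G/P$ restricts to a birational map $X(w) \to X(wW_P)$, so by \eqref{E:pushcsm} we have $\pi_* \csmT(X(w)^\circ) = \csmT(X(wW_P)^\circ)$. Since $\pi_* [X(u)]_T = [X(uW_P)]_T$ when $u \in W^P$ and vanishes otherwise, a Graham-positive Schubert expansion on $G/B$ pushes forward to a Graham-positive expansion on $G/P$. Hence it suffices to treat $X = G/B$.

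For $G/B$, the natural strategy is to induct on $\ell(w)$ via the recursion \eqref{eq:rec-CSM}, namely $\csmT(X(ws_i)^\circ) = \mathcal{T}_i^\mathrm{coh}(\csmT(X(w)^\circ))$ with $\mathcal{T}_i^\mathrm{coh} = c^T(T_{p_i})\parcoh_i - \id = (1 - c_1^T(\mathcal{L}_{\alpha_i}))\parcoh_i - \id$. The immediate obstacle is that this operator contains a $-\id$ term, so a purely combinatorial induction based on the equivariant Chevalley and BGG formulas cannot preserve positivity in an obvious way: negative contributions from $-\id$ and from the weight factor in $\mathcal{L}_{\alpha_i}$ must cancel against positive contributions from $\parcoh_i$, but there is no transparent pairing to control this. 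Overcoming this requires a more intrinsic geometric model of the CSM class.

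The most natural route is to represent $\csmT(X(w)^\circ)$ as the pushforward of an effective equivariant Lagrangian cycle in $T^*(G/B)$---morally the characteristic cycle $\CC(\one_{X(w)^\circ})$---and then apply a Graham-style moving argument: degenerate a generic dual Schubert cycle so as to intersect this representative properly, then compute the intersection via equivariant localization at the torus fixed points and check that each local contribution lies in $\Z_{\ge 0}[\alpha_1,\dots,\alpha_r]$. The hardest step, and the primary obstacle, is the equivariant moving/transversality argument for Lagrangian cycles: unlike ordinary Schubert varieties, characteristic cycles in $T^*(G/B)$ do not admit an obvious action of the opposite Borel $B^-$ preserving their effectivity. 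A possible workaround is to decompose $\CC(\one_{X(w)^\circ})$ explicitly as a positive combination of conormal varieties $T^*_{X(v)}(G/B)$, with multiplicities given by local Euler obstructions or Kazhdan--Lusztig data, and verify Graham positivity summand by summand. As a fallback, one might leverage the non-equivariant positivity of \cite{AMSS:shadows} together with the leading-term relationship of \Cref{prop:csm=initial} to inductively exclude negative monomials in the coefficient polynomials, ordered by total degree in the equivariant parameters.
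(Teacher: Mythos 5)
This statement is a \emph{conjecture}, not a theorem: the paper explicitly labels it as such, states only that ``the numerical evidence supports'' it, and offers no proof. There is therefore no ``paper's own proof'' to compare against. Your proposal should be read as a strategy sketch for an open problem, and indeed you yourself flag the main steps as obstacles, workarounds, and fallbacks rather than completed arguments.

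That said, the parts of the sketch that are actually carried out are correct and match remarks already made in the paper. The reduction to $P=B$ via $\pi_*\csmT(X(w)^\circ)=\csmT(X(wW_P)^\circ)$ and $\pi_*[X(u)]_T=[X(uW_P)]_T$ for $u\in W^P$ (and $0$ otherwise) is precisely the observation the paper records immediately after the conjecture, so that step is fine. You also correctly identify why naive induction on \eqref{eq:rec-CSM} fails: the operator $\mathcal{T}_i^{\mathrm{coh}}=(1-c_1^T(\mathcal{L}_{\alpha_i}))\parcoh_i-\id$ contains a $-\id$ term and a Chern-class factor that introduce signs, and there is no transparent combinatorial cancellation mechanism. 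The paper makes the same point implicitly by contrasting the non-equivariant case, which is proved in the reference \cite{AMSS:shadows} by a geometric (not purely recursive) argument.

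The genuine gap is that none of the proposed routes forward is actually executed. The ``Graham-style moving argument'' for Lagrangian cycles is plausible in spirit and is adjacent to the paper's own remark that Huh's Grassmannian result combined with Graham positivity \emph{would} imply the conjecture for Grassmannians \emph{if} the representing varieties could be chosen $T$-stable---but that hypothesis is itself unverified, and even granting it would only cover $G/P$ a Grassmannian, not general $G/P$. The proposed decomposition of $\CC(\one_{X(w)^\circ})$ into conormal varieties with Euler-obstruction or Kazhdan--Lusztig multiplicities is also not known to yield a termwise Graham-positive expansion, and the ``fallback'' of bootstrapping from non-equivariant positivity via \Cref{prop:csm=initial} only controls the top-degree (in $\hbar$) part of the coefficient, not the full polynomial in the equivariant parameters. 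In short, the proposal correctly frames the problem and its difficulty, but does not constitute a proof; the statement remains open, exactly as the paper presents it.
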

In the non-equivariant case, while we have proved that $c_{v,w}\ge 0$ in~\cite{AMSS:shadows}, 
the evidence suggests a stronger result.
\begin{conj} [Strong positivity]\label{conj:strong-pos} Let $X(wW_P)^\circ \subseteq G/P$ be any Schubert cell and 
consider the Schubert expansion: 
\[ \csm(X(wW_P)^\circ) = \sum_{v \le w} c_{v,w} [X(vW_P)] \quad \in H_*(G/P; \Z) \/. \]
Then $c_{v,w} >0$ for all $v\le w$.
\end{conj}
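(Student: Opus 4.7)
The plan is to reduce the statement to the case of the complete flag manifold $G/B$ and then induct on $\ell(w)$. For the reduction, let $\pi: G/B \to G/P$ be the natural projection. The non-equivariant specialization of \eqref{E:pushcsm} gives $\csm(X(wW_P)^\circ) = \pi_* \csm(X(w)^\circ)$ for any $w \in W^P$. Expanding both sides in the Schubert basis and using that $\pi_*[X(u)]$ equals $[X(uW_P)]$ when $\ell(u)=\ell(uW_P)$ and vanishes otherwise, the coefficient $c_{vW_P, wW_P}$ in $G/P$ is a sum of the $c_{u,w}$ in $G/B$ over certain lifts $u$ of $vW_P$, each of which is nonnegative by the main positivity theorem of \cite{AMSS:shadows}. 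Since one such lift (e.g., the minimal-length representative $u=v$) already contributes a positive summand once $G/B$ strict positivity is established, the reduction is complete.

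For $G/B$, I would induct on $\ell(w)$ using the recursion \eqref{equ:csm}: $\mathcal{T}_i^{coh}(\csm(X(w)^\circ)) = \csm(X(ws_i)^\circ)$ whenever $ws_i > w$. The base case $w=e$ is immediate since $\csm(X(e))=[pt]$. For the inductive step, writing $\mathcal{T}_i^{coh} = \parcoh_i - s_i$ with $s_i = \id + c_1(\mathcal{L}_{\alpha_i})\parcoh_i$, and expanding using the BGG formula \eqref{equ:BGGonstru} together with the cohomological Chevalley--Monk rule for multiplication by $c_1(\mathcal{L}_{\alpha_i})$, one obtains an explicit linear recursion for the coefficients $c_{u,ws_i}$ in terms of the $c_{v,w}$. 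Weak positivity is already known, so what needs to be ruled out is exact cancellation to zero.

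The crux of the difficulty is therefore ruling out these cancellations. One strategy is purely combinatorial: group the terms of the recursion in pairs $(v, vs_i)$ (where $vs_i<v$) so that the $\parcoh_i$-contributions and $s_i$-contributions can be matched and a manifestly positive residual expression is obtained; this should work in a neighborhood of the Bruhat interval boundary, where a single Chevalley term dominates. A more geometric alternative is to use the interpretation of $\csm(X(w)^\circ)$ as a shadow of a Lagrangian cycle, as in \cite{AMSS:shadows}, and to exhibit for each $u \leq w$ a strictly positive contribution to the Poincar\'e pairing $\langle \csm(X(w)^\circ), \csmve(Y(u)^\circ) \rangle$ from an explicit torus-fixed or transverse geometric intersection. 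In type A one could further try to enumerate $c_{u,w}$ by a combinatorial model (pipe dreams, Rothe diagrams, or RC-graphs) whose underlying set is manifestly nonempty for every interval $u \leq w$.

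The main obstacle is precisely the passage from weak to strict positivity. The proof of $c_{u,w}\geq 0$ in \cite{AMSS:shadows} proceeds geometrically through a limiting construction that a priori allows entire coefficients to vanish, and the recursive step above introduces signed contributions whose generic positivity is preserved only by the full strength of Monk's formula. Closing the induction thus seems to require new structural input: either a bijective/enumerative model in which the set counted by $c_{u,w}$ is nonempty for every $u\leq w$, a Hodge-theoretic strict-positivity statement in the spirit of Kazhdan--Lusztig theory, or a Bott--Samelson transversality argument refining \cite{AMSS:shadows} so that a nonempty contribution can be extracted for every Bruhat pair. I expect this step, rather than the reduction or the recursion itself, to be the genuinely hard part.
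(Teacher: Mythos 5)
The statement you were asked to prove is \Cref{conj:strong-pos}, which is an open \emph{conjecture} in the paper, not a theorem; the paper gives no proof of it. The authors explicitly note that weak positivity $c_{v,w}\ge 0$ for $G/B$ is proved in \cite{AMSS:shadows}, and they record, in the paragraph following the conjecture, the same reduction you give: since $\pi_*\csm(X(w)^\circ)=\csm(X(wW_P)^\circ)$ by \eqref{E:pushcsm}, strict positivity for $G/B$ would imply it for all $G/P$. Your reduction is therefore correct and matches the paper's remark. Your honest identification of the genuine obstacle — that the recursion $\mathcal{T}_i^{\mathrm{coh}}=\parcoh_i - s_i$ introduces signed contributions, so weak positivity cannot be upgraded to strict positivity by the induction alone — is exactly why this remains open.

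A few small precisions worth flagging in your reduction step: it is not quite automatic that the coefficient $c_{vW_P,wW_P}$ in $G/P$ is a sum of coefficients $c_{u,w}$ in $G/B$ with the representative $u=v\in W^P$ necessarily appearing as one of the summands; you need $\pi_*[X(u)]=[X(uW_P)]$ precisely when $u\in W^P$, and for $w\in W^P$ one has $v\le w$ in $W^P$ iff $v\le w$ in $W$, so the minimal representative $u=v$ does contribute. That part is fine once spelled out. The rest of your proposal is a survey of plausible strategies (pairing cancellations, Hodge-theoretic positivity, pipe-dream or RC-graph enumerations, Lagrangian shadow transversality), none of which you carry out and none of which is known to close the gap; this is consistent with the status of the statement as a conjecture. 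In short: there is no paper proof to compare against, your reduction agrees with the paper's own observation, and the remaining step you flag is precisely the unresolved content of the conjecture.
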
 
Huh's result for Grassmannians in \cite{huh:csm} shows that 
each homogeneous component $\csm(X(wW_P)^\circ)_k$ of the CSM class is 
represented by a non-empty irreducible variety. This is slightly weaker than the requirement 
in \Cref{conj:strong-pos}. On the other hand, {if this variety may be chosen to be $T$-stable, 
then Huh's result and the positivity 
results of Graham \cite{graham:pos} would imply Conjecture~\ref{conj:eqcsmpos}
for Grassmannians.}

Let $\pi: G/B \to G/P$ be the natural projection. Since $\pi_* (\csm(X(w)^\circ)) = \csm(X(wW_P)^\circ)$
(see e.g., \eqref{E:pushcsm})
it follows that if \Cref{conj:eqcsmpos} or \Cref{conj:strong-pos} holds for cells in $G/B$, then it also 
holds in $G/P$.  

\subsection{Positivity of CSM/SM structure constants}
We now turn to structure constants of the CSM multiplication. The 
CSM classes $\csm(X(w)^\circ)$ `are in homology', and it is natural to multiply their Poincar{\'e}
duals, the Segre-MacPherson (SM) classes. 
In general, if $Z$ is a subvariety of a nonsingular variety $X$, we set
\[ 
\ssm(Z) = \frac{\csm(Z)}{c(T(X))} \in H_*(X)
\]
(the ambient variety $X$ is understood in context, so omitted from the notation).
For instance, in $G/B$ we have
\[
\ssm(Y(w)^\circ) = \frac{\csm(Y(w)^\circ)}{c(T(G/B))}
=c(T^*(G/B))\cap \csm(Y(w)^\circ)\,:
\]
we proved in \cite[Lemma 8.2]{AMSS:shadows} that $c(T(G/B))\cdot c(T^*(G/B))=1$.
We also proved that 
\[ \ssm(X(ws_i)^\circ) = \mathcal{T}_i^{coh,\vee} \ssm(X(w)^\circ) \]
(cf.~\eqref{E:ssmrec} above). `Poincar{\'e} duality' states that,
in $G/P$,
\begin{equation}\label{E:Poincarecsm-ssm} \langle \ssm(Y(vW_P)^\circ), \csm(X(wW_P)^\circ) \rangle = \delta_{v,w} \/; \end{equation}
cf.~\cite[Thm.~7.1]{AMSS:shadows}. 
This can be proved using a transversality formula due to 
Sch{\"u}rmann \cite{schurmann:transversality};
see also \cite[Cor.~10.3]{AMSS:shadows}. 

A key consequence of \eqref{E:ssmrec} (cf.~\cite[Eq.~(36)]{AMSS:shadows}) 
is that for $G/B$, the Schubert expansions of $\csm(X(w)^\circ)$ and 
$\ssm(X(w)^\circ)$ are related by changing signs. More precisely, if 
\[ \ssm(X(w)^\circ) = \sum f_{v;w} [X(v)] \quad \in H_*(G/B) \/, \]
then with notation as above $f_{v;w} = (-1)^{\ell(w) - \ell(v)} c_{v;w}$. This follows because
the homogenized 
operator $\mathcal{T}_i^{coh,\hbar}= \hbar \partial_i-s_i$, giving CSM classes, and its adjoint
$\mathcal{T}_i^{coh,\vee,\hbar}= \hbar \partial_i + s_i$, giving SM classes, differ by a sign.
(See also \eqref{E:Tihomsign} above.) Consider now the structure
constants
\begin{equation}\label{E:ssmmult} \ssm(Y(u)^\circ) \cdot \ssm(Y(v)^\circ) = \sum  e_{u,v}^w \ssm(Y(w)^\circ) \/. \end{equation}
The transversality theorem by Sch{\"u}rmann \cite{schurmann:transversality} shows that
\[ e_{u,v}^w = \chi(g_1 Y(u)^\circ \cap g_2 Y(v)^\circ \cap g_3 X(w)^\circ ) \/, \]
the topological Euler characteristic of the intersection of three Schubert cells translated in general 
position via $g_1, g_2, g_3 \in G$. This interpretation of the structure constants holds 
for any $G/P$, although the relation between the Schubert expansions of CSM and SM classes 
does not extend beyond $G/B$. (Still, the SM classes are known to be Schubert alternating; see 
\cite{AMSS:ssmpos}.) 

Due to its statement involving only `classical' objects, 
perhaps the most remarkable positivity conjecture in this paper is the next.
\begin{conj}[Alternation of Euler characteristic]\label{conj:euler-alt} The Euler characteristic of the intersection
of three Schubert cells in general position in $G/P$ is alternating, i.e., for any $u,v,w \in W^P$,
\[ (-1)^{\ell(u)+\ell(v) +\ell(w)} \chi(g_1 Y(uW_P)^\circ \cap g_2 Y(vW_P)^\circ \cap g_3 X(wW_P)^\circ ) \ge 0\/. \]
\end{conj}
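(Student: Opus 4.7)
Since the statement is a conjecture, I will outline a strategy that reduces it to tractable subproblems rather than attempt a complete proof. Writing $Z := g_1 Y(uW_P)^\circ \cap g_2 Y(vW_P)^\circ \cap g_3 X(wW_P)^\circ$ for the triple intersection in general position, its expected complex dimension is $d_Z = \dim(G/P) - \ell(u) - \ell(v) + \ell(w)$. Since $2\ell(w)$ is even, the desired sign $(-1)^{\ell(u)+\ell(v)+\ell(w)}$ coincides with $(-1)^{\dim(G/P) - d_Z}$, so the conjecture is equivalent to the assertion that $(-1)^{\dim(G/P)-d_Z}\chi(Z) \geq 0$, i.e., the Euler characteristic of $Z$ carries the sign of $(-1)^{\mathrm{codim}\,Z}$.

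The first approach would be to combine three ingredients already available in the program: (i) Sch\"urmann's transversality theorem~\cite{schurmann:transversality}, which realizes $e_{u,v}^w = \chi(Z)$ as a structure constant of the SM multiplication~\eqref{E:ssmmult} (this interpretation extending from $G/B$ to general $G/P$); (ii) the Schubert sign-alternation of Segre--MacPherson classes from~\cite{AMSS:ssmpos}, which says that in the expansion $\ssm(Y(uW_P)^\circ) = \sum_a p_{u,a}[Y(aW_P)]$ one has $(-1)^{\ell(a)-\ell(u)}p_{u,a} \geq 0$; and (iii) the non-negativity of the ordinary Schubert structure constants $c_{a,b}^d$ in $H^*(G/P)$. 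I would match coefficients of $[Y(dW_P)]$ in the product identity to obtain the linear system
\[
\sum_{a,b} p_{u,a}\, p_{v,b}\, c_{a,b}^d = \sum_{w} e_{u,v}^w\, p_{w,d},
\]
in which the matrix $(p_{w,d})$ is upper-triangular with respect to Bruhat order with unit diagonal. Inverting this matrix over $\mathbb{Z}$ would express each $e_{u,v}^w$ as an integer combination of the $p_{u,a}$, $p_{v,b}$, and $c_{a,b}^d$, reducing the conjecture to tracking one overall sign.

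The main obstacle will be the sign-tracking itself: the inverse of an upper-triangular matrix with sign-alternating off-diagonal entries does not automatically inherit a uniform sign pattern, so it is not clear that combining with the nonnegative $c_{a,b}^d$ yields $e_{u,v}^w$ with the predicted sign. A more promising reformulation is to express $e_{u,v}^w$ as a single Poincar\'e integral using the duality~\eqref{E:Poincarecsm-ssm}:
\[
e_{u,v}^w = \big\langle \ssm(Y(uW_P)^\circ)\cdot\ssm(Y(vW_P)^\circ),\; \csm(X(wW_P)^\circ) \big\rangle.
\]
A natural route would then be to establish the equivariant enhancement (Conjecture~\ref{conj:eqcsmpos}), since Graham-positivity of the equivariant SM and CSM expansions should specialize, after a careful degree count in the positive roots, to the required integer alternation. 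Alternatively, one might hope to construct a direct geometric model for $Z$ --- a stratification whose strata carry controlled Euler characteristics, or better, an explicit cellular decomposition --- thereby bypassing the inversion entirely. This is the deepest step: such a model is available in Grassmannian-type cases via Huh's varieties of critical points~\cite{huh:csm}, and extending it to arbitrary $G/P$ is, in my view, where the essential difficulty lies.
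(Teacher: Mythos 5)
This statement is labeled as a \emph{conjecture} in the paper, and the paper offers no proof of it, only a survey of partial results. There is therefore no paper proof for your proposal to be compared against; the right baseline is what the paper reports as the state of the art, namely: (i) Knutson and Zinn-Justin proved it for $d$-step flag manifolds in type A with $d\le 3$ using integrable systems, and (ii) S.~Kumar \cite{kumar:conjpos} formulated a stronger conjecture --- that the CSM class of a Richardson \emph{cell} $Y(u)^\circ\cap X(v)^\circ$ is Schubert-positive, i.e., $\csm(Y(u)^\circ\cap X(v)^\circ)=\sum_w f_{u,v}^w[Y(w)]$ with $f_{u,v}^w\ge 0$ --- and showed that it implies \Cref{conj:euler-alt}.

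Your coefficient-matching approach is set up correctly, and you are right to flag the obstruction yourself: the matrix $(p_{w,d})$ is unitriangular with sign-alternating entries, and its inverse has no controlled sign pattern, so sign-alternation of the $\ssm$ expansions plus nonnegativity of $c_{a,b}^d$ does \emph{not} transfer to the structure constants $e_{u,v}^w$. This gap is genuine and is exactly what the Kumar-type reformulation is designed to circumvent: rather than inverting a matrix, one replaces the inequality on a structure constant by a positivity claim about a single characteristic class of the Richardson cell. Since $e_{u,v}^w = \langle \ssm(X(w_0uW_P)^\circ\cap Y(vW_P)^\circ), \csm(X(wW_P)^\circ)\rangle$ by the transversality formula and Poincar\'e duality \eqref{E:Poincarecsm-ssm}, positivity of the CSM Schubert expansion of a Richardson cell immediately gives the predicted sign without any inversion. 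Your second route (deducing the result from the equivariant positivity \Cref{conj:eqcsmpos}) does not obviously lead anywhere here: specializing equivariant parameters gives positivity of $\csm(X(w)^\circ)$, already known non-equivariantly from \cite{AMSS:shadows}, but it says nothing about Richardson cells. And the actual progress on the conjecture (Knutson--Zinn-Justin) comes through integrable-systems methods rather than the kind of cellular or critical-variety model you sketch in your third route, so your account of where the difficulty lies is somewhat orthogonal to the paper's. In short: your proposal identifies the correct obstruction but misses the reduction (to Kumar's conjecture) that the paper singles out as the most promising open approach.
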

Utilizing deep connections between SM classes to the theory of integrable systems, this
was proved by Knutson and Zinn-Justin for $d$-step flag manifolds 
with $d \le 3$, and it was conjectured to hold for $d=4$; cf.~\cite[p.~43]{knutson.justin:segre}.
Independently, and based on multiplications 
of SM classes from \cite{AMSS:shadows}, the authors of this paper stated this conjecture in several 
conference and seminar talks, for partial flag manifolds
$G/P$ in arbitrary Lie type. S. Kumar \cite{kumar:conjpos}
conjectured that the CSM class of the Richardson cells
are Schubert positive, that is, if
\[ \csm(Y(u)^\circ \cap X(v)^\circ) = \sum_w f_{u,v}^w [Y(w)] \/, \]
then $f_{u,v}^w \ge 0$. (We also learned about this conjecture independently
from Rui Xiong, and it is now stated in \cite[Conj. 9.2]{xiong:pieri}.)
It is shown in \cite{kumar:conjpos} that this implies \Cref{conj:euler-alt}.
Note that the {\em Segre} class of the Richardson cell 
\[ \ssm(R_v^{u,\circ}) := \ssm(Y(u)^\circ \cap X(v)^\circ)\] 
is Schubert alternating 
by \cite[Theorem 1.1]{AMSS:ssmpos} (the inclusion of $Y(u)^\circ \cap X(v)^\circ$ 
is an affine morphism). 

For $G/B$, the absolute value of the structure constants 
$e_{u,v}^w$ from \eqref{E:ssmmult}
give the structure
constants to multiply CSM classes of Schubert cells. This generalizes the positivity in ordinary Schubert 
Calculus: if $\ell(uW_P) + \ell(vW_P) = \ell(wW_P)$, 
then the intersection in question is $0$ dimensional and reduced, 
and the Euler characteristic counts the number of points in the intersection. 
A different algorithm to calculate the SM structure
constants is given in~\cite{su:structure}.

We end this section by proving a property of the sum of the coefficients $e_{u,v}^w$.

\begin{prop}\label{prop:sumcsmst} Consider the multiplication 
\[ \ssm(Y(uW_P)^\circ) \cdot \ssm(Y(vW_P)^\circ) = \sum_w e_{u,v}^w \ssm(Y(wW_P)^\circ) \/. \]
Then $\sum_w e_{u,v}^w = \delta_{w_0uW_P,vW_P}$.
\end{prop}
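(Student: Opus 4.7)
The plan is to exploit the Poincar\'e duality \eqref{E:Poincarecsm-ssm} twice, bridging the two applications by the $w_0$-translation on $G/P$, which non-equivariantly is homotopic to the identity. First, pair both sides of \eqref{E:ssmmult} with $\csm(X(zW_P)^\circ)$ for $z\in W^P$ and use \eqref{E:Poincarecsm-ssm} to extract each individual structure constant as
\[
e_{u,v}^z \;=\; \int_{G/P} \ssm(Y(uW_P)^\circ) \cdot \ssm(Y(vW_P)^\circ) \cdot \csm(X(zW_P)^\circ).
\]
Sum this equality over $z\in W^P$. By additivity of $c_*$ on the Schubert stratification of $G/P$,
\[
\sum_{z\in W^P} \csm(X(zW_P)^\circ) \;=\; \csm(\one_{G/P}) \;=\; c(T(G/P))\cap [G/P],
\]
and since $c(T(G/P))\cdot \ssm(Y(vW_P)^\circ)=\csm(Y(vW_P)^\circ)$ by definition of the Segre--MacPherson class, the sum collapses to
\[
\sum_{z} e_{u,v}^z \;=\; \int_{G/P} \ssm(Y(uW_P)^\circ) \cdot \csm(Y(vW_P)^\circ).
\]

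The crucial step is to convert $\csm(Y(vW_P)^\circ)$ back into a CSM class of an ordinary Schubert cell, so that \eqref{E:Poincarecsm-ssm} may be applied a second time. Let $v'\in W^P$ be the minimal length representative of $w_0 v W_P$. Since $w_0 B w_0^{-1}=B^-$, left multiplication by any lift of $w_0\in W$ in $G$ defines a biholomorphism $\phi_{w_0}\colon G/P\to G/P$ with $\phi_{w_0}(X(v'W_P)^\circ)=Y(vW_P)^\circ$. The group $G$ being connected, $\phi_{w_0}$ is homotopic to the identity, so the induced map on (integral) Borel--Moore homology is the identity. Combined with covariant functoriality of MacPherson's Chern class transformation under proper morphisms, this yields
\[
\csm(Y(vW_P)^\circ) \;=\; (\phi_{w_0})_*\csm(X(v'W_P)^\circ) \;=\; \csm(X(v'W_P)^\circ) \quad\text{in } H_*(G/P).
\]

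Substituting this identity and invoking \eqref{E:Poincarecsm-ssm} one more time gives
\[
\sum_z e_{u,v}^z \;=\; \langle \ssm(Y(uW_P)^\circ),\, \csm(X(v'W_P)^\circ)\rangle \;=\; \delta_{uW_P,\,v'W_P}\;=\;\delta_{w_0 uW_P,\,vW_P},
\]
which is the statement to prove. The main subtle point is the $w_0$-translation identification of $\csm(Y(vW_P)^\circ)$ with $\csm(X(v'W_P)^\circ)$: it genuinely requires non-equivariant homology, since $\phi_{w_0}$ is not a $T$-equivariant map, which explains why the proposition is stated in the non-equivariant setting.
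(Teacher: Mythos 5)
Your proof is correct, but the key steps differ from the paper's proof after the common reduction
$\sum_z e_{u,v}^z = \int_{G/P} \ssm(Y(uW_P)^\circ)\cdot\ssm(Y(vW_P)^\circ)\cdot c(T(G/P))$. The paper converts the product $\ssm(Y(uW_P)^\circ)\cdot\ssm(Y(vW_P)^\circ)$ into the Segre class of a single Richardson cell $X(w_0uW_P)^\circ\cap Y(vW_P)^\circ$ via Sch\"urmann's transversality theorem, then multiplies by $c(T(G/P))$ to get $\csm$ of that cell, and finally evaluates the integral as the topological Euler characteristic of the Richardson cell via the Bia\l{}ynicki-Birula fixed-point argument. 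You instead absorb $c(T(G/P))$ into one of the Segre factors to obtain $\csm(Y(vW_P)^\circ)$, convert to $\csm(X(v'W_P)^\circ)$ by the $w_0$-translation (which is also used implicitly by the paper in the step $\ssm(Y(uW_P)^\circ)=\ssm(X(w_0uW_P)^\circ)$), and then apply the orthogonality \eqref{E:Poincarecsm-ssm} a second time. Your route bypasses both the transversality theorem and the Bia\l{}ynicki-Birula fixed-point computation, trading the paper's geometric interpretation of the sum as $\chi$ of a Richardson cell for a shorter and more formal argument that rests only on \eqref{E:Poincarecsm-ssm}, the definition of $\ssm$, and the homotopy invariance of $w_0$-translation.
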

\begin{proof} By the transversality formula from
\cite{schurmann:transversality}, 
\[ \ssm(Y(uW_P)^\circ) \cdot \ssm(Y(vW_P)^\circ) = \ssm(X(w_0uW_P)^\circ) \cdot \ssm(Y(vW_P)^\circ) 
= \ssm(X(w_0uW_P)^\circ \cap Y(vW_P)^\circ) \/. \]
Utilizing that $c(T(G/P)) = \sum_w \csm(X(w)^\circ)$ and the duality from \eqref{E:Poincarecsm-ssm}, the
sum of the coefficients $e_{u,v}^w$ equals
\[ \begin{split} \int_{G/P} \ssm(Y(uW_P)^\circ) & \cdot \ssm(Y(vW_P)^\circ) \cdot c(T(G/P)) \\
& =
\int_{G/P} \ssm(X(w_0uW_P)^\circ \cap Y(vW_P)^\circ) \cdot c(T(G/P)) \\ & = 
\int_{G/P} \csm(X(w_0uW_P)^\circ \cap Y(vW_P)^\circ) = \delta_{w_0uW_P,vW_P} \/. \end{split} \]
Here the last equality follows because the Richardson cell
$X(w_0uW_P)^\circ \cap Y(vW_P)^\circ$ is torus-stable, therefore its Euler characteristic equals the
Euler characteristic of 
the fixed locus; see \cite[Corollary 2]{BB:on-fixed}, applied for a general $\mathbb{C}^* \subseteq T$. 
In this case the fixed locus is empty or one point, giving $\delta_{w_0uW_P,vW_P}$.
\end{proof} 

\begin{example} Take $G$ is Lie type $G_2$. Then
\[ \begin{split} \ssm(Y(\id)^\circ) \cdot  \ssm(Y(\id)^\circ) & = \ssm(Y(\id)^\circ) - \ssm(Y(s_1)^\circ)  - \ssm(Y(s_2)^\circ) \\ & + 2 \ssm(Y(s_2 s_1)^\circ) +   4 \ssm(Y(s_1 s_2)^\circ) 
\\ &  -9 \ssm(Y(s_1s_2 s_1)^\circ) -   11 \ssm(Y(s_2 s_1 s_2)^\circ) \\ & 
 +22 \ssm(Y(s_2s_1s_2 s_1)^\circ) + 34 \ssm(Y(s_1s_2 s_1 s_2)^\circ) \\ &
  -57 \ssm(Y(s_1s_2s_1s_2 s_1)^\circ) -51 \ssm(Y(s_2s_1s_2 s_1 s_2)^\circ) \\ &
+ 67 \ssm(Y(s_2s_1s_2s_1s_2 s_1)^\circ) \/. \end{split} \]
Observe that these structure constants are alternating, and add up to $0$, confirming
\Cref{conj:euler-alt} and \Cref{prop:sumcsmst} in this case.
\end{example}
  
\subsection{Unimodality and log concavity for CSM polynomials}
We now turn to some unimodality and log concavity properties of the coefficients $c_{v;w}$.
Following \cite{stanley:log-concave}, a sequence $a_0, \ldots, a_n$ is {\bf unimodal} if there exists $i_0$ such that
\[ a_0 \le a_1 \le \ldots \le a_{i_0} \ge a_{i_0+1} \ge \ldots \ge a_n \/. \]
The sequence is {\bf log-concave} if for any $1 \le i \le n-1$, 
\[ a_i^2 \ge a_{i-1} a_{i+1} \/. \]
A log-concave sequence of nonnegative integers with no internal zeros is unimodal.
A polynomial $P(x) = \sum a_i x^i$ is unimodal, resp., log-concave, if its sequence of coefficients
satisfies the corresponding property.

Consider now any class $\kappa= \sum c_w [X(wW_P)]$ in $H_*(G/P)$. 
We define the {\bf $H$-polynomial} associated to $\kappa$ by 
\[ H(\kappa) := \sum c_w x^{\ell(w)} = \sum c_i x^i \/, \]
determining the coefficients $c_i$. For $w \in W^P$ we denote by 
$H_w(x):= H(\csm(X(wW_P)))$, the $H$-polynomial of the CSM class of the 
Schubert {\em variety}.

\begin{conj}[Unimodality and log concavity]\label{conj:uni-logc}
Let $X(w) \subseteq G/P$ be any Schubert variety. Then the following
hold:

(a) The polynomial $H_w$ is unimodal with no internal zeros.

(b) If $G$ is of Lie type A (i.e., $G/P= \Fl(i_1, \ldots, i_k;n)$ 
is a partial flag manifold) then $H_w$ is log-concave.
\end{conj}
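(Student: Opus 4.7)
The plan is to tackle parts (a) and (b) separately, since unimodality is strictly weaker than log-concavity and should admit a more direct argument. First I would rewrite the target in terms of cells: from $\csm(X(wW_P)) = \sum_{vW_P \le wW_P} \csm(X(vW_P)^\circ)$, the $k$-th coefficient of $H_w(x)$ is $h_k = \sum_{v \le w,\, \ell(v)=k} c_{v,w}$, where $c_{v,w}$ is the Schubert coefficient of the cell class. By the positivity results of~\cite{AMSS:shadows} (and Conjecture~\ref{conj:strong-pos}), each $c_{v,w} > 0$, so we are dealing with a sequence of genuinely positive integers and the question is purely one of shape. I would begin by tabulating $H_w$ in small rank (types $A_n$ for $n \le 4$, $B_2$, $G_2$) using the recursion $\mathcal{T}_i^{coh,\hbar}(\csmh(X(w)^\circ)) = \csmh(X(ws_i)^\circ)$, paying special attention to extremal cases such as $w = w_0$ (where $H_{w_0}$ is, essentially, the Poincar\'e polynomial of $G/P$ weighted by Euler characteristics of cells) and to $W^P$-elements near the boundary of $W^P$.

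For part (a), the natural route is a Hard Lefschetz argument: one seeks a degree-raising operator $L$ on the appropriate graded object such that $L^{n-2k}$ is an isomorphism from the $k$-th to the $(n-k)$-th graded piece, forcing unimodality of the dimensions. A candidate framework is the theory of equivariant mixed Hodge modules: the homogenized CSM class $\csmh(X(w)^\circ)$ arises as $MC_y$ of the constant mixed Hodge module on $X(w)^\circ$ via \Cref{prop:csm=initial}, and one could attempt to lift $H_w(x)$ to the graded dimension of a bigraded object (for instance, the associated graded of $IC_{X(w)}$ or of a Schubert D-module) carrying an $\mathfrak{sl}_2$-action coming from relative Hard Lefschetz for a suitable proper map such as the Bott--Samelson resolution. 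Compatibility with the recursion via $\mathcal{T}_i^{coh,\hbar}$ would need to be verified, using the Demazure--Lusztig action on this bigraded object.

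For part (b), the most promising strategy is to connect $H_w(x)$ to matroid theory and invoke Adiprasito--Huh--Katz, building on Huh's log-concavity result for CSM classes of Grassmannian Schubert cells~\cite{huh:csm}. Concretely, I would try to realize $H_w(x)$ (or a small modification) as the reduced characteristic polynomial of a matroid $M_w$ associated to $X(wW_P)$, or more ambitiously show that $H_w$ is a Lorentzian polynomial in the sense of Br\"and\'en--Huh. A natural geometric source for such a matroid in type A is the combinatorics of the cells appearing in a Bott--Samelson resolution $\theta : Z \to X(w)$, together with the projection to $G/P$; the pushforward formula for $\csm$ via $\theta$ should then translate matroid operations (deletion/contraction) into the recursion for $H_w$.

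The hard part will be that the operator $\mathcal{T}_i^{coh,\hbar} = \hbar\partial_i - s_i$ mixes a nonnegative piece with a sign, so simply inducting on $\ell(w)$ using $H_{ws_i}$ in terms of $H_w$ is hopeless without additional structure. For (a), the serious obstacle is to pinpoint the geometric or Hodge-theoretic object whose Lefschetz decomposition refines $H_w$; for (b), the obstacle is that Richardson and Schubert varieties do not a priori carry a matroid structure compatible with the Schubert stratification outside of Grassmannians, so the required matroid $M_w$ (or Lorentzian representative of $H_w$) would have to be constructed from scratch, and its characteristic polynomial matched to $H_w$ via an independent calculation. Failing a full proof, one might first settle the Grassmannian case by combining Huh's matroid with the Pieri-type recursions of~\cite{aluffi.mihalcea:csm} to verify the log-concave inequality $h_k^2 \ge h_{k-1}h_{k+1}$ directly.
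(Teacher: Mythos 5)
The statement you were asked to prove is explicitly labeled a \emph{Conjecture} in the paper (\Cref{conj:uni-logc}): the authors do not prove it, they only supply supporting computations (the Schubert variety $X_{(2,1)}\subseteq\Gr(3,6)$, and the quadric $Q^5$). So there is no proof in the paper to compare your argument against, and no proof is expected of you either. Your proposal is honest about this: it is a research plan with clearly flagged obstacles, not a demonstration, and as such it cannot be judged ``correct'' or ``incorrect'' as a proof.

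As a plan, the directions you sketch are the natural ones (relative Hard Lefschetz via Bott--Samelson or mixed Hodge module structure for unimodality; Huh's matroid/Lorentzian machinery for log-concavity in type~$A$), and your diagnosis of the difficulty is accurate: the recursion $\mathcal T_i^{\mathrm{coh},\hbar}=\hbar\,\parcoh_i - s_i$ mixes signs, so a naive induction on $\ell(w)$ cannot preserve unimodality or log-concavity on its own. Two remarks worth internalizing. First, the paper's own $Q^5$ example already shows that $H_w$ can be unimodal but \emph{not} log-concave outside of type~$A$; any framework strong enough to yield a Lorentzian or matroidal interpretation in general would prove too much, so for part~(a) you genuinely need a mechanism (e.g.\ an $\mathfrak{sl}_2$-action or hard Lefschetz on a suitable graded object) that gives unimodality without log-concavity. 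Second, your identification $h_k=\sum_{\ell(v)=k}c_{v,w}$ with $c_{v,w}$ the cell coefficients is off by a Bruhat sum: since $\csm(X(wW_P))=\sum_{u\le w}\csm(X(uW_P)^\circ)$, the coefficient of $[X(vW_P)]$ in the \emph{variety} class is $\sum_{v\le u\le w}c_{v,u}$, not $c_{v,w}$; harmless for the plan, but it matters the moment one tries to feed the recursion into an inequality. In short: no gap to report in the sense of a flawed proof step, because nothing is proved; the statement remains open, and your write-up correctly frames it that way.
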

A similar conjecture for Mather classes can be found in 
\cite{mihalcea.singh:conormal}.
\begin{example} Consider the Grassmannian $\Gr(3,6)$ and 
the Schubert variety $X_{(2,1)}$ of dimension $3$. Then
\[ \csm(X_{(2,1)}) = [X_{(2,1)}] + 3 [X_2] + 3 [X_{1,1}] +8 [X_1] + 5 [X_0] \/. \]
Its $H$-polynomial is 
\[ x^3 + 6 x^2 + 8 x +5 \/, \]
which is log-concave. 

Consider now the $5$-dimensional quadric $Q^5$. The
$H$-polynomial of $c(TQ^5)$ is
\[ x^5 + 5x^4 + 11x^3 + 26x^2 + 18x + 6 \]
which is unimodal, but not log-concave.
\end{example}

\subsection{Conjectures about the motivic Chern classes}
The goal of this section is to state some conjectures for the motivic Chern classes of the Schubert cells. 
Given the (proved and conjectural) positivity properties of the CSM classes of Schubert cells,
it is natural to expect that the motivic Chern classes of Schubert cells will also 
satisfy a positivity conjecture. The following 
conjecture was stated by F{e}h{\'e}r, Rim{\'a}nyi, and Weber \cite{feher2018characteristic}
in type A, and in \cite[Conjecture 1]{AMSS:motivic} for arbitrary Lie type. 
\begin{conj}[Positivity of MC classes]\label{conj:k} Consider the Schubert expansion: 
\[ MC_y^{T}(X(w)^\circ)=\sum_{u\leq w}c_{u,w}(y,e^t)\calO_u^{T}\in K_{\mathbb{T}}(G/B)[y].\]
Then for any $u\leq w\in W$, we have
\[(-1)^{\ell(w)-\ell(u)}c_{w,u}(y,e^t)\in \bbZ_{\geq 0}[y][e^{-\alpha_1}, \ldots , e^{-\alpha_r}] \/,\]
i.e., the coefficients $(-1)^{\ell(w) - \ell(u)} c_{u,w}(y,e^t)$ are polynomials in the variables $y$ and $e^{-\alpha_1}, \ldots , e^{-\alpha_r}$  with non-negative coefficients. 
\end{conj}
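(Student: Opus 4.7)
The natural approach to \Cref{conj:k} is by induction on $\ell(w)$, using the recursion $MC_y(X(ws_i)^\circ) = \mathcal{T}_i(MC_y(X(w)^\circ))$ for $ws_i > w$ that follows from \Cref{thm:MC+dual}. The base case $w = \id$ is immediate since $MC_y(X(\id)) = \cO_{\id}^T$. For the inductive step, one would expand $\mathcal{T}_i(\cO_u^T) = \lambda_y(\mathcal{L}_{\alpha_i}) \partial_i(\cO_u^T) - \cO_u^T$ using \Cref{equ:BGGonstru} together with an equivariant K-theoretic Chevalley formula (such as that of Lenart--Postnikov or Pittie--Ram) to expand $\mathcal{L}_{\alpha_i} \cdot \cO_v^T$ in the Schubert basis. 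The aim is to show that if $(-1)^{\ell(w) - \ell(u)} c_{u,w}(y,e^t) \in \mathbb{Z}_{\ge 0}[y][e^{-\alpha_1}, \ldots, e^{-\alpha_r}]$ for all $u \le w$, then the same sign-positivity holds after replacing $w$ with $ws_i$. Since the parity of $\ell(w) - \ell(u)$ shifts by one under this transition, the interplay between the $-\id$ summand in $\mathcal{T}_i$ and the expansion of $\lambda_y(\mathcal{L}_{\alpha_i}) \partial_i(\cO_u^T)$ must combine to produce exactly the required sign pattern. This parallels the Anderson--Griffeth--Miller sign positivity for structure constants in equivariant $\K$-theory, and a natural strategy is to try to reduce to their theorem through a suitable change of variables relating $y$ and the exponentials of roots.

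Alternatively, one could seek a geometric interpretation. Using the description of motivic Chern classes via equivariant mixed Hodge modules announced in the forthcoming work cited in the paper, one may write $MC_y(X(w)^\circ)$ as a signed combination of push-forwards from a Bott--Samelson resolution $\theta: Z \to X(w)$ of classes with a controlled positivity structure. The divisibility property established in \Cref{prop:divisibility}---the factor $(1+y)^{\ell(u)}$ divides $c_{u,w}(y)$ nonequivariantly---suggests that the reduced coefficients $c_{u,w}(y,e^t)/(1+y)^{\ell(u)}$ carry a cleaner positivity: specialization at $y = -1$ recovers precisely the nonnegative Chern--Schwartz--MacPherson coefficients established in \cite{AMSS:shadows}, and \Cref{prop:csm=initial} identifies the CSM class with the leading term of the Chern character of $MC_y$. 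Lifting CSM positivity through all orders of the Chern character expansion is an appealing route, though it would require careful control of the higher-order terms, and the equivariant parameters complicate matters because CSM positivity in the equivariant setting is itself conjectural (cf.\ \Cref{conj:eqcsmpos}).

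The main obstacle, and the reason this conjecture has resisted proof, is that $\mathcal{T}_i$ is not manifestly sign-positive on the Schubert basis. Already the equivariant $\K$-theoretic Chevalley expansion of $\mathcal{L}_{\alpha_i} \cdot \cO_u^T$ contains contributions of mixed sign when expanded in $\{\cO_v^T\}$, and the further action of $\partial_i$, combined with the auxiliary parameter $y$, compounds the sign bookkeeping. A successful argument will likely require either exhibiting a refined basis of $\K_T(G/B)[y]$ in which the DL operators act with controlled sign behavior---a Kazhdan--Lusztig-type basis adapted to motivic Chern classes---or a direct geometric interpretation of $(-1)^{\ell(w)-\ell(u)} c_{u,w}(y,e^t)$ as the class of a genuinely positive object, for instance arising from a transversal intersection of a resolution of $X(w)$ with an equivariant degeneration of the opposite Schubert cell $Y(u)$, in the spirit of the transversality arguments used for SM classes.
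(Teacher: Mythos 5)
This statement is a \emph{conjecture}, not a theorem, and the paper offers no proof of it. You correctly recognize this: your ``proof proposal'' is in fact an honest survey of strategies and obstacles, explicitly acknowledging that the conjecture ``has resisted proof.'' There is therefore no proof in the paper to compare against. What the paper does supply is partial evidence, and your discussion lines up well with it: the DL-recursion/Chevalley approach you sketch in the first paragraph is essentially the strategy the paper carries out combinatorially in \S\ref{sec:Hecke}, where \Cref{prop:KKH} reinterprets the coefficients $c_{u,w}(y,e^t)$ inside the Kostant--Kumar Hecke algebra and verifies the sign pattern by hand for $\ell(w)\le 2$; the paper also cites positivity at the specializations $y=0$ (ideal sheaves, via~\eqref{equ:idealstru}) and $y=-1$ (fixed-point classes, via Anderson--Griffeth--Miller), the closed form for $c_{w,w}$, and finite computer checks. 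Your identification of the central obstruction---that $\mathcal{T}_i$ is not manifestly sign-positive on the Schubert basis and the Chevalley expansion mixes signs---is exactly right and is precisely why the paper stops at evidence rather than proof. The geometric route via equivariant mixed Hodge modules and Bott--Samelson resolutions, and the hope of lifting CSM positivity through higher orders of the Chern character, are plausible further ideas not pursued in the paper, but you correctly flag that they face serious difficulties (in particular, equivariant CSM positivity, \Cref{conj:eqcsmpos}, is itself open). In short: your submission is an accurate appraisal of an open problem, not a proof, and no proof should have been expected.
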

The conjecture implies that the coefficients of the non-equivariant motivic 
Chern classes of Schubert cells are sign-alternating: $(-1)^{\ell(w) - \ell(u)} c_{u,w}(y) \in \Z_{\ge 0} [y]$. 
If we specialize $y=0$, then the positivity conjecture is known from \Cref{prop:specialize}(b) 
and the fact that
the ideal sheaves are alternating in Schubert classes; see~\eqref{equ:idealstru} above, and e.g., \cite{brion:flagv}.
Some particular coefficients $c_{u,w}(y,e^t)$ are known to be positive.  
For instance, the coefficient 
\[ c_{w,w}(y,e^t) =\prod_{\alpha>0,w(\alpha)<0}(1+ye^{w\alpha}) \/, \] 
calculated in Proposition \ref{prop:specialize}(d) is positive. 
The specialization at $y=-1$ gives the structure sheaf $\cO_{e_w}^{\mathbb{T}}$ (see Proposition \ref{prop:specialize}), which is also  consistent with the conjecture; 
the structure sheaf $\cO_{e_w}^{\mathbb{T}}$ is known to be Schubert 
alternating, using e.g., the positivity in the 
equivariant K theory proved by Anderson, Griffeth and Miller \cite{anderson.griffeth.miller:positivity}. 
Finally, we used a computer program to check Conjecture~\ref{conj:k} for flag manifolds of type $A_n$ 
for $n \le 5$ (i.e., up to $\Fl(5)$), and for the Lie types $B_2, C_2, D_3$ and $G_2$.

In Lie type A, F{e}h{\'e}r, Rim{\'a}nyi, and Weber \cite{feher2018characteristic} 
also observed a log concavity 
result for motivic Chern classes. We checked their conjectures in the same cases. 
\begin{conj}[Log concavity for MC classes]\label{conj:MClogc} Let 
$X(w)^\circ \subseteq G/B$ and consider the Schubert expansion
of the non-equivariant motivic Chern class:
\[ MC^T(X(w)^\circ) = \sum_{v \le w} c_{v;w}(y;e^t) \cO_v^T \/. \]
Then $c_{v;w}(y;e^t)$ is log-concave.\end{conj}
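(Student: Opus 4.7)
The plan is to attempt the conjecture by induction on $\ell(w)$, using the Demazure--Lusztig recursion $MC_y(X(w)^\circ) = \calT_i(MC_y(X(w')^\circ))$ whenever $w = w' s_i > w'$ (cf.~\Cref{thm:MC+dual}). The base case $w=\id$ is trivial since the only nonzero coefficient is $c_{\id,\id}=1$. For the inductive step, I would combine this recursion with the $\K$-theoretic Chevalley rule (to expand $\lambda_y(\mathcal{L}_{\alpha_i})\cdot\cO_v^T$ in the Schubert basis) and the Demazure rule \eqref{equ:BGGonstru}, producing an explicit recursion expressing each $c_{u,w}(y)$ as a $\Z[y]$-linear combination of the coefficients $c_{v,w'}(y)$ for $v\le w'$.

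Because log-concavity is rarely preserved under recursions involving subtractions, my next move would be to factor out the divisibility established in \Cref{prop:divisibility}, writing $c_{u,w}(y) = (1+y)^{\ell(u)} Q_{u,w}(y)$ with $Q_{u,w}(y)\in\Z[y]$. Since $(1+y)^{\ell(u)}$ is Polya frequency (its coefficients are binomial, hence log-concave with no internal zeros), and the product of a PF polynomial with a nonnegative log-concave polynomial remains log-concave, the problem reduces to log-concavity of $Q_{u,w}(y)$, with signs adjusted according to \Cref{conj:k}. This reformulation is also natural because $Q_{u,w}(-1)$ recovers, up to sign, the CSM Schubert coefficient $\overline{c}_{u,w}$ via \Cref{thm:csm}, so individual specializations of the conjecture are consistent with the (now proved) non-equivariant CSM positivity.

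The hardest part will be that no purely recursive argument on Demazure--Lusztig operators appears likely to control log-concavity directly; one would typically need either a graded vector space (or mixed Hodge module) whose Betti numbers realize the sequence and which carries a hard-Lefschetz-type structure forcing log-concavity, or a matroidal/combinatorial realization in the spirit of Adiprasito--Huh--Katz. Accordingly, my preferred attack would be to interpret $(-1)^{\ell(w)-\ell(u)} c_{u,w}(y)$ through the mixed Hodge module picture developed in the forthcoming \cite{AMSS:MCcot} (where $MC_y$ is recovered from a class in the $\K$-theory of the cotangent bundle), and to try to exhibit the polynomial as the Hilbert series of a graded piece of an intersection cohomology stalk equipped with a pure Hodge structure, so that Hodge--Riemann bilinear relations can be invoked. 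Controlling purity along Schubert strata, or, failing that, extracting a log-concave part from the weight filtration, is the essential technical difficulty I would expect to encounter.
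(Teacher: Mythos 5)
This statement is Conjecture~\ref{conj:MClogc}, which the paper states \emph{without proof}: the authors attribute the observation to F\'eh\'er, Rim\'anyi, and Weber in type~A, report that they verified it by computer in small rank, and leave it open. There is therefore no proof in the paper against which to compare your attempt, and your proposal — as you acknowledge explicitly in its final paragraph — is a research outline rather than a proof; the key step (constructing a graded structure or mixed Hodge module realizing the coefficients and invoking Hodge–Riemann/hard-Lefschetz to force log-concavity) is not carried out, and no existing result in the paper supplies it.

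Two smaller points worth flagging in your reduction. First, the standard preservation result is that the product of two log-concave sequences of nonnegative reals \emph{with no internal zeros} is again log-concave with no internal zeros (this covers the PF factor $(1+y)^{\ell(u)}$); you should include the no-internal-zeros hypothesis, and observe that it is also part of what must be proved about the quotient $Q_{u,w}(y)=c_{u,w}(y)/(1+y)^{\ell(u)}$. Second, the sign normalization matters: Conjecture~\ref{conj:k} predicts $(-1)^{\ell(w)-\ell(u)}c_{u,w}(y)\in\Z_{\ge0}[y]$, so log-concavity should be read for the sign-corrected polynomial (as in the $G_2$ example, where the coefficient of $\cO^{w_0}$ already has positive coefficients). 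With those caveats your reduction to the $Q_{u,w}$ is correct but, as you note, it does not by itself advance the conjecture, since no recursion in the Demazure–Lusztig formalism is known to propagate log-concavity through the signed subtractions in~\eqref{E:TiKdef}.
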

\begin{example} Consider $G$ of Lie type $G_2$. The coefficient of $\cO^{w_0}$ in the expansion of
$MC_y(X(w_0)^\circ)$ is
\[ 64 y^6 + 141y^5 + 125y^4 + 69y^3 + 29y^2 + 8 y + 1\/. \]
This is a log-concave polynomial. Its specialization at $y=-1$ gives $1$, reflecting the
fact that it calculates the Euler characteristic of the big cell in $G_2/B$.
\end{example}
\subsection{An interpretation in the Hecke algebra}\label{sec:Hecke}
In this section, we use Hecke algebra to give a combinatorial interpretation of the coefficients 
$c_{w,u}(y)$ in \Cref{conj:k}. For the cohomology case, see \cite[Theorem 6.2]{lee2018chern}.
Recall the K-theoretic BGG operator $\partial_i$ satisfies $\partial_i^2=\partial_i$ and the braid relation. The operators $\calT_i$ (and $\calT^\vee_i$) satisfy the finite Hecke algebra relation, see Proposition \ref{prop:hecke-relations}. Besides, we also have
\begin{lemma}\label{lem:commutation}
For any simple root $\alpha$ and torus weight $\lambda$,
\[\partial_{s_\alpha}\calL_{\lambda}-\calL_{s_\alpha\lambda}\partial_{s_\alpha}=\frac{\calL_{\lambda}-\calL_{s_\alpha\lambda}}{1-\calL_{\alpha}}\in \End_{K_{\mathbb{T}}(pt)[y]}\K_{T}(G/B)[y],\]
\[\calT_{s_\alpha}\calL_{\lambda}-\calL_{s_\alpha\lambda}\calT_{s_\alpha}=(1+y)\frac{\calL_{s_\alpha\lambda}-\calL_{\lambda}}{1-\calL_{-\alpha}}\in \End_{K_{\mathbb{T}}(pt)[y]}\K_{T}(G/B)[y]\]
and 
\[\calT^\vee_{s_\alpha}\calL_{\lambda}-\calL_{s_\alpha\lambda}\calT^\vee_{s_\alpha}=(1+y)\frac{\calL_{s_\alpha\lambda}-\calL_{\lambda}}{1-\calL_{-\alpha}}\in \End_{\K_{T}(pt)[y]}\K_{T}(G/B)[y].\]
Here $\frac{\calL_{\lambda}-\calL_{s_\alpha\lambda}}{1-\calL_{\alpha}}$ is defined as follows. Suppose $\frac{e^\lambda-e^{s_\alpha \lambda}}{1-e^\alpha}=\sum_\mu e^\mu$, then $\frac{\calL_{\lambda}-\calL_{s_\alpha\lambda}}{1-\calL_{\alpha}}:=\sum_\mu \calL_\mu$. 
\end{lemma}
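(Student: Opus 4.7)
The plan is to prove all three identities simultaneously by reducing them to scalar identities on the torus-fixed-point basis. Each operator appearing in the statement is $\K_T(pt)[y]$-linear, and by the localization theorem (used already in the discussion around Lemma~3.6) the natural map $\K_T(G/B)[y] \to \K_T(G/B)_{\text{loc}}[y]$ is injective with $\{\iota_w\}_{w\in W}$ a basis on the right. Hence it suffices to verify each identity after evaluation on an arbitrary $\iota_w$, using the explicit formulas from Lemma~3.6.

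For the first identity I would expand, using Lemma~3.6(a)(b),
\begin{align*}
(\partial_{s_\alpha}\calL_\lambda)(\iota_w) &= e^{w\lambda}\partial_{s_\alpha}(\iota_w),\\
(\calL_{s_\alpha\lambda}\partial_{s_\alpha})(\iota_w) &= \calL_{s_\alpha\lambda}\!\left(\tfrac{1}{1-e^{w\alpha}}\iota_w+\tfrac{1}{1-e^{-w\alpha}}\iota_{ws_\alpha}\right).
\end{align*}
Since $s_\alpha^2=\id$, one has $\calL_{s_\alpha\lambda}\cdot\iota_{ws_\alpha}=e^{ws_\alpha\cdot s_\alpha\lambda}\iota_{ws_\alpha}=e^{w\lambda}\iota_{ws_\alpha}$, so the $\iota_{ws_\alpha}$--coefficients cancel and the commutator collapses to
\[
(\partial_{s_\alpha}\calL_\lambda-\calL_{s_\alpha\lambda}\partial_{s_\alpha})(\iota_w)=\frac{e^{w\lambda}-e^{ws_\alpha\lambda}}{1-e^{w\alpha}}\,\iota_w.
\]
The exact same mechanism applied to Lemma~3.6(c)(d) treats the $\calT_{s_\alpha}$ and $\calT_{s_\alpha}^\vee$ commutators: in each case the $\iota_{ws_\alpha}$--coefficient vanishes for the same reason, and the $\iota_w$--coefficient simplifies to $(1+y)(e^{ws_\alpha\lambda}-e^{w\lambda})/(1-e^{-w\alpha})$.

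For the right-hand side, the quotient $(e^\lambda-e^{s_\alpha\lambda})/(1-e^\alpha)$ is a genuine element of $R(T)$ since $\lambda-s_\alpha\lambda=\langle\lambda,\alpha^\vee\rangle\alpha$ makes $e^\lambda-e^{s_\alpha\lambda}$ divisible by $1-e^\alpha$; write its Laurent expansion $\sum_\mu e^\mu$, so that by definition $(\calL_\lambda-\calL_{s_\alpha\lambda})/(1-\calL_\alpha)=\sum_\mu \calL_\mu$. By Lemma~3.6(a), this operator acts on $\iota_w$ by the scalar $\sum_\mu e^{w\mu}$. The key step is to observe that the Weyl element $w$ acts on $R(T)$ by ring automorphisms preserving the relation $(1-e^\alpha)\!\sum_\mu e^\mu=e^\lambda-e^{s_\alpha\lambda}$, so applying $w$ yields $\sum_\mu e^{w\mu}=(e^{w\lambda}-e^{ws_\alpha\lambda})/(1-e^{w\alpha})$. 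This matches the LHS for the first identity; replacing $\alpha$ by $-\alpha$ in the quotient produces the $(1-e^{-w\alpha})$ denominator needed for the $\calT_{s_\alpha}$ and $\calT_{s_\alpha}^\vee$ identities.

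The main obstacle is conceptual rather than computational: one has to be careful that the formal quotient on the right hand side, defined only as an element of $R(T)$, acts diagonally on the fixed point basis by a scalar that transforms correctly under the Weyl action when passing from $\lambda$ to $w\lambda$. Once this $W$-equivariance is recorded, every other step is a term-by-term verification using the fixed point formulas of Lemma~3.6, with the cancellation of the off-diagonal $\iota_{ws_\alpha}$--term being the sole combinatorial input.
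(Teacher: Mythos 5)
Your proposal is correct and takes essentially the same approach as the paper, which also verifies all three identities by localization at the torus fixed points using Lemma~\ref{lem:actiononfixedpoint} and $\calL_\lambda\otimes\iota_w=e^{w\lambda}\iota_w$; you have simply written out the term-by-term verification that the paper leaves to the reader.
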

\begin{proof}
We can check the equalities on the fixed point basis. Then all of them follow from Lemma \ref{lem:actiononfixedpoint} and the equality $\calL_\lambda \otimes\iota_w=e^{w\lambda}\iota_w$.
\end{proof}

Let us recall the definition of the K-theoretic Kostant-Kumar Hecke algebra \cite{kostant1990t} and the affine Hecke algebra. Let $P$ denote the weight lattice of $G$.
\begin{definition}\label{defin:nilhecke}
\begin{enumerate}
\item 
The Kostant-Kumar Hecke algebra $\calH$ is a free $\bbZ$ module with basis $\{D_we^\lambda|w\in W, \lambda\in P\}$, such that 
\begin{itemize}
\item 
For any $\lambda,\mu\in P$, $e^\lambda e^\mu=e^{\lambda+\mu}$.
\item
For any simple root $\alpha$, $D_{s_\alpha}^2=D_{s_\alpha}$.
\item
For any $w, y\in W$ such that $\ell(wy)=\ell(w)+\ell(y)$, $D_wD_y=D_{wy}$
\item 
For any simple root $\alpha$ and $\lambda\in P$, 
\[e^{\lambda}D_i-D_ie^{s_i\lambda}=\frac{e^\lambda-e^{s_i\lambda}}{1-e^{\alpha_i}}.\]
\end{itemize}

\item 
The affine Hecke algebra $\bbH$ is a free $\bbZ[q,q^{-1}]$ module with basis $\{T_we^\lambda|w\in W, \lambda\in P\}$, such that
\begin{itemize}
\item 
For any $\lambda,\mu\in P$, $e^\lambda e^\mu=e^{\lambda+\mu}$.
\item
For any simple root $\alpha$, $(T_{s_\alpha}+1)(T_{s_\alpha}-q)=0$.
\item
For any $w, y\in W$ such that $\ell(wy)=\ell(w)+\ell(y)$, $T_wT_y=T_{wy}$
\item 
For any simple root $\alpha$ and $\lambda\in P$, 
\[T_\alpha e^{s_\alpha\lambda}-e^\lambda T_\alpha=(1-q)\frac{e^\lambda-e^{s_\alpha\lambda}}{1-e^{-\alpha}}.\]
\end{itemize}
\end{enumerate}
\end{definition}
It follows from Lemma \ref{lem:commutation} that the Kostant-Kumar Hecke algebra $\calH$ acts on $K_{T}(G/B)$ by sending $D_i$ to $\partial_i$ and $e^\lambda$ to $\operL_\lambda$, see \cite{kostant1990t}; the affine Hecke algebra $\bbH$ acts on $K_{T}(G/B)[y,y^{-1}]$ by sending $q$ to $-y$, $T_i$ to $\calT_i$ (or $\calT^\vee_i$), and $e^\lambda$ to $\calL_\lambda$, see \cite{lusztig:eqK}. From here on, we always identify $q$ with $-y$. 

For any simple root $\alpha_i$, let
\begin{equation}\label{equ:simplegenerator}
T_i:=(1+ye^{\alpha_i})D_i-1=D_i(1+ye^{-\alpha_i})-(1+y+ye^{-\alpha_i})\in \calH[y].
\end{equation}
Then these $T_i$ and $e^\lambda$ satisfies the relations in the affine Hecke algebra $\bbH$. Therefore, $T_w$'s is well-defined for all $w\in W$.

For any $w$, we can expand $T_{w^{-1}}$ as a linear combination of terms $D_{u^{-1}}$,
\begin{equation}\label{equ:Heckecoeff}
T_{w^{-1}}:=\sum_{u\leq w}D_{u^{-1}}a_{u,w}(y;e^t),
\end{equation}
for some $a_{u,w}(y, e^t)\in \bbC[T][y]$. It is easy to compute $a_{w,w}(y;e^t)=\prod_{\alpha>0,w\alpha<0}(1+ye^{w\alpha})$, which equals $c_{w,w}(y;e^t)$ defined in \Cref{E:schub}. More general statements are true.
\begin{prop}\label{prop:KKH}
For any $u\leq w\in W$, we have
\[a_{u,w}(y; e^t)=c_{u,w}(y; e^t).\]
\end{prop}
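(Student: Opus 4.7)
The strategy is to transport the abstract Hecke algebra identity \eqref{equ:Heckecoeff} into the concrete module $\K_T(G/B)[y]$ by evaluating both sides of the identity on the class $\cO_{\id}^T$.

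First, I would invoke the action of the Kostant--Kumar Hecke algebra $\calH[y]$ on $\K_T(G/B)[y]$ indicated right after \Cref{defin:nilhecke}: $D_i$ acts as $\partial_i$ and $e^\lambda$ acts as tensor product with the line bundle class $\calL_\lambda$. This is well-defined by \Cref{lem:commutation}, which verifies the defining cross-relations of $\calH$ for $\partial_i$ and $\calL_\lambda$. I need to check that, under this action, the element $T_i = (1+ye^{\alpha_i})D_i - 1 \in \calH[y]$ from \eqref{equ:simplegenerator} maps to the Demazure--Lusztig operator $\calT_i$ of \eqref{E:TiKdef}. This is immediate from $T^*_{p_i}=\calL_{\alpha_i}$ and $\lambda_y(\calL_{\alpha_i})=1+y\calL_{\alpha_i}$. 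Consequently $T_{w^{-1}}$ acts as $\calT_{w^{-1}}$, and by \Cref{thm:MC+dual},
\[
T_{w^{-1}}(\cO_{\id}^T)=\calT_{w^{-1}}(\cO_{\id}^T)=MC_y(X(w)^\circ).
\]

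Next, I would apply the right-hand side of \eqref{equ:Heckecoeff} to $\cO_{\id}^T$ term by term. The scalar $a_{u,w}(y;e^t)\in R(T)[y]$, once moved into the module, acts as $a_{u,w}(y;\calL)\otimes (-)$; evaluated at $\cO_{\id}^T$, which is supported at the fixed point $e_{\id}$, the formula $\calL_\lambda|_{e_{\id}}=e^\lambda$ gives
\[
a_{u,w}(y;\calL)\otimes \cO_{\id}^T \;=\; a_{u,w}(y;e^t)\cdot \cO_{\id}^T.
\]
Then $\partial_{u^{-1}}$ is $R(T)[y]$-linear, so the scalar pulls out and one gets $a_{u,w}(y;e^t)\,\partial_{u^{-1}}(\cO_{\id}^T)$.

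The remaining ingredient is the identity $\partial_{u^{-1}}(\cO_{\id}^T)=\cO_u^T$. This is a short induction on $\ell(u)$: for a reduced expression $u=s_{i_1}\cdots s_{i_k}$ we have $u^{-1}=s_{i_k}\cdots s_{i_1}$, so $\partial_{u^{-1}}=\partial_{i_k}\cdots\partial_{i_1}$, and \eqref{equ:BGGonstru} shows that the successive applications $\partial_{i_1}, \partial_{i_2},\ldots,\partial_{i_k}$ walk up the Bruhat chain $\id<s_{i_1}<s_{i_1}s_{i_2}<\cdots<u$, ending at $\cO_u^T$. Assembling the three steps yields
\[
MC_y(X(w)^\circ)\;=\;\sum_{u\le w} a_{u,w}(y;e^t)\,\cO_u^T,
\]
and comparison with the Schubert expansion \eqref{E:schub} forces $c_{u,w}(y;e^t)=a_{u,w}(y;e^t)$.

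The only subtle point is justifying that the action of $a_{u,w}(y;e^t)$ on $\cO_{\id}^T$ reduces to scalar multiplication by the ``numerical'' element of $R(T)[y]$ rather than leaving an honest tensor product with a line bundle; this hinges on the fact that $\cO_{\id}^T$ is the class of a torus fixed point, so that $\calL_\lambda$ acts on it by its weight at $e_{\id}$. Everything else is standard bookkeeping with the Hecke algebra action and the Demazure operator on Schubert structure sheaves.
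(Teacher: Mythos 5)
Your proof is correct and takes essentially the same approach as the paper: evaluate \eqref{equ:Heckecoeff} on $\cO_{\id}^T$ under the Kostant--Kumar Hecke algebra action, using \Cref{thm:MC+dual}, the identity $\calL_\lambda\otimes\cO_{\id}^T=e^\lambda\cO_{\id}^T$, and $\partial_{u^{-1}}(\cO_{\id}^T)=\cO_u^T$ from \eqref{equ:BGGonstru}. You simply spell out the bookkeeping (why $T_i\mapsto\calT_i$, why the scalar factor $a_{u,w}$ pulls out past the $R(T)[y]$-linear operator $\partial_{u^{-1}}$, and the reduced-word induction for $\partial_{u^{-1}}\cO_{\id}^T=\cO_u^T$) that the paper's terser proof leaves implicit.
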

\begin{proof}
Recall the Kostant-Kumar Hecke algebra $\calH$ acts on $\K_{T}(G/B)$ by sending $D_i$ to $\partial_i$ 
and $e^\lambda$ to $\calL_\lambda$. Under this action, $T_i$ is sent to the DL operator $\calT_i$
from \eqref{E:TiKdef}. 
By Theorem \ref{thm:MC+dual}, Equation \eqref{equ:BGGonstru} and 
the equality 
$\calL_\lambda\otimes \calO_{\id}^{T}=e^\lambda \calO_{\id}^{T}$, applying 
\eqref{equ:Heckecoeff} to $\calO_{\id}^{\mathbb{T}}\in \K_{T}(G/B)$, we get
\[MC_y(X(w)^\circ)=\sum_{u\leq w}a_{u,w}(y, e^t)\calO_{u}^{T}.\]
Therefore, $a_{u,w}(y; e^t)=c_{u,w}(y; e^t)$.
\end{proof}
Proposition~\ref{prop:KKH} provides a purely combinatorial way to compute the coefficients $c_{u,w}(y; e^t)$. 
In particular, we can check \Cref{conj:k} in the case when $\ell(w)\leq 2$ as follows:
\begin{enumerate}
\item $\ell(w)=1$. From $T_i=D_i(1+ye^{-\alpha_i})-(1+y+ye^{-\alpha_i})$,
we get $c_{1,s_i}=-(1+y+ye^{-\alpha_i})$ and $c_{s_i,s_i}=1+ye^{-\alpha_i}$. 
(This is consistent with \Cref{ex:P1}.)

\item $\ell(w)=2$. Pick two simple roots $\alpha_i, \alpha_j$. We calculate 
\begin{align*}
T_iT_j&=\left(D_i(1+ye^{-\alpha_i})-(1+y+ye^{-\alpha_i})\right)\left(D_j(1+ye^{-\alpha_j})-(1+y+ye^{-\alpha_j})\right)\\
&=D_iD_j(1+ye^{-\alpha_j})(1+ye^{-s_j\alpha_i})-D_j(1+ye^{-\alpha_j})(1+y+ye^{-s_j\alpha_i})\\
&-D_i\left((1+ye^{-\alpha_i})(1+y+ye^{-\alpha_j})-y(1+ye^{-\alpha_j})\frac{e^{-\alpha_i}-e^{-s_j\alpha_i}}{1-e^{\alpha_j}}\right)\\
&+(1+y+ye^{-\alpha_j})(1+y+ye^{-\alpha_i})-y(1+ye^{-\alpha_j})\frac{e^{-\alpha_i}-e^{-s_j\alpha_i}}{1-e^{\alpha_j}},
\end{align*}
where 
\[\frac{e^{-\alpha_i}-e^{-s_j\alpha_i}}{1-e^{\alpha_j}}=-e^{-s_j\alpha_i}-e^{-s_j\alpha_i+\alpha_j}-\cdots-e^{-\alpha_i-\alpha_j}.\]
\end{enumerate}
This implies \Cref{conj:k} when $\ell(w)=2$. 

To illustrate this, consider $G=SL(3,\bbC)$, $i=2$, and $j=1$. Then 
\begin{align*}
T_2T_1&=D_2D_1(1+ye^{-\alpha_1})(1+ye^{-s_1\alpha_2})-D_1(1+ye^{-\alpha_1})(1+y+ye^{-s_1\alpha_2})\\
&-D_2\left((1+ye^{-\alpha_2})(1+y+ye^{-\alpha_1})-y(1+ye^{-\alpha_1})\frac{e^{-\alpha_2}-e^{-s_1\alpha_2}}{1-e^{\alpha_1}}\right)\\
&+(1+y+ye^{-\alpha_1})(1+y+ye^{-\alpha_2})-y(1+ye^{-\alpha_1})\frac{e^{-\alpha_2}-e^{-s_1\alpha_2}}{1-e^{\alpha_1}}\\
&=D_2D_1(1+ye^{-\alpha_1})(1+ye^{-\alpha_2-\alpha_1})-D_1(1+ye^{-\alpha_1})(1+y+ye^{-\alpha_2-\alpha_1})\\
&-D_2\left((1+ye^{-\alpha_2})(1+y+ye^{-\alpha_1})+y(1+ye^{-\alpha_1})e^{-\alpha_1-\alpha_2}\right)\\
&+(1+y+ye^{-\alpha_1})(1+y+ye^{-\alpha_2})+y(1+ye^{-\alpha_1})e^{-\alpha_1-\alpha_2}.
\end{align*}
Under the `dictionary' above between the Hecke algebra elements and operators, 
this recovers \Cref{ex:FL3}.

\section{Star duality}\label{sec:stardual} By `star duality' we mean the involution 
$\star: \K_T(X) \to \K_T(X)$ which sends (the class of) a vector bundle $[E]$ to 
$[E^\vee] = [Hom_{\cO_X}(E, \cO_X)]$. This is not an involution of 
$\K_T(pt)$-algebras, but it satisfies $\star(\mathbb{C}_\lambda) = \mathbb{C}_{-\lambda}$, where
$\mathbb{C}_\lambda$ denotes the trivial line bundle with weight~$\lambda$.
We extend $\star$ to $\K_{T}(X)[y,y^{-1}]$ by linearity, requiring $\star(y^i) = y^i$ 
for $i \in \bbZ$.

The goal of this section is to study the effect of this duality 
on the motivic Chern classes for Schubert cells in $X=G/B$.

Recall the DL operators $\mathcal{T}_i$ and $\operL_i$ from \eqref{E:TiKdef} respectively \eqref{equ:defofL}. These determine recursively the motivic Chern classes 
$MC_y(X(w)^\circ)=\calT_{w^{-1}}(\calO_{\id}^{{{T}}})$ and the 
(normalized) classes
$\widetilde{MC}_y(X(w)^\circ)=\operL_{w^{-1}}(\calO_{\id}^{T})$. (By \Cref{thm:MCtilde}, the opposite
classes $\widetilde{MC}_y(Y(w)^\circ)$ are orthogonal to the motivic classes.) 

We state next the main result in this section.
\begin{theorem}\label{thm:inversecoeff} Let $w \in W$. Then the following hold: 

(a) $\bb{C}_{- \rho} \otimes \mathcal{L}_{- \rho} \otimes MC_y(X(w)^\circ) = (-1)^{\mathrm{codim} X(w)} \star(\widetilde{MC}_y(X(w)^\circ))$.

(b)  Consider the Schubert expansions
\[ MC_y (X(w)^\circ) = \sum_{u \leq w} c_{u,w}(y;e^t) \mathcal{O}_{u}^{T} \/; \quad \widetilde{MC}_y(X(w)^\circ) = \sum_{u \leq w} d_{u,w}(y;e^t) \mathcal{I}_u^T \/. \]
Then $c_{u,w}(y;e^t)=(-1)^{\ell(u)-\ell(w)}\star(d_{u,w}(y;e^t))$, or, equivalently, 
\[ \langle MC_y(X(w)^\circ), \mathcal{I}^{u} \rangle = (-1)^{\ell(w) - \ell(u)} *\langle \widetilde{MC}_y(X(w)^\circ), \mathcal{O}^{u, T} \rangle \/. \] 

(c) Consider the Schubert expansions
\[ MC_y (X(w)^\circ) = \sum_{u \leq w} a_{u,w}(y;e^t) \mathcal{I}_{u}^{T} \/; \quad \widetilde{MC}_y(X(w)^\circ) = \sum_{u \leq w} b_{u,w}(y;e^t) \mathcal{O}_u^T \/. \]
Then $a_{u,w}(y;e^t)=(-1)^{\ell(u)-\ell(w)}b_{u,w}(y;e^t)$, or, equivalently, 
\[ \langle MC_y(X(w)^\circ), \mathcal{O}^{u, T} \rangle = (-1)^{\ell(w) - \ell(u)} \langle \widetilde{MC}_y(X(w)^\circ), \mathcal{I}^{u,T} \rangle \/. \]

\end{theorem}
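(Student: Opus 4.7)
The proof of (a) rests on the operator identity
\[
\star \circ \operL_i \circ \star \;=\; -\,\mathcal{L}_{-\rho} \circ \mathcal{T}_i \circ \mathcal{L}_{\rho}
\]
in $\End(\K_T(X)[y,y^{-1}])$. To establish it I would first prove the commutation
\[
\star \circ \partial_i \;=\; -\,\partial_i \circ \mathcal{L}_{\alpha_i} \circ \star,
\]
which is the K-theoretic manifestation of Grothendieck-Serre duality for the smooth map $p_i \colon G/B \to G/P_i$: pullback $p_i^*$ commutes with $\star$, while relative Serre duality together with the identification $\omega_{p_i}^\bullet = \mathcal{L}_{\alpha_i}[1]$ gives $\star\circ(p_i)_* = -(p_i)_*(\mathcal{L}_{\alpha_i}\otimes -)\circ\star$ (the minus sign coming from the cohomological shift). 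Expanding $\mathcal{T}_i=(1+y\mathcal{L}_{\alpha_i})\partial_i-1$ and $\operL_i=\partial_i(1+y\mathcal{L}_{\alpha_i})+y$, and commuting $\partial_i$ past $\mathcal{L}_{\pm\rho}$ via \Cref{lem:commutation} (using $s_i\rho=\rho-\alpha_i$), a direct manipulation yields the displayed operator identity.

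Part (a) then follows by induction on $\ell(w)$. When $ws_i>w$ one has $MC_y(X(ws_i)^\circ)=\mathcal{T}_i\,MC_y(X(w)^\circ)$ and $\widetilde{MC}_y(X(ws_i)^\circ)=\operL_i\,\widetilde{MC}_y(X(w)^\circ)$; the operator identity combined with $\star\circ\mathcal{L}_{\pm\rho}=\mathcal{L}_{\mp\rho}\circ\star$ produces exactly the extra factor $-1$ needed to match the drop in codimension $|R^+|-\ell(w)\to|R^+|-\ell(w)-1$. For the base case $w=\id$ one has $\mathcal{O}_\id^T=\iota_\id$, and Serre duality on the skyscraper $\mathcal{O}_{e_\id}$ yields $\star(\iota_\id)=(-1)^{\dim X}\omega_X^{-1}|_{e_\id}\,\iota_\id$; with $\omega_X=\mathcal{L}_{2\rho}$ (forced by $T^*_{p_i}=\mathcal{L}_{\alpha_i}$) and \Cref{lem:actiononfixedpoint}(a), both sides localize to $e^{-2\rho}\,\iota_\id$.

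For parts (b) and (c) I would combine (a) with the dual-basis identities \eqref{E:dualst} and the companion formulas
\[
\star(\mathcal{O}_u^T)=(-1)^{|R^+|-\ell(u)}\,e^{-\rho}\mathcal{L}_{-\rho}\,\mathcal{I}_u^T,
\qquad
\star(\mathcal{I}_u^T)=(-1)^{|R^+|-\ell(u)}\,e^{-\rho}\mathcal{L}_{-\rho}\,\mathcal{O}_u^T,
\]
which follow from Grothendieck-Serre duality on $X(u)$ together with the $T$-equivariant form of Brion's canonical bundle formula $\omega_{X(u)}=e^{-\rho}\mathcal{L}_\rho\,\mathcal{I}_u^T$; the precise scalar twist $e^{-\rho}$ can be pinned down by checking the extremes $u=\id$ and $u=w_0$. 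Substituting the Schubert expansion $MC_y(X(w)^\circ)=\sum c_{u,w}\,\mathcal{O}_u^T$ into the rearrangement $\widetilde{MC}_y(X(w)^\circ)=(-1)^{|R^+|-\ell(w)}\,e^\rho\mathcal{L}_\rho\,\star(MC_y(X(w)^\circ))$ of (a), the factors $e^{\pm\rho}\mathcal{L}_{\pm\rho}$ cancel pairwise and one reads off $d_{u,w}=(-1)^{\ell(u)-\ell(w)}\star(c_{u,w})$, giving (b). Part (c) is the mirror statement, derived identically by swapping the roles of the two dual Schubert bases.

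The main technical obstacle is tracking the equivariant twist in $\omega_{X(u)}$: Brion's formula is usually stated non-equivariantly, and it is precisely the extra scalar character $e^{-\rho}$ needed in the $T$-equivariant setting that propagates into the $e^{-\rho}\mathcal{L}_{-\rho}$ on the left-hand side of (a), and whose cancellation makes the coefficient identities in (b) and (c) free of any $T$-twist. A cleaner alternative would be to prove (a) entirely by fixed-point localization, using $\star(\iota_v)=(-1)^{|R^+|}\,e^{-2v\rho}\,\iota_v$ together with the closed formulas of \Cref{lem:actiononfixedpoint} for $\mathcal{T}_i$ and $\mathcal{T}_i^\vee$ on the fixed-point basis; this route verifies the identity vertex-by-vertex and bypasses any a priori identification of $\omega_{X(u)}$ as a $T$-equivariant sheaf.
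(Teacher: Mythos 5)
Your proposal is correct in substance and follows the same overall architecture as the paper's proof (reduce (a) to an operator conjugation identity, then deduce (b) and (c) from (a) via Schubert expansions), but it establishes the key operator identity by a genuinely different method. The paper works throughout with the map $\Psi(a)=\bbC_\rho\otimes\mathcal{L}_\rho\otimes\star(a)$, proves $\Psi\circ\mathcal{T}_i=-\operL_i\circ\Psi$ directly by fixed-point localization using \Cref{lem:actiononfixedpoint} together with $\Psi(\iota_w)=(-1)^{\dim G/B}e^{\rho-w\rho}\iota_w$, and then gets (a) immediately from $MC_y(X(w)^\circ)=\mathcal{T}_{w^{-1}}(\iota_{\id})$. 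Your primary route is instead structural: you deduce $\star\circ\partial_i=-\partial_i\circ\mathcal{L}_{\alpha_i}\circ\star$ from relative Grothendieck--Serre duality for the $\bbP^1$-fibration $p_i$ (with $\omega_{p_i}=\mathcal{L}_{\alpha_i}$), then obtain the conjugation $\star\operL_i\star=-\mathcal{L}_{-\rho}\mathcal{T}_i\mathcal{L}_\rho$ by expanding $\mathcal{T}_i,\operL_i$ and commuting $\mathcal{L}_{\pm\rho}$ past $\partial_i$ via \Cref{lem:commutation} (I checked the algebra and it closes, using $\partial_i\mathcal{L}_\rho=\mathcal{L}_{\rho-\alpha_i}(\partial_i-1)$ and $\mathcal{L}_{-\alpha_i}\partial_i-\partial_i\mathcal{L}_{\alpha_i}=1+\mathcal{L}_{-\alpha_i}$). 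What this buys is a conceptual explanation of the sign and the $\mathcal{L}_{\alpha_i}$ twist as coming from relative Serre duality, rather than a fixed-point verification; and you explicitly flag the paper's localization route as the cleaner alternative, so you have both in hand. Your base case, the companion formulas $\star(\cO_u^T)=(-1)^{\mathrm{codim}X(u)}\,e^{-\rho}\mathcal{L}_{-\rho}\,\mathcal{I}_u^T$ and its inverse, and the induction step for (a) all match the paper's \Cref{prop:eqideal} and the argument in the proof of \Cref{thm:inversecoeff}.

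One small point of substance: your derivation of (c), carried out by the mirror substitution, yields $a_{u,w}=(-1)^{\ell(u)-\ell(w)}\star(b_{u,w})$, \emph{with} a $\star$. This disagrees with the statement of (c) in the paper, which has no $\star$. But in fact the paper's own Möbius-inversion proof of (c) produces a $\star$ in the penultimate line and then silently drops it, and already the diagonal coefficient contradicts the stated identity: $a_{w,w}=c_{w,w}=\lambda_y(T^*_{e_w}X(w))$ while $b_{w,w}=d_{w,w}=\star(c_{w,w})$, so $a_{w,w}\ne b_{w,w}$ unless $c_{w,w}$ is $\star$-invariant, which fails already for $w=s_1s_2$ in $\Fl(3)$. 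So the $\star$ should be present in (c), and your version is the correct one; this is a typo in the statement, not a gap in your argument.
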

Before we give the proof of this theorem, we note that the $y=0$ specialization 
recovers a known relation between the ideal sheaves and structure 
sheaves; see \Cref{prop:eqideal} below and compare to \cite[Proposition~4.3.4]{brion:flagv}. 
Brion proves the result in the non-equivariant case, and 
for completeness we sketch a proof for the equivariant generalization. 
Aside from the intrinsic interest, we also note that we utilize this result in the proof of
\Cref{thm:inversecoeff}.

\begin{prop}[Brion] \label{prop:eqideal} Let $w \in W$. Then the following holds in $\K_T(X)$:
\[ \bbC_{-\rho} \otimes \mathcal{L}_{- \rho} \otimes \mathcal{I}_w^T = 
(-1)^{\mathrm{codim}X(w)}  \star(\mathcal{O}_w^{T}) \/. \] \end{prop}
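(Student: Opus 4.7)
The plan is to derive the identity from Grothendieck--Serre duality on $G/B$, combined with Grothendieck duality for the closed immersion $X(w)\hookrightarrow G/B$ and an equivariant identification of the dualizing sheaf of $X(w)$.

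First I would translate $\star$ into the Serre duality operator $\calD$. Since $G/B$ is smooth of dimension $N=\ell(w_0)$, the definition of $\calD$ from Section~2 gives
\[
\calD(\cO_w^T)=[\omega^{\bullet}_{G/B}]\otimes\star(\cO_w^T)=(-1)^N[\omega_{G/B}]\otimes\star(\cO_w^T).
\]
A direct $T$-weight computation at the fixed point $eB$ (the cotangent space $T^{*}_{eB}(G/B)=(\mathfrak{g}/\mathfrak{b})^{*}$ has weights equal to the positive roots) identifies $\omega_{G/B}\simeq\mathcal{L}_{2\rho}$ as a $G$-equivariant line bundle. Hence
\[
\star(\cO_w^T)=(-1)^N\,\mathcal{L}_{-2\rho}\otimes\calD(\cO_w^T).
\]

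Next, Schubert varieties are Cohen--Macaulay (in fact Gorenstein, by Ramanathan), so $\omega^{\bullet}_{X(w)}\simeq\omega_{X(w)}[\ell(w)]$ for an equivariant line bundle $\omega_{X(w)}$ on $X(w)$. Grothendieck duality applied to the closed embedding $f\colon X(w)\hookrightarrow G/B$ --- equivalently the identification $f^{!}\omega^{\bullet}_{G/B}=\omega^{\bullet}_{X(w)}$ --- yields
\[
\calD(\cO_w^T)=f_*[\omega^{\bullet}_{X(w)}]=(-1)^{\ell(w)}\,f_*[\omega_{X(w)}]\in\K_T(G/B),
\]
which combined with the previous display gives $\star(\cO_w^T)=(-1)^{N+\ell(w)}\,\mathcal{L}_{-2\rho}\otimes f_*[\omega_{X(w)}]$. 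Since $(-1)^{N+\ell(w)}=(-1)^{\mathrm{codim}\,X(w)}$ and $\mathcal{L}_{-2\rho}\otimes\mathcal{L}_{\rho}=\mathcal{L}_{-\rho}$, the proposition reduces to establishing the equivariant isomorphism
\[
\omega_{X(w)}\simeq\bbC_{-\rho}\otimes\mathcal{L}_{\rho}|_{X(w)}\otimes\cO_{X(w)}(-\partial X(w)).
\]

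This equivariant refinement of Ramanathan's theorem is the step I expect to require the most care. The non-equivariant isomorphism $\omega_{X(w)}\cong\mathcal{L}_{\rho}(-\partial X(w))$ is the classical result of Ramanathan (also reproved in Brion's notes), so only the twist by the trivial character $\bbC_{-\rho}$ needs to be determined. I would fix it by restricting both sides to the smooth point $e_w=wB\in X(w)^{\circ}$: the cotangent space $T^{*}_{e_w}X(w)^{\circ}$ has $T$-weights $\{-\alpha:\alpha>0,\,w^{-1}\alpha<0\}$, which sum to $w\rho-\rho$, while the proposed right-hand side evaluates at $e_w$ to $-\rho+w\rho+0=w\rho-\rho$, using that $\mathcal{L}_{\rho}|_{wB}$ has weight $w\rho$ and $\cO(-\partial X(w))$ restricts trivially on the open cell. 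This pointwise match, together with the non-equivariant Ramanathan isomorphism, pins down the equivariant identification, and the proposition follows.
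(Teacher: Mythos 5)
Your proof is correct and follows essentially the same route as the paper: the identity $\star(\cO_w^T) = (-1)^{\codim X(w)}\,\omega_{X(w)}\otimes\omega_{G/B}^{-1}$, which you derive from Grothendieck--Serre duality for $G/B$, Grothendieck duality for the inclusion $X(w)\hookrightarrow G/B$, and the Cohen--Macaulay property, is exactly what the paper attributes to Brion's argument, and the equivariant formula $\omega_{X(w)}\simeq \bbC_{-\rho}\otimes\mathcal{L}_{\rho}\otimes\cO_{X(w)}(-\partial X(w))$ is what the paper cites from Brion--Kumar, Proposition~2.2.2. The only difference is that you determine the trivial twist $\bbC_{-\rho}$ by a $T$-weight computation at the smooth fixed point $e_w$ (legitimate because a $T$-equivariant line bundle on the proper irreducible variety $X(w)$ that is non-equivariantly trivial is of the form $\bbC_{\chi}$, with $\chi$ read off at any fixed point), rather than citing the equivariant statement directly --- a clean, self-contained version of the same step.
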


\begin{proof} Following Brion's proof, as equivariant sheaves, 
\[ \star(\mathcal{O}_w^T) = 
(-1)^{\mathrm{codim} X(w)} \omega_{X(w)} \cdot \omega_{X}^{-1} \/;\] 
this uses the fact that Schubert varieties are irreducible and Cohen-Macaulay - see \cite[\S 3.3]{brion:flagv}. 
The difference in the equivariant case is that the canonical sheaf of 
$X(w)$ needs to be twisted by a trivial bundle: 
\[ \omega_{X(w)} = \mathcal{O}_{X(w)} ( - \partial X(w)) \otimes \mathcal{L}_{\rho} \otimes \bb{C}_{-\rho} \/. \] This follows from \cite[Proposition 2.2.2]{brion.kumar:frobenius} to which one applies the 
projection formula. (Note that our convention defining  
$\mathcal{L}_\lambda$ is opposite to the one from \cite{brion.kumar:frobenius}.) 
If $X(w) = G/B$ then one obtains $\omega_X = \mathcal{L}_{2 \rho}$. The result follows from this. \end{proof} 
 
Define the $\bbZ$-linear endomorphism 
\[ \Psi: \K_{T}(X) \to \K_{T}(X); \quad [E]_T \mapsto \bb{C}_{\rho} \otimes \mathcal{L}_{\rho} \otimes \star [E]_T \/. \]
The previous proposition shows that \begin{equation}\label{E:psiformula} \Psi(\mathcal{I}_w^T) =  (-1)^{\mathrm{codim}X(w)} \cO_w^{T} \/. \end{equation} We need the following lemma.
\begin{lemma}\label{lemma:psi-iota} Let $w \in W$. Then \[ \Psi(\iota_w) = \frac{(-1)^{\dim G/B}}{e^{w(\rho) - \rho}} \iota_w \/. \]  
\end{lemma}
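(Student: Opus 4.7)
The plan is to prove the lemma via a short eigenvector argument reducing to a single localization computation at $e_w$. Since $\star$ is a ring endomorphism of $\K_T(G/B)_{\textit{loc}}$ satisfying $\star(\mathcal{L}_\lambda) = \mathcal{L}_{-\lambda}$ on line bundles and $\star(e^\lambda) = e^{-\lambda}$ on $\Frac(\K_T(pt))$, applying $\star$ to the eigenvalue relation $\mathcal{L}_\lambda \cdot \iota_w = e^{w\lambda}\iota_w$ from \Cref{lem:actiononfixedpoint}(a) gives $\mathcal{L}_\mu \cdot \star(\iota_w) = e^{w\mu}\star(\iota_w)$ for every weight $\mu$. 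Thus $\star(\iota_w)$ lies in the same simultaneous eigenspace as $\iota_w$ for the $\mathcal{L}_\mu$-action. Because the characters $\mu \mapsto e^{v\mu}$ are pairwise distinct for $v \in W$ (the Weyl group acts faithfully on the weight lattice), and because $\{\iota_v\}_{v\in W}$ is a common eigenbasis of $\K_T(G/B)_{\textit{loc}}$, I conclude that $\star(\iota_w) = c_w\,\iota_w$ for a unique scalar $c_w \in \Frac(\K_T(pt))$.

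To determine $c_w$, I restrict along the inclusion $i_w: e_w \hookrightarrow G/B$. The pullback $i_w^*$ is a ring homomorphism commuting with $\star$ (both are defined on vector bundles by $E \mapsto E^{\vee}$ and extend compatibly), so $c_w\cdot (\iota_w|_w) = \star(\iota_w|_w)$. By the self-intersection formula, $\iota_w|_w = i_w^*(i_w)_*[\mathcal{O}_{e_w}]$ equals the K-theoretic Euler class $\lambda_{-1}(T^*_{e_w}(G/B))$. Since the cotangent weights at $e_{\id}$ are the positive roots and the Weyl group acts by translation on the fixed locus, the cotangent weights at $e_w$ are $\{w\alpha : \alpha > 0\}$, yielding $\iota_w|_w = \prod_{\alpha > 0}(1 - e^{w\alpha})$. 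Using the identity $\tfrac{1 - e^{-x}}{1 - e^{x}} = -e^{-x}$ together with $\sum_{\alpha > 0}\alpha = 2\rho$ and $|R^+| = \dim G/B$, I obtain
\begin{equation*}
c_w \;=\; \prod_{\alpha > 0}\frac{1 - e^{-w\alpha}}{1 - e^{w\alpha}} \;=\; \prod_{\alpha > 0}\bigl(-e^{-w\alpha}\bigr) \;=\; (-1)^{\dim G/B}\,e^{-2w\rho}.
\end{equation*}

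Finally I unpack $\Psi(\iota_w) = \mathbb{C}_\rho \otimes \mathcal{L}_\rho \otimes \star(\iota_w)$, using that $[\mathbb{C}_\rho] = e^\rho \in \K_T(pt)$ acts by scalar multiplication and applying \Cref{lem:actiononfixedpoint}(a) once more:
\begin{equation*}
\Psi(\iota_w) \;=\; e^\rho\cdot \mathcal{L}_\rho\cdot \star(\iota_w) \;=\; (-1)^{\dim G/B}\, e^\rho\, e^{-2w\rho}\, (\mathcal{L}_\rho \cdot \iota_w) \;=\; (-1)^{\dim G/B}\,e^{\rho-w\rho}\,\iota_w \;=\; \frac{(-1)^{\dim G/B}}{e^{w(\rho)-\rho}}\,\iota_w,
\end{equation*}
which is the claimed formula. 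The substantive content of the argument is the eigenvector reduction $\star(\iota_w) = c_w \iota_w$; everything else is a direct localization computation. The only place where care is needed is verifying that $\star$ commutes with $i_w^*$, but this is immediate since both are defined on vector bundles and extend to $\K_T(G/B) \cong K_0(\mathfrak{coh}^T(\cO_{G/B}))$ by finite locally free resolutions (available because $G/B$ is smooth).
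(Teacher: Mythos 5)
Your proof is correct. The route differs from the paper's in one meaningful respect: the paper proves the identity by computing the restriction $\Psi(\iota_w)|_u$ at \emph{every} fixed point $e_u$ directly, observing that it is $\delta_{u,w}\, e^{\rho+w\rho}\prod_{\alpha>0}(1-e^{-w\alpha})$, and then massaging this into $\frac{(-1)^{\dim G/B}}{e^{w\rho-\rho}}\,\iota_w|_u$; the result follows by injectivity of localization. You instead first establish the qualitative fact $\star(\iota_w)=c_w\,\iota_w$ by an eigenvalue argument using \Cref{lem:actiononfixedpoint}(a) and the faithfulness of the $W$-action on weights, which reduces the problem to a single localization at $e_w$. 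Both approaches rest on the same computational kernel (the formula $\iota_w|_w=\prod_{\alpha>0}(1-e^{w\alpha})$ and the identity $1-e^{-x}=-e^{-x}(1-e^x)$), so neither is easier in any essential way; what your version buys is a cleaner conceptual statement — that $\star$ preserves each fixed-point eigenline — which the paper leaves implicit in the $\delta_{u,w}$ factor. Both arguments are fully rigorous; your closing remark about $\star$ commuting with $i_w^*$ addresses the one point that could be questioned and is correct, since duality of equivariant vector bundles commutes with pullback and this extends over the localized coefficient ring.
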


\begin{proof}
By definition, 
\begin{align*}
\Psi(\iota_w)|_u&=\delta_{w,u} e^{\rho+w\rho}\prod_{\alpha>0}(1-e^{-w\alpha})\\
&=\delta_{u,w}\frac{(-1)^{\dim G/B}}{e^{w(\rho) - \rho}}\prod_{\alpha>0}(1-e^{w\alpha})\\
&=\frac{(-1)^{\dim G/B}}{e^{w(\rho) - \rho}} \iota_w|_u.
\end{align*}
Therefore, $\Psi(\iota_w) = \frac{(-1)^{\dim G/B}}{e^{w(\rho) - \rho}} \iota_w$.
\end{proof}

The map $\Psi$ intertwines with the Hecke algebra action in the following way.
\begin{theorem}\label{thm:compatTL} For any $a \in \K_{T}(X)$, \[ \Psi( \mathcal{T}_i (a)) = - \operL_i (\Psi (a)) \/. \] In particular, if $w \in W$, then \[ \Psi( (\partial_i - \id)(\mathcal{I}_w^T)) = (-1)^{\mathrm{codim} X(w) +1} \partial_i(\cO_w^{T}) \/. \]
\end{theorem}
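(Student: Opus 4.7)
The second identity follows cleanly from the first by specialization at $y=0$. By \Cref{lemma:yspec}(a), $(\mathcal{T}_i)_{y=0}=\partial_i-\id$, and the definition \eqref{equ:defofL} gives $(\operL_i)_{y=0}=\partial_i$. Setting $a=\mathcal{I}_w^T$ in the $y=0$ specialization $\Psi((\partial_i-\id)(a))=-\partial_i(\Psi(a))$ and invoking \Cref{prop:eqideal} (i.e.\ \eqref{E:psiformula}) yields
$$\Psi((\partial_i-\id)(\mathcal{I}_w^T))=-\partial_i(\Psi(\mathcal{I}_w^T))=-(-1)^{\codim X(w)}\partial_i(\cO_w^T)=(-1)^{\codim X(w)+1}\partial_i(\cO_w^T).$$

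For the first (and main) identity $\Psi(\mathcal{T}_i(a))=-\operL_i(\Psi(a))$, I would verify the equality on the fixed-point basis $\{\iota_w\}$ after extending scalars to $\operatorname{Frac}(\K_T(pt))$; this suffices since $\K_T(X)\hookrightarrow \K_T(X)_{loc}$ by the localization theorem. Both $\mathcal{T}_i$ and $\operL_i$ are $\K_T(pt)$-linear, while $\Psi$ is antilinear in the sense $\Psi(e^\lambda a)=e^{-\lambda}\Psi(a)$. Using \Cref{lem:actiononfixedpoint}(c) together with \Cref{lemma:psi-iota} (which I will abbreviate by $\varepsilon_u:=(-1)^{\dim G/B}e^{\rho-u\rho}$, so $\Psi(\iota_u)=\varepsilon_u\iota_u$), I compute
$$\Psi(\mathcal{T}_i(\iota_w))=-\frac{1+y}{1-e^{w\alpha_i}}\,\varepsilon_w\,\iota_w+\frac{1+ye^{w\alpha_i}}{1-e^{w\alpha_i}}\,\varepsilon_{ws_i}\,\iota_{ws_i}.$$

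On the other side, writing $\operL_i=\mathcal{T}_i^\vee+(1+y)\id$ and applying \Cref{lem:actiononfixedpoint}(d), a short simplification gives
$$\operL_i(\iota_w)=\frac{1+y}{1-e^{w\alpha_i}}\iota_w-\frac{e^{w\alpha_i}(1+ye^{w\alpha_i})}{1-e^{w\alpha_i}}\iota_{ws_i},$$
so that $-\operL_i(\Psi(\iota_w))=\varepsilon_w\bigl(-\tfrac{1+y}{1-e^{w\alpha_i}}\iota_w+\tfrac{e^{w\alpha_i}(1+ye^{w\alpha_i})}{1-e^{w\alpha_i}}\iota_{ws_i}\bigr)$. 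The coefficients of $\iota_w$ match immediately. Matching the coefficients of $\iota_{ws_i}$ reduces, after cancelling the common factor $\tfrac{(-1)^{\dim G/B}(1+ye^{w\alpha_i})}{1-e^{w\alpha_i}}$, to the identity $e^{\rho-ws_i\rho}=e^{w\alpha_i+\rho-w\rho}$, i.e.\ $s_i\rho+\alpha_i=\rho$, which holds because $\langle \rho,\alpha_i^\vee\rangle=1$.

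\textbf{Main obstacle.} The principal source of confusion is the antilinearity of $\Psi$ on the scalars $e^\lambda\in \K_T(pt)$: every scalar factor that one pulls through $\Psi$ flips its characters, which must be carefully coordinated with the mixed presentations of $\mathcal{T}_i$ (via $\partial_i$) and $\operL_i$ (via $\mathcal{T}_i^\vee$) before the decisive input $s_i\rho=\rho-\alpha_i$ can be applied. Once the bookkeeping is set up, the whole identity collapses to this one Weyl-geometric fact.
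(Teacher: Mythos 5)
Your proof is correct and follows essentially the same route as the paper's: reduce the second identity to the first by specializing at $y=0$ and invoking \Cref{prop:eqideal}, then verify the first identity on the fixed-point basis using \Cref{lem:actiononfixedpoint} and \Cref{lemma:psi-iota}, with the decisive input being $s_i\rho=\rho-\alpha_i$. The only cosmetic difference is that you make the antilinearity $\Psi(e^\lambda a)=e^{-\lambda}\Psi(a)$ explicit and introduce the abbreviation $\varepsilon_w$, while the paper carries the factor $(-1)^n/e^{w\rho-\rho}$ along directly.
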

\begin{proof} The last statement follows from the first after specializing at $y=0$ and using 
\eqref{E:psiformula} and \Cref{lemma:yspec}. Therefore it suffices to prove the first statement. 
By localization, it suffices to prove this for the fixed point basis elements $a:=\iota_w$. 
Let $n= \dim G/B$. We use the formulae from \Cref{lem:actiononfixedpoint} and 
\Cref{lemma:psi-iota} to calculate:
\[ \Psi(\mathcal{T}_i(\iota_w)) = (-1)^n \frac{ -(1+y)}{e^{w(\rho) - \rho} (1- e^{w(\alpha_i)})} \iota_w + (-1)^n \frac{ 1+ y e^{w(\alpha_i)}}{e^{ws_i(\rho) - \rho}(1- e^{w(\alpha_i)})} \iota_{ws_i} \/. \]
By definiton of $\operL_i$, 
\[ \begin{split} - \operL_i \Psi(\iota_w) = - (\mathcal{T}_i^\vee + (1+y)\id) \Psi(\iota(w)) = \frac{(-1)^{n+1}}{e^{w(\rho) - \rho}} (\mathcal{T}_i^\vee (\iota_w) + (1+y) \iota_w) \/. 
\end{split} \] 
Using now the action of $\mathcal{T}_i^\vee$ from 
\Cref{lem:actiononfixedpoint} 
we calculate the last term as 
\[  \frac{(-1)^{n}}{e^{w(\rho) - \rho}} \left( (1+y) (\frac{1}{1- e^{-w(\alpha_i)}}-1 )\iota_w 
- \frac{1+ye^{w(\alpha_i)}}{1- e^{-w(\alpha_i)}}\iota_{ws_i}\right) \/. \] 
A simple algebra calculation shows that the coefficients of $\iota_w$ in 
both calculations are equal. The equality of the coefficients of $\iota_{ws_i}$ 
is proved similarly, using in addition that $s_i(\rho) = \rho - \alpha_i$.
\end{proof}

\begin{remark} \Cref{thm:compatTL} has a particularly natural interpretation 
in terms of the Kostant-Kumar Hecke algebra $\calH$. We keep the notation from
\S \ref{sec:Hecke}.
There is a Hecke algebra automorphism $A: \mathcal{H} \to \mathcal{H}$ sending 
$D_i \mapsto 1-D_i$ and $e^\lambda \mapsto e^{-\lambda}$. Let 
$L_i:= D_i(1+ y e^{\alpha_i}) + y$. Then it follows from definition that 
\[A(T_i)=A((1+ye^{\alpha_i})D_i-1)=-L_i.\]
Therefore, \Cref{thm:compatTL} shows that 
$\Psi: \K_{T}(X) \to \K_{T}(X)$ 
commutes with the Hecke automorphism $A$. 
\end{remark}  
\begin{proof}[Proof of \Cref{thm:inversecoeff}] Observe first that 
$\cO_{\id}^{T} = \mathcal{I}_{\id}^T = \iota_{\id}$, and that $\Psi(\iota_{\id}) = (-1)^{\dim G/B} \iota_{\id}$. 
Then, by \Cref{thm:compatTL},
\[ \Psi(\mathcal{T}_{w^{-1}} (\cO_{\id}^{T})) = (-1)^{\ell(w^{-1})} \operL_{w^{-1}}\Psi(\iota_{\id}) = (-1)^{\mathrm{codim} X(w)} \operL_{w^{-1}}(\iota_{\id})=  (-1)^{\mathrm{codim} X(w)} \widetilde{MC}_y(X(w)^\circ) \/. \] Since $\mathcal{T}_{w^{-1}} (\cO_{\id}^{T}) = MC_y(X(w)^\circ)$ 
by \Cref{thm:MC+dual}, this proves (a). 

The equality \[c_{u,w}(y;e^t)=(-1)^{\ell(u)-\ell(w)}\star(d_{u,w}(y;e^t)),\] which, by \eqref{E:dualst}, 
is equivalent to 
\[\langle MC_y(X(w)^\circ), \mathcal{I}^{u,T} \rangle = (-1)^{\ell(w) - \ell(u)} *\langle \widetilde{MC}_y(X(w)^\circ), \mathcal{O}^{u, T} \rangle,\] 
follows by applying $\Psi$ to both sides of
\[MC_y (X(w)^\circ) = \sum_{u \leq w} c_{u,w}(y;e^t) \mathcal{O}_{u}^{T}\/,\] 
and using \Cref{prop:eqideal}. 
Finally, for part (c) 
we utilize that $\cO^{u,T} = \sum_{v\geq u}\mathcal{I}^{v,T}$
(and proved by M{\"o}bius inversion). Then:
\begin{align*}
\langle MC_y(X(w)^\circ), \mathcal{O}^{u, T} \rangle=& \langle MC_y(X(w)^\circ), 
\sum_{v\geq u}\mathcal{I}^{v,T} \rangle\\
=& \sum_{v\geq u}(-1)^{\ell(w) - \ell(v)} *\langle \widetilde{MC}_y(X(w)^\circ), \mathcal{O}^{v, T} \rangle\\
=& (-1)^{\ell(w) - \ell(u)} \langle \widetilde{MC}_y(X(w)^\circ), \mathcal{I}^{u,T}\rangle.
\end{align*}
This finishes the proof.
 \end{proof}

\bibliographystyle{halpha}
\bibliography{AMSSbiblio}

\end{document}